\titleformat{\subsection}[runin]{\normalfont\bfseries}{\thesubsection.}{.5em}{}[.]\titlespacing{\subsection}{0pt}{2ex plus .1ex minus .2ex}{.8em}
\titleformat{\subsubsection}[runin]{\normalfont\itshape}{\thesubsubsection.}{.3em}{}[.]\titlespacing{\subsubsection}{0pt}{1ex plus .1ex minus .2ex}{.5em}
\numberwithin{equation}{section}
\numberwithin{figure}{section}
\theoremstyle{plain} 
\newtheorem{theorem}{Theorem}[section]
\newtheorem*{theorem*}{Theorem}
\newtheorem{lemma}[theorem]{Lemma}
\newtheorem*{lemma*}{Lemma}
\newtheorem{corollary}[theorem]{Corollary}
\newtheorem*{corollary*}{Corollary}
\newtheorem{proposition}[theorem]{Proposition}
\newtheorem*{proposition*}{Proposition}
\newtheorem{definition}[theorem]{Definition}
\newtheorem*{definition*}{Definition}
\theoremstyle{definition} 
\newtheorem*{example*}{Example}
\newtheorem{remark}[theorem]{Remark}
\newtheorem*{remark*}{Remark}
\newtheorem*{remarks*}{Remarks}
\newcommand{\f}[1]{\boldsymbol{\mathrm{#1}}} 
\newcommand{\bb}{\mathbb} 
\renewcommand{\cal}{\mathcal} 
\newcommand{\fra}{\mathfrak} 
\newcommand{\ol}[1]{\overline{#1} \!\,} 
\newcommand{\wh}{\widehat}
\newcommand{\wt}{\widetilde}
\renewcommand{\P}{\mathbb{P}}
\newcommand{\E}{\mathbb{E}}
\newcommand{\R}{\mathbb{R}}
\newcommand{\C}{\mathbb{C}}
\newcommand{\N}{\mathbb{N}}
\newcommand{\Z}{\mathbb{Z}}
\newcommand{\me}{\mathrm{e}}
\newcommand{\ii}{\mathrm{i}}
\newcommand{\dd}{\mathrm{d}}
\newcommand{\col}{\mathrel{\vcenter{\baselineskip0.75ex \lineskiplimit0pt \hbox{.}\hbox{.}}}}
\newcommand*{\deq}{\mathrel{\vcenter{\baselineskip0.65ex \lineskiplimit0pt \hbox{.}\hbox{.}}}=}
\newcommand*{\eqd}{=\mathrel{\vcenter{\baselineskip0.65ex \lineskiplimit0pt \hbox{.}\hbox{.}}}}
\renewcommand{\leq}{\leqslant}
\renewcommand{\geq}{\geqslant}
\renewcommand{\epsilon}{\varepsilon}
\newcommand{\ceil}[1]  {\lceil  {#1} \rceil}
\newcommand{\ind}[1]{\f 1 (#1)}
\newcommand{\indb}[1]{\f 1 \pb{#1}}
\newcommand{\indB}[1]{\f 1 \pB{#1}}
\newcommand{\indbb}[1]{\f 1 \pbb{#1}}
\newcommand{\p}[1]{({#1})}
\newcommand{\pb}[1]{\bigl({#1}\bigr)}
\newcommand{\pB}[1]{\Bigl({#1}\Bigr)}
\newcommand{\pbb}[1]{\biggl({#1}\biggr)}
\newcommand{\pBB}[1]{\Biggl({#1}\Biggr)}
\newcommand{\qB}[1]{\Bigl[{#1}\Bigr]}
\newcommand{\qbb}[1]{\biggl[{#1}\biggr]}
\newcommand{\h}[1]{\{{#1}\}}
\newcommand{\hb}[1]{\bigl\{{#1}\bigr\}}
\newcommand{\hbb}[1]{\biggl\{{#1}\biggr\}}
\newcommand{\hBB}[1]{\Biggl\{{#1}\Biggr\}}
\newcommand{\abs}[1]{\lvert #1 \rvert}
\newcommand{\absb}[1]{\bigl\lvert #1 \bigr\rvert}
\newcommand{\absB}[1]{\Bigl\lvert #1 \Bigr\rvert}
\newcommand{\absbb}[1]{\biggl\lvert #1 \biggr\rvert}
\newcommand{\absBB}[1]{\Biggl\lvert #1 \Biggr\rvert}
\newcommand{\norm}[1]{\lVert #1 \rVert}
\newcommand{\normB}[1]{\Bigl\lVert #1 \Bigr\rVert}
\newcommand{\normbb}[1]{\biggl\lVert #1 \biggr\rVert}
\DeclareMathOperator{\tr}{Tr}
\DeclareMathOperator{\re}{Re}
\DeclareMathOperator{\im}{Im}
\DeclareMathOperator{\dist}{dist}
\newcommand{\e}{\epsilon}
\newcommand{\be}{\begin{equation}}
\newcommand{\ee}{\end{equation}}
\renewcommand{\le}{\leq}
\renewcommand{\ge}{\geq}
\title{The Local Semicircle Law for a General Class of Random Matrices}
\author{
L\'aszl\'o Erd\H os${}^1$\thanks{Partially supported
by SFB-TR 12 Grant of the German Research Council. On leave from Institute of Mathematics, University of Munich, Germany.} \quad
Antti Knowles${}^2$\thanks{Partially supported by NSF grant DMS-0757425.} \quad
Horng-Tzer Yau${}^3$\thanks{Partially supported
by NSF grants  DMS-0804279 and Simons  Investigator Award.}  \quad
Jun Yin${}^4$\thanks{Partially supported by NSF grants DMS-1001655 and DMS-1207961.} \\\\\\
\normalsize{Institute of Science and Technology Austria, Am Campus 1, A-3400 Klosterneuburg, Austria} \\
\normalsize{lerdos@ist.ac.at ${}^1$} \\ \\
\normalsize{Courant Institute, New York University, 251 Mercer Street, New York, NY 10012, USA} \\
\normalsize{knowles@cims.nyu.edu ${}^2$} \\ \\
\normalsize{Department of Mathematics, Harvard University, Cambridge MA 02138, USA} \\
\normalsize{htyau@math.harvard.edu ${}^3$}  \\  \\
\normalsize{Department of Mathematics, University of Wisconsin, Madison, WI 53706, USA} \\
\normalsize{jyin@math.uwisc.edu ${}^4$} \\ \\ 
}
\date{May 24, 2013}
\begin{document}

\maketitle

\begin{abstract}
We consider a general class of $N\times N$ random matrices
whose entries $h_{ij}$ are independent up to a symmetry
constraint, but not necessarily identically distributed.
Our main result is a local semicircle law which improves previous results 
\cite{EYY} both in the bulk and at the edge.  The error bounds are given in terms of the basic small
parameter of the model, $\max_{i,j} \E \abs{h_{ij}}^2$. 
As a consequence, we prove the universality of the local $n$-point correlation functions
in the bulk spectrum for a class of matrices whose entries do not have comparable variances,
including random band matrices with band width  $W\gg N^{1-\e_n}$ with some $\e_n>0$
 and with a negligible
 mean-field component.
In addition, we provide a coherent and pedagogical proof of the local semicircle
law, streamlining and strengthening previous arguments from \cite{EYY, EYYrigi, EKYY1}.
\end{abstract}

\medskip
\medskip

%

{\it Keywords:}  Random band matrix, local semicircle law, universality, eigenvalue rigidity.

\newpage

\section{Introduction} \label{sec:intro}

Since the pioneering work \cite{W} of Wigner in the fifties,
random matrices have played a fundamental role in modelling
complex systems. The basic example is the Wigner matrix
ensemble, consisting of $N\times N$ symmetric or Hermitian 
matrices $H = (h_{ij})$ whose matrix entries are identically
distributed random variables that are independent up to the symmetry constraint
 $H=H^*$. From a physical point of view,
these matrices represent Hamilton operators
of disordered mean-field quantum systems, where the
quantum transition rate from state $i$ to state $j$ is given
by the entry $h_{ij}$. 

A central problem in the theory or random matrices
is to establish the \emph{local universality of the spectrum}.
Wigner observed that the distribution of the distances 
between consecutive eigenvalues (the gap distribution) in complex physical
systems follows a universal pattern. The Wigner-Dyson-Gaudin-Mehta conjecture,
formalized in \cite{M}, states that this gap distribution is universal in the sense that it depends only
on the symmetry class of the matrix, but is otherwise
independent of the details of the distribution of the matrix entries.
This conjecture has recently been established for all symmetry classes in a series of works \cite{ESY4, EYYrigi,
EKYY2}; an alternative approach
 was  given in \cite{TV} for the special Wigner Hermitian case. 
The general approach of \cite{ESY4, EYYrigi,
EKYY2} to prove universality
consists of three steps: (i) establish a local semicircle law for the density of eigenvalues; 
(ii) prove universality of Wigner matrices with a small Gaussian
component by analysing the convergence of Dyson Brownian motion to local equilibrium; (iii) remove the small Gaussian
component by comparing Green functions of Wigner ensembles 
with a few matching moments.
For an overview of recent results and this three-step
strategy, see \cite{EYBull}.

Wigner's vision was not restricted to Wigner matrices.
 In fact, he predicted that universality should hold
for any quantum system, described by a large Hamiltonian $H$, of sufficient complexity. 
In order to make such complexity mathematically tractable, one typically replaces
the detailed structure of $H$ with a statistical description. In this phenomenological model, $H$ is drawn from a random ensemble whose distribution mimics the true complexity.
One prominent example
 where random matrix statistics are expected
to hold is the random Schr\"odinger
operator in the delocalized regime. The random Schr\"odinger operator differs greatly from Wigner matrices in that most of its entries vanish. It describes a model with \emph{spatial structure}, in contrast to the mean-field Wigner matrices where all matrix entries are of comparable size. In order to address the question of universality of general disordered quantum systems, and in particular to probe Wigner's vision, one therefore has to break the mean-field permutational symmetry
of Wigner's original model, and hence to allow the distribution of $h_{ij}$ to depend
on $i$ and $j$ in a nontrivial fashion.  For example, if the matrix entries are labelled by a
discrete torus $\bb T \subset\Z^d$ on the $d$-dimensional lattice,
then the distribution of $h_{ij}$ may depend on the Euclidean distance $\abs{i - j}$
between sites $i$ and $j$, thus introducing a nontrivial spatial
structure into the model. If $h_{ij} = 0$ for $\abs{i - j} > 1$ we essentially obtain the random Schr\"odinger operator. A random Schr\"odinger operator models a physical system with a \emph{short-range} interaction, in contrast to the infinite-range, mean-field interaction described by Wigner matrices. More generally, we may consider a \emph{band matrix}, characterized
by the property that $h_{ij}$ becomes negligible if $\abs{i - j}$ exceeds
a certain parameter, $W$, called the \emph{band width}, describing the range of the interaction. 
Hence, by varying the band width $W$, band matrices naturally interpolate between mean-field
Wigner matrices and random Schr\"odinger operators; see \cite{Spe} for
an overview. 

For definiteness, let us focus on the case of a one-dimensional band matrix $H$. A fundamental conjecture, supported by nonrigorous supersymmetric arguments as well as numerics \cite{Fy}, is that the local spectral statistics of $H$ are governed by random matrix statistics for large $W$ and by Poisson statistics for small $W$. This transition is in the spirit of the Anderson metal-insulator
transition \cite{Fy, Spe}, and is conjectured to be sharp around the critical value $W = \sqrt{N}$.
In other words, if $W \gg \sqrt{N}$, we expect the universality results of \cite{EYY, EYY2, EYYrigi} to hold. In addition to a transition in the local spectral statistics, an accompanying transition is conjectured to occur in the behaviour \emph{localization length} of the eigenvectors of $H$, whereby in the large-$W$ regime they are expected to be completely delocalized and in the small-$W$ regime exponentially localized. 
The localization length for band matrices was recently investigated in great detail in \cite{EKYY3}.

Although the Wigner-Dyson-Gaudin-Mehta conjecture was originally stated for 
Wigner matrices, the methods of \cite{ESY4, EYYrigi,
EKYY2} also apply to certain ensembles 
with independent but not identically distributed entries, which however retain the mean-field character of Wigner matrices.
More precisely, they yield universality provided the variances 
$$
  s_{ij} \;\deq\; \E \abs{h_{ij}}^2
$$
of the matrix entries are only required to be of comparable size (but not necessarily equal):
\be\label{genwig}
\frac{c}{N} \;\leq\; s_{ij} \;\leq\; \frac{C}{N}
\ee
for some positive constants $c$ and $C$. 
(Such matrices were called \emph{generalized Wigner matrices} in \cite{EYYrigi}.)
This condition admits a  departure from spatial homogeneity, but still
 imposes a mean-field behaviour and hence excludes genuinely inhomogeneous models such as band matrices.

In the three-step approach to universality outlined above, the first step is to establish
the semicircle law on very short scales. In the scaling of $H$ where its spectrum is asymptotically given by the interval $[-2,2]$, the typical distance between neighbouring
eigenvalues is of order $1/N$. The number of eigenvalues
in  an interval of length $\eta$ is typically of order  $N\eta$.
Thus, the smallest possible scale on which the empirical
density may be close to a deterministic density (in our case the semicircle
law) is $\eta\gg 1/N$. If we characterize the
empirical spectral density around an energy $E$ on scale $\eta$
 by its Stieltjes transform, $m_N(z) = N^{-1}\tr (H-z)^{-1}$ for
$z=E+i\eta$, then the local semicircle law around the energy $E$ and in a spectral window of size $\eta$ is essentially equivalent to  
\begin{equation}\label{mNm}
|m_N(z)-m(z)| \;=\; o(1)
\end{equation}
as $N \to \infty$, where $m(z)$ is the Stieltjes transform of the semicircle law.
For any $\eta\gg 1/N$ (up to logarithmic corrections) the asymptotics \eqref{mNm} in the bulk spectrum was first proved
in \cite{ESY2} for Wigner matrices. The optimal
error bound of the form $O((N\eta)^{-1})$ (with an $N^\e$ correction)
was first proved in \cite{EYY2} in the bulk.
(Prior to this work, the best results 
were restricted to regime $\eta \ge N^{-1/2}$; see Bai et al.\ \cite{BMT}
as well as related concentration bounds in \cite{GZ}.)  
This result
was then extended to the spectral edges in \cite{EYYrigi}.  
 (Some  improvements over the estimates from \cite{ESY2}  
at the edges, for  a  special class of ensembles, were obtained  in  \cite{TV2}.)
In  \cite{EYYrigi},  the identical distribution of the entries of $H$ was not required,
 but the upper bound in  \eqref{genwig} on the variances was necessary. 
Band matrices in $d$ dimensions with band width $W$ satisfy the 
weaker bound $s_{ij}\leq C/W^d$. (Note that the band width $W$ is typically much smaller
than the linear size $L$ of the configuration space $\bb T$, i.e.\ the bound $W^{-d}$ is much larger than the inverse number of lattice sites, $L^{-d}= \abs{\bb T}^{-1}=N^{-1}$.) This motivates us to consider even more general matrices, with the sole condition
\be
s_{ij} \;\leq\; C/M
\label{sM}
\ee
on the variances (instead of \eqref{genwig}).
Here $M$ is a
new parameter that typically satisfies $M \ll N$.
(From now on, the relation $A\ll B$ for two $N$-dependent quantities $A$ and $B$ means that $A \leq N^{-\e}B$ 
for some positive $\e>0$.)
The question of the validity of the local semicircle law under the assumption \eqref{sM} was initiated in \cite{EYY}, where \eqref{mNm} was proved with an error term of order $(M\eta)^{-1/2}$
away from the spectral edges.

The purpose of this paper is twofold. First, we prove a local semicircle law \eqref{mNm}, under the variance condition \eqref{sM}, with a stronger error bound of order $(M\eta)^{-1}$, including energies $E$ near
the spectral edge. Away from the spectral edge (and from the origin $E=0$ if the matrix does not have a band structure), the result holds for any $\eta\gg 1/M$. 
Near the edge there is a restriction
on how small $\eta$ can be. This restriction depends explicitly
on a norm of the resolvent of the matrix of variances, $S=(s_{ij})$; we give
explicit bounds on this norm for various special cases of interest.

As a corollary, we derive bounds on the eigenvalue counting function
and rigidity estimates on the locations of the eigenvalues for a general class of matrices.
Combined with an analysis of Dyson Brownian motion and the Green function comparison method,
this yields bulk universality of the local eigenvalue statistics in a certain range of parameters, 
which depends on the matrix $S$.
In particular, we extend bulk universality, proved for generalized Wigner matrices
in \cite{EYY}, to a large class of matrix ensembles where the upper and lower bounds on 
the variances \eqref{genwig} are relaxed.

The main motivation for the generalizations in this paper is the Anderson
 transition for band matrices outlined above.
While not optimal, our results nevertheless imply that band matrices with a 
sufficiently broad band  plus a negligible mean-field component exhibit 
bulk universality: their local spectral statistics are governed by random matrix statistics.
For example, the local two-point correlation functions coincide if $W\gg N^{33/34}$. 
Although eigenvector delocalization and random matrix statistics are conjectured to occur
in tandem, delocalization was actually proved in \cite{EKYY3} under
more general conditions than those under which we establish random matrix statistics. 
In fact, the delocalization results of \cite{EKYY3} hold for a mean-field component as small as
$(N/W^2)^{2/3}$, and, provided that $W\gg N^{4/5}$, the mean-field component may even vanish 
(resulting in a genuine band matrix).

The second purpose of this paper is to 
provide a coherent, pedagogical, and
self-contained proof of the local semicircle law.
In recent years, a series of papers \cite{ESY1, ESY2,  EYY, EYY2, EYYrigi, EKYY1} 
with gradually weaker assumptions, was published on this topic. 
These papers often cited and relied on the previous ones.
This made it difficult for the interested reader to follow all the details of the argument.
The basic strategy of our proof (that is, using resolvents and large deviation bounds) was already used in \cite{ESY1, ESY2,  EYY, EYY2, EYYrigi, EKYY1}. In this paper we not only streamline the argument
for generalized Wigner matrices (satisfying \eqref{genwig}), but we also obtain sharper bounds for random matrices satisfying the much weaker condition \eqref{sM}.
This allows us to establish universality results for a class of ensembles beyond generalized Wigner matrices.

Our proof is self-contained and simpler than those of \cite{EYY, EYY2, EYYrigi, EKYY1}.
In particular, we give a proof of the \emph{Fluctuation Averaging Theorem}, Theorems \ref{thm: averaging} and \ref{thm: averaging with Lambdao} below, which is considerably simpler than that of its predecessors
in \cite{EYY2, EYYrigi, EKYY1}. In addition, we consistently use fluctuation averaging at several key steps of the main argument, which allows us to shorten the proof and relax previous assumptions on the variances $s_{ij}$.
The reader who is mainly interested in the pedagogical presentation should 
focus on the simplest choice of $S$, $s_{ij}=1/N$, 
which corresponds to the standard Wigner matrix (for which $M = N$), and focus on Sections \ref{sec: setup}, \ref{sec: tools}, \ref{sec: simple proof}, and \ref{sec:withgap}, as well as Appendix \ref{app: fluct averaging}.

We conclude this section with an outline of the paper. In Section \ref{sec: setup} we define the model, introduce basic definitions, and state the local semicircle law in full generality (Theorem \ref{thm: with gap}). Section \ref{sec: example} is devoted to some examples of random matrix models that satisfy our assumptions; for each example we give explicit bounds on the spectral domain on which the local semicircle law holds. Sections \ref{sec: tools}, \ref{sec: simple proof}, and \ref{sec:withgap} are devoted to the proof of the local semicircle law. Section \ref{sec: tools} collects the basic tools that will be used throughout the proof. The purpose of Section \ref{sec: simple proof} is mainly pedagogical; in it, we state and prove a weaker form of the local semicircle law, Theorem \ref{thm: no gap}. The error bounds in Theorem \ref{thm: no gap} are identical to those of Theorem \ref{thm: with gap}, but the spectral domain on which they hold is smaller. Provided one stays away from  the spectral edge, Theorems \ref{thm: no gap} and \ref{thm: with gap} are equivalent; near the edge, Theorem \ref{thm: with gap} is stronger. The proof of Theorem \ref{thm: no gap} is very short and contains several key ideas from the proof of Theorem \ref{thm: with gap}. The expert reader may therefore want to skip Section \ref{sec: simple proof}, but for the reader looking for a pedagogical presentation we recommend first focusing on Sections \ref{sec: tools} and \ref{sec: simple proof} (along with Appendix \ref{app: fluct averaging}). The full proof of our main result, Theorem \ref{thm: with gap}, is given in Section \ref{sec:withgap}. In Sections \ref{sec: dos} and \ref{sec:bulk} we draw consequences from Theorem \ref{thm: with gap}. In Section \ref{sec: dos} we derive estimates on the density of states and the rigidity of the eigenvalue locations. In Section \ref{sec:bulk} we state and prove the universality of the local spectral statistics in the bulk, and give applications to some concrete matrix models. In Appendix \ref{app: bounds on varrho} we derive explicit bounds on relevant norms of the resolvent of $S$ (denoted by
the abstract control parameters $\wt \Gamma$ and $\Gamma$), which are used to define the domains of applicability of Theorems \ref{thm: with gap} and \ref{thm: no gap}. Finally, Appendix \ref{app: fluct averaging} is devoted to the proof of the fluctuation averaging estimates, Theorems \ref{thm: averaging} and \ref{thm: averaging with Lambdao}.

We use $C$ to denote a generic large positive constant, which may depend on some fixed parameters and whose value may change from one expression to the next. Similarly, we use $c$ to denote a generic small positive constant.

\section{Definitions and the main result} \label{sec: setup}

Let $(h_{ij} \col i \leq j)$ be a family of independent, complex-valued 
random variables $h_{ij} \equiv h_{ij}^{(N)}$ satisfying
$\E h_{ij} = 0$  and $h_{ii} \in \R$ for all $i$.
For $i > j$ we define $h_{ij} \deq \bar h_{ji}$, and denote by $H \equiv H_N = (h_{ij})_{i,j = 1}^N$ the $N \times N$ matrix
 with entries $h_{ij}$. By definition, $H$ is Hermitian: $H = H^*$.
We stress that all our results hold not only for complex Hermitian matrices but also for real symmetric matrices. In fact, 
the symmetry class of $H$ plays no role, and our results apply for instance in the case where some off-diagonal entries 
of $H$ are real and some complex-valued.  (In contrast to some other papers in the literature, in our terminology the 
concept of Hermitian simply refers to the fact that $H = H^*$.)

We define
\begin{equation} \label{variance of h}
s_{ij} \;\deq\; \E \abs{h_{ij}}^2\,, \qquad M \;\equiv\; M_N \;\deq\; \frac{1}{\max_{i, j} s_{ij}}\,.
\end{equation}
In particular, we have the bound
\begin{equation} \label{s leq W}
s_{ij} \;\leq\; M^{-1}
\end{equation}
for all $i$ and $j$. We regard $N$ as the fundamental parameter of our model, and $M$ as a function of $N$. We introduce the $N \times N$ symmetric matrix $S \equiv S_N = (s_{ij})_{i,j = 1}^N$. We assume that $S$ is (doubly) stochastic:
\begin{equation} \label{S is stochastic}
\sum_j s_{ij} \;=\; 1
\end{equation}
for all $i$.  For simplicity, we assume that $S$ is irreducible, so that 1 is a simple eigenvalue. 
(The case of non-irreducible $S$ may be trivially dealt with by considering its irreducible components separately.) 
We shall always assume the bounds
\begin{equation} \label{lower bound on W}
N^\delta \;\leq\; M \;\leq\; N
\end{equation}
for some fixed $\delta > 0$.

It is sometimes convenient to use the normalized entries
\begin{equation}\label{def:zeta}
\zeta_{ij} \;\deq\; (s_{ij})^{-1/2} h_{ij}\,,
\end{equation}
which satisfy $\E \zeta_{ij} = 0$ and $\E \abs{\zeta_{ij}}^2 = 1$. (If $s_{ij} = 0$ we set for convenience $\zeta_{ij}$ to be a normalized Gaussian, so that these relations continue hold. Of course in this case the law of $\zeta_{ij}$ is immaterial.) We assume that the random variables $\zeta_{ij}$ have finite moments, uniformly in $N$, $i$, and $j$, in the sense that for all $p \in \N$ there is a constant $\mu_p$ such that
\begin{equation} \label{finite moments}
\E \abs{\zeta_{ij}}^p \;\leq\; \mu_p
\end{equation}
for all $N$, $i$, and $j$. We make this assumption to streamline notation in the statements of results such 
as Theorem \ref{thm: with gap} and the proofs. In fact, our results (and our proof) also cover the case where \eqref{finite moments}
 holds for some finite large $p$; see Remark \ref{rem: finite p}.

Throughout the following we use a spectral parameter $z \in \C$ satisfying $\im z > 0$. We use the notation
\begin{equation*}
z \;=\; E + \ii \eta\
\end{equation*}
without further comment, and always assume that $\eta>0$.
 Wigner semicircle law $\varrho$ and its Stieltjes transform $m$ are defined by
\begin{equation} \label{definition of msc}
  \varrho(x) \;\deq\; \frac{1}{2\pi}\sqrt{(4-x^2)_+}\,, \qquad m(z) \;\deq\; \frac{1}{2 \pi} \int_{-2}^2 \frac{\sqrt{4 - x^2}}{x - z} \, \dd x\,.
\end{equation}
To avoid confusion, we remark that $m$ was denoted by $m_{sc}$ in the papers
 \cite{ESY1, ESY2,  ESY4, ESYY, EYY, EYY2, EYYrigi, EKYY1, EKYY2}, in which $m$ had a different
 meaning from \eqref{definition of msc}. It is well known that the Stieltjes transform $m$ is the unique solution of
\begin{equation} \label{identity for msc}
m(z) + \frac{1}{m(z)} + z \;=\; 0
\end{equation}
satisfying $\im m(z) > 0$ for $\im z > 0$. Thus we have
\begin{equation} \label{explicit m}
m(z) \;=\; \frac{-z + \sqrt{z^2 - 4}}{2}.
\end{equation}
Some basic estimates on $m$ are collected in Lemma~\ref{lemma: msc} below.

An important parameter of the model is\footnote{Here we use the notation
$ \norm{A}_{\ell^\infty \to \ell^\infty} =\max_{i} \sum_j |A_{ij}|$ for the operator norm on $\ell^\infty(\C^N)$.}
\begin{equation}\label{def:rho}
\Gamma_N(z) \;\equiv\; \Gamma(z) \;\deq\; \normB{\pb{1-m(z)^2 S}^{-1} }_{\ell^\infty\to\ell^\infty}\,.
\end{equation}
A related quantity is obtained by restricting the operator $\pb{1-m(z)^2 S}^{-1}$ to the subspace $\f e^\perp$ orthogonal to the constant vector $\f e \deq N^{-1/2} (1,1, \dots, 1)^*$. Since $S$ is stochastic, we have the estimate $-1\leq S\leq 1$ and $1$ is a simple eigenvalue of $S$ with eigenvector $\f e$. Set
\begin{equation}\label{def:rhohat}
\wt \Gamma_N(z) \;\equiv\; \wt \Gamma(z) \;\deq\; \normB{\pb{1-m(z)^2 S}^{-1} \Big |_{\f e^\perp}}_{\ell^\infty\to\ell^\infty}\,,
\end{equation}
the norm of $(1-m(z) ^2S)^{-1}$ restricted to the subspace orthogonal to the constants.
Clearly, $\wt \Gamma(z) \leq \Gamma(z)$.
Basic estimates on $\Gamma$ and $\wt \Gamma$ are collected in Proposition \ref{prop: bounds on Gamma} below.
Many estimates in this paper depend critically on $\Gamma$ and $\wt \Gamma$. Indeed, these parameters quantify the stability of certain self-consistent equations that underlie our proof.
However, $\Gamma$ and $\wt \Gamma$ remain bounded (up to a factor $\log N$) provided $E=\re z$ is separated from the set $\{-2,0,2\}$; for band matrices (see Example \ref{def: band matrix}) it suffices that $E$ be separated from the spectral edges $\{-2,2\}$; see Appendix \ref{app: bounds on varrho}. At a first reading, we recommend that the reader neglect $\Gamma$ and $\wt \Gamma$ (i.e.\ replace them with a constant). For band matrices, this amounts to focusing on the local semicircle law in the bulk of the spectrum.

We define the \emph{resolvent} or \emph{Green function} of $H$ through
\begin{equation*}
G(z) \;\deq\; (H - z)^{-1}\,,
\end{equation*}
and denote its entries by $G_{ij}(z)$.
The Stieltjes transform of the empirical spectral measure of $H$ is
\begin{equation}\label{mNdef}
m_N(z) \;\deq\; \frac{1}{N} \tr G(z)\,.
\end{equation}

The following definition introduces a notion of a high-probability bound that is suited for our purposes. It was introduced (in a slightly different form) in \cite{EKYfluc}.

\begin{definition}[Stochastic domination]\label{def:stocdom}
Let
\begin{equation*}
X = \pb{X^{(N)}(u) \col N \in \N, u \in U^{(N)}} \,, \qquad
Y = \pb{Y^{(N)}(u) \col N \in \N, u \in U^{(N)}}
\end{equation*}
be two families of nonnegative random variables, where $U^{(N)}$ is a possibly $N$-dependent parameter set. 
We say that $X$ is \emph{stochastically dominated by $Y$, uniformly in $u$,} if for all (small) $\epsilon > 0$ and (large) $D > 0$ we have
\begin{equation*}
\sup_{u \in U^{(N)}} \P \qB{X^{(N)}(u) > N^\epsilon Y^{(N)}(u)} \;\leq\; N^{-D}
\end{equation*}
for large enough $N\geq N_0(\e, D)$. Unless stated otherwise, 
throughout this paper the stochastic 
domination will always be uniform in all parameters apart from the parameter $\delta$ in \eqref{lower bound on W} and the sequence of constants $\mu_p$ in \eqref{finite moments}; thus, $N_0(\e, D)$ also depends on $\delta$ and $\mu_p$.
If $X$ is stochastically dominated by $Y$, uniformly in $u$, we use the notation $X \prec Y$. Moreover, if for some complex family $X$ we have $\abs{X} \prec Y$ we also write $X = O_\prec(Y)$.
\end{definition}

For example, using Chebyshev's inequality and \eqref{finite moments} one easily finds that 
\begin{equation}\label{hsmallerW}
\abs{h_{ij}} \;\prec\; (s_{ij})^{1/2} \;\prec\; M^{-1/2}\,,
\end{equation}
so that we may also write $h_{ij} = O_\prec((s_{ij})^{1/2})$. Another simple, but useful, example is a family of events $\Xi \equiv \Xi^{(N)}$ with asymptotically very high probability:
If $\P (\Xi^c) \leq N^{-D}$ for any $D>0$ and $N \geq N_0(D)$, then the indicator function $\ind{\Xi}$ of $\Xi$ satisfies $1 - \ind{\Xi} \prec 0$.

The relation $\prec$ is a partial ordering, i.e.\ it is transitive and it satisfies the familiar arithmetic rules of order relations. For instance if $X_1 \prec Y_1$ and $X_2 \prec Y_2$ then $X_1 + X_2 \prec Y_1 + Y_2$ and $X_1 X_2 \prec Y_1 Y_2$. More general statements in this spirit are given in Lemma \ref{lemma: basic properties of prec} below.

\begin{definition}[Spectral domain] \label{def: domain}
We call an $N$-dependent family
\begin{equation*}
\f D \;\equiv\; \f D^{(N)} \;\subset\; \hb{z \col \abs{E} \leq 10 \,,\, M^{-1} \leq \eta \leq 10}
\end{equation*}
a \emph{spectral domain}. (Recall that $M\equiv M_N$ depends on $N$.)
\end{definition}
In this paper we always consider families $X^{(N)}(u) = X^{(N)}_i(z)$ indexed by $u = (z,i)$, where $z$ takes on values in some spectral domain $\f D$, and $i$ takes on values in some finite (possibly $N$-dependent or empty) index set. The stochastic domination $X \prec Y$ of such families will always be uniform in $z$ and $i$, and we usually do not state this explicitly. Usually, which spectral domain $\f D$ is meant will be clear from the context, in which case we shall not mention it explicitly.

In this paper we shall make use of two spectral domains, $\f S$ defined in \eqref{def S varrho} and $\wt {\f S}$ defined in \eqref{def S varrho 2}. Our main result is formulated on the larger of these domains, $\wt {\f S}$. In order to define it, we introduce an $E$-dependent lower boundary $\wt \eta_E$ on the spectral domain. We choose a (small) positive constant $\gamma$, and define for each $E \in [-10,10]$
\begin{equation} \label{def wt eta E}
\wt \eta_E \;\deq\; \min \hBB{\eta \;\col\; \frac{1}{M \eta} \leq \min \hbb{\frac{M^{-\gamma}}{\wt \Gamma(z)^3} \,,\, \frac{M^{-2 \gamma}}{\wt \Gamma(z)^4 \im m(z)}} \text{ for all }
 z \in [E + \ii \eta, E + 10 \ii] }\,.
\end{equation}
Note that $\wt \eta_E$ depends on $\gamma$, but we do not explicitly indicate this dependence since we regard $\gamma$ as fixed. At a first reading we advise the
reader to think of $\gamma$ as being zero. Note also that the lower bound in \eqref{bound for Gamma tilde} below implies that
$\wt \eta_E \geq M^{-1}$.
We also define  the distance to the spectral edge,
\begin{equation} \label{def: kappa}
\kappa \;\equiv\; \kappa_E \;\deq\; \absb{\abs{E} - 2}\,.
\end{equation}
Finally, we introduce the fundamental control parameter
\begin{equation}
\Pi(z) \;\deq\; \sqrt{\frac{\im m(z)}{M \eta}} + \frac{1}{M \eta}\,,
\end{equation}
which will be used throughout this paper as a sharp, deterministic upper bound on the entries of $G$. Note that the condition in the definition of $\wt \eta_E$ states that the first term of $\Pi$ is bounded by $M^{-\gamma} \wt \Gamma^{-2}$ and the second term by $M^{-\gamma} \wt \Gamma^{-3}$.
We may now state our main result.

\begin{theorem}[Local semicircle law] \label{thm: with gap}
Fix $\gamma \in (0,1/2)$ and define the spectral domain
\begin{equation} \label{def S varrho 2}
\wt{\f S} \;\equiv\; \wt{\f S} \!\,^{(N)}(\gamma) \;\deq\; \hb{E + \ii \eta \col \abs{E} \leq 10 \,,\, \wt \eta_E \leq \eta \leq 10}\,.
\end{equation}
We have the bounds
\begin{equation}\label{Gijest 2}
\max_{i,j} \absb{G_{ij}(z) - \delta_{ij} m(z)} \;\prec\; \Pi(z)
\end{equation}
uniformly in  $z \in \wt {\f S}$, as well as
\begin{equation}\label{m-mest 2}
\absb{m_N(z) - m(z)} \;\prec\; \frac{1}{M \eta}
\end{equation}
uniformly in $z \in \wt {\f S}$.
Moreover, outside of the spectrum we have the stronger estimate
\begin{equation}\label{m-mestout}
\absb{m_N(z) - m(z)} \;\prec\; \frac{1}{M (\kappa+\eta)}  + \frac{1}{(M\eta)^2\sqrt{\kappa+\eta}}
\end{equation}
uniformly in $z \in \wt {\f S} \cap \{z \col \abs{E} \geq 2 \,,\, M\eta\sqrt{\kappa+\eta}\geq M^\gamma\}$.
\end{theorem}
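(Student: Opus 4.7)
My plan follows the now-standard three-stage approach: (i) derive a self-consistent vector equation for the diagonal Green function entries using Schur's complement and large-deviation bounds; (ii) invert the stability operator $1 - m^2 S$ to control the entries pointwise, bootstrapping in $\eta$ from the trivial regime $\eta\sim 1$ down to $\eta = \wt\eta_E$; (iii) apply the Fluctuation Averaging Theorem (Theorems \ref{thm: averaging} and \ref{thm: averaging with Lambdao}) to the normalized trace to obtain the sharper averaged bound \eqref{m-mest 2}. The point required for Theorem \ref{thm: with gap}, beyond Theorem \ref{thm: no gap}, is to project out the top eigenvector $\f e$ of $S$ so as to use $\wt \Gamma$ in place of $\Gamma$; the price is a scalar self-consistent equation for the average $[v]\deq N^{-1}\sum_i (G_{ii}-m) = m_N - m$ which must be analysed separately.

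Concretely, from Schur's formula $G_{ii}^{-1} = h_{ii} - z - \sum_{k,l\neq i} h_{ik} G^{(i)}_{kl} h_{li}$, the large-deviation tools in Section \ref{sec: tools} replace the quadratic form by $\sum_k s_{ik} G^{(i)}_{kk}$ up to a fluctuation error $Z_i = O_\prec(\Pi)$ (through the Ward identity $\sum_j |G_{ij}|^2 = (\im G_{ii})/\eta$). Combining with the perturbation identity $G_{kk}^{(i)} = G_{kk} - G_{ki} G_{ik}/G_{ii}$, the semicircle relation $m + 1/m + z = 0$, and the expansion of $1/G_{ii}$ around $1/m$, one arrives at the vector equation
\begin{equation*}
(1 - m^2 S) v \;=\; m^2 Z \;-\; \frac{v \cdot v}{m} \;+\; (\text{off-diagonal terms})\,,
\end{equation*}
where $v_i \deq G_{ii} - m$ and $v\cdot v$ is the entrywise product. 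The off-diagonal entries $G_{ij}$ ($i\neq j$) are controlled simultaneously of size $\Pi$ by a direct Schur/large-deviation estimate.

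I then split $v = [v]\f 1 + v^\perp$ with $v^\perp \in \f e^\perp$. Because $S$ is stochastic, the perpendicular part solves $(1 - m^2 S) v^\perp = m^2 Z^\perp - (v\cdot v)^\perp/m + \dots$, which inverts with norm $\wt\Gamma$, yielding $\norm{v^\perp}_\infty \prec \wt\Gamma\,\Pi$. The scalar component obeys the quadratic equation
\begin{equation*}
(1-m^2)[v] \;+\; \frac{[v]^2}{m} \;=\; m^2 [Z] \;+\; (\text{corrections from } v^\perp)\,,
\end{equation*}
and here the Fluctuation Averaging Theorem is decisive: it shows $[Z] = O_\prec(\Pi^2) = O_\prec((M\eta)^{-1})$, dramatically better than the pointwise size $\Pi$. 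In the bulk, where $|1-m^2|$ is bounded below, linear inversion gives $[v] = O_\prec((M\eta)^{-1})$, and combining with $v^\perp$ produces the entrywise bound \eqref{Gijest 2}. The bootstrap in $\eta$ closes exactly at $\eta=\wt\eta_E$ because the definition \eqref{def wt eta E} enforces both $\wt\Gamma^3 \Pi$ and $\wt\Gamma^4 (\im m)/(M\eta)$ to be at most $M^{-\gamma}$, keeping the nonlinear terms in $v$ subleading.

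For the refined estimate \eqref{m-mestout} outside the spectrum, I would analyse the scalar quadratic equation more carefully. When $|E|>2$ one has $|1-m^2| \asymp \sqrt{\kappa+\eta}$ and $\im m \asymp \eta/\sqrt{\kappa+\eta}$, so the linear inversion of $(1-m^2)[v] = m^2[Z]+\dots$ contributes the first term $1/(M(\kappa+\eta))$ of \eqref{m-mestout}, while propagating the quadratic self-interaction $[v]^2/m$ through the inversion gives the second term $(M\eta)^{-2}(\kappa+\eta)^{-1/2}$; the hypothesis $M\eta\sqrt{\kappa+\eta}\geq M^\gamma$ is precisely what makes the quadratic correction subdominant so the iteration converges. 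The main obstacle throughout is closing the bootstrap all the way to $\eta = \wt\eta_E$ near the spectral edge, where $\wt\Gamma$ can grow polynomially in $M$; this is handled by the careful interplay between the two components of $\Pi$, namely $\sqrt{\im m/(M\eta)}$ and $1/(M\eta)$, which combined with fluctuation averaging and the projection onto $\f e^\perp$ are just barely strong enough to absorb the growth of $\wt\Gamma$.
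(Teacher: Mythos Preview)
Your outline matches the paper's architecture: the self-consistent vector equation from Schur's formula, the splitting of $v_i = G_{ii}-m$ into its average $[v]$ and the fluctuation $v_i-[v]$, the use of $\wt\Gamma$ on $\f e^\perp$, the quadratic scalar equation for $[v]$, fluctuation averaging for the averaged error, and the continuity bootstrap in $\eta$. The treatment of \eqref{m-mestout} is also on the right track.

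There is, however, one genuine gap. You obtain $\max_i\abs{v_i - [v]} \prec \wt\Gamma\,\Pi$ by inverting $(1-m^2 S)$ on $\f e^\perp$ with source $m^2 Z^\perp + O(\Lambda^2)$, and claim that combining this with $[v]\prec (M\eta)^{-1}$ yields \eqref{Gijest 2}. But $\wt\Gamma\,\Pi$ is larger than $\Pi$ by the factor $\wt\Gamma$, which near the edge is unbounded, so this does not give $\Lambda \prec \Pi$ and the iteration you describe cannot close there. The paper's fix (Lemma~\ref{lem: estimate of Lambdao 2}, see \eqref{Lambdad estimate 2}) is \emph{not} to invert the operator. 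One writes the expanded self-consistent relation as
\[
v_i - [v] \;=\; m^2 \sum_k s_{ik}(v_k - [v]) - m^2\bigl(\Upsilon_i - [\Upsilon]\bigr) + O_\prec(\Psi^2)
\]
and applies the fluctuation-averaging estimate \eqref{averaging without Q avg} directly to the first sum on the right, obtaining $\sum_k s_{ik}(v_k-[v]) = O_\prec(\wt\Gamma\,\Psi^2)$. The crucial point is that $\wt\Gamma$ now multiplies only the \emph{quadratic} $\Psi^2$, while the fluctuation $\Upsilon_i = O_\prec\bigl(\sqrt{(\im m+\Psi)/(M\eta)}\,\bigr)$ enters without any $\wt\Gamma$ prefactor. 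Since the rough a priori bound guarantees $\wt\Gamma\Psi \leq M^{-\gamma/4}$, the term $\wt\Gamma\Psi^2 \leq M^{-\gamma/4}\Psi$ is contractive and disappears under iteration, leaving $\Lambda \prec \sqrt{(\im m+\Phi)/(M\eta)} + (M\eta)^{-1} + \Phi$; with $\Phi = (M\eta)^{-1}$ this is exactly $\Lambda \prec \Pi$. This second, independent use of fluctuation averaging---on the weighted sums $\sum_k s_{ik}v_k$, not just on $[\Upsilon]$---is precisely what absorbs the growth of $\wt\Gamma$, and without it \eqref{Gijest 2} does not follow near the spectral edge.
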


We remark that the main estimate for the Stieltjes transform $m_N$ is \eqref{m-mest 2}.
 The other estimate \eqref{m-mestout} is mainly useful for controlling the norm of $H$, 
which we do in Section \ref{sec: dos}. We also recall that  uniformity for
the spectral parameter $z$ means that the threshold $N_0(\e, D)$
in the definition of $\prec$ is independent of the choice of $z$ within the indicated spectral domain.  As stated
 in Definition \ref{def:stocdom}, this uniformity holds for all statements containing $\prec$, and 
is not explicitly mentioned in the following; all of our arguments are trivially uniform in $z$ 
and any matrix indices.

\begin{remark}\label{rem: finite p}
Theorem \ref{thm: with gap} has the following variant for matrix entries where the condition \eqref{finite moments} 
is only imposed for some large but  fixed $p$. More precisely,  for any $\epsilon > 0$ and $D > 0$ there exists a 
constant $p(\epsilon, D)$ such that if \eqref{finite moments} holds for $p = p(\epsilon, D)$ then
\begin{equation*}
\P \pb{\abs{m_N(z) - m(z)} > N^\epsilon (M \eta)^{-1}} \;\leq\; N^{-D}
\end{equation*}
for all $z \in \wt {\f S}$ and $N \geq N_0(\epsilon, D)$. An analogous estimate replaces
\eqref{Gijest 2} and \eqref{m-mestout}.
The proof of this variant is the same as that of Theorem \ref{thm: with gap}.
\end{remark}

\begin{remark}\label{rem:dec}
Most of the previous works \cite{ESY1, ESY2, EYY, EYY2, EYYrigi, EKYY1} 
assumed a stronger, 
subexponential decay condition on $\zeta_{ij}$ instead of \eqref{finite moments}.
Under the subexponential decay condition, certain probability estimates in the results
were somewhat stronger and precise tolerance thresholds were sharper. Roughly, 
this corresponds to operating with a modified definition of $\prec$, where
the factors $N^\e$ are replaced by high powers of $\log N$ and the polynomial probability
bound $N^{-D}$ is replaced with a subexponential one. The proofs of the current paper
can be easily adjusted to such a setup, but we shall not pursue this further.
\end{remark}

A local semicircle law for  Wigner matrices on the optimal scale $\eta \gtrsim 1/N$ was first obtained in \cite{ESY2}.  The optimal error estimates in the bulk were proved in \cite{EYY2}, and extended to the edges in \cite{EYYrigi}.
These estimates underlie the derivation of rigidity estimates for individual eigenvalues, which in turn were used in \cite{EYYrigi} to prove
Dyson's conjecture on the optimal local relaxation time for the Dyson Brownian motion. 

Apart from the somewhat different assumption on the tails of the entries of $H$ (see Remark~\ref{rem:dec}), Theorem~\ref{thm: with gap}, when restricted to generalized Wigner matrices, 
subsumes all previous local semicircle laws obtained in \cite{ESY1, ESY2,  EYY2, EYYrigi}. 
For band matrices, a local semicircle law was proved in \cite{EYY}.
(In fact, in \cite{EYY} the band structure was not required; only 
the conditions \eqref{s leq W}, \eqref{S is stochastic}, and the subexponential decay condition for the matrix entries (instead of \eqref{finite moments}) were used.)
Theorem~\ref{thm: with gap} improves this result in several ways.
First, the error bounds in \eqref{Gijest 2} and \eqref{m-mest 2}
are uniform in $E$, even for $E$ near the spectral edge; the corresponding bounds 
in Theorem 2.1 of \cite{EYY} diverged as $\kappa^{-1}$. 
Second, the bound
\eqref{m-mest 2} on the Stieltjes transform is better than (2.16) in \cite{EYY}
by a factor $(M\eta)^{-1/2}$. This improvement
is due to exploiting the fluctuation averaging mechanism of Theorem~\ref{thm: averaging}.
Third, the domain of $\eta$ for which  Theorem~\ref{thm: with gap} 
applies is essentially $\eta\gg \kappa^{-7/2}M^{-1}$,
which is somewhat larger than the domain $\eta\gg \kappa^{-4}M^{-1}$ of \cite{EYY}.

While Theorem~\ref{thm: with gap} subsumes several previous local semicircle laws, two previous results are not covered.
The local semicircle law for sparse matrices proved in \cite{EKYY1} does not follow from
Theorem~\ref{thm: with gap}. However, the argument of this paper may be modified so as to include sparse matrices as well; we do not pursue this issue further. 
The local semicircle law for one-dimensional band matrices given in Theorem 2.2 of \cite{EKYY3} is, however, 
of a very different nature, and may not be recovered using the methods of the current paper. Under the conditions $W\gg N^{4/5}$ and $\eta \gg N^2/W^3$,
Theorem 2.2 of \cite{EKYY3} shows that (focusing for simplicity on the one-dimensional case)
\be\label{lscbbb}
\absb{G_{ij}(z) - \delta_{ij} m(z)}  \;\prec\; \frac{1}{(N\eta)^{1/2}} + \frac{1}{ (W\sqrt{\eta})^{1/2}}
\ee
in the bulk spectrum, which is stronger than the bound  of order $(W\eta)^{-1/2}$
in \eqref{Gijest 2}.  The proof of \eqref{lscbbb} relies on a very general fluctuation averaging result from \cite{EKYfluc}, which is considerably stronger than Theorems \ref{thm: averaging} and \ref{thm: averaging with Lambdao}; see Remark~\ref{FAhist} below. 
The key open problem for band matrices is to establish a local semicircle law
on a scale $\eta$ below $W^{-1}$. The estimate \eqref{lscbbb} suggests that
the resolvent entries should remain bounded throughout the range $\eta\gtrsim \max \{ N^{-1}, W^{-2}\}$.

The local semicircle law, Theorem~\ref{thm: with gap}, has numerous consequences,
 several of which are formulated in Sections \ref{sec: dos} and
\ref{sec:bulk}. Here we only sketch them. Theorem~\ref{thm:count} states that the empirical
counting function converges to the counting function of the semicircle law. The precision is of order $M^{-1}$ provided that we have the lower bound $s_{ij}\ge c/N$ for some constant $c>0$. As a consequence, Theorem~\ref{thm:evl} states that
the bulk eigenvalues are rigid on scales of order $M^{-1}$. Under the same condition, 
in Theorem~\ref{thm: bulk univ} we prove the universality of the local two-point correlation
functions 
in the bulk provided that  $M\gg N^{33/34}$; 
 we obtain similar results for higher order
correlation functions, assuming a stronger restriction on $M$.
These results generalize the earlier theorems from \cite{EYYrigi, EKYY1, EKYY2}, which were
 valid for generalized Wigner matrices satisfying the condition \eqref{genwig}, under which $M$ is comparable to $N$.  
We obtain similar results  if the condition $s_{ij} \ge c/N$ in \eqref{genwig} is relaxed to  $s_{ij} \ge N^{-1-\xi}$ with some small $\xi$. The exponent $\xi$ can be chosen near 1
for band matrices with a broad band $W\asymp N$. In particular, we prove universality for such band matrices with a 
rapidly vanishing mean-field component.
These applications of the general Theorem~\ref{thm: bulk univ}  are listed
in Corollary~\ref{cor:bulk}.

\section{Examples} \label{sec: example}

In this section we give some important example of random matrix models $H$. In each of the examples, we give the deterministic matrix $S = (s_{ij})$ of the variances of the entries of $H$. The matrix $H$ is then obtained from $h_{ij} = s_{ij} \zeta_{ij}$. Here $(\zeta_{ij})$ is a Hermitian matrix whose upper-triangular entries are independent and whose diagonal entries are real; moreover, we have $\E \zeta_{ij} = 0$, $\E \abs{\zeta_{ij}}^2 = 1$, and the condition \eqref{finite moments} for all $p$, uniformly in $N$, $i$, and $j$.

\begin{definition}[Full and flat Wigner matrices] \label{def: Wigner}
Let $a \equiv a_N$ and $b \equiv b_N$ be possibly $N$-dependent positive quantities. We call $H$ an \emph{$a$-full Wigner matrix} if $S$ satisfies \eqref{S is stochastic} and
\begin{equation} \label{def full Wigner}
s_{ij} \;\geq\; \frac{a}{N}\,.
\end{equation}
Similarly, we call $H$ a \emph{$b$-flat Wigner matrix} if $S$ satisfies \eqref{S is stochastic} and
\begin{equation*}
s_{ij} \;\leq\; \frac{b}{N}\,.
\end{equation*}
(Note that in this case we have $M \geq N / b$.)

If $a$ and $b$ are independent of $N$ we call an $a$-full Wigner matrix simply \emph{full} and a $b$-flat Wigner 
matrix simply \emph{flat}. In particular, generalized Wigner matrices, satisfying \eqref{genwig}, are full and flat Wigner matrices.
\end{definition}

\begin{definition}[Band matrix] \label{def: band matrix}
Fix $d \in \N$. Let $f$ be a bounded and symmetric (i.e.\ $f(x) = f(-x)$) probability density on $\R^d$.
Let $L$ and $W$ be integers satisfying
\begin{equation*}
L^{\delta'} \;\leq\; W \;\leq\; L
\end{equation*}
for some fixed $\delta' > 0$. Define the $d$-dimensional discrete torus
\begin{equation*}
\bb T^d_L \;=\; [-L/2, L/2)^d \cap \Z^d\,.
\end{equation*}
Thus, $\bb T^d_L$ has $N = L^d$ lattice points; and we may identify $\bb T_L^d$ with $\{1, \dots, N\}$. We define the canonical representative of $i \in \Z^d$ through
\begin{equation*}
[i]_L \;\deq\; (i + L \Z^d) \cap \bb T^d_L\,.
\end{equation*}
Then $H$ is a \emph{$d$-dimensional band matrix} with band width $W$ and profile function $f$ if
\begin{equation*}
s_{ij} \;=\; \frac{1}{Z_{L}} \, f \pbb{\frac{[i - j]_L}{W}}\,,
\end{equation*}
where $Z_L$ is a normalization chosen so that \eqref{S is stochastic} holds.
\end{definition}

\begin{definition}[Band matrix with a mean-field component] \label{def: bandwigner}
Let $H_B$ a $d$-dimensional band matrix from Definition~\ref{def: band matrix}. Let $H_W$ be
an independent $a$-full Wigner matrix indexed by the set $\bb T^d_L$.
 The matrix $H \deq \sqrt{1-\nu} H_B + \sqrt{\nu}H_W$,
with some $\nu \in [0,1]$, is called a \emph{band matrix with a mean-field component}.
\end{definition}

The example of Definition \ref{def: bandwigner} is a mixture of the previous two. We are especially interested in the
case $\nu\ll1$, when most of the variance comes from the band matrix, i.e.\ the profile of $S$ is very close to a sharp band.

We conclude with some explicit bounds for these examples. 
The behaviour of $\Gamma$ and $\wt \Gamma$ near the spectral edge is governed by the parameter
\begin{equation} \label{def theta}
\theta \;\equiv\; \theta(z) \;\deq\;
\begin{cases}
\kappa + \frac{\eta}{\sqrt{\kappa + \eta}} & \text{if } \abs{E} \leq 2
\\
\sqrt{\kappa + \eta} & \text{if }\abs{E} > 2\,,
\end{cases}
\end{equation}
where we set, as usual, $\kappa \equiv \kappa_E$ and $z=E+i\eta$. Note that the parameter $\theta$ may be bounded from below by $(\im m)^2$.
The following results follow immediately from Propositions \ref{prop: bounds on Gamma} and \ref{prop: spectrum of S} in Appendix \ref{app: bounds on varrho}. They hold for an arbitrary spectral domain $\f D$.
\begin{enumerate}
\item
For general $H$ and any constant $c > 0$, there is a constant $C > 0$ such that
\begin{equation*}
C^{-1} \;\leq\; \wt \Gamma \;\leq\; \Gamma \;\leq\; C \log N
\end{equation*}
provided $\dist (E, \{-2,0,2\}) \geq c$.
\item
For a full Wigner matrix we have
\begin{equation*}
c \;\leq\; \wt \Gamma \;\leq\; C \log N \,, \qquad \frac{c}{\sqrt{\kappa + \eta}} \;\leq\; \Gamma \;\leq\; \frac{C \log N}{\theta}\,,
\end{equation*}
where $C$ depends on the constant $a$ in Definition \ref{def: Wigner} but $c$ does not.
\item
For a band matrix with a mean-field component, as in Definition \ref{def: bandwigner}, we have
\begin{equation*}
c \;\leq\; \wt \Gamma \;\leq\; \frac{C \log N}{(W/L)^2 +\nu a + \theta}\,.
\end{equation*}
The case $\nu=0$ corresponds to a band matrix from Definition \ref{def: band matrix}.
\end{enumerate}

\section{Tools} \label{sec: tools}
In this subsection we collect some basic facts that will be used throughout the paper.
For two positive quantities $A_N$ and $B_N$ we use the notation $A_N \asymp B_N$ to mean $c A_N \leq B_N \leq C A_N$.
Throughout the following we shall frequently drop the arguments $z$ and $N$, bearing in mind that we are dealing with a function on some spectral domain $\f D$.

\begin{definition}[Minors] \label{def: minors}
For $ {\bb T} \subset \{1, \dots, N\}$ we define $H^{( {\bb T})}$ by
\begin{equation*}
(H^{( {\bb T})})_{ij} \;\deq\; \ind{i \notin  {\bb T}} \ind{j \notin  {\bb T}} h_{ij}\,.
\end{equation*}
Moreover, we define the resolvent of $H^{( {\bb T})}$ through
\begin{equation*}
G^{( {\bb T})}_{ij}(z) \;\deq\;  \pb{H^{( {\bb T})} - z}^{-1}_{ij}\,.
\end{equation*}
We also set
\begin{equation*}
\sum_i^{( {\bb T})} \;\deq\; \sum_{i \col i \notin  {\bb T}}\,.
\end{equation*}
When $ {\bb T} = \{a\}$, we abbreviate $(\{a\})$ by $(a)$ in the above definitions; similarly, we write $(ab)$ instead of $(\{a,b\})$.
\end{definition}

\begin{definition}[Partial expectation and independence] \label{definition: P Q}
Let $X \equiv X(H)$ be a random variable. For $i \in \{1, \dots, N\}$ define the operations $P_i$ and $Q_i$ through
\begin{equation*}
P_i X \;\deq\; \E(X | H^{(i)}) \,, \qquad Q_i X \;\deq\; X - P_i X\,.
\end{equation*}
We call $P_i$ \emph{partial expectation} in the index $i$. Moreover, we say that $X$ is \emph{independent of ${\bb T} \subset \{1, \dots, N\}$} if $X = P_i X$ for all $i \in {\bb T}$.
\end{definition}

We introduce the random $z$-dependent control parameters
\begin{equation}\label{def:Lambda}
\Lambda_o \;\deq\; \max_{i \neq j} \abs{G_{ij}}\,, \qquad
\Lambda_d \;\deq\; \max_i \abs{G_{ii} - m}\,,
\qquad \Lambda \;\deq\; \max \{\Lambda_o, \Lambda_d\}\,,
\qquad \Theta \;\deq\; \abs{m_N - m}\,.
\end{equation}
We remark that the letter $\Lambda$ had a different meaning in several earlier papers, such as \cite{EYYrigi}.
The following lemma collects basic bounds on $m$.

\begin{lemma} \label{lemma: msc}
There is a constant $c > 0$ such that for $E \in [-10, 10]$ and $\eta \in (0, 10]$ we have
\begin{equation} \label{m is bounded}
c \;\leq\; \abs{m(z)} \;\leq\; 1 - c\eta\,,
\end{equation}
\begin{equation} \label{1-msquare}
\abs{1-m^2(z)} \;\asymp\; \sqrt{\kappa + \eta} \,,
\end{equation}
as well as
\begin{equation} \label{lower bound on im msc}
\im m(z) \;\asymp\;
\begin{cases}
\sqrt{\kappa + \eta} & \text{if $\abs{E} \leq 2$}
\\
\frac{\eta}{\sqrt{\kappa + \eta}} & \text{if $\abs{E} \geq 2$}\,.
\end{cases}
\end{equation}
\end{lemma}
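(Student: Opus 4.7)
My plan is to base everything on the self-consistent equation \eqref{identity for msc} (equivalently $m^2 + zm + 1 = 0$) together with its two immediate consequences $1 + m^2 = -zm$ and $(m - 1/m)^2 = z^2 - 4$; since the spectral parameter is confined to the bounded box $\abs{E} \leq 10$, $\eta \in (0, 10]$, all $\asymp$ constants will be allowed to depend on this upper bound. All three conclusions then follow from short algebraic manipulations, with one mild case distinction between $\abs{E} \leq 2$ and $\abs{E} > 2$ needed for the final identity.

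For \eqref{m is bounded} I first take imaginary parts of $m + 1/m = -z$ to obtain $(\im m)(1 - \abs{m}^{-2}) = -\eta$, which rearranges to the useful identity
\begin{equation*}
\abs{m}^2 \;=\; \frac{\im m}{\im m + \eta}\,.
\end{equation*}
This at once gives $\abs{m} < 1$; combined with the trivial bound $\im m \leq \abs{m} \leq 1$ it further yields $\abs{m}^2 \leq 1/(1+\eta) \leq 1 - c\eta$ on the range $\eta \in (0,10]$, which is the upper half of \eqref{m is bounded}. For the lower bound I rewrite the self-consistent equation as $m(z+m) = -1$, so that $\abs{m} = 1/\abs{z+m} \geq 1/(\abs{z} + \abs{m}) \geq 1/(\abs{z}+1) \geq c$ since $\abs{z} \leq 20$ on the domain.

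For \eqref{1-msquare} I use $m^2 + zm + 1 = 0$ in the form $1 - m^2 = -m(z + 2m)$ together with the identity $z + 2m = \sqrt{z^2-4}$ built into \eqref{explicit m}. This gives $\abs{1-m^2} = \abs{m} \sqrt{\abs{z^2-4}} \asymp \sqrt{\abs{z^2-4}}$ by the previous step, and I then factor $\abs{z^2-4} = \abs{z-2}\abs{z+2}$; on the range $\abs{E} \leq 10$ the factor corresponding to the edge nearer $E$ equals $\sqrt{\kappa^2 + \eta^2} \asymp \kappa + \eta$, while the other factor is bounded above and below by positive constants, so $\abs{z^2-4} \asymp \kappa+\eta$ as required.

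The last identity \eqref{lower bound on im msc} is the one I expect to cause the most trouble, because the two cases differ qualitatively. I combine two scalar relations: expanding $\abs{1-m^2}^2 = (1-m^2)(1-\bar m^2)$ and using $\abs{m}^2 - \re(m^2) = 2(\im m)^2$ gives
\begin{equation*}
(1 - \abs{m}^2)^2 + 4(\im m)^2 \;=\; \abs{1-m^2}^2 \;\asymp\; \kappa + \eta\,,
\end{equation*}
while clearing denominators in the identity of the first step yields $(1-\abs{m}^2)\im m = \eta\abs{m}^2 \asymp \eta$. Writing $A \deq 1 - \abs{m}^2$ and $B \deq 2\im m$, these two relations force the unordered pair $\{A,B\}$ to consist of one quantity of size $\asymp \sqrt{\kappa+\eta}$ and one of size $\asymp \eta/\sqrt{\kappa+\eta}$ (the two coinciding in order of magnitude exactly when $\eta \asymp \sqrt{\kappa+\eta}$). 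Identifying which of the two is dominant is the crux: the limit $\im m(E+\ii 0^+) = \pi\rho(E) \asymp \sqrt{\kappa}$ for $\abs{E} < 2$ forces $B$ to be the large one, while the limit $\im m(E+\ii 0^+) = 0$ for $\abs{E} > 2$ forces $A$ to be the large one. To avoid a soft continuity argument I would make this rigorous by computing $Q \deq \im \sqrt{z^2-4}$ directly from $Q^2 = (\abs{z^2-4} + 4 - E^2 + \eta^2)/2$ together with $2\im m = Q - \eta$, analyzing $Q^2 - \eta^2 = (\abs{z^2-4} + 4 - E^2 - \eta^2)/2$ from above and below in each of the two cases $\abs{E} \leq 2$ and $\abs{E} > 2$; the bounds of \eqref{1-msquare} and the explicit sign of $4 - E^2$ then immediately deliver the two asymptotics.
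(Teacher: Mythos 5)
Your proposal is correct, and it fills in precisely the kind of elementary computation that the paper dismisses with the one-line remark ``an elementary exercise using \eqref{explicit m}''. All the identities you use check out — in particular $\abs{m}^2 = \im m/(\im m + \eta)$, the factorization $1-m^2 = -m(z+2m) = -m\sqrt{z^2-4}$, the relation $\abs{1-m^2}^2 = (1-\abs{m}^2)^2 + 4(\im m)^2$, and the formula $2\im m = \im\sqrt{z^2-4} - \eta$ with $Q^2 = (\abs{z^2-4}+4-E^2+\eta^2)/2$ — and the final case analysis you outline does close the $A$-versus-$B$ dichotomy rigorously.
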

\begin{proof}
The proof is an elementary exercise using \eqref{explicit m}.
\end{proof}

In particular, recalling that $-1 \leq S \leq 1$ and using the upper bound $\abs{m} \leq C$ from \eqref{m is bounded}, we find that there is a constant $c > 0$ such that
\begin{equation} \label{varrho geq c}
c \;\leq\; \wt \Gamma \;\leq\; \Gamma\,.
\end{equation}

The following lemma collects basic algebraic properties of stochastic domination $\prec$. Roughly, it states that $\prec$ satisfies the usual arithmetic
 properties of order relations. We shall use it tacitly throughout the following.

\begin{lemma} \label{lemma: basic properties of prec}
\begin{enumerate}
\item
Suppose that $X(u,v) \prec Y(u,v)$ uniformly in $u \in U$ and $v \in V$. If $\abs{V} \leq N^C$ for some constant $C$ then
\begin{equation*}
\sum_{v \in V} X(u,v) \;\prec\; \sum_{v \in V} Y(u,v)
\end{equation*}
uniformly in $u$.
\item
Suppose that $X_{1}(u) \prec Y_{1}(u)$ uniformly in $u$ and $X_{2}(u) \prec Y_{2}(u)$ uniformly in $u$. Then
$X_{1}(u) X_{2}(u) \prec Y_{1}(u) Y_{2}(u)$
uniformly in $u$.
\item
If $X \prec Y + N^{-\epsilon} X $ for some $\epsilon > 0$ then $X \prec Y$.
\end{enumerate}
\end{lemma}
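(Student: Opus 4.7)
The plan is to deduce all three assertions directly from Definition~\ref{def:stocdom} via elementary union bounds. For part (i), I fix $\epsilon, D > 0$ and apply the hypothesis $X(u,v) \prec Y(u,v)$ at tolerance $\epsilon$ and probability level $D + C + 1$ to each of the (at most $N^C$) summands. A union bound over $v \in V$ shows that the intersection of the events $\{X(u,v) \leq N^\epsilon Y(u,v)\}$ has probability at least $1 - N^{-D}$ uniformly in $u$, and on this intersection $\sum_v X(u,v) \leq N^\epsilon \sum_v Y(u,v)$. Uniformity in $u$ is preserved because each individual bound is uniform in $u$.

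For part (ii), the approach is to split the tolerance symmetrically as $\epsilon = \epsilon/2 + \epsilon/2$ and apply the two hypotheses $X_1 \prec Y_1$ and $X_2 \prec Y_2$ at tolerance $\epsilon/2$ and level $D + 1$. On the intersection of the two high-probability events (a two-term union bound), one has $X_1 X_2 \leq N^{\epsilon/2} Y_1 \cdot N^{\epsilon/2} Y_2 = N^\epsilon Y_1 Y_2$, which is the required conclusion. Both arguments are routine and present no real difficulty.

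The main point lies in part (iii), where one must absorb the term $N^{-\epsilon} X$ back into the left-hand side. My plan is the following: fix arbitrary $\epsilon'' > 0$ and $D > 0$, then choose an auxiliary exponent $\epsilon' \in (0, \min\{\epsilon'', \epsilon\})$ and invoke the hypothesis $X \prec Y + N^{-\epsilon} X$ at tolerance $\epsilon'$ and level $D$. With probability at least $1 - N^{-D}$ one then has the deterministic bound $X \leq N^{\epsilon'}(Y + N^{-\epsilon} X) = N^{\epsilon'} Y + N^{\epsilon' - \epsilon} X$. Rearranging yields $(1 - N^{\epsilon' - \epsilon}) X \leq N^{\epsilon'} Y$, and since $\epsilon' - \epsilon < 0$ the prefactor exceeds $1/2$ for $N$ sufficiently large, so $X \leq 2 N^{\epsilon'} Y \leq N^{\epsilon''} Y$ once $N$ is large. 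This establishes $X \prec Y$. The only delicate point is the choice $\epsilon' < \epsilon$, which makes the geometric absorption step converge; this is possible precisely because $\epsilon$ is a fixed positive constant independent of the target tolerance $\epsilon''$.
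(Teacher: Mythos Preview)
Your proof is correct and follows the same approach as the paper, which simply states that (i) and (ii) follow from a union bound and that (iii) is an immediate consequence of the definition of $\prec$. You have filled in the routine details exactly as intended; in particular, your absorption argument in (iii) with the choice $\epsilon' < \min\{\epsilon'',\epsilon\}$ is the standard way to make the paper's one-line claim precise.
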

\begin{proof}
The claims (i) and (ii) follow from a simple union bound. The claim (iii) is an immediate consequence of the definition of $\prec$. 
\end{proof}

The following resolvent identities form the backbone of all of our calculations. The idea behind them is that a resolvent  matrix element $G_{ij}$
depends strongly on the $i$-th and $j$-th columns of $H$, but weakly on all other columns. The first identity determines how to make
a resolvent matrix element $G_{ij}$ independent of an additional index $k \neq i,j$.
The second identity expresses the dependence of a resolvent matrix element $G_{ij}$ on the matrix elements in the $i$-th or in the $j$-th column of $H$.

\begin{lemma}[Resolvent identities] \label{lemma: res id}
For any Hermitian matrix $H$ and ${\bb T} \subset \{1, \dots, N\}$ the following identities hold.
If $i,j,k \notin {\bb T}$ and $i,j \neq k$ then
\begin{equation} \label{resolvent expansion type 1}
G_{ij}^{({\bb T})} \;=\; G_{ij}^{({\bb T}k)} + \frac{G_{ik}^{({\bb T})} G_{kj}^{({\bb T})}}{G_{kk}^{({\bb T})}}\,, \qquad
\frac{1}{G_{ii}^{(\bb T)}} \;=\; \frac{1}{G_{ii}^{(\bb Tk)}} - \frac{G_{ik}^{(\bb T)} G_{ki}^{(\bb T)}}{G_{ii}^{(\bb T)} G_{ii}^{(\bb Tk)} G_{kk}^{(\bb T)}}\,.
\end{equation}
If $i,j \notin {\bb T}$ satisfy $i \neq j$ then
\begin{equation} \label{resolvent expansion type 2}
G_{ij}^{({\bb T})} \;=\; - G_{ii}^{({\bb T})} \sum_{k}^{({\bb T}i)} h_{ik} G_{kj}^{({\bb T}i)} \;=\; - G_{jj}^{({\bb T})} \sum_k^{({\bb T}j)} G_{ik}^{({\bb T}j)} h_{k j}\,.
\end{equation}
\end{lemma}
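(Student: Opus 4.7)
The plan is to reduce everything to the case $\bb T = \emptyset$. Observe that $G^{(\bb T)}$ is the resolvent of the Hermitian matrix $H^{(\bb T)}$, which simply has the entries in the rows and columns indexed by $\bb T$ set to zero; equivalently, if we work on the subspace spanned by $\{e_l \col l \notin \bb T\}$, then $G^{(\bb T)}$ restricted to this subspace is the resolvent of the principal submatrix $(h_{kl})_{k, l \notin \bb T}$. Since all the identities in question involve only indices outside of $\bb T$, it suffices to prove them for $\bb T = \emptyset$, applied to the Hermitian matrix $H^{(\bb T)}$ rather than $H$ itself. In what follows I therefore take $\bb T = \emptyset$.

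For \eqref{resolvent expansion type 2} I would invoke the Schur complement (block inversion) formula. Permute index $i$ to the first position and write
\begin{equation*}
H - z \;=\; \begin{pmatrix} h_{ii} - z & a_i^* \\ a_i & H^{(i)} - z \end{pmatrix}, \qquad a_i \;\deq\; (h_{ki})_{k \neq i}.
\end{equation*}
The standard block inversion formula then yields $G_{ii} = (h_{ii} - z - a_i^* (H^{(i)} - z)^{-1} a_i)^{-1}$ and, for $j \neq i$,
\begin{equation*}
G_{ij} \;=\; -G_{ii} \, \pb{a_i^* (H^{(i)} - z)^{-1}}_j \;=\; -G_{ii} \sum_{k}^{(i)} h_{ik} G^{(i)}_{kj},
\end{equation*}
which is exactly the first equality in \eqref{resolvent expansion type 2}. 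Singling out index $j$ instead of $i$ gives the second equality. No hard estimates are required; this is pure linear algebra.

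For the first identity in \eqref{resolvent expansion type 1}, I would apply the same block decomposition with index $k$ singled out. The $(i,j)$ entry of the inverse with $i, j \neq k$ is, by the block inversion formula,
\begin{equation*}
G_{ij} \;=\; G^{(k)}_{ij} + G_{kk} \pb{(H^{(k)} - z)^{-1} a_k}_i \pb{a_k^* (H^{(k)} - z)^{-1}}_j.
\end{equation*}
Rewriting the two factors using the already-proved \eqref{resolvent expansion type 2} (with $i$ replaced by $k$) gives $(H^{(k)} - z)^{-1} a_k)_i = -G_{ik}/G_{kk}$ and $(a_k^* (H^{(k)} - z)^{-1})_j = -G_{kj}/G_{kk}$, so the correction equals $G_{ik} G_{kj}/G_{kk}$. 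This proves the first part of \eqref{resolvent expansion type 1}. For the second part, I specialize $i = j$ to obtain $G_{ii} - G^{(k)}_{ii} = G_{ik} G_{ki}/G_{kk}$, divide by $G_{ii} G^{(k)}_{ii}$, and rearrange:
\begin{equation*}
\frac{1}{G^{(k)}_{ii}} - \frac{1}{G_{ii}} \;=\; \frac{G_{ii} - G^{(k)}_{ii}}{G_{ii} G^{(k)}_{ii}} \;=\; \frac{G_{ik} G_{ki}}{G_{ii} G^{(k)}_{ii} G_{kk}},
\end{equation*}
which is the claimed formula.

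There is no genuine obstacle here; the statement is a purely algebraic consequence of the Schur complement formula, and the only care required is keeping track of the bookkeeping of which superscripts $(\bb T)$, $(k)$, $(\bb T k)$ appear on which resolvent entry. A clean way to avoid confusion is to treat $\bb T$ symbolically throughout by working with $H^{(\bb T)}$ from the outset and then reinstating the $(\bb T)$ on every resolvent at the end.
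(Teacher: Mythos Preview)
Your proof is correct and follows the same linear-algebraic route the paper alludes to (Schur complement / block inversion), just spelled out in full rather than deferred to references. The only cosmetic difference is that you derive \eqref{resolvent expansion type 2} first and then use it to simplify the correction term in \eqref{resolvent expansion type 1}, whereas the paper cites separate references for each; both arguments are equivalent.
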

\begin{proof}
This is an exercise in linear algebra. The first identity \eqref{resolvent expansion type 1} was proved in
 Lemma 4.2 of \cite{EYY} and the second is an immediate consequence of the first. 
The identity \eqref{resolvent expansion type 2} is proved in Lemma 6.10 of \cite{EKYY2}.
\end{proof}

Our final tool consists of the following results on \emph{fluctuation averaging}.
 They exploit cancellations in sums of fluctuating quantities involving resolvent
matrix entries. A very general result was obtained in \cite{EKYfluc}; in this paper
we state a special case sufficient for our purposes here, and give a relatively simple proof in
 Appendix \ref{app: fluct averaging}.
We consider weighted averages of diagonal resolvent matrix entries $G_{kk}$. They
are weakly dependent, but the correlation between $G_{kk}$ and
$G_{mm}$ for $m\ne k$ is not sufficiently small to apply the general 
theory of sums of weakly dependent random variables; instead, we need to exploit
the precise form of the dependence using the resolvent structure.

It turns out that the key quantity that controls the magnitude of the fluctuations is $\Lambda$.
However, being a random variable, $\Lambda$ itself is unsuitable as an upper bound. For technical reasons (our proof relies on a high-moment estimate combined with Chebyshev's inequality), it is essential that $\Lambda$ be estimated by
 a \emph{deterministic} control parameter, which we call $\Psi$. The error terms
are then estimated in terms of powers of $\Psi$.
We shall always assume that $\Psi$ satisfies
\begin{equation} \label{admissible Psi}
M^{-1/2} \;\leq\; \Psi \;\leq\; M^{-c}
\end{equation}
in the spectral domain $\f D$, where $c > 0$ is some constant. 
 We shall perform the averaging with respect to a family of complex 
 weights $T = (t_{ik})$ satisfying
\begin{equation} \label{condition on weights}
0 \;\leq\;  \abs{t_{ik}}  \;\leq\; M^{-1} \,, \qquad \sum_{k}  \abs{t_{ik}}
  \;\leq\; 1\,.
\end{equation}
Typical example  weights are $t_{ik} = s_{ik}$ and $t_{ik} = N^{-1}$. Note that in both of these cases $T$ commutes with $S$.
We introduce the \emph{average} of a vector $(a_i)_{i = 1}^N$ through
\begin{equation} \label{def average}
[a] \;\deq\; \frac{1}{N} \sum_i a_i\,.
\end{equation}

\begin{theorem}[Fluctuation averaging] \label{thm: averaging}
Fix a spectral domain $\f D$ and a deterministic control parameter $\Psi$ satisfying \eqref{admissible Psi}.
Suppose that $\Lambda \prec \Psi$ and the weight $T = (t_{ik})$ satisfies \eqref{condition on weights}.
Then we have
\begin{equation} \label{averaging with Q}
\sum_{k} t_{ik} Q_k \frac{1}{G_{kk}} \;=\; O_\prec(\Psi^2)\,, \qquad \sum_{k} t_{ik} Q_k G_{kk} \;=\; O_\prec(\Psi^2)\,.
\end{equation}
 If $T$ commutes with $S$ then
\begin{equation} \label{averaging without Q}
\sum_{k} t_{ik} v_k \;=\;  O_\prec(\Gamma \Psi^2)\,.
\end{equation}
Finally, if $T$ commutes with $S$ and
\begin{equation} \label{sum_t_1}
\sum_{k} t_{ik} \;=\; 1
\end{equation}
for all $i$ then
\begin{equation} \label{averaging without Q avg}
\sum_{k} t_{ik} (v_k - [v]) \;=\;  O_\prec(\wt \Gamma \Psi^2)\,,
\end{equation}
where we defined $v_i \deq G_{ii} - m$.
The estimates \eqref{averaging with Q}, \eqref{averaging without Q}, and \eqref{averaging without Q avg} are uniform in the index $i$. 
\end{theorem}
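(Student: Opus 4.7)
\emph{Strategy.} The four estimates split into two tiers. The identity \eqref{averaging with Q} is the fundamental fluctuation averaging statement, which I would prove by a high-moment computation combined with iterated resolvent expansions; the remaining bounds \eqref{averaging without Q} and \eqref{averaging without Q avg} then follow by inverting an approximate linear self-consistent equation for the vector $v = (v_k)_k$, $v_k \deq G_{kk} - m$. The main obstacle will be \eqref{averaging with Q}, whose proof requires delicate combinatorial bookkeeping of expansion terms; the remaining deductions are essentially algebraic.

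\emph{Proof of \eqref{averaging with Q}.} Set $Y_k \deq Q_k(1/G_{kk})$ and fix a large integer $p$. By Chebyshev's inequality and Definition \ref{def:stocdom}, it suffices to bound $\E \bigl|\sum_k t_{ik} Y_k\bigr|^{2p}$ by $(N^\epsilon \Psi^2)^{2p}$ for arbitrary $\epsilon > 0$. Expand the left-hand side as a sum over tuples $(k_1, \dots, k_{2p})$ of products $\prod_\alpha t_{ik_\alpha}^\# \cdot \E \prod_\alpha Y_{k_\alpha}^\#$, where $\#$ denotes either the quantity or its complex conjugate in a fixed pattern. The key cancellation is $P_k Q_k = 0$: if, for some index $\alpha$, every factor $Y_{k_\beta}^\#$ with $\beta \neq \alpha$ happens to be $P_{k_\alpha}$-measurable, then applying $P_{k_\alpha}$ inside the full expectation annihilates the entire product. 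To manufacture dependence on $k_\alpha$ in every other factor, iteratively apply \eqref{resolvent expansion type 1}, which replaces $G_{k_\beta k_\beta}$ by $G_{k_\beta k_\beta}^{(k_\alpha)}$ (now $P_{k_\alpha}$-measurable) plus a correction $G_{k_\beta k_\alpha} G_{k_\alpha k_\beta}/G_{k_\alpha k_\alpha}$ of size $O_\prec(\Psi^2)$ (using $\Lambda \prec \Psi$). Standard bookkeeping shows every surviving contribution carries a factor $\Psi^{4p}$, and the summation against the weights is controlled via $\sum_k |t_{ik}| \leq 1$ together with $|t_{ik}| \leq M^{-1}$. The companion bound $\sum_k t_{ik} Q_k G_{kk} \prec \Psi^2$ follows by Taylor expansion $G_{kk} - m = -m^2(1/G_{kk} - 1/m) + O_\prec(\Psi^2)$: the linear piece reduces to the first bound, and the remainder is already $O_\prec(\Psi^2)$ summand by summand.

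\emph{Self-consistent equation.} From the Schur complement $1/G_{kk} = h_{kk} - z - \sum_{i,j}^{(k)} h_{ki} G_{ij}^{(k)} h_{jk}$, take partial expectation to obtain $P_k(1/G_{kk}) = -z - \sum_i^{(k)} s_{ki} G_{ii}^{(k)}$. Substitute $-z = m + 1/m$, replace $G_{ii}^{(k)} = G_{ii} + O_\prec(\Psi^2)$ (from \eqref{resolvent expansion type 1}, valid for $i \neq k$; the missing $i = k$ term $s_{kk} G_{kk}$ is absorbed into the error since $s_{kk} \leq M^{-1} \leq \Psi^2$), and use $\sum_i s_{ki} = 1$, to deduce $P_k(1/G_{kk}) - 1/m = -(Sv)_k + O_\prec(\Psi^2)$. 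Combining with the Taylor expansion $1/G_{kk} - 1/m = -v_k/m^2 + O_\prec(\Psi^2)$ and rearranging yields the self-consistent identity
\begin{equation*}
\bigl[(1 - m^2 S) v\bigr]_k \;=\; -m^2 Q_k \frac{1}{G_{kk}} + O_\prec(\Psi^2)
\end{equation*}
uniformly in $k$.

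\emph{Proofs of \eqref{averaging without Q} and \eqref{averaging without Q avg}.} Assume $T$ commutes with $S$, and hence with $R \deq (1 - m^2 S)^{-1}$. Applying the first part of the theorem row by row to the self-consistent identity gives $\bigl((1 - m^2 S) T v\bigr)_j = \bigl(T(1 - m^2 S) v\bigr)_j = O_\prec(\Psi^2)$ uniformly in $j$; inverting via the definition $\norm{R}_{\ell^\infty \to \ell^\infty} = \Gamma$ produces $(Tv)_i \prec \Gamma \Psi^2$, establishing \eqref{averaging without Q}. For \eqref{averaging without Q avg}, add the hypothesis $\sum_k t_{ik} = 1$: since $\f e$ spans the simple eigenspace of $S$ at eigenvalue $1$, commutativity forces $T \f e = \lambda \f e$, and the row-sum condition pins $\lambda = 1$; the same argument applied to $T^*$ (using $S = S^*$) gives $T^* \f e = \f e$, so $T$ preserves both the constant direction and $\f e^\perp$. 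Decompose $v = v' + [v] \f 1$ and $w \deq \bigl(-m^2 Q_k(1/G_{kk}) + O_\prec(\Psi^2)\bigr)_k = w' + [w] \f 1$ with $v', w' \in \f e^\perp$. The projection of the self-consistent identity onto $\f e^\perp$ reads $(1 - m^2 S)\big|_{\f e^\perp} v' = w'$. Apply $T$ and invert with $\wt R \deq [(1 - m^2 S)|_{\f e^\perp}]^{-1}$, using $\norm{\wt R}_{\ell^\infty \to \ell^\infty} = \wt \Gamma$; together with $(Tw')_j = (Tw)_j - [w] = O_\prec(\Psi^2)$ (by applying the first part both to $T$ and to the uniform weights $1/N$), this yields $\sum_k t_{ik}(v_k - [v]) = (T v')_i \prec \wt \Gamma \Psi^2$, completing the proof.
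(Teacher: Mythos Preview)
Your proposal is correct and follows essentially the same strategy as the paper: a high-moment expansion with iterated applications of \eqref{resolvent expansion type 1} for \eqref{averaging with Q}, followed by the self-consistent equation for $v$ and inversion of $1-m^2S$ (restricted to $\f e^\perp$ for \eqref{averaging without Q avg}); the paper proves the second estimate in \eqref{averaging with Q} by rerunning the moment argument for $Q_k G_{kk}$ rather than via your Taylor reduction, but both work. One minor slip: from commutativity and $S=S^*$ you only obtain $T^*\f e = \mu\f e$ for some scalar $\mu$ (the hypothesis \eqref{sum_t_1} fixes row sums, not column sums), but $T^*\f e \propto \f e$ is already enough for $T$ to preserve $\f e^\perp$, which is all your argument actually uses.
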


In fact, the first 	bound of \eqref{averaging with Q} can be improved as follows.

\begin{theorem} \label{thm: averaging with Lambdao}
Fix a spectral domain $\f D$ deterministic control parameters $\Psi$ and $\Psi_o$, both satisfying \eqref{admissible Psi}.
Suppose that $\Lambda \prec \Psi$, $\Lambda_o \prec \Psi_o$, and that the weight $T = (t_{ik})$ satisfies \eqref{condition on weights}.
Then
\begin{equation} \label{averaging with Lambdao}
\sum_{k} t_{ik} Q_k \frac{1}{G_{kk}} \;=\;  O_\prec(\Psi_o^2)\,.
\end{equation}
\end{theorem}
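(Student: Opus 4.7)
\textbf{The plan} is to adapt the high-moment / Chebyshev argument that proves Theorem~\ref{thm: averaging} in Appendix~\ref{app: fluct averaging}, keeping track of diagonal versus off-diagonal resolvent entries separately so as to harvest the stronger control $\Lambda_o \prec \Psi_o$. I would begin with the Schur complement identity
\begin{equation*}
\frac{1}{G_{kk}} \;=\; h_{kk} - z - \sum_{i,j}^{(k)} h_{ki}\, G^{(k)}_{ij}\, h_{jk},
\end{equation*}
and, using that $G^{(k)}$ is independent of the $k$-th row/column of $H$ together with $P_k(h_{ki} h_{jk}) = \delta_{ij} s_{ki}$, rewrite
\begin{equation*}
Q_k \frac{1}{G_{kk}} \;=\; h_{kk} \;-\; \sum_{i}^{(k)} \pb{|h_{ki}|^2 - s_{ki}}\, G^{(k)}_{ii} \;-\; \sum_{\substack{i, j \neq k \\ i \neq j}} h_{ki}\, G^{(k)}_{ij}\, h_{jk}.
\end{equation*}
The diagonal sum on the right is exactly the one dealt with in the proof of \eqref{averaging with Q}; it is estimated by $O_\prec(M^{-1}) \leq O_\prec(\Psi_o^2)$ without any off-diagonal input. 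The improvement of \eqref{averaging with Lambdao} over \eqref{averaging with Q} is driven entirely by the double sum, where every surviving resolvent entry is off-diagonal.

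Fix $p \in \mathbb N$ and estimate
\begin{equation*}
\E \absbb{\sum_k t_{ik}\, Q_k \frac{1}{G_{kk}}}^{2p} \;=\; \sum_{k_1, \dots, k_{2p}} \pb{\textstyle\prod_r t_{i k_r}}\, \E \prod_{r=1}^{p} Q_{k_r} G_{k_r k_r}^{-1} \prod_{r=p+1}^{2p} \overline{Q_{k_r} G_{k_r k_r}^{-1}}.
\end{equation*}
For each $r$, expand $Q_{k_r} G_{k_r k_r}^{-1}$ by the formula above and, using \eqref{resolvent expansion type 1} iteratively, replace each factor $G^{(k_r)}_{\alpha\beta}$ by $G_{\alpha\beta}$ plus controllable remainders. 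Because every identity in \eqref{resolvent expansion type 1} applied to an off-diagonal entry produces a quotient of two off-diagonal entries by one diagonal entry, \emph{every} resolvent factor that ever appears remains either diagonal (bounded by a constant, since $\Lambda_d \prec \Psi \leq M^{-c}$ and $|m| \leq 1$) or off-diagonal (bounded by $\prec \Psi_o$ on the event $\Lambda_o \prec \Psi_o$). The crux, as in the proof of \eqref{averaging with Q}, is then a standard pairing/partition argument: a term in the expansion vanishes under $\E$ unless every index $k_r$ is ``used'' at least twice, which forces the $h_{k_r \,\cdot}$-factors at distinct $r$'s to pair up. Each such pairing integrates out one $|h|^2$ (contributing $s_{k_r k_{r'}} \leq M^{-1}$) and attaches two off-diagonal resolvent factors (each $\prec \Psi_o$), producing an overall weight $\prec M^{-1} \Psi_o^2$ per paired block; the $k$-sums are absorbed by $\sum_k t_{ik} \leq 1$ and $\sum_k s_{k k'} \leq 1$, and a routine count over partitions of $\{1,\dots,2p\}$ into blocks of size $\geq 2$ yields
\begin{equation*}
\E \absbb{\sum_k t_{ik}\, Q_k \frac{1}{G_{kk}}}^{2p} \;\prec\; C_p \, \Psi_o^{4p}.
\end{equation*}
Chebyshev's inequality and the arbitrariness of $p$ then give the claim.

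\textbf{The main obstacle} is Step~3: one must verify that the iterative removal of upper indices via \eqref{resolvent expansion type 1} never forces the appearance of an off-diagonal entry that we could only estimate by $\Psi$ rather than $\Psi_o$. This is a purely combinatorial bookkeeping issue, but it is the heart of the improvement: the identity \eqref{resolvent expansion type 1} transforms $G^{(\mathbb T)}_{ij} - G^{(\mathbb T k)}_{ij}$ into a product $G^{(\mathbb T)}_{ik} G^{(\mathbb T)}_{kj} / G^{(\mathbb T)}_{kk}$, whose numerator is a product of two off-diagonal entries (note $i,j \neq k$ in each application), and hence any remainder generated during the reduction is bounded by $\Psi_o^2$ (times the trivially-bounded diagonal factor), not merely by $\Psi^2$. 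With this observation in hand, the rest of the proof is a line-by-line repetition of the argument for \eqref{averaging with Q} in Appendix~\ref{app: fluct averaging}, with every occurrence of $\Psi$ attached to an off-diagonal resolvent entry replaced by $\Psi_o$.
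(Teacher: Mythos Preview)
Your final paragraph is exactly right and is essentially the paper's approach: the proof of \eqref{averaging with Lambdao} \emph{is} the combinatorial argument of Appendix~\ref{app: fluct averaging}, once one checks that (a) the base bound $\abs{Q_k (G_{kk}^{(\bb T)})^{-1}} \prec \Psi_o$ holds (via Schur and the large deviation bounds, since the quadratic off-diagonal part of $Z_k$ dominates), and (b) every correction term produced by \eqref{resolvent expansion type 1} carries two off-diagonal factors in its numerator, each $\prec \Psi_o$. In fact the paper proves \eqref{averaging with Lambdao} first and derives the first bound of \eqref{averaging with Q} as the special case $\Psi_o = \Psi$, so your appeal to ``the proof of \eqref{averaging with Q}'' is slightly circular --- but the content is correct.

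The detailed plan preceding that paragraph, however, contains errors you would have to fix. First, the direction of the resolvent expansion is backwards: to decouple $X_{k_r}$ from a lone index $k_s$ you must \emph{add} $k_s$ as an upper index (the paper's ``maximally expanded'' entries), not remove the upper index $k_r$ to return to $G$; writing everything in terms of $G$ leaves every factor depending on every row of $H$ and no pairing is possible. Second, your claim that the diagonal Schur piece $\sum_j^{(k)}(\abs{h_{kj}}^2 - s_{kj}) G_{jj}^{(k)}$ averages to $O_\prec(M^{-1})$ ``without any off-diagonal input'' is wrong: decoupling $G_{jj}^{(k_r)}$ from another index $k_s$ via \eqref{resolvent expansion type 1} produces exactly the off-diagonal factors $G_{j k_s}^{(k_r)} G_{k_s j}^{(k_r)}$, so the diagonal piece is not special. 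Third, the description of ``pairing $h_{k_r \cdot}$-factors at distinct $r$'s contributing $s_{k_r k_{r'}}$'' cannot be right for distinct $k_r$, since those rows of $H$ are independent and their product has mean zero. The paper sidesteps all of this by never inserting the Schur decomposition into the moment computation: it works directly with the monomials $1/G_{k_r k_r}$, applies \eqref{resolvent expansion type 1} only to add upper indices drawn from the set of lone labels, and tracks that each such step either preserves the off-diagonal count ($w_0$) or increases it ($w_1$); a nonvanishing expectation forces every lone label to appear as a lower index in some other factor, yielding the gain $\Psi_o^{p+\abs{L}}$.
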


\begin{remark}\label{FAhist}
The first instance of the fluctuation averaging mechanism appeared in \cite{EYY2} for the Wigner case,
where $[Z]= N^{-1}\sum_k Z_k$ was proved to be bounded by $\Lambda_o^2$. Since $Q_k[G_{kk}]^{-1}$ is essentially
$Z_k$ (see \eqref{schur} below), this corresponds to the first bound in \eqref{averaging with Q}. 
A different proof (with a better bound on the constants) was given in \cite{EYYrigi}.
A conceptually streamlined version of the original proof was extended to
sparse matrices \cite{EKYY1} and to sample covariance matrices \cite{PY1}. 
Finally, an extensive analysis in \cite{EKYfluc} treated the fluctuation averaging of general polynomials
of resolvent entries and identified the order of cancellations depending on the
algebraic structure of the polynomial.
 Moreover, in \cite{EKYfluc} an  additional cancellation effect was found 
for the quantity $Q_i|G_{ij}|^2$. These improvements played a key role in obtaining 
the diffusion profile for the resolvent of band matrices and the estimate~\eqref{lscbbb} in \cite{EKYY3}.

All proofs of the fluctuation averaging  theorems rely on computing expectations of high moments
of the averages, and carefully estimating the resulting terms. 
In \cite{EKYfluc}, a diagrammatic representation was developed for bookkeeping such terms,
but this is necessary only for the case of general polynomials. For the special cases given in Theorem
\ref{thm: averaging}, the proof is relatively simple and it is presented in Appendix~\ref{app: fluct averaging}.
Compared with \cite{EYY2,EYYrigi,EKYY1}, the algebra of the decoupling of the randomness is greatly 
simplified in the current paper. Moreover, unlike their counterparts from \cite{EYY2,EYYrigi,EKYY1}, 
the fluctuation averaging results of Theorems \ref{thm: averaging} and \ref{thm: averaging with Lambdao} do not require conditioning on the complement of some ``bad'' low-probability event, because such events are automatically accounted for by the definition of $\prec$\,; this leads to further simplifications in the proofs of Theorems \ref{thm: averaging} and \ref{thm: averaging with Lambdao}.
\end{remark}

\section{A simpler proof using $\Gamma$ instead of $\wt \Gamma$} \label{sec: simple proof}

In this section we prove the following weaker version of Theorem \ref{thm: with gap}. 
In analogy to \eqref{def wt eta E}, we introduce the lower boundary
\begin{equation} \label{def eta E}
\eta_E \;\deq\; \min \hBB{\eta \;\col\; \frac{1}{M \eta} \leq
 \min \hbb{\frac{M^{-\gamma}}{\Gamma(z)^3} \,,\, \frac{M^{-2 \gamma}}{\Gamma(z)^4 \im m(z)}} \text{ for all }
 z \in [E + \ii \eta, E + 10 \ii] }\,.
\end{equation}

\begin{theorem} \label{thm: no gap}
Fix $\gamma \in (0,1/2)$ and define the spectral domain
\begin{equation} \label{def S varrho}
\f S \;\equiv\; \f S^{(N)}(\gamma) \;\deq\; \hb{E + \ii \eta \col \abs{E} \leq 10 \,,\, \eta_E \leq \eta \leq 10}\,.
\end{equation}
We have the bounds
\begin{equation}\label{Gijest}
\absb{G_{ij}(z) - \delta_{ij} m(z)} \;\prec\; \Pi(z)
\end{equation}
uniformly in $i,j$ and $z \in \f S$, as well as
\begin{equation}\label{m-mest}
\absb{m_N(z) - m(z)} \;\prec\; \frac{1}{M \eta}
\end{equation}
uniformly in $z \in \f S$.
\end{theorem}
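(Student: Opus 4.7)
\medskip
\noindent\textbf{Proof plan for Theorem \ref{thm: no gap}.}

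The plan is to derive and invert an approximate self-consistent equation for the diagonal resolvent entries $G_{ii}$, deploying the fluctuation averaging result (Theorem~\ref{thm: averaging with Lambdao}) to extract the improved bound \eqref{m-mest}, and running a continuity/bootstrap argument in $\eta$ from $\eta=10$ down to $\eta_E$. The deterministic a priori control parameter will be
\[
\Psi(z) \;\deq\; \sqrt{\frac{\im m(z)}{M\eta}} + \frac{1}{M\eta},
\]
and the working hypothesis on a high-probability event will be $\Lambda \prec \Psi$. Under this hypothesis, standard Ward-type identities give $\sum_k |G^{(i)}_{kk}|^2 \leq \eta^{-1}\sum_k \im G^{(i)}_{kk} \asymp N \im m_N / \eta$, so that a large-deviation bound for quadratic forms (applicable thanks to the moment condition \eqref{finite moments}) yields, with $Z_i \deq (1-P_i)\sum_{k,l}^{(i)} h_{ik} G^{(i)}_{kl} h_{li}$, the key estimate $|Z_i| \prec \Psi$.

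The first main step is to write Schur's complement as $1/G_{ii} = h_{ii} - z - \sum_k s_{ik} G^{(i)}_{kk} - Z_i$, replace $G^{(i)}_{kk}$ by $G_{kk}$ at the cost of $O_\prec(\Lambda_o^2)$ via \eqref{resolvent expansion type 1}, and use $-z - m = 1/m$ together with the first-order expansion $1/G_{ii} - 1/m \approx -v_i/m^2$ (with $v_i = G_{ii} - m$) to obtain
\begin{equation*}
v_i - m^2 \sum_k s_{ik} v_k \;=\; m^2 \pb{Z_i + O_\prec(\Psi_o^2 + \Psi^2)},
\end{equation*}
where $\Psi_o$ is the a priori bound for $\Lambda_o$. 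Inverting by $(1-m^2 S)^{-1}$ and taking $\ell^\infty$ norms yields $\Lambda_d \prec \Gamma\,(\Psi_o^2 + \Psi)$. Off-diagonal entries are controlled directly from \eqref{resolvent expansion type 2}: the large-deviation bound gives $|G_{ij}| \prec |G_{ii}| \Psi \prec \Psi$, so one takes $\Psi_o = \Psi$. Iterating once (or, equivalently, combining the two steps) gives $\Lambda \prec \Psi$ self-consistently.

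The second main step is the improvement for $m_N$. Averaging the self-consistent equation with uniform weights $t_{ik} = 1/N$ (which commute with $S$), one needs to estimate $[Z] = N^{-1}\sum_i Z_i$. This is precisely the fluctuation averaging input: by Theorem~\ref{thm: averaging with Lambdao}, $[Z] = [Q_\cdot (1/G_{\cdot\cdot})] + (\text{lower order}) = O_\prec(\Psi_o^2)$, and analogous averaging of the $O(\Lambda^2)$ error terms, together with the averaging-without-$Q$ bound \eqref{averaging without Q}, leads to $|m_N - m| \prec \Gamma \Psi_o^2 = \Gamma\Psi^2$. The definition \eqref{def eta E} of $\eta_E$ was precisely tailored so that, on $\f S$, one has $\Gamma\Psi^2 \leq M^{-\gamma}/(M\eta)$; this both closes the bootstrap (it allows us to replace $\Psi$ by a strictly smaller quantity) and delivers \eqref{m-mest}.

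The final ingredient is the continuity argument propagating the hypothesis $\Lambda \prec \Psi$ from $\eta = 10$ (where $\|G\| \leq \eta^{-1}$ makes it trivial) down to $\eta_E$. One discretizes the spectral domain on a polynomially fine grid (so that $\Lambda$ and $\Psi$ vary by at most $N^{-1}$ between neighbouring nodes, using $\|\partial_\eta G\| \leq \eta^{-2}$), and at each node runs the bootstrap: on a high-probability event the above argument turns the a priori bound $\Lambda \prec C\Psi$ into $\Lambda \prec \Psi/2 + \Gamma(\Psi_o^2 + \Psi^2)$, hence $\Lambda \prec \Psi$ via Lemma~\ref{lemma: basic properties of prec}(iii). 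The hard part of this whole scheme is the edge stability: $\Gamma$ may grow near $E = \pm 2$, and keeping track of the interplay between $\Gamma$, $\im m \asymp \sqrt{\kappa+\eta}$ and $\Psi$ is what forces the precise definition of the lower boundary $\eta_E$ in \eqref{def eta E}, and is where the use of $\Gamma$ (instead of the sharper $\wt\Gamma$ needed for Theorem~\ref{thm: with gap}) saves a substantial technical difficulty at the expense of a smaller spectral domain.
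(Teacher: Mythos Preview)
Your outline captures the overall architecture (self-consistent equation, large deviations for $Z_i$, continuity in $\eta$, fluctuation averaging for $m_N$), but there is a genuine gap in your first main step that prevents the bootstrap from closing.

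You write the linearized self-consistent equation as $(1-m^2 S)\,\f v = m^2\bigl(Z + O_\prec(\Psi_o^2+\Psi^2)\bigr)$ and then invert by $(1-m^2S)^{-1}$, obtaining $\Lambda_d \prec \Gamma(\Psi_o^2 + \Psi)$. Since $|Z_i|\prec\Psi$ is only \emph{linear} in $\Psi$, the inversion produces a term of size $\Gamma\Psi$, and there is no reason for $\Gamma\Psi\leq\Psi$: on $\f S$ one only has $\Gamma\Psi \lesssim M^{-\gamma/3}$, and $\Gamma$ can be arbitrarily large near the edge. So ``iterating once gives $\Lambda\prec\Psi$ self-consistently'' is not correct; the map $\Psi\mapsto\Gamma\Psi$ is not a contraction.

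The paper's remedy is precisely the point you reserve for the $m_N$ step: fluctuation averaging is already needed for the \emph{entrywise} bound. Instead of inverting, one writes $v_i = m^2\sum_k s_{ik}v_k - m^2\Upsilon_i + O_\prec(\Lambda^2)$ and applies Theorem~\ref{thm: averaging} with weights $t_{ik}=s_{ik}$ (equation \eqref{averaging without Q}) to get $\sum_k s_{ik}v_k = O_\prec(\Gamma\Psi^2)$. This yields $\Lambda_d \prec \Gamma\Psi^2 + |\Upsilon_i| + \Lambda^2 \prec \Gamma\Psi^2 + \sqrt{(\im m+\Psi)/(M\eta)}$, and now $\Gamma\Psi^2\leq M^{-\gamma/3}\Psi$ on $\f S$ gives a genuine contraction (Proposition~\ref{prop: optimal simple}). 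In other words, the $\Gamma$ factor is harmless only because it multiplies $\Psi^2$, not $\Psi$; getting the square requires fluctuation averaging with the $s_{ik}$ weights, not just the uniform weights you use later.

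A secondary point: the paper separates the argument into (a) a \emph{rough} bound $\Lambda\prec M^{-\gamma/3}\Gamma^{-1}$ obtained by the continuity/gap argument (Proposition~\ref{prop: rough bound 1}), where inversion \emph{is} used but the weaker target makes $\Gamma\Psi$ acceptable, followed by (b) the iteration above that sharpens to $\Lambda\prec\Pi$. Your sketch collapses these two phases; in particular the Taylor expansion of $1/G_{ii}-1/m$ requires an a priori smallness of $\Lambda$ that must first be established by phase (a). Finally, for $m_N$ the relevant stability factor after averaging is $|1-m^2|^{-1}$, not $\Gamma$; the paper uses $\im m/|1-m^2|\leq C$ and $\Gamma/(M\eta)\leq C$ on $\f S$ to conclude $\Theta\prec (M\eta)^{-1}$, rather than the cruder $\Gamma\Pi^2\leq (M\eta)^{-1}$ you invoke.
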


Note that the only difference between Theorems \ref{thm: with gap} and \ref{thm: no gap}
 is that $\wt \Gamma$ was replaced with the larger quantity $\Gamma$ in the definition of
the threshold $\eta_E$ and the spectral domain, so that
\begin{equation} \label{orders}
    \frac{1}{M} \;\le\; \wt \eta_E \;\le\; \eta_E\,, \qquad \f S \;\subset\; \wt{\f S}\,.
\end{equation}
Hence Theorem \ref{thm: no gap} is indeed weaker than Theorem \ref{thm: with gap}, since it holds  on a smaller spectral domain.
As outlined after \eqref{def:rhohat} and discussed in detail in Appendix \ref{app: bounds on varrho}, Theorems \ref{thm: no gap} and \ref{thm: with gap} are equivalent provided $E$ is separated from  the set $\{-2, 0, 2\}$ (for band matrices they are equivalent provided $E$ is separated from the spectral edges $\pm 2$).

The rest of this section is devoted to the proof of Theorem \ref{thm: no gap}. We give the full proof of Theorem \ref{thm: no gap} for pedagogical reasons, since it is simpler than that of Theorem \ref{thm: with gap} but already contains several of its key ideas. Theorem \ref{thm: with gap} will be proved in Section~\ref{sec:withgap}. 
One big difference between the two proofs is that in Theorem \ref{thm: no gap} the main control parameter is $\Lambda$, while
in  Theorem \ref{thm: with gap} we have to keep track of two control parameters, $\Lambda$ and the smaller $\Theta$.

\subsection{The self-consistent equation}

The key tool behind the proof is a self-consistent equation for the diagonal entries of $G$. The starting point is Schur's complement formula, which we write as
\begin{equation} \label{schur}
\frac{1}{G_{ii}} \;=\; h_{ii} - z - \sum_{k,l}^{(i)} h_{ik} G_{kl}^{(i)} h_{li}\,.
\end{equation}
The partial expectation with respect to the index $i$ (see Definition \ref{definition: P Q}) of the last term on the right-hand side reads
\begin{equation*}
   P_i  \sum_{k,l}^{(i)} h_{ik} G_{kl}^{(i)} h_{li} \;=\; \sum_{k}^{(i)} s_{ik}   G_{kk}^{(i)} 
  \;=\; \sum_{k}^{(i)} s_{ik} G_{kk} -  \sum_{k}^{(i)} s_{ik}  \frac{G_{ik} G_{ki}}{G_{ii}}
\;=\; \sum_{k} s_{ik} G_{kk} -  \sum_{k} s_{ik}  \frac{G_{ik} G_{ki}}{G_{ii}}\,,
\end{equation*}
where in the first step we used \eqref{variance of h} and in the second \eqref{resolvent expansion type 1}.
Introducing  the notation
\begin{equation*}
v_i \;\deq\; G_{ii} - m
\end{equation*}
and recalling \eqref{S is stochastic}, we therefore get from \eqref{schur} that
\begin{equation}\label{1g}
\frac{1}{G_{ii}} \;=\; -z - m + \Upsilon_i - \sum_k s_{ik} v_k\,,
\end{equation}
where we introduced the fluctuating error term
\begin{align}\label{def:Ups}
\Upsilon_i \;\deq\; A_i + h_{ii} - Z_i\,, \qquad
A_i \;\deq\; \sum_{k}s_{ik} \frac{G_{ik} G_{ki}}{G_{ii}}\,, \qquad 
Z_i \;\deq\; Q_i \sum_{k,l}^{(i)} h_{ik} G_{kl}^{(i)} h_{li}\,.
\end{align}
Using \eqref{identity for msc}, we therefore get the \emph{self-consistent equation}
\begin{equation} \label{vself for exp}
-\sum_k s_{ik} v_k + \Upsilon_i \;=\; \frac{1}{m + v_i} - \frac{1}{m}\,.
\end{equation}
Notice that this is an equation for the family $(v_i)_{i = 1}^N$, with random error terms $\Upsilon_i$.

 Self-consistent equations play a crucial role in analysing resolvents of
random matrices. The simplest one is the \emph{scalar (or first level) self-consistent equation}
for $m_N(z)$, the Stieltjes transform of the empirical density \eqref{mNdef}. 
By averaging the inverse of \eqref{1g} and neglecting the error terms, one
obtains that $m_N$ approximately satisfies the equation $m = -(m+z)^{-1}$, which
is the defining relation for the Stieltjes transform of the semicircle law \eqref{identity for msc}.

The \emph{vector (or second level) self-consistent equation}, as given in \eqref{vself for exp}, allows one to control not only fluctuations of $m_N-m$ but also those of $G_{ii}-m$. 
The equation \eqref{vself for exp} first appeared in \cite{EYY}, where
a systematic study of resolvent entries of random matrices was initiated.

For completeness, we mention that a \emph{matrix (or third level) self-consistent equation} 
for local averages of $|G_{ij}|^2$, was introduced in \cite{EKYY3}. This equation constitutes the backbone of the study of the diffusion profile of the resolvent entries of random band matrices.

\subsection{Estimate of the error $\Upsilon_i$ in terms of $\Lambda$}

\begin{lemma} \label{lemma: Lambdao}
The following statements hold for any spectral domain $\f D$. Let $\phi$ be the indicator function of some (possibly $z$-dependent) event. If $\phi \Lambda \prec M^{-c}$ for some $c > 0$ then
\begin{equation} \label{Lambdao 1}
\phi \pb{\Lambda_o + \abs{Z_i} + \abs{\Upsilon_i}} \;\prec\; \sqrt{\frac{\im m + \Lambda}{M \eta}}
\end{equation}
uniformly in $z \in \f D$.
Moreover, for any fixed ($N$-independent) $\eta > 0$ 
 we have
\begin{equation} \label{Lambdao 2}
\Lambda_o + \abs{Z_i} + \abs{\Upsilon_i} \;\prec\; M^{-1/2}
\end{equation}
uniformly in $z \in \h{w \in \f D \col \im w = \eta}$.
\end{lemma}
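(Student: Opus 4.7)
The plan is to bound $\Lambda_o$, $|h_{ii}|$, $|Z_i|$, and $|A_i|$ separately using three standard tools: Ward's identity $\sum_l |G_{il}(z)|^2 = \eta^{-1}\im G_{ii}(z)$ (and its analogue for $H^{(i)}$); the resolvent identity $G_{kk}^{(i)} = G_{kk} - G_{ik}G_{ki}/G_{ii}$ from \eqref{resolvent expansion type 1}; and the standard large-deviation estimate (LDE) for centered linear and quadratic forms in independent random variables with uniformly bounded moments. The LDE applies because conditioning on $H^{(i)}$ makes $G^{(i)}$ deterministic while preserving the independence of $(h_{ik})_{k\neq i}$, and \eqref{finite moments} supplies the required tail bounds.

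The contributions from $h_{ii}$ and $A_i$ are the easy ones. Chebyshev and \eqref{finite moments} give $|h_{ii}| \prec s_{ii}^{1/2} \leq M^{-1/2}$, and since $\im m(z) \geq c\eta$ on $\f D$ by Lemma~\ref{lemma: msc}, this is absorbed into $\sqrt{\im m/(M\eta)}$. For $A_i$, combining $s_{ik}\leq M^{-1}$, Ward, and $|G_{ik}G_{ki}|\leq (|G_{ik}|^2+|G_{ki}|^2)/2$ yields $|A_i| \leq \im G_{ii}/(M\eta|G_{ii}|) \leq (M\eta)^{-1}$, where the last step uses the deterministic inequality $\im G_{ii} \leq |G_{ii}|$. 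Restricted to $\phi$, we further have $|G_{ii}|\geq c$ (from $|m|\geq c$ and $\phi\Lambda\prec M^{-c}$) and $\im G_{ii}\leq \im m + \Lambda$, so $\phi|A_i|\leq C(\im m + \Lambda)/(M\eta)$, which is well below the target bound.

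For $Z_i$, I apply the LDE conditionally on $H^{(i)}$ to the centered quadratic form $Z_i = Q_i\sum_{k,l}^{(i)} h_{ik} G_{kl}^{(i)} h_{li}$, treating the off-diagonal ($k\neq l$) and diagonal ($k=l$) pieces separately. After absorbing the diagonal contribution via $|G_{kk}^{(i)}|^2 \leq \im G_{kk}^{(i)}/\eta$, one obtains
\begin{equation*}
|Z_i| \;\prec\; \sqrt{\sum_{k,l}^{(i)} s_{ik}s_{il} |G_{kl}^{(i)}|^2}.
\end{equation*}
Using $s_{il}\leq M^{-1}$ and Ward for $H^{(i)}$ bounds the right-hand side by $\sqrt{(M\eta)^{-1}\sum_k^{(i)} s_{ik}\im G_{kk}^{(i)}}$. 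The resolvent identity together with $\sum_k s_{ik}=1$ then gives $\sum_k^{(i)} s_{ik}\im G_{kk}^{(i)} \leq \im m + \Lambda + C\Lambda_o^2/|G_{ii}|$, which on $\phi$ is $\leq C(\im m + \Lambda)$; hence $\phi|Z_i| \prec \sqrt{(\im m + \Lambda)/(M\eta)}$. A parallel argument handles $\Lambda_o$: \eqref{resolvent expansion type 2} gives $G_{ij} = -G_{ii}\sum_k^{(i)} h_{ik} G_{kj}^{(i)}$ for $i\neq j$, the linear LDE and Ward yield $|\sum_k h_{ik} G_{kj}^{(i)}| \prec \sqrt{\im G_{jj}^{(i)}/(M\eta)}$, and after controlling $|G_{ii}|$ and $\im G_{jj}^{(i)}$ on $\phi$ exactly as above, a union bound over the $O(N^2)$ pairs $i\neq j$ promotes the pointwise estimate to $\phi\Lambda_o \prec \sqrt{(\im m + \Lambda)/(M\eta)}$, completing (i).

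Part (ii) is then immediate. On the slice $\im z = \eta$ of constant order, the deterministic bounds $|G_{ab}|, |G_{ab}^{(i)}|\leq \eta^{-1} = O(1)$ make every Ward/LDE step unconditional and the factors of $\eta$ get absorbed into constants; one obtains $|A_i|\leq (M\eta)^{-1}\lesssim M^{-1}$, $|h_{ii}|\prec M^{-1/2}$, and $|Z_i|, \Lambda_o \prec M^{-1/2}$ without any event restriction. The main technical subtlety in the argument above is the apparent circularity of the $Z_i$ estimate, where $\im G_{kk}^{(i)}$ itself must be controlled via the resolvent identity; the hypothesis $\phi\Lambda \prec M^{-c}$ is precisely what absorbs the resulting $O(\Lambda^2)$ error and closes the bound.
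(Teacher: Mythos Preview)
Your proof is correct and follows essentially the same strategy as the paper: bound each of $h_{ii}$, $A_i$, $Z_i$, and $\Lambda_o$ separately using the large deviation estimates (Theorem~\ref{thm: LDE}) conditionally on $H^{(i)}$, Ward's identity, and the resolvent identity \eqref{resolvent expansion type 1} to pass between $G^{(i)}$ and $G$.

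The one notable difference is your treatment of $\Lambda_o$. The paper iterates \eqref{resolvent expansion type 2} \emph{twice} to write
\[
G_{ij} \;=\; -G_{ii}G_{jj}^{(i)}\Bigl(h_{ij} - \sum_{k,l}^{(ij)} h_{ik} G_{kl}^{(ij)} h_{lj}\Bigr)
\]
and then applies the bilinear LDE \eqref{lde 2}, exploiting the independence of $(\zeta_{ik})_k$ and $(\zeta_{lj})_l$ for $i\neq j$. You instead stop after one application, $G_{ij}=-G_{ii}\sum_k^{(i)} h_{ik}G_{kj}^{(i)}$, and apply the linear LDE \eqref{lde 1}, which is slightly more direct and avoids introducing the second minor $G^{(ij)}$. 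Both routes land on the same Ward-type bound $\sqrt{\im G_{jj}^{(i)}/(M\eta)}$; your version is marginally simpler, while the paper's double expansion makes the bilinear structure more explicit (which is closer in spirit to how such terms are handled in later fluctuation-averaging arguments). Your handling of $A_i$ via the deterministic Ward bound $|A_i|\leq (M\eta)^{-1}$ is also a clean alternative to the paper's estimate $|A_i|\prec\Lambda_o^2$.
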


\begin{proof}
We begin with the first statement. We shall often use the fact that, by the lower bound of \eqref{m is bounded} and the assumption $\phi \Lambda \prec M^{-c}$, we have
\begin{equation} \label{1/G is bounded}
\phi / \abs{G_{ii}} \;\prec\; 1\,.
\end{equation}
First we estimate $Z_i$, which we split as
\begin{equation} \label{Zi split}
\phi \abs{Z_i} \;\leq\; \phi \absBB{\sum_{k}^{(i)} \pb{\abs{h_{ik}}^2 - s_{ik}} G_{kk}^{(i)}} + \phi \absBB{\sum_{k \neq l}^{(i)} h_{ik} G_{kl}^{(i)} h_{li}}\,.
\end{equation}
We estimate each term using the large deviation estimates from Theorem \ref{thm: LDE}, by conditioning on $G^{(i)}$ and using the fact that the family $(h_{ik})_{k = 1}^N$ is independent of $G^{(i)}$. By \eqref{lde 1}, the first term of \eqref{Zi split} is stochastically dominated by $\phi \pb{\sum_k^{(i)} s_{ik}^2 \absb{G_{kk}^{(i)}}^2}^{1/2} \prec M^{-1/2}$, where we used the estimate \eqref{s leq W} and $\phi \absb{G_{kk}^{(i)}} \prec 1$, as follows from \eqref{resolvent expansion type 1}, \eqref{1/G is bounded}, and the assumption $\phi \Lambda \prec M^{-c}$. For the second term of \eqref{Zi split} we apply \eqref{lde 3} with $a_{kl} = s_{ik}^{1/2} G_{kl}^{(i)} s_{li}^{1/2}$ and $X_k = \zeta_{ik}$ (see \eqref{def:zeta}). We find
\begin{equation} \label{estimate for sum Gkl}
\phi \sum_{k,l}^{(i)} s_{ik} \absb{G_{kl}^{(i)}}^2 s_{li} \;\leq\; \phi \frac{1}{M} \sum_{k,l}^{(i) } s_{ik} \absb{G_{kl}^{(i)}}^2 \;=\; \phi \frac{1}{M \eta} \sum_k^{(i)} s_{ik} \im G_{kk}^{(i)} \;\prec\; \frac{\im m + \Lambda}{M \eta}\,,
\end{equation}
where  the second step follows by spectral decomposition of $G^{(i)}$, and in the last step we used \eqref{resolvent expansion type 1} and \eqref{1/G is bounded}.
Thus we get
\begin{equation} \label{Zi estimate 1}
\phi \abs{Z_i} \;\prec\; \sqrt{\frac{\im m + \Lambda}{M \eta}}\,,
\end{equation}
where we absorbed the bound $M^{-1/2}$ on the first term of \eqref{Zi split} into the right-hand side of \eqref{Zi estimate 1}, using $\im m \geq \eta$ as follows from \eqref{lower bound on im msc}.

Next, we estimate $\Lambda_o$. We can iterate \eqref{resolvent expansion type 2} once to get, for $i \neq j$,
\begin{equation} \label{iterated identity}
G_{ij} \;=\; - G_{ii}\sum_k^{(i)} h_{ik} G_{kj}^{(i)} \;=\; - G_{ii} G_{jj}^{(i)} \pBB{h_{ij} - \sum_{k,l}^{(ij)} h_{ik} G_{kl}^{(ij)} h_{lj}}\,.
\end{equation}
The term $h_{ij}$ is trivially $O_\prec(M^{-1/2})$. In order to estimate the other term, we invoke \eqref{lde 2} with $a_{kl} = s_{ik}^{1/2} G_{kl}^{(ij)} s_{lj}^{1/2}$, $X_k = \zeta_{ik}$, and $Y_l = \zeta_{lj}$. As in \eqref{estimate for sum Gkl}, we find
\begin{equation*}
\phi \sum_{k,l}^{ (ij) } s_{ik} \absb{G_{kl}^{(ij)}}^2 s_{lj} \;\prec\; \frac{\im m + \Lambda}{M \eta}\,.
\end{equation*}
Thus we find
\begin{equation} \label{Lambdao estimate 1}
\phi \Lambda_o \;\prec\; \sqrt{\frac{\im m + \Lambda}{M \eta}}\,,
\end{equation}
where we again absorbed the term $h_{ij} \prec M^{-1/2}$ into the right-hand side.

In order to estimate $A_i$ and $h_{ii}$ in the definition of $\Upsilon_i$, we use \eqref{1/G is bounded} to estimate 
\begin{equation*}
\phi \pb{\abs{A_i} + \abs{h_{ii}}} \;\prec\; \phi \Lambda_o^2 + M^{-1/2} \;\leq\; \phi \Lambda_o + C \sqrt{\frac{\im m}{M \eta}} \;\prec\; \sqrt{\frac{\im m + \Lambda}{M \eta}}\,,
\end{equation*}
where the second step follows from $\im m \geq \eta$ (recall \eqref{lower bound on im msc}). This completes the proof of \eqref{Lambdao 1}.

The proof of \eqref{Lambdao 2} is almost identical to that of \eqref{Lambdao 1}. The quantities $\absb{G^{(i)}_{kk}}$ and $\absb{G^{(ij)}_{kk}}$ are estimated by the trivial deterministic bound $\eta^{-1}$. We omit the details.
\end{proof}

\subsection{A rough bound on $\Lambda$}
The next step in the proof of Theorem \ref{thm: no gap} is to establish the following rough bound on $\Lambda$.

\begin{proposition} \label{prop: rough bound 1}
We have $\Lambda \prec M^{-\gamma / 3} \Gamma^{-1}$ uniformly in $\f S$.
\end{proposition}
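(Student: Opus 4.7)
The plan is to invert the self-consistent equation \eqref{vself for exp} to control $\Lambda_d$ in terms of $\Upsilon$, combine it with the bound on $\Lambda_o$ from Lemma \ref{lemma: Lambdao}, and then run a bootstrap/continuity argument in $\eta$ to remove an a priori assumption on $\Lambda$.

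First I would expand the right-hand side of \eqref{vself for exp},
$$\frac{1}{m+v_i} - \frac{1}{m} \;=\; -\frac{v_i}{m^2} + O(v_i^2)\,,$$
so that on the event $\h{\Lambda \leq M^{-c}}$ (on which $\abs{m+v_i} \asymp 1$ by \eqref{m is bounded}) the equation \eqref{vself for exp} becomes $\qb{(1 - m^2 S) v}_i = -m^2 \Upsilon_i + O(\Lambda^2)$ componentwise. Applying $(1-m^2 S)^{-1}$, using $\abs m \leq 1$ together with the definition of $\Gamma$, then yields the deterministic pointwise bound
$$\Lambda_d \;\leq\; C\Gamma \pb{\max_i \abs{\Upsilon_i} + \Lambda^2}\,.$$

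The next step is the self-improvement. Let $\phi$ denote the indicator of $\h{\Lambda \leq M^{-\gamma/3}/\Gamma}$; note that $\phi\Lambda \leq M^{-c}$ thanks to the lower bound $\Gamma \geq c$ from \eqref{varrho geq c}, so Lemma \ref{lemma: Lambdao} applies and gives
$$\phi \pb{\max_i \abs{\Upsilon_i} + \Lambda_o} \;\prec\; \sqrt{\frac{\im m}{M\eta}} + \sqrt{\frac{\phi \Lambda}{M\eta}}\,.$$
The defining inequalities of $\f S$ in \eqref{def eta E}, namely $\im m/(M\eta) \leq M^{-2\gamma}/\Gamma^4$ and $1/(M\eta) \leq M^{-\gamma}/\Gamma^3$, together with the a priori bound $\phi\Lambda \leq M^{-\gamma/3}/\Gamma$, force both square roots to be at most $M^{-2\gamma/3}/\Gamma^2$. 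Substituting into the pointwise bound above, and noting that $\Gamma(\phi\Lambda)^2 \leq M^{-2\gamma/3}/\Gamma$, I obtain
$$\phi \Lambda \;\prec\; \frac{M^{-2\gamma/3}}{\Gamma}\,,$$
which is a factor $M^{-\gamma/3}$ better than the a priori input.

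Finally I would remove the indicator $\phi$ by a continuity argument in $\eta$. At $\eta = 10$, Lemma \ref{lemma: Lambdao}(ii) gives $\Lambda \prec M^{-1/2}$ and $\Gamma = O(1)$ (since $\abs{m(z)}$ is bounded away from $1$ there), so $\phi = 1$ with very high probability. The maps $z \mapsto G(z)$, $m(z)$, and $\Gamma(z)$ are all Lipschitz on $\f S$ with polynomial constants. Fixing $E$ and introducing a lattice of points $z_k = E + \ii \eta_k$ in $\q{\eta_E, 10}$ of spacing $N^{-K}$ for a sufficiently large $K$, a union bound combined with the self-improvement at each lattice point gives $\phi(z_k) = 1$ for every $k$ with very high probability. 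Lipschitz continuity then rules out a transition from $\Lambda \leq 2M^{-2\gamma/3}/\Gamma$ at one lattice point to $\Lambda > M^{-\gamma/3}/\Gamma$ at a neighbouring one, since the $M^{-\gamma/3}$ gap dwarfs the Lipschitz variation between consecutive points. Consequently $\phi \equiv 1$ on $\f S$ with very high probability, and the claimed bound $\Lambda \prec M^{-\gamma/3}/\Gamma$ follows (in fact from the stronger $\Lambda \prec M^{-2\gamma/3}/\Gamma$).

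The main obstacle is the delicate calibration of the critical threshold: the a priori bound $M^{-\gamma/3}/\Gamma$ is the unique scale at which the quadratic error $\Gamma \Lambda^2$ in the inverted self-consistent equation is absorbed into $M^{-2\gamma/3}/\Gamma$, while simultaneously leaving a gap of $M^{-\gamma/3}$ between a priori and improved bounds that is large enough to beat the Lipschitz constants of order $N^{O(1)}$ in the continuity step. The inequalities defining $\eta_E$ in \eqref{def eta E} are exactly those needed for the Lemma \ref{lemma: Lambdao} error terms, after $\Gamma$-amplification through $(1-m^2 S)^{-1}$, to meet this budget.
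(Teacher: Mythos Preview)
Your approach is essentially the paper's: expand \eqref{vself for exp}, invert $(1-m^2S)$ to bound $\Lambda_d$ by $\Gamma(\max_i|\Upsilon_i|+\Lambda^2)$, feed in Lemma~\ref{lemma: Lambdao}, exploit the defining inequalities of $\f S$ to get a self-improvement, and close by continuity. The only substantive difference is your choice of exponents (threshold $M^{-\gamma/3}\Gamma^{-1}$, improvement to $M^{-2\gamma/3}\Gamma^{-1}$) versus the paper's ($M^{-\gamma/4}\Gamma^{-1}$ improving to $M^{-\gamma/2}\Gamma^{-1}$); both leave a gap of size $M^{-\gamma/3}$ for the continuity step, so either calibration works and your remark that the scale is ``unique'' is an overstatement.

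There is one genuine gap. For the initial estimate at $\eta=10$ you invoke Lemma~\ref{lemma: Lambdao}\,(ii), but \eqref{Lambdao 2} only gives $\Lambda_o+|Z_i|+|\Upsilon_i|\prec M^{-1/2}$; it says nothing about $\Lambda_d$. You cannot recover $\Lambda_d$ via your expansion-and-invert route either, because the expansion $\frac{1}{m+v_i}-\frac{1}{m}=-\frac{v_i}{m^2}+O(v_i^2)$ is only valid once $|v_i|\ll |m|$, and at the starting point you have no such a~priori bound (indeed $|m|\asymp \eta^{-1}$ is small at large $\eta$, while the trivial bound gives only $|v_i|\leq 2/\eta$). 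The paper handles this separately in Lemma~\ref{lem: initial estimate}: at $\eta=2$ one writes \eqref{vself for exp} as $v_i=\frac{m(\sum_k s_{ik}v_k-\Upsilon_i)}{m^{-1}-\sum_k s_{ik}v_k+\Upsilon_i}$ and uses the \emph{deterministic} bounds $|m^{-1}|\geq 2$, $|v_k|\leq 1$, $|m|\leq 1/2$ to contract $\Lambda_d\leq \frac12\Lambda_d+O_\prec(M^{-1/2})$. You need this extra step (or an equivalent one) to launch the continuity argument. A minor further point: to obtain stochastic domination \emph{uniformly} in $\f S$, the lattice and union bound in the continuity step should run over both $E$ and $\eta$ (as in the paper), not over $\eta$ for fixed $E$.
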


The rest of this subsection is devoted to the proof of Proposition \ref{prop: rough bound 1}.
The core of the proof is a {\it continuity argument.}
 Its basic idea is to establish a \emph{gap} in the range of 
$\Lambda$ of the form $\ind{\Lambda \leq M^{-\gamma/4} \Gamma^{-1}} \Lambda  \prec M^{-\gamma/2} \Gamma^{-1}$ (Lemma \ref{lem: bootstrap 1} below). In other words, for all $z \in \f S$, with high probability either $\Lambda \leq M^{-\gamma/2} \Gamma^{-1}$ or $\Lambda \geq M^{-\gamma/4} \Gamma^{-1}$. For $z$ with a large imaginary part $\eta$, the estimate $\Lambda \leq M^{-\gamma / 2} \Gamma^{-1}$ is  easy to prove using a simple expansion (Lemma \ref{lem: initial estimate} below). Thus, for large $\eta$ the parameter $\Lambda$ is below the gap. Using the fact that $\Lambda$ is continuous in $z$ and hence cannot jump from one side of the gap to the other, we then conclude that with high probability $\Lambda$ is below the gap for all $z \in \f S$. See Figure \ref{fig: gap lambda} for an illustration of this argument.

\begin{lemma} \label{lem: bootstrap 1}
We have the bound
\begin{equation*}
\indb{\Lambda \leq M^{-\gamma/4} \Gamma^{-1}} \Lambda \;\prec\; M^{-\gamma /2} \Gamma^{-1}
\end{equation*}
uniformly in $\f S$.
\end{lemma}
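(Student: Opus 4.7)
The plan is to treat \eqref{vself for exp} as a fixed-point identity for the vector $v = (v_i)$, perturbatively linearized under the smallness encoded by the indicator $\phi \deq \indb{\Lambda \leq M^{-\gamma/4}\Gamma^{-1}}$. Because \eqref{varrho geq c} gives $\Gamma \geq c > 0$, the assumption implies $\phi \Lambda \prec M^{-\gamma/4}$, so Lemma \ref{lemma: Lambdao} is applicable and yields
$$\phi \pb{\Lambda_o + |\Upsilon_i|} \;\prec\; \sqrt{\frac{\im m + \Lambda}{M\eta}}$$
uniformly in $i$.

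Next I would Taylor-expand the right-hand side of \eqref{vself for exp}. Using $|m| \geq c$ from \eqref{m is bounded} and the smallness of $\phi\Lambda_d$, the expansion $\phi\pb{(m+v_i)^{-1} - m^{-1}} = -\phi v_i/m^2 + \phi\, O(\Lambda_d^2)$ converts \eqref{vself for exp} into the perturbed linear equation
$$\phi \sum_k \pb{\delta_{ik} - m^2 s_{ik}} v_k \;=\; \phi\, m^2 \Upsilon_i + \phi\, O(\Lambda^2).$$
Inverting the operator $1 - m^2 S$ — whose $\ell^\infty \to \ell^\infty$ norm is exactly $\Gamma$ by \eqref{def:rho} — and using $|m| \leq 1$, I obtain the key deterministic amplification
$$\phi \Lambda_d \;\prec\; \Gamma \pB{\max_i |\Upsilon_i| + \Lambda^2}.$$

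It remains to insert the bounds supplied by the definition \eqref{def eta E} of $\f S$, according to which both $1/(M\eta) \leq M^{-\gamma}/\Gamma^3$ and $(\im m)/(M\eta) \leq M^{-2\gamma}/\Gamma^4$ hold on $\f S$. These yield $\Gamma \sqrt{(\im m)/(M\eta)} \leq M^{-\gamma}/\Gamma$, while under $\phi$ the $\Lambda$-piece of the fluctuation term gives $\Gamma \sqrt{\Lambda/(M\eta)} \leq \Gamma \cdot M^{-\gamma/8}\Gamma^{-1/2} \cdot M^{-\gamma/2}\Gamma^{-3/2} = M^{-5\gamma/8}/\Gamma$; the quadratic remainder gives $\Gamma \Lambda^2 \leq \Gamma \cdot M^{-\gamma/2}\Gamma^{-2} = M^{-\gamma/2}/\Gamma$. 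Since $5\gamma/8 > \gamma/2$, the last term dominates, giving $\phi \Lambda_d \prec M^{-\gamma/2}/\Gamma$. The bound $\phi \Lambda_o \prec M^{-\gamma/2}/\Gamma$ follows by the same calculation applied directly to the bound on $\phi \Lambda_o$ from Lemma \ref{lemma: Lambdao} (using $\Gamma \geq c$ to absorb one factor of $\Gamma^{-1}$), and together these yield the claim.

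The main delicate point is the matching of the two $\Gamma$-powers in \eqref{def eta E} to the two distinct error sources: inverting $1 - m^2 S$ costs a factor of $\Gamma$, and the quadratic remainder $\phi\Lambda^2$ saves a factor of $\Gamma^{-2}$, leaving $\Gamma^{-1}$, which is exactly what the threshold $1/(M\eta) \leq M^{-\gamma}/\Gamma^3$ is tuned to control; the analogous matching for the $\sqrt{\im m/(M\eta)}$ term explains the appearance of $\Gamma^{-4}$ in the second clause of \eqref{def eta E}. Without this precise tuning, the gap structure required by the continuity argument behind Proposition \ref{prop: rough bound 1} would fail to close.
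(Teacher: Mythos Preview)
Your proof is correct and follows essentially the same approach as the paper's: set $\phi$, invoke Lemma \ref{lemma: Lambdao}, Taylor-expand the self-consistent equation \eqref{vself for exp}, invert $1 - m^2 S$ at cost $\Gamma$, and then feed in the two defining inequalities of $\f S$ to close the bound. The only cosmetic differences are that the paper combines the $\Lambda_o$ and $\Lambda_d$ estimates into a single bound $\phi\Lambda \prec \Gamma(\Lambda^2 + \sqrt{(\im m + \Lambda)/(M\eta)})$ before plugging in, and it uses the cruder estimate $\phi\Lambda \leq \Gamma^{-1}$ (rather than $M^{-\gamma/4}\Gamma^{-1}$) when bounding $\Gamma\sqrt{\Lambda/(M\eta)}$, obtaining $M^{-\gamma/2}\Gamma^{-1}$ directly instead of your sharper $M^{-5\gamma/8}\Gamma^{-1}$; either suffices.
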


\begin{proof}
Set 
$$
\phi \;\deq\; \indb{\Lambda \leq M^{-\gamma/4} \Gamma^{-1}}.
$$
 Then by definition we have $\phi \Lambda \leq M^{-\gamma / 4} \Gamma^{-1} \leq C M^{-\gamma/4}$, where in the last step we used \eqref{varrho geq c}. Hence we may invoke \eqref{Lambdao 1} to estimate $\Lambda_o$ and $\Upsilon_i$. In order to estimate $\Lambda_d$, we expand the right-hand side of \eqref{vself for exp} in $v_i$ to get
\begin{equation*}
\phi \pbb{-\sum_k s_{ik} v_k + \Upsilon_i} \;=\; \phi \pb{-m^{-2} v_i + O(\Lambda^2)}\,,
\end{equation*}
where we used \eqref{m is bounded} and that $\abs{v_i} \leq C M^{-\gamma/4}$ on the event $\{\phi = 1\}$.
Using \eqref{Lambdao 1} we therefore have
\begin{equation*}
\phi \pbb{v_i - m^2 \sum_{k} s_{ik} v_k} \;=\; O_\prec \pbb{\Lambda^2 + \sqrt{\frac{\im m + \Lambda}{M \eta}}}\,.
\end{equation*}
We write the left-hand side as $\phi [(1-m^2S)\f v]_i$ with the vector $\f v = (v_i)_{i = 1}^N$. Inverting
the operator $1-m^2 S$, we
therefore conclude that
\begin{equation*}
\phi \Lambda_d \;=\; \phi \max_i \abs{v_i} \;\prec\; \Gamma \pbb{\Lambda^2 + \sqrt{\frac{\im m + \Lambda}{M \eta}}}\,.
\end{equation*}
Recalling \eqref{varrho geq c} and \eqref{Lambdao 1}, we therefore get
\begin{equation} \label{phi Lambda}
\phi \Lambda \;\prec\; \phi \Gamma \pbb{\Lambda^2 + \sqrt{\frac{\im m + \Lambda}{M \eta}}}\,.
\end{equation}

Next, by definition of $\phi$ we may estimate
\begin{equation*}
\phi \Gamma \Lambda^2 \;\leq\; M^{-\gamma/2} \Gamma^{-1}\,.
\end{equation*}
Moreover, by definitions of $\f S$ and $\phi$ we have
\begin{equation*}
\phi \Gamma \sqrt{\frac{\im m + \Lambda}{M \eta}} \;\leq\; \Gamma \sqrt{\frac{\im m}{M \eta}} + \Gamma \sqrt{\frac{\Gamma^{-1}}{M \eta}} \;\leq\; M^{-\gamma} \Gamma^{-1} + M^{-\gamma/2} \Gamma^{-1} \;\leq\; 2 M^{-\gamma /2} \Gamma^{-1}\,.
\end{equation*}
Plugging this into \eqref{phi Lambda} yields $\phi \Lambda \prec M^{-\gamma/2} \Gamma^{-1}$, which is the claim.
\end{proof}

In order to start the continuity argument underlying the proof of Proposition \ref{prop: rough bound 1}, we need the following bound on $\Lambda$ for large $\eta$.

\begin{lemma} \label{lem: initial estimate}
We have $\Lambda \prec M^{-1/2}$ uniformly in $z \in [-10,10] + 2 \ii$.
\end{lemma}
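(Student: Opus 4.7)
The plan rests on the observation that at the fixed value $\eta = 2$ everything becomes deterministically controlled: we have $|G_{ij}| \le \norm{G} \le \eta^{-1} = 1/2$ from the spectral theorem, and by Lemma~\ref{lemma: msc} we have $|m(z)| \le 1 - 2c$ for some universal $c > 0$ throughout $\{z \col \re z \in [-10,10],\, \im z = 2\}$. These two facts together let me close the self-consistent equation \eqref{vself for exp} in one shot, with no bootstrap or continuity argument required.

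Concretely, I would first apply part (ii) of Lemma~\ref{lemma: Lambdao}, which needs no a priori assumption, to obtain directly $\Lambda_o + \max_i |\Upsilon_i| \prec M^{-1/2}$. It then only remains to control $\Lambda_d = \max_i |v_i|$. Using the algebraic identity $\frac{1}{m + v_i} - \frac{1}{m} = -v_i/(m G_{ii})$ and multiplying both sides of \eqref{vself for exp} by $-m G_{ii}$, I get the linear relation
\begin{equation*}
v_i \;=\; m G_{ii} \sum_k s_{ik}\, v_k \;-\; m G_{ii}\, \Upsilon_i \,.
\end{equation*}
Define $T_{ij} \deq m G_{ii} s_{ij}$. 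Since $S$ is row stochastic, $\sum_j |T_{ij}| = |m G_{ii}| \le |m| \eta^{-1} \le (1-2c)/2 < 1$ pointwise and deterministically, so $\norm{T}_{\ell^\infty \to \ell^\infty} \le (1-2c)/2$. Hence $1 - T$ is boundedly invertible on $\ell^\infty$, and taking the supremum over $i$ in the displayed equation yields $\Lambda_d \le C \max_i |\Upsilon_i| \prec M^{-1/2}$. Combining the two bounds gives $\Lambda \prec M^{-1/2}$, as claimed.

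There is no serious obstacle: the deterministic bound $\norm{G} \le 1/\eta$ at $\eta = 2$ trivialises the usually delicate inversion of $1 - m^2 S$ (which is what produces the large factor $\Gamma$ elsewhere), because the relevant linearised operator is $m G S$ rather than $m^2 S$, and the former is strictly contractive in $\ell^\infty$. The sole role of this lemma is to anchor the continuity argument of Proposition~\ref{prop: rough bound 1}: it certifies that $\Lambda$ lies well below the forbidden gap at the top of the spectral domain $\f S$, so that one can propagate this estimate down to smaller $\eta$ using the gap provided by Lemma~\ref{lem: bootstrap 1}.
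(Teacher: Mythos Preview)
Your proof is correct and takes essentially the same approach as the paper: both use the deterministic bounds $|G_{ij}|\le 1/2$ and $|m|\le 1/2$ at $\eta=2$, invoke \eqref{Lambdao 2} for $\Lambda_o$ and $\Upsilon_i$, and then close the self-consistent equation for $v_i$ via a contraction argument. Your rewriting $v_i = mG_{ii}\sum_k s_{ik}v_k - mG_{ii}\Upsilon_i$ with the explicitly contractive operator $T_{ij}=mG_{ii}s_{ij}$ is slightly cleaner than the paper's version, which writes the same identity as a fraction and bounds the denominator $|m^{-1}-\sum_k s_{ik}v_k+\Upsilon_i|$ by the triangle inequality to reach $\Lambda_d \le \Lambda_d/2 + O_\prec(M^{-1/2})$; but the two arguments are equivalent.
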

\begin{proof}
We shall make use of the trivial bounds
\begin{equation} \label{trivial bounds for large eta}
\absb{G_{ij}^{(\bb T)}} \;\leq\; \frac{1}{\eta} \;=\; \frac{1}{2} \,, \qquad \abs{m} \;\leq\; \frac{1}{\eta}\;=\; \frac{1}{2}\,.
\end{equation}
From \eqref{Lambdao 2} we get
\begin{equation} \label{trivial bound for Lambdao}
\Lambda_o + \abs{Z_i} \;\prec\; M^{-1/2}\,.
\end{equation}
Moreover, we use \eqref{resolvent expansion type 1} and \eqref{iterated identity} to estimate
\begin{equation*}
\abs{A_i} \;\leq\; \sum_j s_{ij} \absbb{\frac{G_{ij} G_{ji}}{G_{ii}}} \;\leq\; M^{-1} + \sum_j^{(i)} s_{ij} \absb{G_{ji} G_{jj}^{(i)}} \, \absBB{h_{ij} - \sum_{k,l}^{(ij)} h_{ik} G_{kl}^{(ij)} h_{lj}} \;\prec\; M^{-1/2}\,,
\end{equation*}
where the last step follows using \eqref{lde 2}, exactly as the estimate of the right-hand side of \eqref{iterated identity} in the proof of Lemma \ref{lemma: Lambdao}. We conclude that $\abs{\Upsilon_i} \prec M^{-1/2}$.

Next, we write \eqref{vself for exp} as
\begin{equation*}
v_i \;=\; \frac{m \pb{\sum_k s_{ik} v_k - \Upsilon_i}}{\pb{m^{-1} - \sum_k s_{ik} v_k + \Upsilon_i}}\,.
\end{equation*}
Using $\abs{m^{-1}} \geq 2$ and $\abs{v_k} \leq 1$ as follows from \eqref{trivial bounds for large eta}, we find 
\begin{equation*}
\absbb{m^{-1} + \sum_k s_{ik} v_k - \Upsilon_i} \;\geq\; 1 + O_\prec(M^{-1/2})\,.
\end{equation*}
Using $\abs{m} \leq 1/2$ we therefore conclude that
\begin{equation*}
\Lambda_d \;\leq\; \frac{\Lambda_d + O_\prec(M^{-1/2})}{2 + O_\prec(M^{-1/2})} 
 \;=\;  \frac{\Lambda_d}{2} + O_\prec(M^{-1/2})\,,
\end{equation*}
from which the claim follows together with the estimate
on $\Lambda_o$ from \eqref{trivial bound for Lambdao}.
\end{proof}

We may now conclude the proof of Proposition \ref{prop: rough bound 1} by a continuity argument in $\eta=\im z$. The gist of the continuity argument is depicted in Figure \ref{fig: gap lambda}.

\begin{figure}[ht!]
\begin{center}
\includegraphics{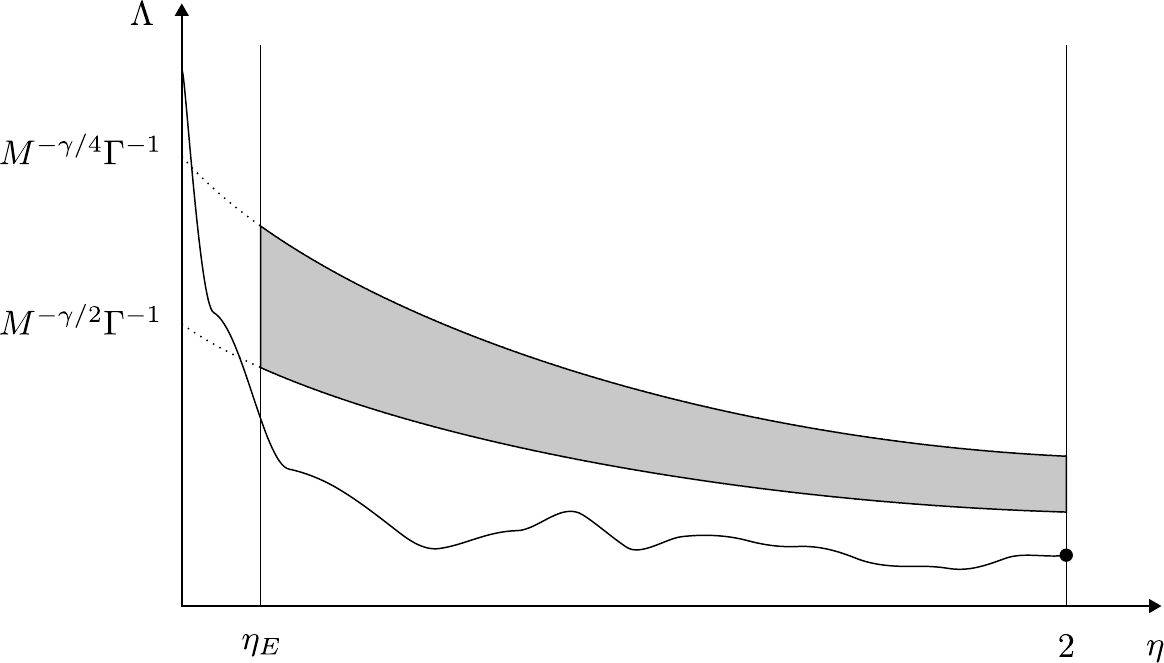}
\end{center}
\caption{The $(\eta, \Lambda)$-plane for a fixed $E$. The shaded region is forbidden with high probability by Lemma \ref{lem: bootstrap 1}. The initial estimate, given by Lemma \ref{lem: initial estimate}, is marked with a black dot.
The graph of $\Lambda = \Lambda(E + \ii \eta)$ is continuous and lies beneath the shaded region.
Note that this method does not control $\Lambda(E+i\eta)$
in the regime  $\eta\leq \eta_E$.  \label{fig: gap lambda}
}
\end{figure}

\begin{proof}[Proof of Proposition \ref{prop: rough bound 1}]
Fix $D > 10$. Lemma \ref{lem: bootstrap 1} implies that for each $z \in \f S$ we have
\begin{equation} \label{gap for one z}
\P \pB{M^{-\gamma/3} \Gamma(z)^{-1} \leq \Lambda(z) \leq M^{-\gamma/4} \Gamma(z)^{-1}} \;\leq\; N^{-D}
\end{equation}
for $N \geq N_0$, where $N_0 \equiv N_0(\gamma, D)$ does not depend on $z$.

Next, take a lattice $\Delta \subset \f S$ such that $\abs{\Delta} \leq N^{10}$ and for each $z \in \f S$ there exists a $w \in \Delta$ such that $\abs{z - w} \leq N^{-4}$. Then \eqref{gap for one z} combined with a union bounds gives
\begin{equation} \label{gap for lattice}
\P \pB{\exists w \in \Delta \,\col\, M^{-\gamma/3} \Gamma(w)^{-1} \leq \Lambda(w) \leq M^{-\gamma/4} \Gamma(w)^{-1}} \;\leq\; N^{-D + 10}
\end{equation}
for $N \geq N_0$.
From the definitions of $\Lambda(z)$, $\Gamma(z)$, and $\f S$ (recall \eqref{varrho geq c}),
 we immediately find that $\Lambda$ and $\Gamma$ are Lipschitz continuous on $\f S$, with Lipschitz constant at most $M^2$. Hence \eqref{gap for lattice} implies
\begin{equation*}
\P \pB{\exists z \in \f S \,\col\, 2 M^{-\gamma/3} \Gamma(z)^{-1} \leq \Lambda(z) \leq 2^{-1} M^{-\gamma/4} \Gamma(z)^{-1}} \;\leq\; N^{-D + 10}
\end{equation*}
for $N \geq N_0$.
We conclude that there is an event $\Xi$ satisfying $\P(\Xi) \geq 1 - N^{-D + 10}$ such
that, for each $z \in \f S$, either $\ind{\Xi} \Lambda(z) \leq 2 M^{-\gamma/3} \Gamma(z)^{-1}$ or $\ind{\Xi} \Lambda(z) \geq 2^{-1} M^{-\gamma/4} \Gamma(z)^{-1}$. Since $\Lambda$ is  continuous 
and $\f S$ is by definition connected, we conclude that either
\begin{equation} \label{good case}
\forall z \in \f S \,\col\, \ind{\Xi} \Lambda(z) \;\leq\; 2 M^{-\gamma/3} \Gamma(z)^{-1}
\end{equation}
or
\begin{equation} \label{bad case}
\forall z \in \f S \,\col\, \ind{\Xi} \Lambda(z) \;\geq\; 2^{-1} M^{-\gamma/4} \Gamma(z)^{-1}\,.
\end{equation}
(Here the bounds \eqref{good case} and \eqref{bad case} each hold surely, i.e.\ for every realization of $\Lambda(z)$.)

It remains to show that \eqref{bad case} is impossible. In order to do so, it suffices to show that there exists a $z \in \f S$ such that $\Lambda(z) < 2^{-1} M^{-\gamma/4} \Gamma(z)^{-1}$ with probability greater than $1/2$. But this holds for any $z$ with $\im z = 2$, as follows from Lemma \ref{lem: initial estimate} and the bound $\Gamma \leq C \eta^{-1}$, which itself follows easily by a simple expansion of $(1 - m^2 S)^{-1}$ combined with the bounds $\norm{S}_{\ell^\infty \to \ell^\infty} \leq 1$ and  \eqref{m is bounded}. This concludes the proof.
\end{proof}

\subsection{Iteration step and conclusion of the proof of Theorem \ref{thm: no gap}}

In the following a key role will be played by \emph{deterministic} control parameters $\Psi$ satisfying
\begin{equation} \label{condition on Psi 1}
c M^{-1/2} \;\leq\; \Psi \;\leq\; M^{-\gamma/3} \Gamma^{-1}\,.
\end{equation}
(Using the definition of $\f S$ and \eqref{lower bound on im msc} it is not hard to check that the upper bound in \eqref{condition on Psi 1} is always larger than the lower bound.)
Suppose that $\Lambda \prec \Psi$ in $\f S$ for some deterministic parameter $\Psi$ satisfying \eqref{condition on Psi 1}. For example, by Proposition \ref{prop: rough bound 1} we may choose $\Psi = M^{-\gamma/3} \Gamma^{-1}$.

We now improve the estimate $\Lambda \prec \Psi$ iteratively. The iteration step is the content of the following proposition.

\begin{proposition} \label{prop: optimal simple}
Let $\Psi$ be a control parameter
satisfying \eqref{condition on Psi 1} and
 fix $\epsilon \in (0,\gamma/3)$. Then 
\begin{equation} \label{iteration step}
\Lambda \;\prec\; \Psi \qquad \Longrightarrow \qquad \Lambda \;\prec\; F(\Psi)\,,
\end{equation}
where we defined
\begin{equation*}
F(\Psi) \;\deq\; M^{-\epsilon} \Psi + \sqrt{\frac{\im m}{M \eta}} + \frac{M^{\epsilon}}{M \eta}\,.
\end{equation*}
\end{proposition}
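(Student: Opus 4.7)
The plan is to rewrite \eqref{1g} as an explicit formula for $v_i$. Combining \eqref{1g} with $z + m = -1/m$ from \eqref{identity for msc} and the algebraic identity $1/G_{ii} - 1/m = -v_i/(m G_{ii})$ yields the rearrangement
\begin{equation*}
v_i \;=\; -m G_{ii}\,\Upsilon_i + m G_{ii} \sum_k s_{ik} v_k\,.
\end{equation*}
The hypothesis $\Lambda \prec \Psi$ together with $\Psi \leq M^{-\gamma/3}\Gamma^{-1} \leq C M^{-\gamma/3}$ from \eqref{condition on Psi 1} and the bound $\abs{m} \leq 1$ from \eqref{m is bounded} give $\abs{m G_{ii}} = O_\prec(1)$, so it suffices to estimate $\Upsilon_i$ pointwise and the average $\sum_k s_{ik} v_k$.

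The bounds on off-diagonal entries and on the fluctuation terms $\Upsilon_i$ come directly from Lemma~\ref{lemma: Lambdao} applied with $\phi \equiv 1$, whose hypothesis holds since $\Lambda \prec M^{-\gamma/3}$. Its conclusion \eqref{Lambdao 1} gives
\begin{equation*}
\Lambda_o + \max_i \abs{\Upsilon_i} \;\prec\; \sqrt{\frac{\im m + \Psi}{M \eta}} \;\leq\; \sqrt{\frac{\im m}{M \eta}} + \sqrt{\frac{\Psi}{M \eta}}\,.
\end{equation*}
A weighted AM--GM inequality bounds the last summand by $\tfrac12 M^{-\epsilon}\Psi + \tfrac12 M^{\epsilon}/(M\eta)$, hence $\Lambda_o + \max_i\abs{\Upsilon_i} \prec F(\Psi)$.

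For the crucial average $\sum_k s_{ik} v_k$, I would apply the fluctuation-averaging estimate \eqref{averaging without Q} of Theorem~\ref{thm: averaging} with weights $t_{ik} = s_{ik}$: these satisfy $s_{ik} \leq M^{-1}$ and $\sum_k s_{ik} = 1$, and $T = S$ trivially commutes with $S$. Thus
\begin{equation*}
\sum_k s_{ik} v_k \;=\; O_\prec(\Gamma \Psi^2)\,,
\end{equation*}
and the upper bound in \eqref{condition on Psi 1} converts this to $\Gamma\Psi^2 \leq M^{-\gamma/3}\Psi \leq M^{-\epsilon}\Psi$ since $\epsilon < \gamma/3$. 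Substituting into the rearranged self-consistent equation,
\begin{equation*}
\abs{v_i} \;\leq\; \abs{m G_{ii}} \pB{\abs{\Upsilon_i} + \absB{\sum_k s_{ik} v_k}} \;\prec\; F(\Psi) + M^{-\epsilon}\Psi \;\prec\; F(\Psi)
\end{equation*}
uniformly in $i$, so $\Lambda_d \prec F(\Psi)$; combined with $\Lambda_o \prec F(\Psi)$ this completes the iteration step.

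The main obstacle, and the only place a non-trivial tool enters, is the quadratic bound $O_\prec(\Gamma\Psi^2)$ on $\sum_k s_{ik} v_k$: a naive triangle inequality would only give $O(\Psi)$, which is no improvement on the input. It is precisely the cancellation encoded in the fluctuation averaging of Theorem~\ref{thm: averaging}, combined with the upper bound on $\Psi$ from \eqref{condition on Psi 1}, that produces the contractive factor $M^{-\epsilon}\Psi$ needed for the iteration to converge.
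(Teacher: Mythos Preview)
Your proof is correct and follows the same strategy as the paper: estimate $\Lambda_o$ and $\Upsilon_i$ via Lemma~\ref{lemma: Lambdao} with $\phi = 1$, estimate $\sum_k s_{ik} v_k$ via the fluctuation averaging bound \eqref{averaging without Q}, and combine using Young's inequality to produce the contractive term $M^{-\epsilon}\Psi$. Your use of the exact identity $v_i = mG_{ii}\bigl(\sum_k s_{ik}v_k - \Upsilon_i\bigr)$ in place of the paper's Taylor expansion of \eqref{vself for exp} is a mild simplification, since it sidesteps the auxiliary indicator $\psi = \ind{\Lambda \leq M^{-\gamma/4}}$ that the paper introduces to make the expansion valid pointwise.
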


For the proof of Proposition \ref{prop: optimal simple} we need the following averaging result, which is a simple corollary of Theorem \ref{thm: averaging}. 

\begin{lemma} \label{lem: avg Upsilon}
Suppose that $\Lambda \prec \Psi$ for some deterministic control parameter $\Psi$ satisfying \eqref{admissible Psi}. Then $[\Upsilon] = O_\prec(\Psi^2)$ (recall the definition of the average $[\cdot]$ from \eqref{def average}).
\end{lemma}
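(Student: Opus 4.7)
The plan is to decompose $[\Upsilon] = [A] + [h] - [Z]$ and show that each piece is $O_\prec(\Psi^2)$ separately, with the essential cancellation coming from Theorem~\ref{thm: averaging}.

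First I would handle $[A]$ by a direct pointwise estimate on each $A_i$. Since $\Lambda \prec \Psi$ and $\Psi \leq M^{-c}$, the bound $\abs{G_{ii}} \geq c$ holds with high probability, so $1/G_{ii}$ is harmless. In the sum $A_i = \sum_k s_{ik} G_{ik} G_{ki}/G_{ii}$ I would separate the diagonal contribution $k=i$, which is $s_{ii} G_{ii} = O(M^{-1})$, from the off-diagonal contribution, which is bounded by $\Lambda_o^2 \sum_k s_{ik}/\abs{G_{ii}} = O_\prec(\Psi^2)$ using $\sum_k s_{ik} = 1$. Since $\Psi^2 \geq c M^{-1}$ by \eqref{admissible Psi}, both pieces give $A_i = O_\prec(\Psi^2)$, hence $[A] = O_\prec(\Psi^2)$.

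The key observation for $[h] - [Z]$ is that Schur's formula \eqref{schur} can be rewritten as
\begin{equation*}
Q_i \frac{1}{G_{ii}} \;=\; Q_i h_{ii} - Q_i \sum_{k,l}^{(i)} h_{ik} G_{kl}^{(i)} h_{li} \;=\; h_{ii} - Z_i\,,
\end{equation*}
using $P_i h_{ii} = 0$ and $Q_i z = 0$. Averaging over $i$,
\begin{equation*}
[h] - [Z] \;=\; \frac{1}{N}\sum_i Q_i \frac{1}{G_{ii}}\,.
\end{equation*}
This has precisely the form required by the first bound of \eqref{averaging with Q} in Theorem~\ref{thm: averaging} with weights $t_{ik} = 1/N$. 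These weights trivially satisfy \eqref{condition on weights}, since $1/N \leq 1/M$ by \eqref{lower bound on W} and $\sum_k (1/N) = 1$. Theorem~\ref{thm: averaging} then directly yields $[h] - [Z] = O_\prec(\Psi^2)$.

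Combining the two estimates gives $[\Upsilon] = O_\prec(\Psi^2)$, as claimed. There is no real obstacle here, the proof being essentially bookkeeping once one recognizes that $h_{ii} - Z_i$ is exactly $Q_i(1/G_{ii})$; this identifies the fluctuating part of $\Upsilon$ with a quantity to which the fluctuation averaging theorem applies directly, producing the gain of one power of $\Psi$ over the trivial pointwise bound $\Upsilon_i = O_\prec(\Psi)$.
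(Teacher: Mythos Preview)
Your proof is correct and takes essentially the same approach as the paper. The paper simply writes $\Upsilon_i = A_i + Q_i(1/G_{ii})$ directly (which is exactly your observation that $h_{ii} - Z_i = Q_i(1/G_{ii})$), invokes the trivial bound $\abs{A_i} \prec \Psi^2$, and applies the first estimate of \eqref{averaging with Q} with $t_{ik} = 1/N$; your version spells out the same steps with slightly more detail.
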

\begin{proof}
The claim easily follows from Schur's complement formula \eqref{schur} written in the form
\begin{equation*}
\Upsilon_i \;=\; A_i + Q_i \frac{1}{G_{ii}}\,.
\end{equation*}
We may therefore estimate  $[\Upsilon]$ using the trivial bound $\abs{A_i} \prec \Psi^2$ as well as the fluctuation averaging bound from the first estimate of \eqref{averaging with Q} with $t_{ik} = 1/N$.
\end{proof}

\begin{proof}[Proof of Proposition \ref{prop: optimal simple}]
Suppose that $\Lambda \prec \Psi$ for some deterministic control parameter $\Psi$ satisfying \eqref{condition on Psi 1}.
We invoke Lemma \ref{lemma: Lambdao} with $\phi = 1$ (recall the bound \eqref{varrho geq c}) to get
\begin{equation} \label{Lambdao Z estimate in varrho proof}
\Lambda_o + \abs{Z_i} + \abs{\Upsilon_i} \;\prec\; \sqrt{\frac{\im m + \Lambda}{M \eta}} \;\prec\; \sqrt{\frac{\im m + \Psi}{M \eta}}\,.
\end{equation}
Next, we estimate $\Lambda_d$. Define the $z$-dependent indicator function 
\begin{equation*}
\psi \;\deq\; \ind{\Lambda \leq M^{-\gamma/4}}\,.
\end{equation*}
By \eqref{condition on Psi 1}, \eqref{varrho geq c}, and the assumption $\Lambda \prec \Psi$, we have $1 - \psi \prec 0$. On the event $\{\psi = 1\}$, we expand the right-hand side of \eqref{vself for exp} to get the bound
\begin{equation*}
\psi \abs{v_i} \;\leq\; C \psi \absbb{\sum_k s_{ik} v_k - \Upsilon_i} + C \psi \Lambda^2\,.
\end{equation*}
Using the fluctuation averaging estimate \eqref{averaging without Q} as well as \eqref{Lambdao Z estimate in varrho proof}, we find
\begin{equation} \label{use of FA in short proof}
\psi \abs{v_i} \;\prec\; \Gamma \Psi^2 + \sqrt{\frac{\im m + \Psi}{M \eta}}\,,
\end{equation}
where we again used the lower bound from \eqref{varrho geq c}. 
 Using $1 - \psi \prec 0$ we conclude
\begin{equation} \label{bound for Lambdad 1}
\Lambda_d \;\prec\; \Gamma \Psi^2 + \sqrt{\frac{\im m + \Psi}{M \eta}}\,,
\end{equation}
which, combined with \eqref{Lambdao Z estimate in varrho proof}, yields
\begin{equation} \label{bound Lambda 1}
\Lambda \;\prec\; \Gamma \Psi^2 + \sqrt{\frac{\im m + \Psi}{M \eta}}\,.
\end{equation}
Using Young's inequality and the assumption $\Psi \leq M^{-\gamma/3} \Gamma^{-1}$ we conclude the proof.
\end{proof}

For the remainder of the proof of Theorem \ref{thm: no gap} we work on the spectral domain $\f S$. We claim that if $\Psi$ satisfies \eqref{condition on Psi 1} then so does $F(\Psi)$. The lower bound $F(\Psi) \geq c M^{-1/2}$ is a consequence of the estimate $\im m / \eta \geq c$, which follows from \eqref{lower bound on im msc}. The upper bound $M^{-\gamma/3 - \epsilon} \Gamma^{-1}$ on the first term of $F(\Psi)$ is trivial by assumption on $\Psi$. Moreover, the second term of $F(\Psi)$ satisfies $\sqrt{\im m / (M \eta)} \leq M^{-\gamma} \Gamma^{-2} \leq C M^{-\gamma} \Gamma^{-1} \leq M^{-\gamma/3 - \epsilon} \Gamma^{-1}$ by definition of $\f S$ and the lower bound \eqref{varrho geq c}. Similarly, the last term of $F(\Psi)$ satisfies $M^\epsilon / (M \eta) \leq C M^{\epsilon -\gamma} \Gamma^{-1} \leq M^{-\gamma/3 - \epsilon} \Gamma^{-1}$ by definition of $\f S$.

We may therefore iterate \eqref{iteration step}. This yields a bound on $\Lambda$ that is essentially the fixed point of the map $\Psi \mapsto F(\Psi)$, which is $\Pi$ (up to the factor $M^\epsilon$). More precisely,
the iteration is started with $\Psi_0 \deq M^{-\gamma/3} \Gamma^{-1}$; the initial hypothesis $\Lambda \prec \Psi_0$ is provided by the rough bound from Proposition \ref{prop: rough bound 1}. For $k \geq 1$ we set $\Psi_{k+1} \deq F(\Psi_k)$. Hence from \eqref{iteration step} we conclude that $\Lambda \prec \Psi_k$ for all $k$. Choosing $k \deq \ceil{\epsilon^{-1}}$ yields
\begin{equation*}
\Lambda \;\prec\; \sqrt{\frac{\im m}{M \eta}} + \frac{M^{\epsilon}}{M \eta}\,.
\end{equation*}
Since $\epsilon$ was arbitrary, we have proved that
\begin{equation} \label{Gijest in proof}
\Lambda \;\prec\; \Pi\,,
\end{equation}
which is \eqref{Gijest}.

What remains is to prove \eqref{m-mest}, i.e.\ to estimate $\Theta$. We expand \eqref{vself for exp} on $\{\psi = 1\}$ to get
\begin{equation} \label{expanded vself 2}
\psi m^2 \pbb{-\sum_k s_{ik} v_k + \Upsilon_i} \;=\; - \psi v_i + O(\psi \Lambda^2)\,.
\end{equation}
Averaging in \eqref{expanded vself 2} yields
\begin{equation*}
\psi m^2 \pb{-[v] + [\Upsilon]} \;=\; - \psi [v] + O(\psi \Lambda^2)\,.
\end{equation*}
By \eqref{Gijest in proof} and \eqref{Lambdao Z estimate in varrho proof} with $\Psi=\Pi$, we have $\Lambda + \abs{\Upsilon_i} \prec \Pi$. Moreover, by Lemma \ref{lem: avg Upsilon} we have $\abs{[\Upsilon]} \prec \Pi^2$. Thus we get
\begin{equation*}
\psi [v] \;=\; m^2 \psi [v] + O_\prec(\Pi^2)\,.
\end{equation*}
Since $1 - \psi \prec 0$, we conclude that $[v] = m^2 [v] + O_\prec(\Pi^2)$. Therefore
\begin{equation*}
\abs{[v]} \;\prec\; \frac{\Pi^2}{\abs{1 - m^2}} \;\leq\; \pbb{\frac{\im m}{\abs{1 - m^2}} + \frac{1}{\abs{1 - m^2} M \eta}} \frac{2}{M \eta} \;\leq\; \pbb{C + \frac{\Gamma}{M \eta}} \frac{2}{M \eta} \;\leq\; \frac{C}{M \eta}\,.
\end{equation*}
Here in the third step we used \eqref{1-msquare}, \eqref{lower bound on im msc}, and the bound $\Gamma \geq \abs{1 - m^2}^{-1}$ which follows from the definition of $\Gamma$ by applying the matrix $(1 - m^2 S)^{-1}$ to the vector $\f e = N^{-1/2} (1, 1, \dots, 1)^*$. The last step follows from the definition of $\f S$. Since $\Theta = \abs{[v]}$, this concludes the proof of \eqref{m-mest}, and hence of Theorem \ref{thm: no gap}.

\section{Proof of Theorem \ref{thm: with gap}}\label{sec:withgap}

The key novelty in this proof is that we solve
the self-consistent equation \eqref{vself for exp} separately on the subspace 
of constants (the span of the vector $\f e$) and on its orthogonal complement $\f e^\perp$.
On the space of constant vectors, it becomes a scalar equation for
the average $[v]$, which can be expanded up to second order. 
Near the spectral edges $\pm 2$, the resulting quadratic self-consistent scalar equation
(given in \eqref{vself for avg v} below) is
more effective than its linearized version.
On the space orthogonal to the constants, we still solve
a self-consistent vector equation, but the stability will
now be quantified using $\wt\Gamma$ instead of the larger quantity $\Gamma$.

Accordingly, the main control parameter in this proof is $\Theta = \abs{[v]}$, and the key iterative
scheme (Lemma \ref{lem: iteration for optimal bound} below) is formulated in terms of $\Theta$. However, many intermediate estimates still involve $\Lambda$. In particular, 
the self-consistent equation \eqref{vself for exp} is effective only 
in the regime where $v_i$ is already small. Hence we need two preparatory
steps. In Section \ref{sec: rough bound with gap} we will prove an apriori
bound on $\Lambda$, essentially showing that $\Lambda \ll 1$. This
proof itself is a continuity argument (see Figure \ref{fig: gap} for a graphical illustration) similar to the proof of 
Proposition \ref{prop: rough bound 1}; now, however, we have to follow $\Lambda$ and 
$\Theta$ in tandem. The main reason why $\Theta$ is already involved in
this part is that we work in larger spectral domain $\wt {\f S}$ defined using
$\wt\Gamma$. Thus, already in this preparatory step,
 the self-consistent equation has to be solved separately 
on the subspace of constants and its orthogonal complement.

In Section \ref{sec:lambdatheta}, we control $\Lambda$
in terms of $\Theta$, which allows us to obtain a self-consistent equation involving only $\Theta$.
In this step we use the Fluctuation Averaging Theorem to obtain a quadratic estimate which, very roughly, states that $\Lambda \lesssim \Theta + \Lambda^2$ (see \eqref{quadr} below for the
precise statement). This implies $\Lambda \lesssim \Theta$ in the regime $\Lambda \ll 1$.

Finally, in Section \ref{sec:itertheta}, we solve the quadratic iteration
for $\Theta$. Since the corresponding quadratic equation has a dichotomy
and for large $\eta=\im z$ we know that $\Theta$ is small by direct expansion,
a continuity argument similar to the proof of  Proposition \ref{prop: rough bound 1} will complete the
proof.

\subsection{A rough bound on $\Lambda$} \label{sec: rough bound with gap}

In this section
we prove the following apriori bounds
 on both control parameters, $\Lambda$ and $\Theta$.

\begin{proposition} \label{prop: rough bound with gap}
In $\wt {\f S}$ we have the bounds
\begin{equation*}
\Lambda \;\prec\; M^{-\gamma/4} \wt \Gamma^{-1}\,, \qquad \Theta \;\prec\; (M \eta)^{-1/3}\,.
\end{equation*}
\end{proposition}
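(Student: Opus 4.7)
The plan is a continuity argument in $\eta = \im z$ analogous to the proof of Proposition~\ref{prop: rough bound 1}, but tracking \emph{two} coupled control parameters $\Lambda$ and $\Theta$ in tandem. On the larger spectral domain $\wt{\f S}$, the operator $1 - m^2 S$ must be inverted separately on the constant subspace spanned by $\f e$ and on its orthogonal complement $\f e^\perp$, since only the restriction to $\f e^\perp$ is controlled by the smaller quantity $\wt \Gamma$. The constant component produces a scalar self-consistent equation whose unknown is $\Theta$, while the $\f e^\perp$-component controls the deviation $\Lambda - \Theta$.

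The core ingredient is a gap lemma for the pair $(\Lambda, \Theta)$, in the spirit of Lemma~\ref{lem: bootstrap 1}. Set $\phi \deq \ind{\Lambda \leq M^{-\gamma/8} \wt \Gamma^{-1}}$. Expanding the right-hand side of \eqref{vself for exp} to first order in $v_i$ and multiplying by $m^2$, the $i$-th entry of the resulting vector equation reads
\begin{equation*}
\phi \qb{(1 - m^2 S) \f v}_i \;=\; \phi \pb{-m^2 \Upsilon_i + r_i}\,,
\end{equation*}
with $\phi \max_i \abs{r_i} \prec \Lambda^2$. Writing $\f v = [v] \f 1 + \f w$ with $[\f w] = 0$ and projecting onto $\f e^\perp$ yields, via \eqref{def:rhohat} and Lemma~\ref{lemma: Lambdao},
\begin{equation*}
\phi \max_i \abs{v_i - [v]} \;\prec\; \wt \Gamma \pbb{\sqrt{\frac{\im m + \Lambda}{M \eta}} + \Lambda^2}\,.
\end{equation*}
The definition of $\wt{\f S}$ combined with the constraint $\phi \Lambda \leq M^{-\gamma/8} \wt \Gamma^{-1}$ bounds the right-hand side by $M^{-\gamma/2} \wt \Gamma^{-1}$, which together with the $\Lambda_o$ estimate of \eqref{Lambdao 1} produces the first half of the gap, $\phi \Lambda \prec \Theta + M^{-\gamma/2} \wt \Gamma^{-1}$.

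For the second half I would expand \eqref{vself for exp} to second order in $v_i$ and average in $i$ to obtain the scalar self-consistent equation $(1 - m^2)[v] = -m^2[\Upsilon] + m^{-1}[v^2] + O_\prec(\Lambda^3)$. The fluctuation averaging bound \eqref{averaging with Q} with $t_{ik} = N^{-1}$, together with the trivial estimate $[A] = O_\prec(\Lambda_o^2)$, yields $[\Upsilon] = O_\prec(\Lambda^2)$ on $\{\phi = 1\}$; the quadratic term satisfies $[v^2] \leq \Lambda_d^2$. Using \eqref{1-msquare} to estimate $\abs{1-m^2} \asymp \sqrt{\kappa + \eta}$ and eliminating $\Lambda$ via the first half of the gap, one arrives at a quadratic inequality of the form
\begin{equation*}
\phi \sqrt{\kappa + \eta}\, \Theta \;\prec\; \Theta^2 + M^{-\gamma} \wt \Gamma^{-2}\,,
\end{equation*}
whose two roots are well separated. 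The smaller branch, combined with the lower bound on $\eta$ enforced by $\wt{\f S}$, delivers $\phi \Theta \prec (M\eta)^{-1/3}$; the cube-root exponent reflects the worst case at the spectral edges, where $\sqrt{\kappa + \eta}$ degenerates to $\sqrt{\eta}$ and one balances $\Theta^2 \sim M^{-\gamma} \wt \Gamma^{-2}$ against the constraint $\wt \Gamma^3 \gtrsim M^\gamma / (M \eta)$ built into the definition \eqref{def wt eta E} of $\wt{\f S}$.

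With the gap lemma in place, the proof concludes exactly as in Section~\ref{sec: simple proof}: the initial estimate at $\eta = 2$ from Lemma~\ref{lem: initial estimate} places both $\Lambda$ and $\Theta$ inside the smaller branch of the gap, and joint continuity of $(\Lambda, \Theta)$ in $z$, combined with a union bound over a polynomial-size mesh of $\wt{\f S}$, transports the control to every $z \in \wt{\f S}$. The main obstacle is the coupled nature of the gap lemma: each of $\Lambda$ and $\Theta$ appears on the right-hand side of the bound for the other, and disentangling them requires the quadratic analysis of the averaged equation, which is effective here precisely because $\wt \Gamma$ (rather than $\Gamma$) governs the inversion on $\f e^\perp$ and remains bounded up to logarithmic factors throughout $\wt{\f S}$, including near the spectral edges $\pm 2$.
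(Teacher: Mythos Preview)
Your proposal has the right architecture --- the separation of the constant subspace from $\f e^\perp$, the resulting scalar equation for $[v]$, and the continuity argument --- but there are two genuine gaps.

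First, the invocation of fluctuation averaging \eqref{averaging with Q} to obtain $[\Upsilon] = O_\prec(\Lambda^2)$ is circular. Theorem~\ref{thm: averaging} requires a \emph{deterministic} a priori bound $\Lambda \prec \Psi$ with $\Psi$ satisfying \eqref{admissible Psi}; at the gap-lemma stage you only have the conditional inequality $\phi \Lambda \leq M^{-\gamma/8}\wt\Gamma^{-1}$, which is not of the form $\Lambda \prec \Psi$. The paper therefore does \emph{not} use fluctuation averaging in establishing the rough bound: it estimates $[\Upsilon]$ by the trivial $\max_i \abs{\Upsilon_i} \prec r(\Lambda)$ (Lemma~\ref{lem: vself with gap}), and the resulting scalar equation \eqref{vself for avg v} carries the larger error $q(\Theta)$ rather than your quadratic $M^{-\gamma}\wt\Gamma^{-2}$. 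Fluctuation averaging enters only later, in Sections~\ref{sec:lambdatheta}--\ref{sec:itertheta}, once Proposition~\ref{prop: rough bound with gap} is already in hand.

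Second, with the correct error $q(\Theta)$ the analysis of the scalar equation does not reduce to a single quadratic with well-separated roots. The paper partitions $\wt{\f S} = \wt{\f S}_> \cup \wt{\f S}_\leq$ according to whether $\sqrt{\kappa + \eta}$ dominates $M^\epsilon(M\eta)^{-1/3}$ (Lemmas~\ref{lem: gap in S >} and \ref{lem: gap leq}). In $\wt{\f S}_>$ one needs an \emph{additional} indicator $\chi = \ind{\Theta \leq M^{\epsilon/2}(M\eta)^{-1/3}}$ so that the quadratic term $m^{-1}[v]^2$ in \eqref{vself for avg v} can be absorbed into the linear one; the continuity argument then bootstraps both $\phi$ and $\chi$ in tandem. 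In $\wt{\f S}_\leq$ one instead solves the quadratic directly, with $\abs{1-m^2}$ itself treated as an error bounded by $M^\epsilon(M\eta)^{-1/3}$. Your single displayed inequality hides this dichotomy, and the cube-root exponent does not follow from it as stated.

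Finally, your closing remark that $\wt\Gamma$ ``remains bounded up to logarithmic factors throughout $\wt{\f S}$, including near the spectral edges'' is not true in general: by \eqref{bound for Gamma tilde}, $\wt\Gamma$ can grow like $\theta^{-1}$ near $\pm 2$ when the gap $\delta_+$ is small (for instance for band matrices with $W \ll L$). The whole point of the domain $\wt{\f S}$ is to accommodate this growth, and the quadratic (rather than linear) treatment of the averaged self-consistent equation is exactly what buys the extra room at the edge.
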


Before embarking on the proof of Proposition \ref{prop: rough bound with gap}, we
state some preparatory lemmas. First, we
 derive the key equation for $[v] = N^{-1} \sum_i v_i$, the average of $v_i$.

\begin{lemma} \label{lem: vself with gap}
Define the $z$-dependent indicator function
\begin{equation} \label{def phi with gap}
\phi \;\deq\; \ind{\Lambda \leq M^{-\gamma/4} \wt \Gamma^{-1}}
\end{equation}
and the random control parameter
\begin{equation*}
q(\Theta) \;\deq\; \sqrt{\frac{\im m + \Theta}{M \eta}} + \frac{\wt \Gamma}{M \eta}\,.
\end{equation*}
Then we have
\begin{equation} \label{vself for avg v}
\phi \pB{(1 - m^2) [v] - m^{-1} [v]^2} \;=\; \phi \, O_\prec \pb{q(\Theta) + M^{-\gamma/4} \Theta^2}
\end{equation}
and
\begin{equation} \label{Lambda estimated in terms of Theta}
\phi \Lambda \;\prec\; \Theta + \wt \Gamma \, q(\Theta)\,.
\end{equation}
\end{lemma}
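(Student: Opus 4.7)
The plan is to derive both \eqref{vself for avg v} and \eqref{Lambda estimated in terms of Theta} from a common vector form of the self-consistent equation \eqref{vself for exp}, by splitting it into its projection onto the constant direction $\f e$ and its projection onto $\f e^\perp$. On $\{\phi = 1\}$, the lower bound $\wt\Gamma \geq c$ from \eqref{varrho geq c} gives $|v_i| \leq M^{-\gamma/4}\wt\Gamma^{-1} = o(1)$, so I may Taylor expand $(m+v_i)^{-1}-m^{-1} = -m^{-2}v_i + m^{-3}v_i^2 + O(\Lambda^3)$, substitute into \eqref{vself for exp}, and multiply by $-m^2$ (using $|m|\asymp 1$ from \eqref{m is bounded}) to obtain
\begin{equation*}
\phi\,\bigl[(1-m^2 S)\f v\bigr]_i \;=\; \phi\bigl(-m^2 \Upsilon_i + m^{-1}v_i^2 + O(\Lambda^3)\bigr). \qquad (\star)
\end{equation*}

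For \eqref{Lambda estimated in terms of Theta}, I subtract from $(\star)$ its average $[\cdot]$ in $i$: since $S$ is doubly stochastic, $[S\f v] = [v]$, and since $S$ preserves $\f e^\perp$, both sides of the resulting identity lie in $\f e^\perp$. Inverting $(1-m^2 S)$ restricted to $\f e^\perp$, whose $\ell^\infty\to\ell^\infty$ norm is $\wt\Gamma$ by \eqref{def:rhohat}, yields
\begin{equation*}
\phi\max_i |v_i - [v]| \;\prec\; \wt\Gamma\cdot\phi\max_i\bigl(|\Upsilon_i - [\Upsilon]| + \Lambda^2\bigr).
\end{equation*}
Bounding $\phi|\Upsilon_i| \prec \sqrt{(\im m + \Lambda)/(M\eta)}$ via Lemma \ref{lemma: Lambdao} (and hence $|\Upsilon_i - [\Upsilon]| \leq 2\max_j |\Upsilon_j|$ by the same bound), together with $\phi\Lambda^2 \leq M^{-\gamma/4}\wt\Gamma^{-1}\Lambda$, and combining with $\phi\Lambda_d \leq \Theta + \phi\max_i|v_i - [v]|$ and the Lambdao bound from Lemma \ref{lemma: Lambdao}, I absorb the surviving factors of $\phi\Lambda$ on the right via Young's inequality (using $\phi\Lambda \leq M^{-\gamma/4}\wt\Gamma^{-1}$), which produces $\phi\Lambda \prec \Theta + \wt\Gamma q(\Theta)$.

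For \eqref{vself for avg v}, I average $(\star)$ over $i$; the identity $[v^2] = [v]^2 + [(\f v - [v]\f 1)^2]$ separates off the $m^{-1}[v]^2$ contribution, leaving
\begin{equation*}
\phi\bigl((1-m^2)[v] - m^{-1}[v]^2\bigr) \;=\; \phi\bigl(-m^2 [\Upsilon] + m^{-1} [(\f v-[v]\f 1)^2] + O(\Lambda^3)\bigr).
\end{equation*}
The $[\Upsilon]$ term is controlled by fluctuation averaging: writing $\Upsilon_i = A_i + Q_i G_{ii}^{-1}$ (as in Lemma \ref{lem: avg Upsilon}), with $|A_i| \prec \Lambda_o^2$ pointwise and applying Theorem \ref{thm: averaging with Lambdao} with $t_{ik} = 1/N$ to the $Q$-part, gives a bound of order $\Psi_o^2 \asymp (\im m + \Lambda)/(M\eta)$. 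The variance term is bounded by $(\max_i|v_i-[v]|)^2 \prec (\wt\Gamma q(\Theta))^2$ via \eqref{Lambda estimated in terms of Theta}, while $\phi\Lambda^3 \leq M^{-\gamma/4}\wt\Gamma^{-1}\Lambda^2 \leq M^{-\gamma/4}\wt\Gamma^{-1}(\Theta + \wt\Gamma q(\Theta))^2$. Using the definition of $\wt{\f S}$ (which ensures that the quadratic factors $\wt\Gamma q(\Theta)^2$ are absorbed into $q(\Theta)$), these bounds combine to the target $O_\prec(q(\Theta) + M^{-\gamma/4}\Theta^2)$.

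The main obstacle I foresee is the tight bookkeeping needed to extract the precise form of $q(\Theta)$ — in particular, the $\wt\Gamma/(M\eta)$ term inside $q(\Theta)$ requires invoking the sharper Theorem \ref{thm: averaging with Lambdao} rather than Theorem \ref{thm: averaging}, so that the fluctuation averaging cost is of order $(\im m + \Lambda)/(M\eta)$ rather than the cruder $\Lambda^2$; without this improvement one cannot close the argument on the sharper spectral domain $\wt{\f S}$. The apparent circularity — each of \eqref{vself for avg v} and \eqref{Lambda estimated in terms of Theta} feeds into the other — is resolved by the apriori smallness $\phi\Lambda \leq M^{-\gamma/4}\wt\Gamma^{-1}$ encoded in $\phi$, which makes $\Lambda^3$ a genuinely subleading perturbation of $\Lambda^2$ and allows the two estimates to be derived in tandem.
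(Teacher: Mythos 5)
Your overall route — Taylor expanding \eqref{vself for exp} on $\{\phi=1\}$, splitting the resulting vector equation into the direction of $\f e$ and its orthogonal complement, inverting $1-m^2S$ on $\f e^\perp$ with norm $\wt\Gamma$ to get \eqref{Lambda estimated in terms of Theta}, and then averaging the second-order expansion to get \eqref{vself for avg v} — is exactly the paper's, and the bookkeeping you describe (absorbing $\wt\Gamma\Lambda^2\leq M^{-\gamma/4}\Lambda$, converting $r(\Lambda)=\sqrt{(\im m+\Lambda)/(M\eta)}$ into $q(\Theta)$ by Young's inequality, and checking $\wt\Gamma^2 r(\Lambda)\lesssim 1$ on $\wt{\f S}$) matches the paper's estimates.

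There is, however, one genuine problem: your treatment of $[\Upsilon]$ via Theorem \ref{thm: averaging with Lambdao}. That theorem requires the hypothesis $\Lambda\prec\Psi$ for a \emph{deterministic} control parameter $\Psi\leq M^{-c}$, i.e.\ a global stochastic domination. At the point where this lemma is used — inside the continuity argument for Proposition \ref{prop: rough bound with gap} — no such bound is available; all you have is the event-restricted inequality $\phi\Lambda\leq M^{-\gamma/4}\wt\Gamma^{-1}$, which is precisely why the lemma is phrased with the indicator $\phi$ and why Lemma \ref{lemma: Lambdao} (unlike the fluctuation averaging theorems) is stated for indicator functions. So the step is not licensed by the hypotheses of Theorem \ref{thm: averaging with Lambdao}. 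Fortunately it is also unnecessary: since the target in \eqref{vself for avg v} is \emph{linear} in $q(\Theta)$, the trivial bound $\abs{[\Upsilon]}\leq\max_i\abs{\Upsilon_i}\prec r(\Lambda)\prec q(\Theta)$ from Lemma \ref{lemma: Lambdao} suffices, and this is what the paper uses. Your closing remark that the sharper fluctuation-averaging bound is ``essential'' to close the argument on $\wt{\f S}$ misplaces where that mechanism is actually needed: it enters only later, in Lemma \ref{lem:psquare}, where the error must be \emph{quadratic} in $p(\Phi)$ and where, by that stage, a genuine deterministic bound $\Lambda\prec\Psi$ is available from Proposition \ref{prop: rough bound with gap}. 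Replacing your fluctuation-averaging step by the trivial bound repairs the proof and reduces it to the paper's argument.
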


\begin{proof}
For the whole proof we work on the event $\{\phi = 1\}$, i.e.\ every quantity is multiplied by $\phi$. We consistently drop these factors $\phi$ from our notation in order to avoid cluttered expressions. In particular, $\Lambda\leq CM^{-\gamma/4}$ throughout the proof.

We begin by estimating $\Lambda_o$ and $\Lambda_d$ in terms of $\Theta$. 
Recalling \eqref{varrho geq c}, we find that $\phi$ satisfies the hypotheses of Lemma \ref{lemma: Lambdao}, from which we get
\begin{equation}\label{Lambdao}
\Lambda_o + \abs{\Upsilon_i} \;\prec\; r(\Lambda)\,, \qquad r(\Lambda) \;\deq\; \sqrt{\frac{\im m + \Lambda}{M \eta}}\,.
\end{equation}
In order to estimate $\Lambda_d$, we expand the self-consistent equation \eqref{vself for exp} (on the event $\{\phi = 1\}$) to get
\begin{equation}\label{exp1}
v_i- m^2 \sum_k s_{ik} v_k \;=\; O_\prec\pb{\Lambda^2 + r(\Lambda)}\,;
\end{equation}
here we used the bound \eqref{Lambdao} on $|\Upsilon_i|$.
Next, we subtract the average $N^{-1} \sum_i$ from each side to get
\begin{equation*}
(v_i - [v])- m^2 \sum_k s_{ik} (v_k - [v]) \;=\; O_\prec\pb{\Lambda^2 + r(\Lambda)}\,.
\end{equation*}
Note that the average of the left-hand side vanishes, so that the average of the right-hand side also vanishes. Hence the right-hand side is perpendicular to  $\f e$. Inverting the operator $1 - m^2 S$ on the subspace $\f e^\perp$ therefore yields
\begin{equation} \label{vi - avg v bound}
\absb{v_i - [v] } \;\prec\; \wt \Gamma \pb{\Lambda^2 + r(\Lambda)}\,.
\end{equation}
Combining with the bound $\Lambda_o\prec r(\Lambda)$ 
from \eqref{Lambdao}, we therefore get 
\begin{equation} \label{Lambda step 1}
\Lambda \;\prec\; \Theta + \wt \Gamma \Lambda^2 + \wt \Gamma r(\Lambda)\,.
\end{equation}
By definition of $\phi$ we have $\wt \Gamma \Lambda^2 \leq M^{-\gamma/4} \Lambda$, so that  by Lemma \ref{lemma: basic properties of prec} (iii) 
 the second term on the right-hand side of \eqref{Lambda step 1} may be absorbed into the left-hand side:
\begin{equation} \label{Lambda step 2}
\Lambda \;\prec\; \Theta + \wt \Gamma r(\Lambda)\,.
\end{equation}
Now we claim that
\begin{equation} \label{p Lambda p Theta}
r(\Lambda) \;\prec\; q(\Theta)\,.
\end{equation}
If \eqref{p Lambda p Theta} is proved, clearly \eqref{Lambda estimated in terms of Theta} follows from \eqref{Lambda step 2}. In order to prove \eqref{p Lambda p Theta}, we use
 \eqref{Lambda step 2} and the Cauchy-Schwarz inequality to get
\begin{equation*}
r(\Lambda) \;\leq\; \sqrt{\frac{\im m}{M \eta}} + \sqrt{\frac{\Lambda}{M \eta}} \;\prec\; \sqrt{\frac{\im m}{M \eta}} + \sqrt{\frac{\Theta}{M \eta}} + \sqrt{\frac{\wt \Gamma \, r(\Lambda)}{M \eta}} \;\leq\; \sqrt{\frac{\im m}{M \eta}} + \sqrt{\frac{\Theta}{M \eta}} + M^{-\epsilon} r(\Lambda) + M^\epsilon \frac{\wt \Gamma}{M \eta}
\end{equation*}
for any $\epsilon > 0$. We conclude that
\begin{equation*}
r(\Lambda)  \;\prec\; \sqrt{\frac{\im m}{M \eta}} + \sqrt{\frac{\Theta}{M \eta}} +  M^\epsilon \frac{\wt \Gamma}{M \eta}\,.
\end{equation*}
Since $\epsilon > 0$ was arbitrary, \eqref{p Lambda p Theta} follows.

Next, we estimate $\Theta$. We expand \eqref{vself for exp} to second order:
\begin{equation} \label{selfc expanded deg 2}
-\sum_k s_{ik} v_k + \Upsilon_i \;=\; - \frac{1}{m^2} v_i + \frac{1}{m^3} v_i^2 + O(\Lambda^3)\,.
\end{equation}
In order to take the average and get a closed equation for $[v]$, we write, using \eqref{vi - avg v bound},
\begin{equation*}
v_i^2 \;=\; \pb{[v] + v_i - [v]}^2 \;=\; [v]^2 + 2 [v] (v_i - [v]) + O_\prec \pB{\wt \Gamma^2 \pb{\Lambda^2 + r(\Lambda)}^2}\,.
\end{equation*}
Plugging this back into \eqref{selfc expanded deg 2} and taking the average over $i$ gives
\begin{equation*}
-m^2 [v] + m^2 [\Upsilon] \;=\; -[v] + m^{-1} [v]^2  + O_\prec \pB{\Lambda^3 + \wt \Gamma^2 \Lambda^4 + \wt \Gamma^2 r(\Lambda)^2}\,.
\end{equation*}
Estimating  $[\Upsilon]$ by $\max \abs{\Upsilon_i} \prec r(\Lambda)$ (recall \eqref{Lambdao}) yields
\begin{equation*}
(1 - m^2) [v] - m^{-1} [v]^2 \;=\; O_\prec \pB{r(\Lambda) + \Lambda^3 + \wt \Gamma^2 \Lambda^4 + \wt \Gamma^2 r(\Lambda)^2}\,.
\end{equation*}
By definitions of $\wt {\f S}$ and $\phi$, we have $\wt \Gamma^2 r(\Lambda) \leq 1$. Therefore we may absorb the last error term into the first.
For the second and third error terms we use \eqref{Lambda step 2} to get
\begin{equation*}
(1 - m^2) [v] - m^{-1} [v]^2 \;=\; O_\prec \pB{r(\Lambda) + \Theta^3 + \wt \Gamma^3 r(\Lambda)^3 + \wt \Gamma^2 \Theta^4 + \wt \Gamma^6 r(\Lambda)^4}\,.
\end{equation*}
In order to conclude the proof of \eqref{vself for avg v}, we observe that, by the estimates $\Theta \leq \Lambda \leq C M^{-\gamma/4}$, $\wt \Gamma^2 r(\Lambda) \leq 1$, and $\Lambda \leq M^{-\gamma/4} \wt \Gamma^{-1}$, we have 
\begin{equation*}
\Theta^3 \;\leq\; C M^{-\gamma/4} \Theta^2 \,, \qquad \wt \Gamma^3 r(\Lambda)^3 \;\leq\;
 r(\Lambda)\,, \qquad 
\wt \Gamma^2 \Theta^4 \;\leq\; \wt \Gamma^2 \Lambda^2 \Theta^2 \;\leq\; M^{-\gamma/2} \Theta^2\,, \qquad \wt \Gamma^6 r(\Lambda)^4 \;\leq\; r(\Lambda)\,.
\end{equation*}
Putting everything together, we have
\begin{equation*}
(1 - m^2) [v] - m^{-1} [v]^2 \;=\; O_\prec \pb{r(\Lambda) + M^{-\gamma/4} \Theta^2}\,.
\end{equation*}
Hence \eqref{vself for avg v} follows from \eqref{p Lambda p Theta}.
\end{proof}

Next, we establish a bound analogous to Lemma \ref{lem: bootstrap 1}, establishing gaps in the ranges of $\Lambda$ and $\Theta$. To that end, we need to partition $\wt {\f S}$ in two. For the following we fix $\epsilon \in (0,\gamma/12)$ and partition $\wt {\f S} = \wt {\f S}_> \cup \wt {\f S}_\leq$, where
\begin{equation*}
\wt {\f S}_> \;\deq\; \hb{z \in \wt {\f S} \col \sqrt{\kappa + \eta} > M^{\epsilon} (M \eta)^{-1/3}}\,,
 \qquad \wt {\f S}_\leq \;\deq\; \hb{z \in \wt {\f S} \col \sqrt{\kappa + \eta} \leq M^{\epsilon} (M \eta)^{-1/3}}.
\end{equation*}
The bound relies on \eqref{vself for avg v}, whereby one of the two terms on the left-hand side of \eqref{vself for avg v} is estimated in terms of all the other terms, which are regarded as an error. In $\wt {\f S}_>$ we shall estimate the first term on the left-hand side of \eqref{vself for avg v}, and in $\wt {\f S}_\leq$ the second. Figure \ref{fig: gap} summarizes the estimates on $\Theta$ of Lemma \ref{lem: gap in S >} and \ref{lem: gap leq}.
\begin{figure}[ht!]
\begin{center}
\includegraphics{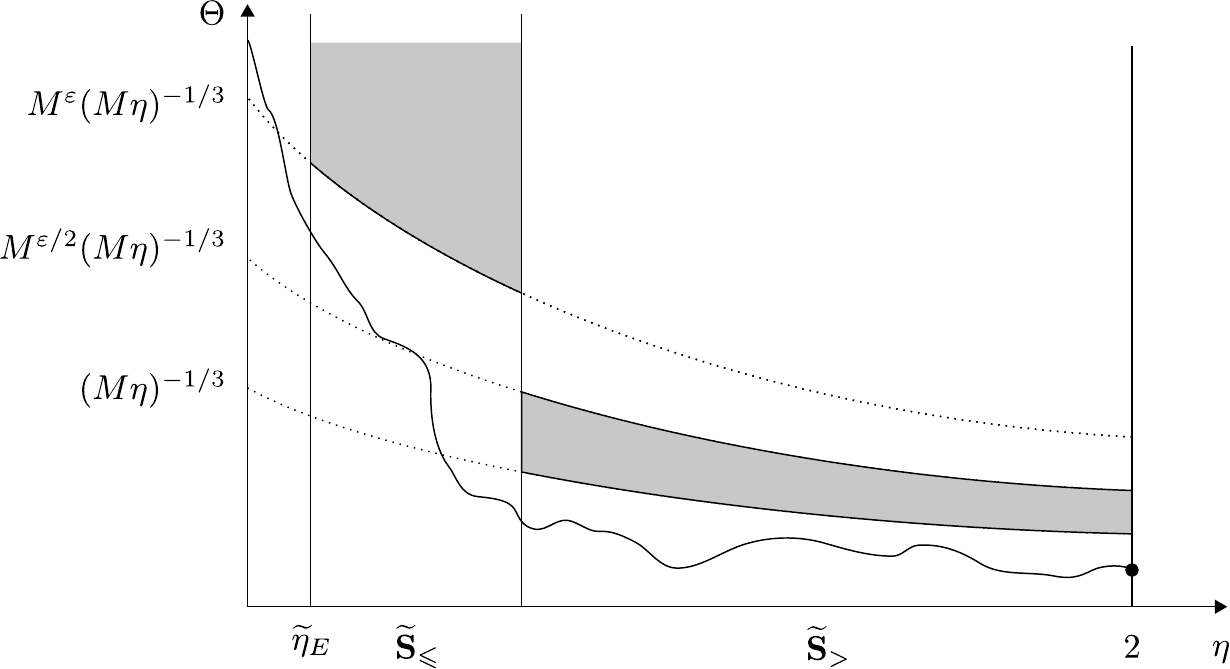}
\end{center}
\caption{The $(\eta, \Theta)$-plane for a fixed $E$ near the edge (i.e.\ with small $\kappa$). The shaded regions are forbidden with high probability by Lemmas \ref{lem: gap in S >} and \ref{lem: gap leq}. The initial estimate, given by Lemma \ref{lem: initial estimate}, is marked with a black dot. The graph of $\Theta = \Theta(E + \ii \eta)$ is continuous, and hence lies beneath the shaded regions. \label{fig: gap}}
\end{figure}

We begin with the domain $\wt {\f S}_>$. In this domain, the following lemma roughly says that if $\Theta \leq 
 M^{\epsilon/2} (M \eta)^{-1/3}$ 
and $\Lambda \leq M^{-\gamma/4} \wt\Gamma^{-1}$ then we get the improved bounds
$\Theta \prec  (M \eta)^{-1/3}$,  $\Lambda\prec M^{-\gamma/3} \wt\Gamma^{-1}$,
i.e.\ we gain a small power of $M$.
These improvements will be fed into the continuity argument
as before.

\begin{lemma} \label{lem: gap in S >}
Let $\epsilon \in (0,\gamma/12)$. Define the $z$-dependent indicator function
\begin{equation*}
\chi \;\deq\; \indB{\Theta \leq M^{\epsilon/2} (M \eta)^{-1/3}}
\end{equation*}
and recall the indicator function $\phi$ from \eqref{def phi with gap}.
In $\wt {\f S}_>$ we have the bounds
\begin{equation} \label{gap in S >}
\phi \chi \, \Theta \;\prec\; (M \eta)^{-1/3}\,, \qquad \phi \chi \, \Lambda \;\prec\; M^{-\gamma/3} \wt \Gamma^{-1}\,.
\end{equation}
\end{lemma}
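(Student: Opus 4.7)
The plan is to combine the two conclusions of Lemma~\ref{lem: vself with gap}: the scalar equation \eqref{vself for avg v} that controls $\Theta = \abs{[v]}$, and the pointwise bound \eqref{Lambda estimated in terms of Theta} that controls $\Lambda$ in terms of $\Theta$. In the regime $\wt{\f S}_>$ the coefficient $(1-m^2)$ of the linear term in \eqref{vself for avg v} is large (by \eqref{1-msquare}, $\abs{1-m^2} \asymp \sqrt{\kappa+\eta} > M^\epsilon (M\eta)^{-1/3}$), so I would treat \eqref{vself for avg v} as a linear equation for $[v]$ with the quadratic term absorbed as a small correction, and then read off $\Theta$ by dividing through by $\sqrt{\kappa+\eta}$.

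Concretely, first I would multiply \eqref{vself for avg v} by $\phi\chi$. The cutoff $\chi$ gives $\abs{m^{-1}[v]^2} \leq C M^{\epsilon/2} (M\eta)^{-1/3} \Theta$, and dividing by $\abs{1-m^2} > M^\epsilon (M\eta)^{-1/3}$ turns this into $C M^{-\epsilon/2} \Theta$, which is absorbed into the left-hand side by Lemma~\ref{lemma: basic properties of prec}(iii). The same $\sqrt{\kappa+\eta}$-division applied to $M^{-\gamma/4}\Theta^2$ produces $M^{-\gamma/4-\epsilon/2}\Theta$, again absorbable. What remains is to bound $q(\Theta)/\sqrt{\kappa+\eta}$ by $(M\eta)^{-1/3}$ up to $N^\epsilon$ factors. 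I would do this termwise: the piece $\sqrt{\im m/(M\eta)}/\sqrt{\kappa+\eta}$ is controlled using $\im m \lesssim \sqrt{\kappa+\eta}$ (from \eqref{lower bound on im msc}) together with the hypothesis $\sqrt{\kappa+\eta} > M^\epsilon (M\eta)^{-1/3}$; the piece $\sqrt{\Theta/(M\eta)}/\sqrt{\kappa+\eta}$ uses $\chi$ to insert $\Theta \leq M^{\epsilon/2}(M\eta)^{-1/3}$; and the piece $\wt\Gamma/((M\eta)\sqrt{\kappa+\eta})$ is handled using $\wt\Gamma \leq M^{-\gamma/3}(M\eta)^{1/3}$, which follows from the definition \eqref{def wt eta E} of $\wt\eta_E$. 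Each of these three terms turns out to be bounded by $M^{-c}(M\eta)^{-1/3}$ for some $c > 0$, proving the first bound in \eqref{gap in S >}.

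For the second bound, I would feed the just-proven estimate $\phi\chi\Theta \prec (M\eta)^{-1/3}$ into \eqref{Lambda estimated in terms of Theta}, giving $\phi\chi\Lambda \prec (M\eta)^{-1/3} + \wt\Gamma\, q((M\eta)^{-1/3})$. The term $(M\eta)^{-1/3}$ is at most $M^{-\gamma/3}\wt\Gamma^{-1}$, since $\wt\Gamma \leq M^{-\gamma/3}(M\eta)^{1/3}$. For $\wt\Gamma\sqrt{\im m/(M\eta)}$ I would use the definition of $\wt\eta_E$ in the form $\im m/(M\eta) \leq M^{-2\gamma}\wt\Gamma^{-4}$, giving $\wt\Gamma\sqrt{\im m/(M\eta)} \leq M^{-\gamma}\wt\Gamma^{-1}$. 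For $\wt\Gamma\sqrt{\Theta/(M\eta)}$ I would insert $\Theta \prec (M\eta)^{-1/3}$ and the bound on $\wt\Gamma$ to obtain $M^{-\gamma/3}(M\eta)^{-1/3} \leq M^{-\gamma/3}\wt\Gamma^{-1}$. Finally, $\wt\Gamma\cdot\wt\Gamma/(M\eta) \leq M^{-\gamma}\wt\Gamma^{-1}$ by the first clause of \eqref{def wt eta E}.

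The main difficulty I anticipate is purely bookkeeping: one must simultaneously track the two regimes $\abs{E}\leq 2$ and $\abs{E}>2$ (where \eqref{lower bound on im msc} gives different asymptotics of $\im m$), match the various powers of $(M\eta)$, $\sqrt{\kappa+\eta}$, and $\wt\Gamma$ against the two defining clauses of $\wt\eta_E$, and verify that the $M^{\epsilon/2}$ and $M^{-\gamma/4}$ factors from $\chi$ and from the quadratic remainder in \eqref{vself for avg v} always win against an $M^\epsilon$ provided by $\wt{\f S}_>$. No new idea beyond Lemma~\ref{lem: vself with gap} and the defining inequalities of $\wt{\f S}$ is needed; the delicate point is simply to choose $\epsilon$ small enough relative to $\gamma$ throughout the argument.
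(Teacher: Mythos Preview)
Your proposal is correct and follows essentially the same approach as the paper's proof: absorb the quadratic term and the $M^{-\gamma/4}\Theta^2$ error into the linear term $(1-m^2)[v]$ using the cutoffs $\chi$ and $\wt{\f S}_>$, divide by $\abs{1-m^2}\asymp\sqrt{\kappa+\eta}$, and bound $q(\Theta)/\sqrt{\kappa+\eta}$ termwise; then feed the resulting $\Theta$-bound into \eqref{Lambda estimated in terms of Theta}. Two small remarks: your worry about the two regimes $\abs{E}\lessgtr 2$ is unnecessary, since the inequality $\im m \leq C\sqrt{\kappa+\eta}$ from \eqref{lower bound on im msc} holds uniformly in both; and for the term $\wt\Gamma\sqrt{\Theta/(M\eta)}$ the paper instead uses the cruder bound $\phi\wt\Gamma\Theta \leq \phi\wt\Gamma\Lambda \leq 1$ coming directly from the indicator $\phi$, which avoids reinserting the just-proven $\Theta$-estimate --- but your route works equally well.
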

\begin{proof}
From the definition of $\wt {\f S}_>$ and \eqref{1-msquare} 
we  get
\begin{equation*}
\phi \chi \, \abs{[v]} \;=\; \phi \chi \, \Theta \;\leq\; M^{\epsilon/2} (M \eta)^{-1/3} \;\leq\; M^{-\epsilon/2} \sqrt{\kappa + \eta} \;\leq\; C M^{-\epsilon/2} \abs{1 - m^2}\,.
\end{equation*}
Therefore, on the event $\{\phi \chi = 1\}$, in \eqref{vself for avg v} we may absorb the second term on the left-hand side and the second term on the right-hand side into the first term on the left-hand side:
\begin{equation*}
\phi \chi \, (1 - m^2) [v] \;=\; \phi O_\prec \pb{q(\Theta)}\,.
\end{equation*}
Recalling $|1-m^2|\asymp \sqrt{\kappa+\eta}$ (see \eqref{1-msquare}), $\im m \leq C\sqrt{\kappa+\eta}$
(see \eqref{lower bound on im msc}),
\eqref{p Lambda p Theta}, $\abs{[v]} =\Theta$, 
and the definition of $\wt {\f S}_>$, we get
\begin{align*}
\phi \chi \, \Theta &\;\prec\; \phi \chi \, (\kappa + \eta)^{-1/2} \pBB{\sqrt{\frac{\im m}{M \eta}} + \sqrt{\frac{\Theta}{M \eta}} + \frac{\wt \Gamma}{M \eta}}
\\
&\;\leq\; (\kappa + \eta)^{-1/4} (M \eta)^{-1/2} + (\kappa + \eta)^{-1/2} M^{\epsilon / 2} (M \eta)^{-2/3} + (\kappa + \eta)^{-1/2} \wt \Gamma (M \eta)^{-1}
\\
&\;\leq\; (M \eta)^{-1/3}\,.
\end{align*}
What remains is to estimate $\Lambda$. From \eqref{Lambda estimated in terms of Theta}, the bound $\wt \Gamma^2 \sqrt{\im m (M \eta)^{-1}} \leq M^{-\gamma}$ from the definition of $\wt {\f S}$, and the estimate $ \phi \wt\Gamma\Theta \leq \phi \wt\Gamma \Lambda \leq 1$ 
we get
\begin{align*}
\phi \chi \, \Lambda &\;\prec\; \phi \chi \, \Theta + M^{-\gamma} \wt \Gamma^{-1}+ \wt \Gamma \sqrt{\wt \Gamma^{-1} (M \eta)^{-1}} + \wt \Gamma^2 (M \eta)^{-1} 
\\
&\;\prec\; (M \eta)^{-1/3} + M^{-\gamma/2} \wt \Gamma^{-1} + M^{-\gamma} \wt \Gamma^{-1}
\\
&\;\leq\; 2 M^{-\gamma/3} \wt \Gamma^{-1}\,.
\end{align*}
This concludes the proof.
\end{proof}
Next, we establish a gap in the range of $\Lambda$, in the domain $\wt {\f S}_\leq$. 
To that end, we improve the estimate on $\Lambda$ from $\Lambda \leq M^{-\gamma/4} \wt\Gamma^{-1}$ to
 $\Lambda\prec M^{\e-\gamma/3} \wt\Gamma^{-1}$ as before.
In this regime there is no need for a gap in $\Theta$, i.e.\ the continuity argument will be performed on the value of $\Lambda$ only.

\begin{lemma} \label{lem: gap leq}
In $\wt {\f S}_\leq$ we have the bounds
\begin{equation} \label{gap in S leq}
\phi \Theta \;\prec\; M^\epsilon (M \eta)^{-1/3}\,, \qquad \phi \Lambda \;\prec\; M^{\epsilon - \gamma/3} \wt \Gamma^{-1}\,.
\end{equation}
\end{lemma}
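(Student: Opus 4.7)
The strategy mirrors Lemma \ref{lem: gap in S >}, but with the roles of the two terms on the left-hand side of \eqref{vself for avg v} swapped: inside $\wt{\f S}_\leq$ the factor $|1-m^2| \asymp \sqrt{\kappa+\eta}$ is small (bounded by $C M^\epsilon (M\eta)^{-1/3}$ by \eqref{1-msquare} and the definition of $\wt{\f S}_\leq$), so the linear term $(1-m^2)[v]$ no longer suffices to dominate, and I would instead extract $\Theta$ from the quadratic term $m^{-1}[v]^2$. Using $|m^{-1}|\asymp 1$ and the triangle inequality, \eqref{vself for avg v} becomes
\begin{equation*}
\phi \, \Theta^2 \;\leq\; C \phi \, |1-m^2| \, \Theta + \phi \, O_\prec\pb{q(\Theta) + M^{-\gamma/4}\Theta^2}.
\end{equation*}
The last term is absorbed into the left-hand side (Lemma \ref{lemma: basic properties of prec}(iii)). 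For the linear term, Young's inequality gives $C M^\epsilon (M\eta)^{-1/3}\Theta \leq \tfrac{1}{4}\Theta^2 + C M^{2\epsilon}(M\eta)^{-2/3}$, so that
\begin{equation*}
\phi \, \Theta^2 \;\prec\; M^{2\epsilon}(M\eta)^{-2/3} + q(\Theta).
\end{equation*}

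Next I would unfold $q(\Theta)$ term by term and reduce everything to $M^{2\epsilon}(M\eta)^{-2/3}$. In $\wt{\f S}_\leq$, $\im m \leq C\sqrt{\kappa+\eta} \leq C M^\epsilon(M\eta)^{-1/3}$ by \eqref{lower bound on im msc}, yielding $\sqrt{\im m/(M\eta)} \leq C M^{\epsilon/2}(M\eta)^{-2/3}$. The term $\sqrt{\Theta/(M\eta)} = \Theta^{1/2}(M\eta)^{-1/2}$ is handled by Young's inequality with exponents $(4, 4/3)$, which bounds it by $\tfrac{1}{4}\Theta^2 + C(M\eta)^{-2/3}$ and thus gets absorbed. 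Finally, the bound $1/(M\eta)\leq M^{-\gamma}\wt\Gamma^{-3}$ from the definition of $\wt{\f S}$ gives $\wt\Gamma/(M\eta)\leq M^{-\gamma/3}(M\eta)^{-2/3}\leq M^{2\epsilon}(M\eta)^{-2/3}$. Combining the three gives $\phi\Theta^2 \prec M^{2\epsilon}(M\eta)^{-2/3}$, which is the first bound in \eqref{gap in S leq}.

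For the second bound I would invoke \eqref{Lambda estimated in terms of Theta}, which gives $\phi\Lambda \prec \Theta + \wt\Gamma \, q(\Theta)$. The just-established bound handles $\Theta$. For $\wt\Gamma \, q(\Theta)$ I would use the two defining constraints of $\wt{\f S}$: from $1/(M\eta) \leq M^{-2\gamma}\wt\Gamma^{-4}(\im m)^{-1}$ we get $\wt\Gamma\sqrt{\im m/(M\eta)} \leq M^{-\gamma}\wt\Gamma^{-1}$; combining this with the bound on $\Theta$ and $1/(M\eta)\leq M^{-\gamma}\wt\Gamma^{-3}$ controls $\wt\Gamma\sqrt{\Theta/(M\eta)}$ and $\wt\Gamma^2/(M\eta)$, each by a multiple of $M^{\epsilon-\gamma/3}\wt\Gamma^{-1}$ after recalling $\wt\Gamma \leq M^{-\gamma/3}(M\eta)^{1/3}$. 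Putting it all together yields $\phi\Lambda \prec M^{\epsilon-\gamma/3}\wt\Gamma^{-1}$.

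The argument is conceptually straightforward once the switch from linear-dominant to quadratic-dominant regime is recognized; the main obstacle I anticipate is the careful bookkeeping of the several exponents in $M$, $\eta$, and $\wt\Gamma$, and in particular verifying at every step that the various error contributions from $q(\Theta)$ can indeed be bounded by $M^{2\epsilon}(M\eta)^{-2/3}$ using precisely the two inequalities built into the definition of $\wt{\f S}$ via $\wt\eta_E$. As in Lemma \ref{lem: gap in S >}, there is no circularity: the bound is an \emph{improvement} from $\Theta\leq \Lambda\leq M^{-\gamma/4}\wt\Gamma^{-1}$ on $\{\phi=1\}$ down to the gap boundary $M^\epsilon(M\eta)^{-1/3}$, which is what enables the continuity argument in the next subsection.
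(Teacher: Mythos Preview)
Your approach is correct and follows the same route as the paper: both extract $\Theta$ from the quadratic self-consistent equation \eqref{vself for avg v} in the regime where $|1-m^2|$ is small, and both handle $\Lambda$ via \eqref{Lambda estimated in terms of Theta} together with the two defining inequalities of $\wt{\f S}$. The paper's presentation differs only in the algebra: instead of bounding $\phi\Theta^2$ and applying Young's inequality, it writes $\phi[v]\bigl(1-m^2-m^{-1}[v]\bigr)=\phi\,O_\prec(\ldots)$ and solves the quadratic directly to get $\phi\Theta \prec |1-m^2| + \sqrt{q(\Theta)+M^{-\gamma/4}\Theta^2}$, after which a single absorption of the term $M^{-\gamma/8}\phi\Theta$ (valid by Lemma~\ref{lemma: basic properties of prec}(iii)) suffices.

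One technical caution with your version: absorbing a \emph{constant-fraction} term such as $\tfrac14\phi\Theta^2$ into $\phi\Theta^2$ is not covered by Lemma~\ref{lemma: basic properties of prec}(iii), which requires a coefficient of size $N^{-\epsilon}$; indeed $X\prec\tfrac14 X+Y$ does not imply $X\prec Y$ in general (take $X=1$, $Y=0$). This arises twice in your outline, once for the linear term and once for $\sqrt{\Theta/(M\eta)}$. The fix is routine: either perform the Young step and subtract $\tfrac14\phi\Theta^2$ deterministically \emph{before} passing from the pointwise ``$\leq$'' with $O_\prec$ error to a $\prec$ statement, or use Young's inequality with parameter $M^{-\epsilon'}$ so that Lemma~\ref{lemma: basic properties of prec}(iii) applies and the resulting $M^{\epsilon'}$ is swallowed by $\prec$. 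The paper's quadratic-formula route simply sidesteps this wrinkle.
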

\begin{proof}
We write \eqref{vself for avg v} as
\begin{equation*}
\phi [v] (1 - m^2 - m^{-1} [v]) \;=\; \phi \, O_\prec \pb{q(\Theta) + M^{-\gamma/4} \Theta^2}.
\end{equation*}
Solving this quadratic relation for $[v]$, we get
\begin{equation}\label{phith}
\phi \Theta \;\prec\; \abs{1 - m^2} + \phi \sqrt{q(\Theta) + M^{-\gamma/4} \Theta^2}\,.
\end{equation}
Using \eqref{lower bound on im msc},
the bound $\wt\Gamma \leq M^{-\gamma/3} (M\eta)^{1/3} \leq (M\eta)^{1/3}$ from 
 the definition of $\wt {\f S}$, and Young's inequality, we estimate
\begin{align*}
\sqrt{q(\Theta) + M^{-\gamma/4} \Theta^2} &\;\leq\; (\im m)^{1/4} (M \eta)^{-1/4} + \Theta^{1/4} (M \eta)^{-1/4} + \wt \Gamma^{\,1/2} (M \eta)^{-1/2} + M^{-\gamma/8} \Theta
\\
&\;\leq\; C \sqrt{\kappa + \eta} + C M^\epsilon (M \eta)^{-1/3} + C M^{-\epsilon} \Theta\,.
\end{align*}
Plugging this bound into \eqref{phith}, 
together with \eqref{1-msquare} and
the definition of $\wt {\f S}_\leq$, we find
\begin{equation*}
\phi \Theta \;\prec\; \sqrt{\kappa + \eta} + M^\epsilon (M \eta)^{-1/3} \;\leq\; 2 M^\epsilon (M \eta)^{-1/3}\,.
\end{equation*}
This proves the first bound of \eqref{gap in S leq}.

What remains is the estimate of $\Lambda$. From \eqref{Lambda estimated in terms of Theta} and the bounds $\wt\Gamma \leq M^{-\gamma/3} (M\eta)^{1/3}$ and $\wt \Gamma^2 \sqrt{\im m (M \eta)^{-1}} \leq M^{-\gamma}$ from the definition of $\wt {\f S}$, we get
\begin{align*}
\phi \Lambda &\;\prec\; \phi  \Theta + M^{-\gamma} \wt \Gamma^{-1} + \wt \Gamma \sqrt{\wt \Gamma^{-1} (M \eta)^{-1}} + \wt \Gamma^2 (M \eta)^{-1} 
\\
&\;\prec\; M^\epsilon (M \eta)^{-1/3} + M^{-\gamma/2} \wt \Gamma^{-1} + M^{-\gamma} \wt \Gamma^{-1}
\\
&\;\leq\; 2 M^{\epsilon -\gamma/3} \wt \Gamma^{-1}\,.
\end{align*}
This concludes the proof.
\end{proof}

We now have all of the ingredients to complete the proof of Proposition \ref{prop: rough bound with gap}.

\begin{proof}[Proof of Proposition \ref{prop: rough bound with gap}]
The proof is a continuity argument similar to the proof of Proposition \ref{prop: rough bound 1}. In a first step, we prove that
\begin{equation} \label{result in S >}
\Lambda \;\prec\; M^{-\gamma/3} \wt \Gamma^{-1}\,, \qquad \Theta \;\prec\; (M \eta)^{-1/3}\,.
\end{equation}
in $\wt {\f S}_>$. The continuity argument is almost identical to that following \eqref{gap for one z}; the only difference is that we keep track of the two parameters $\Lambda$ and $\Theta$. The required gaps in the ranges of $\Lambda$ and $\Theta$ are provided by \eqref{gap in S >}, and the argument is closed using the large-$\eta$ estimate from Lemma \ref{lem: initial estimate}, which yields $\Theta \leq \Lambda \prec M^{-1/2}$ for $\eta = 2$.

In a second step, we prove that
\begin{equation*}
\Lambda \;\prec\; M^{\epsilon -\gamma/4} \wt \Gamma^{-1}\,, \qquad \Theta \;\prec\; M^\epsilon (M \eta)^{-1/3}
\end{equation*}
in $\wt {\f S}_\leq$. This is again a continuity argument almost identical to that following \eqref{gap for one z}. Now we establish a gap only in the range of  $\Lambda$. The gap is provided by \eqref{gap in S leq} (recall that by definition of $\epsilon$ we have $\epsilon - \gamma/3 < - \gamma/4$), and the argument is closed using the bound \eqref{result in S >} at the boundary of the domains $\wt {\f S}_>$ and $\wt {\f S}_\leq$.

The claim now follows since we may choose $\epsilon \in (0, \gamma/12)$ to be arbitrarily small. This concludes the proof of Proposition \ref{prop: rough bound with gap}.
\end{proof}

\subsection{An improved bound on $\Lambda$ in terms of $\Theta$}\label{sec:lambdatheta}

In  \eqref{Lambda estimated in terms of Theta} we already estimated
$\Lambda$ in terms of $\Theta$; the goal of this section
is to improve this bound by removing the factor $\wt\Gamma$
from that estimate. We do this using the Fluctuation Averaging Theorem, but we stress that
the removal of a factor $\wt\Gamma$ is not the main rationale
for using the fluctuation averaging mechanism. Its fundamental use
will take place in Lemma \ref{lem:psquare} below.
A technical consequence of invoking fluctuation averaging
is that we have to use deterministic control parameters instead of random ones.
Thus, we introduce a deterministic control parameter $\Phi$ that captures the size of the random control parameter $\Theta$ through the relation $\Theta \prec \Phi$.
Throughout the following we shall make use of the control parameter
\begin{equation*}
p(\Phi) \;\deq\; \sqrt{\frac{\im m + \Phi}{M \eta}} + \frac{1}{M \eta}\,,
\end{equation*}
which differs from $q(\Phi)$ only by a factor $\wt\Gamma$
in the second term.

\begin{lemma} \label{lem: estimate of Lambdao 2} 
Suppose that   $\Lambda \prec \Psi$ and $\Theta \prec \Phi$ in $\wt {\f S}$
for some deterministic control parameters $\Psi$ and $\Phi$ satisfying
\begin{equation} \label{condition on Psi 2}
c M^{-1/2} \;\leq\; \Psi \;\leq\; C M^{-\gamma/4} \wt \Gamma^{-1}\,, \qquad \Phi \;\leq\; C M^{-\gamma/4} \wt \Gamma^{-1}\,.
\end{equation}
Then
\begin{equation}\label{LL}
\Lambda_o + \abs{Z_i} \;\prec\; p(\Phi)\,, \qquad \Lambda \;\prec\; p(\Phi) + \Phi\,.
\end{equation}
\end{lemma}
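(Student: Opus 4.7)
The hypotheses \eqref{condition on Psi 2}, combined with the lower bound $\wt\Gamma \geq c$ from \eqref{varrho geq c}, give $\Lambda \prec \Psi \leq C'M^{-\gamma/4}$. Thus the indicator $\phi$ of \eqref{def phi with gap} satisfies $1-\phi \prec 0$, and all subsequent estimates may tacitly be multiplied by $\phi$; in particular the conclusions of Lemmas \ref{lemma: Lambdao} and \ref{lem: vself with gap} are freely available. The whole point of the additional hypothesis $\Theta \prec \Phi$ is to permit replacing $\Lambda$ by $\Phi$ in the error terms coming from Lemma \ref{lemma: Lambdao}, and more importantly to remove the $\wt\Gamma$ factors appearing in \eqref{Lambda estimated in terms of Theta}.

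For the first bound of \eqref{LL}, my plan is to refine the estimate \eqref{estimate for sum Gkl} used in the proof of Lemma \ref{lemma: Lambdao}. Writing $v_k = G_{kk} - m$ and applying the fluctuation averaging estimate \eqref{averaging without Q avg} with weights $t_{ik} = s_{ik}$ (which commute with $S$ and sum to $1$), I obtain $\sum_k s_{ik}(v_k - [v]) = O_\prec(\wt\Gamma\Psi^2)$; combining this with $|[v]| = \Theta \prec \Phi$, and with \eqref{resolvent expansion type 1} (to pass from $G^{(i)}$ to $G$ at cost $O_\prec(\Psi^2 + M^{-1})$), I would deduce
\begin{equation*}
\sum_k^{(i)} s_{ik}\im G_{kk}^{(i)} \;=\; \im m + O_\prec\bigl(\Phi + \wt\Gamma\Psi^2 + \Psi^2 + M^{-1}\bigr)\,.
\end{equation*}
Inserted into the large deviation step of the proof of \eqref{Lambdao 1}, this yields $\Lambda_o + |Z_i| \prec \sqrt{(\im m + \Phi + \wt\Gamma\Psi^2)/(M\eta)} + M^{-1/2}$. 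The bound $\wt\Gamma\Psi^2 \leq CM^{-\gamma/4}\Psi$ from \eqref{condition on Psi 2}, Young's inequality, and the spectral-domain inequalities $\wt\Gamma^3/(M\eta) \leq M^{-\gamma}$ and $\wt\Gamma^2\sqrt{\im m/(M\eta)} \leq M^{-\gamma}$ encoded in \eqref{def wt eta E} would then let me absorb the extra contributions into the target $p(\Phi)$.

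For the second bound, I would expand the self-consistent equation \eqref{vself for exp} to first order as in \eqref{exp1}, subtract the average, and project onto $\f e^\perp$: this yields $(1 - m^2 S)(v - [v]) = m^2(\Upsilon - [\Upsilon]) + O_\prec(\Lambda^2)$ componentwise. Inverting $(1-m^2 S)|_{\f e^\perp}$, whose $\ell^\infty \to \ell^\infty$ norm is $\wt\Gamma$, and combining with the first bound of \eqref{LL} (which yields $\|\Upsilon\|_\infty \prec p(\Phi)$ since $A_i = O_\prec(\Lambda_o^2)$ and $h_{ii} = O_\prec(M^{-1/2})$) together with Lemma \ref{lem: avg Upsilon} ($|[\Upsilon]| \prec \Psi^2$), I would obtain $\|v - [v]\|_\infty \prec \wt\Gamma\bigl(p(\Phi) + \Psi^2 + \Lambda^2\bigr)$. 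The spectral-domain inequalities from \eqref{def wt eta E} combined with $\wt\Gamma\Psi^2 \leq CM^{-\gamma/4}\Psi$ would then let me absorb the $\wt\Gamma$ factors into quantities controlled by $p(\Phi) + \Phi$, using Lemma \ref{lemma: basic properties of prec}(iii) to close any residual self-referential $\Psi$ or $\Lambda$ term. Together with $\Lambda_d \leq |[v]| + \|v - [v]\|_\infty \leq \Theta + \|v - [v]\|_\infty$ and the first bound for $\Lambda_o$, this yields $\Lambda \prec p(\Phi) + \Phi$.

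The main obstacle will be the careful bookkeeping required to remove the $\wt\Gamma$ factors arising both from inverting $(1-m^2 S)|_{\f e^\perp}$ and from the fluctuation averaging step. This removal relies on the tight relationship between $\Psi$, $\Phi$, $\wt\Gamma$, and $M\eta$ encoded in the definition of $\wt{\f S}$; crucially, it is the factor $\wt\Gamma$ in \eqref{averaging without Q avg}, rather than the larger $\Gamma$ from \eqref{averaging without Q}, that makes the enlarged spectral domain $\wt{\f S}$ accessible here (as opposed to only $\f S$ in Theorem \ref{thm: no gap}).
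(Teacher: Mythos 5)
Your setup is right and you have correctly identified the two key inputs (Lemma \ref{lemma: Lambdao} and the fluctuation averaging estimate \eqref{averaging without Q avg}), but there are two genuine gaps, the first of which defeats the argument as written. In your second step you invert $(1-m^2S)|_{\f e^\perp}$ in the equation $(v_i-[v]) - m^2\sum_k s_{ik}(v_k-[v]) = -m^2(\Upsilon_i-[\Upsilon]) + O_\prec(\Lambda^2)$, which forces the factor $\wt\Gamma$ onto \emph{every} term on the right, in particular onto $\max_i\abs{\Upsilon_i-[\Upsilon]} \prec p(\Phi)$. The product $\wt\Gamma\, p(\Phi)$ cannot be absorbed into $p(\Phi)+\Phi$: on $\wt{\f S}$ one only has $\wt\Gamma (M\eta)^{-1} \le M^{-\gamma}\wt\Gamma^{-2} \le CM^{-\gamma}$ and $\wt\Gamma\sqrt{\im m/(M\eta)} \le CM^{-\gamma}$, and the constant $M^{-\gamma}$ is \emph{not} dominated by $C(p(\Phi)+\Phi)$ (these can be as small as order $M^{-1/2}$ and $0$, while $\gamma<1/2$). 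Your second step therefore only reproduces \eqref{Lambda estimated in terms of Theta}, $\Lambda \prec \Theta + \wt\Gamma\, q(\Theta)$, which is precisely the bound this lemma is designed to improve. The paper's proof never inverts the operator here: it solves the expanded equation for $v_i-[v]$ directly (using $\abs{m^2}\le 1$) and bounds the surviving average $\sum_k s_{ik}(v_k-[v])$ by \eqref{averaging without Q avg}, i.e.\ by $O_\prec(\wt\Gamma\Psi^2)$. There the $\wt\Gamma$ multiplies a \emph{quadratic} quantity, and $\wt\Gamma\Psi^2 \le CM^{-\gamma/4}\Psi$ is harmless, while $\Upsilon_i-[\Upsilon]=O_\prec(r(\Psi))$ enters with no $\wt\Gamma$ at all. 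That asymmetric placement of $\wt\Gamma$ is the whole point of using Theorem \ref{thm: averaging} in this lemma, and your operator inversion undoes it.

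The second gap concerns the residual $\Psi$-terms. Even with the correct step one only gets $\Lambda \prec \wt\Gamma\Psi^2 + \Phi + r(\Psi) \le M^{-\epsilon}\Psi + \Phi + \sqrt{\im m/(M\eta)} + M^\epsilon(M\eta)^{-1}$, with a leftover proportional to the \emph{deterministic} parameter $\Psi$. Lemma \ref{lemma: basic properties of prec} (iii), which you invoke to close such terms, applies to the random variable $\Lambda$ itself, not to its deterministic majorant: from $\Lambda \prec Y + M^{-\epsilon}\Psi$ one cannot conclude $\Lambda\prec Y$ (and the fluctuation averaging error is intrinsically expressed in $\Psi$, since Theorem \ref{thm: averaging} requires a deterministic control parameter). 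The paper removes this term by checking that $F(\Psi,\Phi)\deq M^{-\epsilon}\Psi + \Phi + \sqrt{\im m/(M\eta)} + M^\epsilon(M\eta)^{-1}$ again satisfies \eqref{condition on Psi 2} and \emph{iterating} the implication $\Lambda\prec\Psi \Rightarrow \Lambda\prec F(\Psi,\Phi)$ about $\ceil{\epsilon^{-1}}$ times. Only after this does one obtain $\Lambda\prec p(\Phi)+\Phi$, and the bound $\Lambda_o+\abs{Z_i}\prec p(\Phi)$ is then deduced \emph{last} by feeding the improved majorant back into Lemma \ref{lemma: Lambdao} and verifying $r(p(\Phi)+\Phi)\le Cp(\Phi)$. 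Your plan of proving the $\Lambda_o$ bound first, by refining \eqref{estimate for sum Gkl} with fluctuation averaging, leaves the same irreducible residue of order $M^{-c}\Psi$ and would in any case have to be re-run after the iteration; it is a workable variant only once the iteration is in place.
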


We remark that, by Proposition \ref{prop: rough bound with gap},
the choice  $\Psi = M^{-\gamma/4} \wt \Gamma^{-1}$ and 
$\Phi = (M \eta)^{-1/3} \leq M^{-\gamma/4} \wt \Gamma^{-1}$ satisfies the assumptions of Lemma \ref{lem: estimate of Lambdao 2}.

\begin{proof}[Proof of Lemma \ref{lem: estimate of Lambdao 2}]
Choosing $\phi = 1$ in Lemma \ref{lemma: Lambdao} and recalling \eqref{varrho geq c}, we get
\begin{equation} \label{Lambdao estimate 3}
\Lambda_o + \abs{\Upsilon_i} \;\prec\; r(\Psi)\,, \qquad r(\Psi) \;\deq\; \sqrt{\frac{\im m + \Psi}{M \eta}}\,.
\end{equation}
In order to estimate $\Lambda_d$, as in \eqref{expanded vself 2}, we expand \eqref{vself for exp} to get
\begin{equation}\label{exp2}
-\sum_k s_{ik} v_k + \Upsilon_i \;=\; -m^{-2} v_i + O_\prec(\Psi^2)\,.
\end{equation}
As in the proof of \eqref{expanded vself 2} and \eqref{exp1}, 
the expansion of
\eqref{vself for exp} is only possible on the event $\{\Lambda\leq M^{-\delta}\}$
for some $\delta>0$. By $\Lambda\prec \Psi$ and \eqref{condition on Psi 2}, the indicator function of this event is $1 + O_\prec(0)$; the contribution $O_\prec(0)$
of the complementary event can be absorbed in the error term $O_\prec(\Psi^2)$.

Subtracting the average $N^{-1}\sum_i$ from both sides of \eqref{exp2}
 and estimating $m^2$ by a constant (see \eqref{m is bounded}) yields
\begin{equation} \label{Lambdad estimate 2}
\absb{v_i - [v]} \;\leq\; C \absbb{\sum_k s_{ik} \pb{v_k - [v]} - \pb{\Upsilon_i - [\Upsilon]}} + O_\prec(\Psi^2) \;\prec\; \wt \Gamma \Psi^2 + r(\Psi)\,,
\end{equation}
where in the last step we used the fluctuation averaging estimate  \eqref{averaging without Q avg}  and $\abs{\Upsilon_i} \prec r(\Psi)$ from \eqref{Lambdao estimate 3}.
Together with $\abs{[v]} = \Theta\prec\Phi$ ,
this gives the estimate $\Lambda_d \prec \Gamma \Psi^2 + \Phi + r(\Psi)$.
Combining it with the bound \eqref{Lambdao estimate 3},   we conclude that 
\begin{equation}\label{quadr}
\Lambda \prec \wt \Gamma \Psi^2 + \Phi + r(\Psi).
\end{equation}
Now fix $\epsilon \in (0, \gamma/4)$. Using 
 the assumption $\wt \Gamma \Psi \leq C M^{-\gamma/4} \leq M^{-\epsilon}$, we conclude: if $\Psi$ and $\Phi$ satisfy \eqref{condition on Psi 2} then
\begin{equation} \label{iteration 2}
\Lambda \;\prec\; \Psi \qquad \Longrightarrow \qquad \Lambda \;\prec\; F(\Psi, \Phi)\,,
\end{equation}
where we defined
\begin{equation*}
F(\Psi, \Phi) \;\deq\; M^{-\epsilon} \Psi + \Phi + \sqrt{\frac{\im m}{M \eta}} + \frac{M^\epsilon}{M \eta}\,,
\end{equation*}
which plays a role similar to $F(\Psi)$ in Proposition~\ref{prop: optimal simple}.
(Here we estimated $\sqrt{\Psi (M \eta)^{-1}}$ in $r(\Psi)$
 by $M^{-\e} \Psi+M^\e (M\eta)^{-1}$.) 
From \eqref{lower bound on im msc} and the definition of $\wt {\f S}$ it easily follows that if $(\Psi, \Phi)$ satisfy \eqref{condition on Psi 2} then so do $(F(\Psi, \Phi), \Phi)$. Therefore iterating \eqref{iteration 2} $\ceil{\epsilon^{-1}}$ times and using the fact that $\epsilon \in (0,\gamma/4)$ was arbitrary yields
\begin{equation} \label{new bound for Lambda}
\Lambda \;\prec\; \sqrt{\frac{\im m}{M \eta}} + \frac{1}{M \eta} + \Phi\,.
\end{equation}
This implies the claimed bound \eqref{LL} on $\Lambda$.
Calling the right-hand side of \eqref{new bound for Lambda} $\Psi$, we find
\begin{equation} \label{r leq p}
r(\Psi) \;\leq\; C p(\Phi)\,.
\end{equation}
Hence the claimed bound  \eqref{LL}
on $\Lambda_o$ and $Z_i$ follows from \eqref{Lambdao estimate 3}.
\end{proof}

\subsection{Iteration for $\Theta$ and conclusion of the proof of Theorem \ref{thm: with gap}}\label{sec:itertheta}

Next, we prove the following version of \eqref{vself for exp}, which is the key tool for estimating $\Theta$.
\begin{lemma}\label{lem:psquare}
Let $\Phi$ be some deterministic control parameter satisfying $\Theta \prec \Phi$ in $\wt {\f S}$. Then
\begin{equation} \label{vself for optimal bound}
(1 - m^2) [v] - m^{-1}[v]^2 \;=\; O_\prec \pb{p(\Phi)^2 + M^{-\gamma/4} \Phi^2}\,.
\end{equation}
\end{lemma}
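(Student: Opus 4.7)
The strategy is to expand the self-consistent equation \eqref{vself for exp} to second order in $v_i$, average over $i$, and carefully bound the three resulting error terms using the improved estimates from Lemma~\ref{lem: estimate of Lambdao 2} together with the sharpened fluctuation averaging Theorem~\ref{thm: averaging with Lambdao}. The apriori bound $\Lambda\prec M^{-\gamma/4}\wt\Gamma^{-1}\ll 1$ from Proposition~\ref{prop: rough bound with gap} makes all Taylor expansions legitimate (up to an $O_\prec(0)$ error that is absorbed by $\prec$).

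First I would expand $1/(m+v_i) - 1/m = -v_i/m^2 + v_i^2/m^3 + O(\Lambda^3)$, insert it into \eqref{vself for exp}, multiply by $-m^2$, average over $i$ (using that $S$ is doubly stochastic so that $[\sum_k s_{\cdot k}v_k]=[v]$), and decompose $\frac{1}{N}\sum_i v_i^2 = [v]^2 + \frac{1}{N}\sum_i(v_i-[v])^2$. This produces the key identity
\begin{equation*}
(1-m^2)[v] - m^{-1}[v]^2 \;=\; m^{-1}\cdot\frac{1}{N}\sum_i(v_i-[v])^2 \;-\; m^2[\Upsilon] \;+\; O(\Lambda^3)\,.
\end{equation*}
The target bound $O_\prec(p(\Phi)^2 + M^{-\gamma/4}\Phi^2)$ now dictates the precision required for each of the three pieces on the right-hand side.

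Set $\Psi \deq p(\Phi)+\Phi$; the pair $(\Psi,\Phi)$ satisfies \eqref{condition on Psi 2} by $\Theta\prec\Phi$ and Proposition~\ref{prop: rough bound with gap}, so Lemma~\ref{lem: estimate of Lambdao 2} applies and yields $\Lambda\prec\Psi$, $\Lambda_o\prec p(\Phi)$, together with the intermediate bound $|v_i-[v]|\prec\wt\Gamma\Psi^2+r(\Psi)$ from \eqref{Lambdad estimate 2} in its proof. Using $\wt\Gamma\Psi\le CM^{-\gamma/4}$ (from \eqref{condition on Psi 2}) together with the elementary inequality $r(\Psi)\le Cp(\Phi)$ (which follows from $\sqrt{\Phi/(M\eta)}\le p(\Phi)$ and $1/(M\eta)\le p(\Phi)$), this tightens to $|v_i-[v]|\prec p(\Phi)+M^{-\gamma/4}\Phi$, so that $\frac{1}{N}\sum_i(v_i-[v])^2\prec p(\Phi)^2+M^{-\gamma/4}\Phi^2$. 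The cubic term is absorbed by $\Lambda^3\le C M^{-\gamma/4}\wt\Gamma^{-1}\Lambda^2\prec M^{-\gamma/4}(p(\Phi)^2+\Phi^2)$, which fits the target since $M^{-\gamma/4}p(\Phi)^2\le p(\Phi)^2$.

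The crux — and the reason this lemma is not a direct consequence of Lemma~\ref{lem: avg Upsilon} — is the estimate on $[\Upsilon]$, where the sharper fluctuation averaging Theorem~\ref{thm: averaging with Lambdao} is indispensable. Writing $\Upsilon_i=A_i+Q_i(G_{ii}^{-1})$ as in the proof of Lemma~\ref{lem: avg Upsilon}, the $A_i$ piece is bounded termwise by $|A_i|\prec\Lambda_o^2\prec p(\Phi)^2$, exploiting that $A_i$ contains only off-diagonal resolvent entries. For the fluctuating piece I invoke Theorem~\ref{thm: averaging with Lambdao} with constant weights $t_{ik}=1/N$, the same $\Psi$ as above, and $\Psi_o=p(\Phi)$, which delivers $\frac{1}{N}\sum_i Q_i(G_{ii}^{-1})=O_\prec(p(\Phi)^2)$. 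Hence $[\Upsilon]=O_\prec(p(\Phi)^2)$, with no $\Phi^2$ contamination. The main obstacle is precisely this final point: the cruder Lemma~\ref{lem: avg Upsilon} would bound $[\Upsilon]$ by $\Lambda^2\prec(p(\Phi)+\Phi)^2$, producing a stray $\Phi^2$ term that would destroy the quadratic structure of \eqref{vself for optimal bound} and make the continuity argument in Section~\ref{sec:itertheta} fail near the spectral edge. Collecting the three estimates gives \eqref{vself for optimal bound}.
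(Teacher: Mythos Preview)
Your proof is correct and follows essentially the same approach as the paper: set $\Psi = p(\Phi)+\Phi$, invoke Lemma~\ref{lem: estimate of Lambdao 2} for $\Lambda\prec\Psi$ and $\Lambda_o\prec p(\Phi)$, expand \eqref{vself for exp} to second order and average, control $\frac{1}{N}\sum_i(v_i-[v])^2$ via \eqref{Lambdad estimate 2} and \eqref{r leq p}, and bound $[\Upsilon]$ by $p(\Phi)^2$ using Theorem~\ref{thm: averaging with Lambdao}. The only point you leave slightly implicit is that one may assume $\Phi\le M^{-\gamma/4}\wt\Gamma^{-1}$ without loss of generality (since $\Theta\le\Lambda\prec M^{-\gamma/4}\wt\Gamma^{-1}$ by Proposition~\ref{prop: rough bound with gap}); the paper states this reduction explicitly before verifying \eqref{condition on Psi 2}.
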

Notice that this bound is stronger than the previous
formula \eqref{vself for avg v}, as the power of $p(\Phi)$ is two instead of one.
The improvement is due to using fluctuation averaging in $[\Upsilon]$. 
Otherwise the proof is very similar to that of \eqref{vself for avg v}.

\begin{proof}
By Proposition \ref{prop: rough bound with gap}, we may assume that 
\begin{equation} \label{condition on Phi}
\Phi \;\leq\; M^{-\gamma/4}\wt \Gamma^{-1}
\end{equation}
since $\Theta \leq \Lambda \prec M^{-\gamma/4}\wt \Gamma^{-1}$. From Lemma \ref{lem: estimate of Lambdao 2} we get $\Lambda_o + \abs{Z_i} \prec p(\Phi)$ and $\Lambda \prec \Psi$, where
\begin{equation} \label{optimal Psi}
\Psi \;\deq\; p(\Phi) + \Phi\,.
\end{equation}
By definition of $\wt {\f S}$ and \eqref{condition on Phi},
we find that $\Psi \leq 2 M^{-\gamma/4} \wt \Gamma^{-1}$.

Now we expand the right-hand side of \eqref{vself for exp} exactly as in \eqref{selfc expanded deg 2} to get
\begin{equation} \label{self cons expanded 4}
-m^2 \sum_k s_{ik} v_k + m^2 \Upsilon_i \;=\; -v_i  + m^{-1} v_i^2 + O_\prec(\Psi^3)\,.
\end{equation}
Using Theorem \ref{thm: averaging with Lambdao} and the bound $\Lambda_o \prec p(\Phi)$ from Lemma \ref{lem: estimate of Lambdao 2}, we may prove, exactly as in Lemma \ref{lem: avg Upsilon}, that $\abs{[\Upsilon]} \prec p(\Phi)^2$.
Taking the average over $i$ in \eqref{self cons expanded 4} therefore yields
\begin{equation} \label{averaged vself expanded}
(1 - m^2) [v] - m^{-1} \frac{1}{N} \sum_i v_i^2 \;=\; -m^2 [\Upsilon] + O_\prec(\Psi^3) \;=\; O_\prec \pb{p(\Phi)^2 + \Psi^3}\,.
\end{equation}
Using the estimates \eqref{Lambdad estimate 2} and \eqref{r leq p}, we write the quadratic term on the left-hand side as
\begin{equation*}
\frac{1}{N} \sum_i v_i^2 \;=\; [v]^2 + \frac{1}{N} \sum_i \pb{v_i - [v]}^2 \;=\; [v]^2 + O_\prec \pB{\pb{\wt \Gamma \Psi^2 + p(\Phi)}^2} \;=\; [v]^2 + O_\prec \pb{M^{-\gamma/2} \Psi^2 + p(\Phi)^2}\,
\end{equation*}
where we also used $\wt\Gamma \Psi\leq 2M^{-\gamma/4}$, as observed after \eqref{optimal Psi}.
From \eqref{averaged vself expanded} we therefore get
\begin{equation*}
(1 - m^2) [v] - m^{-1}[v]^2 \;=\; O_\prec \pb{p(\Phi)^2 + M^{-\gamma/4} \Psi^2}\,.
\end{equation*}
The claim follows from \eqref{optimal Psi}.
\end{proof}

The bound on $\Theta$ will follow by iterating the following estimate.
\begin{lemma} \label{lem: iteration for optimal bound}
Fix $\epsilon \in (0,\gamma/12)$ and suppose that $\Theta \prec \Phi$ in $\wt {\f S}$ for some deterministic control parameter $\Phi$.
\begin{itemize}
\item[(i)] If $\Phi \geq M^{3 \epsilon} (M \eta)^{-1}$ then
\begin{equation} \label{iteration for optimal bound}
\Theta \;\prec\; M^{-\epsilon} \Phi\,. 
\end{equation}
\item[(ii)] If $\abs{E} \geq 2$,
$\frac{M^{3\epsilon}}{M(\kappa+\eta)} \leq \Phi \leq M^\epsilon \sqrt{\kappa+\eta}$,
and $M\eta\sqrt{\kappa+\eta} \geq M^{2\e}$, then
\begin{equation} \label{iteration for optimal bound2}
\Theta \;\prec\; \frac{1}{(M\eta)^2\sqrt{\kappa+\eta}}+M^{-\epsilon} \Phi\,.
\end{equation}
\end{itemize}
\end{lemma}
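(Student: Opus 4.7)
The starting point is Lemma~\ref{lem:psquare}, which gives the improved (quadratic in the error) self-consistent scalar equation
\[
[v]\bigl((1-m^2) - m^{-1}[v]\bigr) \;=\; O_\prec(\mathcal{E})\,, \qquad \mathcal{E} \;\deq\; p(\Phi)^2 + M^{-\gamma/4}\Phi^2\,.
\]
Viewed as a quadratic in $[v]$ with fixed coefficients, this has (up to the stochastic error) a \emph{small} root $\approx \mathcal{E}/(m(1-m^2))$ and a \emph{large} root $\approx m(1-m^2)$, whose absolute values satisfy $|[v]_{\mathrm{small}}| \asymp \mathcal{E}/\sqrt{\kappa+\eta}$ and $|[v]_{\mathrm{large}}| \asymp \sqrt{\kappa+\eta}$ by \eqref{1-msquare}. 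The plan is to show $[v]$ is always the small root on $\wt{\f S}$, so that
\begin{equation}\label{eq:mainbd}
\Theta \;\prec\; \frac{\mathcal{E}}{|1-m^2|} \;\asymp\; \frac{\mathcal{E}}{\sqrt{\kappa+\eta}}\,,
\end{equation}
after which both (i) and (ii) are a matter of bookkeeping.

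The selection of the small root will be carried out by a continuity argument in $\eta$ over $\wt{\f S}$, mimicking the one used in Proposition~\ref{prop: rough bound with gap} (see Figure~\ref{fig: gap}). The two basins (small versus large) are separated by a gap of order $\sqrt{\kappa+\eta}$, and one verifies from the admissible range $\Phi \leq CM^{-\gamma/4}\wt\Gamma^{-1}$ guaranteed by Proposition~\ref{prop: rough bound with gap} (WLOG), together with the definition of $\wt{\f S}$, that $\mathcal{E} \ll (\kappa+\eta)$ on $\wt{\f S}$; hence the discriminant of the quadratic is positive and the two basins are genuinely disjoint. The initial condition at $\eta = 2$ (Lemma~\ref{lem: initial estimate}) puts $[v]$ in the small basin, and continuity in $\eta$ keeps it there.

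Given \eqref{eq:mainbd}, expand using \eqref{lower bound on im msc}:
\[
\frac{\mathcal{E}}{\sqrt{\kappa+\eta}} \;\lesssim\; \frac{\im m}{M\eta\sqrt{\kappa+\eta}} + \frac{\Phi}{M\eta\sqrt{\kappa+\eta}} + \frac{1}{(M\eta)^2\sqrt{\kappa+\eta}} + \frac{M^{-\gamma/4}\Phi^2}{\sqrt{\kappa+\eta}}\,.
\]
For (i) we have $\im m \leq C\sqrt{\kappa+\eta}$ in the bulk/near edge, so the first term is $\leq C/(M\eta) \leq CM^{-3\epsilon}\Phi$ by the hypothesis $\Phi \geq M^{3\epsilon}/(M\eta)$; the third and fourth terms are absorbed into $M^{-\epsilon}\Phi$ using $\Phi \leq CM^{-\gamma/4}\wt\Gamma^{-1}$ and the definition of $\wt{\f S}$ (together with $\epsilon < \gamma/12$); the second term is absorbed similarly after a short computation. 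For (ii) we use $|E|\geq 2$, so $\im m \asymp \eta/\sqrt{\kappa+\eta}$ and the first term becomes $\asymp 1/(M(\kappa+\eta))$, dominated by $M^{-3\epsilon}\Phi$ thanks to $\Phi \geq M^{3\epsilon}/(M(\kappa+\eta))$; the second and fourth are absorbed using $M\eta\sqrt{\kappa+\eta}\geq M^{2\epsilon}$ and $\Phi \leq M^\epsilon\sqrt{\kappa+\eta}$; the third term $1/((M\eta)^2\sqrt{\kappa+\eta})$ is precisely the extra error term appearing in \eqref{iteration for optimal bound2}.

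The main obstacle is the continuity argument selecting the small root, which is delicate near the edge where $\sqrt{\kappa+\eta}$ is small and $\Phi$ may be comparable to or exceed $|1-m^2|$. Making this rigorous requires checking that $\mathcal{E}/|1-m^2|^2$ is small uniformly on $\wt{\f S}$, so the discriminant is strictly positive and the two basins remain separated by an $\eta$-continuous gap; once this is in place, the rest is algebraic manipulation of the four error terms above under the hypotheses of (i) and (ii).
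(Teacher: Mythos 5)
There is a genuine gap, and it sits exactly where you flagged ``the main obstacle.'' Your plan hinges on the uniform bound $\Theta\prec \cal E/\abs{1-m^2}$ with $\cal E= p(\Phi)^2+M^{-\gamma/4}\Phi^2$, obtained by showing that $[v]$ always lies in the small basin of the quadratic. But the required separation $\cal E\ll \kappa+\eta$ is \emph{false} on a sizeable part of $\wt{\f S}$: take $E=\pm 2$ and $\eta\sim M^{-1+4\gamma}$ (allowed in $\wt{\f S}$ when $\delta_\pm\geq c$, by \eqref{etab}); then already the single term $(M\eta)^{-2}=M^{-8\gamma}$ in $\cal E$ vastly exceeds $\kappa+\eta=\eta$. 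So the two basins merge and the continuity argument cannot select a small root there. Worse, even where the bound $\Theta\prec\cal E/\sqrt{\kappa+\eta}$ does hold, it does not imply \eqref{iteration for optimal bound}: at the same point, with $\Phi=M^{3\epsilon}(M\eta)^{-1}$, the term $(M\eta)^{-2}(\kappa+\eta)^{-1/2}= M^{1/2-10\gamma}$ is larger than $1$ for small $\gamma$, i.e.\ your intermediate bound is weaker than the trivial bound $\Theta\leq\Lambda$, while the lemma asserts $\Theta\prec M^{2\epsilon-4\gamma}\ll1$. The same failure afflicts the term $M^{-\gamma/4}\Phi^2(\kappa+\eta)^{-1/2}$ whenever $\sqrt{\kappa+\eta}\leq M^{\epsilon-\gamma/4}\Phi$. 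Dividing by $\abs{1-m^2}\asymp\sqrt{\kappa+\eta}$ uniformly is simply the wrong move near the edge; the ``bookkeeping'' cannot be completed.

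The repair is the dichotomy used in the paper: partition $\wt{\f S}$ into $\wt{\f S}\,\!^{>}=\{\sqrt{\kappa+\eta}>M^{-\epsilon}\Phi\}$ and $\wt{\f S}\,\!^{\leq}=\{\sqrt{\kappa+\eta}\leq M^{-\epsilon}\Phi\}$. On $\wt{\f S}\,\!^{>}$ your strategy works, because there the extra input $(\kappa+\eta)^{-1/2}\leq M^{\epsilon}\Phi^{-1}$ lets every term of $\cal E/\sqrt{\kappa+\eta}$ be absorbed into $M^{-2\epsilon}\Phi$, and the continuity argument (with the indicator $\ind{\Theta\leq\sqrt{\kappa+\eta}/K}$ and the initial bound from Proposition \ref{prop: rough bound with gap} at $\eta=2$) is legitimate since the gap between the basins is of order $\sqrt{\kappa+\eta}$. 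On $\wt{\f S}\,\!^{\leq}$ one must not divide by $1-m^2$ at all: instead solve the quadratic for $[v]$ to get $\Theta\prec\abs{1-m^2}+\sqrt{\cal E}$, and observe that now $\abs{1-m^2}\asymp\sqrt{\kappa+\eta}\leq M^{-\epsilon}\Phi$ is itself an acceptable bound, while $\sqrt{\cal E}\leq p(\Phi)+M^{-\gamma/8}\Phi\leq CM^{-\epsilon}\Phi+CM^{\epsilon}(M\eta)^{-1}$; no continuity argument is needed in this regime. Part (ii) then follows from the same scheme on $\wt{\f S}\,\!^{>}$ with the sharper bound $\im m\leq C\eta(\kappa+\eta)^{-1/2}$ for $\abs{E}\geq2$, which is the sole source of the extra term $(M\eta)^{-2}(\kappa+\eta)^{-1/2}$ in \eqref{iteration for optimal bound2}.
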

\begin{proof}
We begin by partitioning $\wt {\f S} = \wt {\f S}\,\!^> \cup \wt {\f S}\,\!^\leq$. This partition is analogous to the partition $\wt {\f S} = \wt {\f S}_> \cup \wt {\f S}_\leq$ from Section \ref{sec: rough bound with gap}, and will determine which of the two terms in the left-hand side of \eqref{vself for optimal bound} is estimated in terms of the others. Here
\begin{equation*}
\wt {\f S}\,\!^> \;\deq\; \hb{z \in \wt {\f S} \col \sqrt{\kappa + \eta} > M^{-\epsilon} \Phi}\,,
\qquad \wt {\f S}\,\!^\leq \;\deq\; \hb{z \in \wt {\f S} \col \sqrt{\kappa + \eta} 
\leq M^{-\epsilon} \Phi}.
\end{equation*}

We begin with the domain $\wt {\f S}\,\!^>$. Let $K > 0$ be a constant large enough that
\begin{equation*}
\sqrt{\kappa + \eta} \;\leq\; \frac{K}{2} \absb{1 - m^2} \abs{m}\,;
\end{equation*}
such constant exists by \eqref{m is bounded} and \eqref{1-msquare}. Define the indicator function 
\begin{equation}\label{psidef}
\psi \;\deq\; \indb{\Theta \leq \sqrt{\kappa + \eta} / K}\,.
\end{equation}
Hence on the event $\{\psi = 1\}$ we may absorb the quadratic term on the left-hand side of \eqref{vself for optimal bound} into the linear term, to get the bound
\begin{equation}\label{psiTheta}
\psi \Theta \;\prec\; (\kappa + \eta)^{-1/2} \pbb{\frac{\im m + \Phi}{M \eta} + \frac{1}{(M \eta)^2} + M^{-\gamma / 4} \Phi^2} \;\leq\; C \frac{M^\epsilon}{M \eta} + M^{\epsilon -\gamma/4} \Phi \;\leq\; C M^{-2 \epsilon} \Phi\,,
\end{equation}
where in the second step we used \eqref{lower bound on im msc}, the assumption $(M \eta)^{-1} \leq
M^{-3\e}\Phi \leq \Phi$,  and the definition of $\wt {\f S}\,\!^>$.
We conclude that in $\wt {\f S}\,\!^>$ we have
\begin{equation} \label{estimate for iteration 2}
\psi \Theta \;\prec\; M^{- 2 \epsilon} \Phi \;\leq\; M^{-\epsilon} \sqrt{\kappa + \eta}\,,
\end{equation}
where in the last step we used the definition of
 $\wt {\f S}\,\!^>$. This means that there is a gap of order $\sqrt{\kappa + \eta}$ 
between the bound in the definition of $\psi$ 
in \eqref{psidef} and the right-hand side of \eqref{estimate for iteration 2}. Moreover, by Proposition \ref{prop: rough bound with gap} we have $\Theta \prec M^{-\epsilon} \sqrt{\kappa + \eta}$ for $\eta = 2$. Hence a continuity argument on $\Theta$, similar to the proof of Proposition \ref{prop: rough bound 1}, yields \eqref{iteration for optimal bound} in $\wt {\f S}\,\!^>$.

Let us now consider the domain $\wt {\f S}\,\!^\leq$. We write the left-hand side of \eqref{vself for optimal bound} as $(1 - m^2 - m^{-1}[v]) [v]$. 
Solving the resulting equation for $[v]$, as in the proof of \eqref{phith}, yields the bound
\begin{equation}\label{Theta2}
\Theta \;\prec\; \abs{1 - m^2} + p(\Phi) + M^{-\gamma/8} \Phi \;\leq\; C M^{-\epsilon} \Phi
 + \sqrt{\frac{\im m + \Phi}{M \eta}} + \frac{1}{M \eta} \;\leq\; C M^{-\epsilon} \Phi + \frac{M^\epsilon}{M \eta} \;\leq\; C M^{-\epsilon} \Phi\,,
\end{equation}
where we used the definition of $\wt {\f S}\,\!^\leq$ and the bounds \eqref{1-msquare}
 and \eqref{lower bound on im msc}. This proves \eqref{iteration for optimal bound} in $\wt {\f S}\,\!^\leq$,
and hence completes the proof of part (i) of Lemma~\ref{lem: iteration for optimal bound}.

The proof of part (ii) is analogous. In this case we are in the domain $\wt {\f S}\,\!^>$, and use the estimate  $\im m\leq C\eta(\kappa+\eta)^{-1/2}$ from \eqref{lower bound on im msc}
instead of $\im m\leq C\sqrt{\kappa+\eta}$ in \eqref{psiTheta}. Using the other assumptions in part (ii),
we have
\begin{equation}\label{psiTheta1}
\psi \Theta \;\prec\;
  \frac{1}{(M\eta)^2\sqrt{\kappa+\eta}} +  C M^{-2 \epsilon} \Phi\, \;\leq\; M^{-\e} \sqrt{\kappa + \eta}\,,
\end{equation}
which replaces \eqref{psiTheta} and  \eqref{estimate for iteration 2}. The rest of the argument is unchanged.
\end{proof}

Armed with Lemma \ref{lem: iteration for optimal bound}, we may now complete the proof of Theorem \ref{thm: with gap}. Fix $\epsilon \in (0, \gamma/12)$. From Proposition \ref{prop: rough bound with gap} we get that $\Theta \prec \Phi_0$ for $\Phi_0 \deq (M \eta)^{-1/3} + M^{3 \epsilon} (M \eta)^{-1}$. Iteration of Lemma \ref{lem: iteration for optimal bound} therefore implies that, for all $k \in \N$, we have $\Theta \prec \Phi_k$ where
\begin{equation*}
\Phi_{k+1} \;\deq\; \frac{M^{3 \epsilon}}{M \eta} + M^{-\epsilon} \Phi_k \;\leq\; C_k \pbb{\frac{M^{3 \epsilon}}{M \eta} + M^{-\epsilon k } \Phi_0}\,.
\end{equation*}
Choosing $k = \ceil{\epsilon^{-1}}$ yields $\Theta \prec M^{3 \epsilon} (M \eta)^{-1}$. Since $\epsilon$ can be made as small as desired, we therefore obtain $\Theta \prec (M \eta)^{-1}$. This is \eqref{m-mest 2}.

In the regime $\abs{E} \geq 2$, the 
same argument with the better iteration bound \eqref{iteration for optimal bound2} yields \eqref{m-mestout}.
The iteration can be started with $\Phi_0= M^{3\e} (M\eta)^{-1}$ from  \eqref{m-mest 2}.

Finally, the bound $\Lambda \prec \Pi$ in \eqref{Gijest 2} follows from \eqref{m-mest 2} and Lemma \ref{lem: estimate of Lambdao 2}. This concludes the proof of Theorem \ref{thm: with gap}.

\section{Density of states and eigenvalue locations} \label{sec: dos}

\newcommand{\hp}[2]{with $(#1,#2)$-high probability\xspace}
\newcommand{\fn}{{\fra n}}
\newcommand{\al}{\alpha}

In this section we apply the local semicircle law to obtain information
on the density of states and on the location of eigenvalues.
The techniques used here have been developed in a series
of papers \cite{ESY2, ESYY, EYYrigi, EKYY1}.

The first result is to translate the local semicircle law,
Theorem~\ref{thm: with gap}, into a statement on the
counting function of the eigenvalues. Let $\lambda_1 \leq \lambda_2 \leq \cdots \leq \lambda_N$ denote the ordered eigenvalues of $H$, and
recall the semicircle density $\varrho$ defined in \eqref{definition of msc}.
We define the distribution functions
\begin{equation}\label{def:fn}
n(E) \;\deq\; \int_{-\infty}^E \varrho(x)  \, \dd x\,, \qquad \fn_N(E)  \;\deq\; \frac{1}{N} \absb{\h{\alpha \col \lambda_\alpha \leq E}}
\end{equation}
for the semicircle law and the empirical eigenvalue density of $H$.
 Recall also the definition \eqref{def: kappa} of $\kappa_x$ for $x \in \R$
and the definition \eqref{def wt eta E} of $\wt \eta_x$ for $|x|\leq 10$.
The following result is proved in Section~\ref{sec:count} below.

\begin{lemma} \label{lemma: counting function estimate}
Suppose that \eqref{m-mest 2} holds uniformly in $z \in \wt {\f S}$, i.e.\ for $\abs{E} \leq 10$ and $\wt \eta_E \leq \eta \leq 10$ we have
\begin{equation} \label{lsc hypothesis}
\abs{m_N(z)-m(z)} \;\prec\; \frac{1}{M\eta}\,.
\end{equation}
For given $E_1 < E_2$ in $[-10, 10]$ we abbreviate
\begin{equation} \label{def of cal E}
\wt\eta \;\deq\; \max \hb{ \wt\eta_{E} \col E\in [E_1, E_2]}.
\end{equation}
Then, for $-10 \leq E_1 < E_2 \leq 10$, we have
\begin{equation} \label{main estimate on n - nsc}
\absB{\pb{{\fra n}_N(E_2) -  {\fra n}_N(E_1)} - \pb{n(E_2) - n(E_1)}} \;\prec\; \wt \eta\,.
\end{equation}
\end{lemma}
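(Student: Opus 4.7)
The plan is to approximate the indicator of $[E_1, E_2]$ by a smoothing at scale $\ell \deq \wt\eta$ and apply a Helffer--Sj\"ostrand representation, then control the resulting double integral using the local law hypothesis \eqref{lsc hypothesis}. First I would introduce a smooth cutoff $f$ equal to $1$ on $[E_1 + \ell, E_2 - \ell]$, vanishing outside $[E_1 - \ell, E_2 + \ell]$, with $\abs{f'} \leq C/\ell$ and $\abs{f''} \leq C/\ell^2$. (The degenerate case $E_2 - E_1 < 2\ell$ reduces immediately to the elementary Poisson--kernel bound $\frac{1}{N}\#\h{\alpha \col \lambda_\alpha \in I} \leq 2\abs{I}\,\im m_N(E + i\abs{I})$ combined with \eqref{lsc hypothesis} and the trivial estimate $\abs{n(E_2) - n(E_1)} \leq C \wt\eta$.) The smoothing error in the semicircle distribution is $\leq C\ell$ by boundedness of $\varrho$, and the analogous error in the empirical distribution counts eigenvalues in the two intervals of length $2\ell$ around $E_1$ and $E_2$, controlled by the same Poisson--kernel bound together with \eqref{lsc hypothesis} at $y = \ell$, giving $\prec \ell = \wt\eta$.

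It then remains to estimate $\int f\,d(\fn_N - n)$. Using the Helffer--Sj\"ostrand formula with almost-analytic extension $\wt f(x + iy) \deq (f(x) + iy f'(x))\chi(y)$, where $\chi$ is a smooth cutoff equal to $1$ on $\abs y \leq 1$ and supported in $\abs y \leq 2$, this quantity equals
\[
\frac{1}{2\pi}\iint \qB{iy f''(x)\chi(y) + i\pb{f(x) + iy f'(x)}\chi'(y)}\,(m_N - m)(x + iy)\,dx\,dy.
\]
Taking real parts, the $\chi'$-contribution is supported in $\abs y \in [1,2]$, where $\abs{m_N - m} \prec 1/M$, and is therefore $\prec 1/M \prec \wt\eta$. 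The main term reduces to $-\frac{1}{2\pi}\iint y f''(x)\chi(y)\im(m_N - m)(x + iy)\,dx\,dy$, which I would split at $\abs y = \ell$.

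For the inner region $\abs y \leq \ell$, I would use the monotonicity of $y \mapsto y\im m_N(x + iy)$ (and likewise for $m$) together with \eqref{lsc hypothesis} at $y = \ell$ to obtain $y\abs{\im(m_N - m)(x + iy)} \leq \ell\,(\im m_N + \im m)(x + i\ell) \prec \ell$; combined with $\int \abs{f''}\,dx \leq C/\ell$ and a $y$-range of size $2\ell$, this contributes at most order $\ell = \wt\eta$. For the outer region $\abs y > \ell$ I would integrate by parts in $x$ via the Cauchy--Riemann identity $\partial_x \im(m_N - m) = -\partial_y \re(m_N - m)$ to convert $f''$ into $f'$, and then integrate by parts once in $y$; this produces a boundary contribution at $\abs y = \ell$ of order $\ell\cdot\norm{f'}_{L^1}\cdot (M\ell)^{-1} = \norm{f'}_{L^1}/M \prec 1/M$, together with a remainder bounded by $\int_\ell^2 (dy/y)\,\norm{f'}_{L^1}/M \prec \log(N)/M$, both of which are $\prec \wt\eta$ since $\wt\eta \geq M^{-1}$ and $\prec$ absorbs the logarithmic factor. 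The principal technical difficulty is the careful handling of the inner region $\abs y \leq \ell$, where the pointwise local law no longer applies and one must rely on the monotonicity of the Poisson integral to propagate control from the threshold scale $y = \ell$ to smaller $\abs y$.
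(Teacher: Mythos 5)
Your argument is correct and follows essentially the same route as the paper's proof: a Helffer--Sj\"ostrand representation for a mollified indicator at scale $\wt\eta$, the monotonicity of $y\mapsto y\im m_N(x+\ii y)$ to propagate control below the threshold scale in the inner region, a double integration by parts (in $x$ via Cauchy--Riemann, then in $y$) in the outer region, and the Poisson-kernel bound to pass back to the sharp counting function. The only (cosmetic) deviation is that you mollify outward onto $[E_1-\wt\eta,E_2+\wt\eta]$ rather than inward onto $[E_1,E_2]$ as the paper does; since \eqref{def of cal E} only guarantees $\wt\eta\geq\wt\eta_E$ for $E\in[E_1,E_2]$, you should smooth inward so that \eqref{lsc hypothesis} is invoked only at energies covered by that definition.
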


The accuracy of the estimate \eqref{main estimate on n - nsc} depends 
on $\wt\Gamma$ (see \eqref{bound for Gamma tilde} for explicit bounds on $\wt \Gamma$),
since $\wt\Gamma$ determines $\wt\eta_E$, the smallest scale
on which the local semicircle law (Theorem~\ref{thm: with gap}) holds around the energy $E$.
In the regime away from the spectral edges $E=\pm 2$ and away from $E=0$, the 
parameter $\wt\Gamma$ is essentially bounded (see the example (i) from Section \ref{sec: example});
in this case $\wt \eta_E \asymp M^{-1}$ (up to an irrelevant logarithmic factor).
For $E$ near $0$, the parameter  $\wt \Gamma$ blows up as $E^{-2}$, so that $\wt\eta_E\sim E^{-12} M^{-1}$; however, 
if $S$ has a positive gap $\delta_-$ at the bottom of its spectrum, $\wt \Gamma$ remains bounded in the vicinity 
of $E = 0$ (see \eqref{bound for Gamma tilde}). See Definition \ref{def: gap} in Appendix \ref{app: bounds on varrho}
 for the definition of the spectral gaps $\delta_\pm$.

A typical example of $S$ without a positive gap $\delta_-$ is a $2\times 2$ block matrix
with zero diagonal blocks, i.e.  $s_{ij} = 0$ if $i,j\leq L$ or $L+1\leq i,j \leq N$. 
In this case, the vector ${\f v} = (1,1,\ldots 1, -1, -1, \ldots -1)$ consisting of $L$ ones and
$N-L$ minus ones satisfies $S{\f v} =-{\f v}$, so that $-1$ is in fact an eigenvalue of $S$. Since at energy $E=0$
we have $m^2(z)= m^2(i\eta) = -1 + O(\eta)$, the inverse matrix $(1-m^2S)^{-1}$, even 
after restricting it  to ${\f e}^\perp$, becomes singular as
$\eta\to 0$. Thus, $\wt \Gamma(i\eta)\sim \eta^{-1}$, and the estimates
leading to Theorem~\ref{thm: with gap} become unstable.
The corresponding random matrix
has the form
\begin{equation*}
H \;=\;
\begin{pmatrix} 0  & A \\ A^* & 0
\end{pmatrix}
\end{equation*}
where $A$ is an $L\times (N-L)$ rectangular matrix with independent centred entries.
The eigenvalues of $H$ are the square roots (with both signs) of the eigenvalues of
the random covariance matrices $AA^*$ and $A^*A$, whose spectral density is asymptotically given by the \emph{Marchenko-Pastur law} \cite{MP}.
The instability near $E = 0$ arises from the fact that $H$ has a macroscopically large kernel unless
$L/N\to 1/2$. In the latter case the support of the Marchenko-Pastur law extends to zero and 
in fact the density diverges as $E^{-1/2}$. We remark that a local version of the Marchenko-Pastur
 law was given in \cite{ESYY}
for the case when the limit of $L/N$ differs from 0, 1/2 and $\infty$; the ``hard edge'' case, $L/N\to 1/2$, in which the density near the lower spectral edge is singular, was treated in \cite{CMS}.

This example shows that the vanishing of $\delta_-$ may lead to a very different behaviour of the spectral statistics.
Although our technique is also applicable to random covariance matrices, for simplicity
in this section we assume that $\delta_-\geq c$ for some positive constant $c$. By  Proposition~\ref{prop: spectrum of S}, this holds for random band matrices, for full Wigner 
matrices (see Definition~\ref{def: Wigner}), and for their combinations; these examples are our main interest in this paper.

Under the condition $\delta_-\geq c$, the upper bound of \eqref{bound for Gamma tilde} yields
\begin{equation} \label{Gaim}
\wt\Gamma(E+i\eta) \;\leq\; \frac{C \log N}{\delta_++\theta}\,,
\end{equation}
where $\theta$ was defined in \eqref{def theta} and $\delta_+$ is the upper gap of the spectrum of $S$ given in Definition \ref{def: gap}.
Notice that $\theta$ vanishes near the spectral edge $E =\pm 2$ as $\eta\to 0$.
For the purpose of estimating $\wt \Gamma$, this deterioration is mitigated if the upper gap $\delta_+$
is non-vanishing. While full Wigner matrices satisfy $\delta_+ \geq c$,
the lower bound on $\delta_+$ for band matrices is weaker; see Proposition~\ref{prop: spectrum of S} for a precise statement.

We first give an estimate on $\wt\eta_x$ using the explicit bound \eqref{Gaim}. While not fully optimal, this estimate is sufficient for our purposes and in particular reproduces the correct behaviour when $\delta_+ \geq c$.

\begin{lemma}  Suppose that $\delta_-\geq c$ (so that \eqref{Gaim} holds). 
Then we have for any $\abs{x} \leq 2$
\begin{equation}\label{etab}
\wt \eta_x \;\leq\; \frac{CM^{3\gamma}}{M(\kappa_x+\delta_+ + M^{-1/5})^{7/2}}\,.
\end{equation}
In the regime $2 \leq \abs{x} \leq 10$ we have the improved bound
\begin{equation}\label{etabout}
\wt \eta_x \;\leq\;  \frac{CM^{3\gamma}}{M( \sqrt{\kappa_x}+\delta_+ + M^{-1/5})^3}\,.
\end{equation}
\end{lemma}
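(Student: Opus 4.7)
The strategy is direct verification. Let $\wh\eta_x$ denote the right-hand side of \eqref{etab} for $|x|\leq 2$, or of \eqref{etabout} for $2\leq|x|\leq 10$. By the definition \eqref{def wt eta E} of $\wt\eta_x$, it suffices to establish that for every $\eta'\in[\wh\eta_x,10]$ and $z'=x+\ii\eta'$, both inequalities
\begin{equation}\label{plan:vv}
M\eta'(\delta_+ + \theta(z'))^3 \;\geq\; (C\log N)^3 M^\gamma\,, \qquad M\eta'(\delta_+ + \theta(z'))^4 \;\geq\; (C\log N)^4 M^{2\gamma}\im m(z')
\end{equation}
hold, since these follow from the defining inequalities of $\wt\eta_x$ after substituting the upper bound \eqref{Gaim} for $\wt\Gamma(z')$. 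Using \eqref{def theta} and \eqref{lower bound on im msc}, direct differentiation shows that the maps $\eta' \mapsto \eta'(\delta_+ + \theta(x+\ii\eta'))^3$ and $\eta' \mapsto \eta'(\delta_+ + \theta(x+\ii\eta'))^4/\im m(x+\ii\eta')$ are both non-decreasing in $\eta'$. Hence \eqref{plan:vv} is tightest at the left endpoint $\eta' = \wh\eta_x$, and the proof reduces to verifying \eqref{plan:vv} at this single value.

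The verification rests on the key lower bound
\begin{equation}\label{plan:key}
\delta_+ + \theta(x+\ii\wh\eta_x) \;\gtrsim\; \Delta_x\,, \qquad \Delta_x \deq \begin{cases}\kappa_x + \delta_+ + M^{-1/5} & \text{if } |x|\leq 2,\\ \sqrt{\kappa_x} + \delta_+ + M^{-1/5} & \text{if } |x|\geq 2,\end{cases}
\end{equation}
which I would prove by cases. If the $M^{-1/5}$ term is dominated by $\delta_+ + \kappa_x$ (resp.\ $\delta_+ + \sqrt{\kappa_x}$), the bound is immediate from $\theta(z')\geq\kappa_x$ (resp.\ $\sqrt{\kappa_x}$). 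In the opposite regime $\Delta_x \asymp M^{-1/5}$, one plugs in the explicit value $\wh\eta_x \asymp M^{3\gamma-3/10}$ (resp.\ $M^{3\gamma-2/5}$) and checks that $\wh\eta_x \geq \kappa_x$, whence $\theta(\wh\eta_x) \gtrsim \sqrt{\wh\eta_x} \gtrsim M^{-1/5}$. This is precisely where the regularization exponent $1/5$ is fixed: it is the largest value compatible with the resulting arithmetic uniformly over $\gamma\in(0,1/2)$.

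With \eqref{plan:key} in hand, the first inequality of \eqref{plan:vv} at $\eta'=\wh\eta_x$ reduces by direct substitution to $CM^{3\gamma}/\sqrt{\Delta_x} \gtrsim (\log N)^3 M^\gamma$ in the case $|x|\leq 2$ (and to $CM^{3\gamma} \gtrsim (\log N)^3 M^\gamma$ in the case $|x|\geq 2$), which holds for $N$ large since $\Delta_x$ is bounded and the extra factor $M^{2\gamma}$ absorbs the polylogarithm. For the second inequality I would use $\im m(\wh\eta_x) \lesssim \sqrt{\kappa_x + \wh\eta_x}$ and treat two sub-cases: if $\wh\eta_x \leq \Delta_x$ then \eqref{plan:key} closes the estimate, whereas if $\wh\eta_x > \Delta_x$ one invokes the stronger bound $\theta(\wh\eta_x) \gtrsim \sqrt{\wh\eta_x}$, reducing the check to $M\wh\eta_x^{5/2} \gtrsim (\log N)^4 M^{2\gamma}$, which follows from the explicit form of $\wh\eta_x$. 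The main obstacle is this final bookkeeping; no new ideas enter, but the specific exponents $7/2$ and $3$ in the claimed bounds are calibrated precisely so that the borderline regime $\Delta_x \asymp M^{-1/5}$ is tight.
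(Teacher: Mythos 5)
Your strategy---inserting the explicit bound \eqref{Gaim} for $\wt\Gamma$ into the two inequalities defining $\wt\eta_x$ in \eqref{def wt eta E}, reducing to the endpoint $\eta'=\wh\eta_x$ by monotonicity, and verifying the endpoint via the lower bound $\delta_++\theta\gtrsim\Delta_x$---is essentially the paper's argument run backwards (the paper introduces an intermediate threshold $\eta'_x$ as the solution of an explicit equation built from the surrogate $\theta\geq c(\kappa_x+\eta^{2/3})$, shows $\wt\eta_x\leq\eta'_x$, and then solves for $\eta'_x$). However, your justification of the key lower bound in the borderline regime contains a step that fails. For $\abs{x}\leq 2$ and $\Delta_x\asymp M^{-1/5}$ you assert that $\wh\eta_x\geq\kappa_x$. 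This is false in general: in that regime $\wh\eta_x\asymp M^{3\gamma-3/10}$ while $\kappa_x$ may be as large as a constant times $M^{-1/5}$, and for $\gamma<1/30$ one has $M^{3\gamma-3/10}\ll M^{-1/5}$; e.g.\ $\kappa_x=M^{-1/4}$ with small $\gamma$ gives $\kappa_x\gg\wh\eta_x$ while still $\Delta_x\asymp M^{-1/5}$. Consequently the inequality $\theta(x+\ii\wh\eta_x)\gtrsim\sqrt{\wh\eta_x}$, which for $\abs{x}\leq 2$ requires $\kappa_x\lesssim\wh\eta_x$ (the minimum of $\kappa+\eta/\sqrt{\kappa+\eta}$ over $\kappa$ is of order $\eta^{2/3}$, not $\eta^{1/2}$), is not available as stated. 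For $\abs{x}\geq 2$ there is no problem, since $\theta=\sqrt{\kappa+\eta}\geq\sqrt{\eta}$ unconditionally.

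The conclusion you need is nevertheless correct, and the repair is exactly the bound the paper uses: $\theta(x+\ii\eta)\geq c(\kappa_x+\eta^{2/3})$ for $\abs{x}\leq 2$, whence in the borderline regime $\theta(x+\ii\wh\eta_x)\geq c\,\wh\eta_x^{2/3}\geq cM^{(3\gamma-3/10)\cdot 2/3}=cM^{2\gamma-1/5}\geq cM^{-1/5}$, uniformly in $\gamma\in(0,1/2)$. With this substitution your case analysis closes. One further point deserves care: in the sub-case $\wh\eta_x>\Delta_x$ of your second inequality, the target $M\wh\eta_x^{5/2}\gtrsim(\log N)^4M^{2\gamma}$ does not follow from either of the crude bounds $\wh\eta_x\geq cM^{3\gamma-1}$ or $\wh\eta_x>M^{-1/5}$ alone (the first fails for $\gamma<3/11$, the second for $\gamma>1/4$); you must combine the sub-case hypothesis with the explicit formula, namely $\wh\eta_x^{9/2}>\wh\eta_x\Delta_x^{7/2}=CM^{3\gamma-1}$, which gives $M\wh\eta_x^{5/2}\geq cM^{(4+15\gamma)/9}\gg M^{2\gamma}(\log N)^4$ because $(4-3\gamma)/9>0$. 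Neither issue requires a new idea, but both must be spelled out for the bounds to hold over the full range of $\gamma$ and $\kappa_x$.
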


\begin{proof}
For any $\abs{x} \leq 2$ define
$\eta_x'$ as the solution of the equation
\begin{equation}\label{solx}
   \sqrt{ \frac{\sqrt{\kappa_x+\eta}}{M\eta}}  \frac{1}{ ( \kappa_x +\eta^{2/3}  +\delta_+)^{2}}
+ \frac{1}{M\eta} \frac{1}{ (  \kappa_x +\eta^{2/3} +\delta_+)^{3}} \;=\; M^{-\frac{3\gamma}{2}} .
\end{equation}
This solution is unique since the left-hand side is decreasing in $\eta$.
An elementary but tedious analysis of \eqref{solx} yields
\begin{equation}\label{etab1}
  \eta'_x \;\leq\; \frac{CM^{3\gamma}}{M(\kappa_x+\delta_+ + M^{-1/5})^{7/2}}\,.
\end{equation}
(The calculation is based on the observation that if $\eta(a+\eta^\al)\le b$ for some $a, b>0$ and $\al \ge0$, then $\eta \le 2b(b^{\frac{\al}{1+\al}}+a)^{-1}$.)
From  \eqref{Gaim},  $\im m(x+i\eta) \leq C\sqrt{\kappa_x+\eta}$ (see \eqref{lower bound on im msc})
and the  simple bound $\theta(x+i\eta) \geq c(\kappa_x +\eta^{2/3})$,
we get for $\eta \geq \eta'_x$
$$
   \sqrt{ \frac{\im m(x+i\eta)}{M\eta}} \wt \Gamma^2(x+i\eta)
+ \frac{1}{M\eta}\wt \Gamma^3(x+i\eta)  \;\le\; C(\log N)^3 M^{-\frac{3\gamma}{2}} .
$$
From the definition \eqref{def S varrho 2} of $\wt{\f S}$, we therefore get $\wt\eta_x \leq \eta_x'$, which proves  \eqref{etab}.

The proof of \eqref{etabout} is similar, but we use $\theta = \sqrt{\kappa+\eta}$ and 
the stronger bound  $\im m \le \eta/\sqrt{\kappa+\eta}$ available in the regime $\abs{x} \geq 2$.
 For $2\leq \abs{x} \leq 10$, define $\eta_x'$ to be the  solution of the equation
\be
   \sqrt{ \frac{1}{M\sqrt{\kappa_x+\eta}}} \frac{1}{ ( \sqrt{\kappa_x+\eta}  +\delta_+)^{2}}
 + \frac{1}{M\eta} \frac{1}{( \sqrt{\kappa_x+\eta}  +\delta_+)^{3}}  \;=\; M^{-\frac{3\gamma}{2}} .
\end{equation}
As for \eqref{etab1}, a tedious calculation yields
$$ 
   \eta_x' \;\leq\; \frac{CM^{3\gamma}}{M( \sqrt{\kappa_x}+\delta_+ + M^{-1/5})^3}\,.
$$
This concludes the proof.
\end{proof}

Next, we obtain an estimate on the extreme eigenvalues. 

\begin{theorem}[Extremal eigenvalues]\label{thm:extr}  Suppose that $\delta_-\geq c$ 
(so that \eqref{Gaim} holds) and that $N^{3/4} \leq M \leq N$. Then we have  
\begin{equation}\label{extreme2}
\norm{H} \;\le\; 2 +  O_\prec (X)\,,
\end{equation}
where we introduced the control parameter
\be\label{def:X}
X \;\deq\; \frac{N^2}{M^{8/3}} +  \pbb{\frac{N}{M^2}}^2 
\qbb{\delta_+ +  \Big( \frac{N}{M^2}\Big)^{1/7}}^{-12}\,.
\ee
In particular, if $\delta_+\geq c$ then
\begin{equation}\label{extreme}
\norm{H} \;\leq\; 2 + O_\prec \pbb{\frac{N^2}{M^{8/3}}}\,.
\end{equation}
\end{theorem}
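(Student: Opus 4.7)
The plan is to derive the norm bound from the local semicircle law outside the spectrum, \eqref{m-mestout}, by ruling out eigenvalues with absolute value appreciably larger than $2$. By the symmetry $H \leftrightarrow -H$, it suffices to prove $\lambda_N \leq 2 + O_\prec(X)$. The starting point is the elementary lower bound
\begin{equation*}
\im m_N(E+\ii\eta) \;=\; \frac{1}{N}\sum_\alpha \frac{\eta}{(\lambda_\alpha - E)^2 + \eta^2} \;\geq\; \frac{1}{2N\eta}\, \indb{\exists \alpha \col \abs{\lambda_\alpha - E} \leq \eta}.
\end{equation*}
Hence, if we can show $\im m_N(E+\ii\eta) < (2N\eta)^{-1}$ with high probability for a well-chosen $\eta$ and uniformly in $E$ over a mesh of spacing $\leq \eta$ covering $[2 + CX, K]$ (for any large constant $K$), then no eigenvalue of $H$ lies in $[2+CX, K]$; letting $K \to \infty$ yields $\lambda_N \leq 2 + CX$ with high probability. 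Uniformity in $E$ is obtained by a straightforward union bound over the polynomially-many mesh points, which is absorbed by $\prec$.

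To upper-bound $\im m_N$, we set $E = 2 + \kappa$ with $\kappa \geq CX$ and combine \eqref{m-mestout} with the edge estimate $\im m \asymp \eta/\sqrt{\kappa+\eta}$ from \eqref{lower bound on im msc}:
\begin{equation*}
\im m_N(E + \ii\eta) \;\prec\; \frac{\eta}{\sqrt{\kappa+\eta}} + \frac{1}{M(\kappa+\eta)} + \frac{1}{(M\eta)^2\sqrt{\kappa+\eta}}.
\end{equation*}
Working in the regime $\eta \ll \kappa$, each of the three terms lies below $(2N\eta)^{-1}$ provided, respectively, $\eta < \kappa^{1/4}/\sqrt{N}$, $\eta < M\kappa/N$, and $\eta > N/(M^2 \sqrt{\kappa})$; the middle condition is implied by the other two together with $M \leq N$, so the admissible window of $\eta$ is nonempty iff $N/(M^2\sqrt{\kappa}) < \kappa^{1/4}/\sqrt{N}$, equivalently $\kappa > N^2/M^{8/3}$. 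This produces the first summand in $X$.

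The second summand in $X$ comes from imposing that $z = E + \ii\eta$ lies in $\wt{\f S}$ together with $M\eta\sqrt{\kappa+\eta} \geq M^\gamma$, both of which are needed to invoke \eqref{m-mestout}. The latter can be ensured by enlarging $\kappa$ by a factor $M^{O(\gamma)}$, which is negligible because $\gamma$ may be chosen arbitrarily small and is absorbed by the $N^\e$-slack in $O_\prec$. The binding constraint is therefore $\eta \geq \wt\eta_E$, and inserting the explicit bound \eqref{etabout} converts it (up to $M^{O(\gamma)}$) into $(\sqrt{\kappa} + \delta_+ + M^{-1/5})^3 \kappa^{1/4} \geq \sqrt{N}/M$. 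A brief case analysis on the three terms inside the cube shows that this is ensured by $\kappa \geq c(N/M^2)^{2/7}$ when $\sqrt{\kappa}$ dominates and by $\kappa \geq c(N/M^2)^{2}/\delta_+^{12}$ when $\delta_+$ dominates, the $M^{-1/5}$-dominated case being vacuous for $M \leq N$. Interpolating the two thresholds yields exactly $(N/M^2)^2/(\delta_+ + (N/M^2)^{1/7})^{12}$, the second summand of $X$. The specialization \eqref{extreme} is then immediate: if $\delta_+ \geq c$, the second term is $O((N/M^2)^2)$, which is dominated by $N^2/M^{8/3}$ throughout $M \leq N$. The most delicate point in executing the plan is the careful bookkeeping of the three competing constraints on $\eta$ together with the $\wt\eta_E$--constraint through the regularizers $\sqrt\kappa$, $\delta_+$, and $M^{-1/5}$.
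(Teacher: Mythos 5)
Your treatment of the window $2+CX\leq E\leq K$ is essentially the paper's argument for its indicator $\phi_1$: the same lower bound $\im m_N\geq c(N\eta)^{-1}$ near an eigenvalue, the same use of the outside-the-spectrum estimate \eqref{m-mestout} together with $\im m\asymp \eta/\sqrt{\kappa+\eta}$, and the same bookkeeping of the competing constraints on $\eta$ (the paper's choice $\eta_k = M^{4\gamma}\bigl(N M^{-2}\kappa_k^{-1/2} + M^{-1}(\sqrt{\kappa_k}+\delta_+)^{-3}\bigr)$ is exactly the lower end of your admissible window, and your two thresholds for $\kappa$ reproduce the two summands of $X$). The only cosmetic difference is that you run a union bound over a mesh where the paper telescopes over intervals of length $1/N$; both are fine.

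The genuine gap is the step ``letting $K\to\infty$''. The local semicircle law \eqref{m-mestout} is only available on the spectral domain $\wt{\f S}$ of \eqref{def S varrho 2}, which is confined to $\abs{E}\leq 10$; outside that range you have no resolvent estimate at all, so your argument can only exclude eigenvalues in $[2+CX,\,K]$ for a fixed constant $K\leq 10$ and says nothing about eigenvalues of magnitude larger than $10$. Under the hypothesis \eqref{finite moments} (only polynomially many finite moments, no boundedness or subexponential decay) there is no trivial deterministic or union-bound control of $\norm{H}$ on that scale, so this regime cannot be dismissed. The paper closes it with a separate ingredient: the moment-method estimate $\E\,\absb{\h{j\col \lambda_j\leq -3}}\leq N^{-D}$ of \eqref{conc}, obtained by adapting the truncation argument of Lemma 7.2 of \cite{EYY} to the polynomial-decay setting. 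Some such a priori bound on the far-out spectrum is an unavoidable second component of the proof, and your proposal is missing it. Everything else — the nonemptiness condition $\kappa> N^2/M^{8/3}$, the reduction of the $\wt\eta_E$-constraint via \eqref{etabout} to the two cases $\sqrt{\kappa}$ versus $\delta_+$ dominant (the $M^{-1/5}$ case being absorbed thanks to $M\geq N^{3/4}$), and the specialization to \eqref{extreme} — checks out.
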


Note that \eqref{extreme} yields the optimal error bound $O_\prec(N^{-2/3})$ in the case of a 
full and flat Wigner matrix (see Definition \ref{def: Wigner}). 
Under stronger assumptions on the law of the entries of $H$, Theorem \ref{thm:extr} can be improved as follows.

\begin{theorem}\label{thm:extr1} Suppose that the matrix elements $h_{ij}$ have a uniform subexponential decay, 
i.e.\ that there exist positive constants $C$ and $\vartheta$ such that
\be\label{subexp}
\P \pb{\abs{h_{ij}} \geq x^\vartheta \sqrt{s_{ij}}} \;\le\; C \, \me^{-x}\,.
\ee
Then \eqref{extreme2} holds with 
\be\label{def:X1}
  X \;\deq\; M^{-1/4}\,.
\ee
If in addition the law of each matrix entry is symmetric (i.e.\ $h_{ij}$ and $-h_{ij}$ have the same law), then \eqref{extreme2} holds with 
\be\label{def:Xsim}
  X \;\deq\; M^{-2/3}\,.
\ee
\end{theorem}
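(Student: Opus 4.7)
The plan is to follow the Stieltjes-transform strategy used to prove Theorem \ref{thm:extr}, while exploiting the sharper decay assumption \eqref{subexp} (and, in the symmetric case, the additional cancellation) to tighten each of the three main ingredients: the local semicircle law Theorem \ref{thm: with gap}, the counting-function estimate Lemma \ref{lemma: counting function estimate}, and the extremal-eigenvalue argument itself.

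Under the subexponential tail assumption \eqref{subexp}, as observed in Remark \ref{rem:dec}, the notation $\prec$ can be strengthened so that the $N^\epsilon$ tolerance is replaced by $(\log N)^C$ and the probability estimate $N^{-D}$ by a stretched-exponential bound. All the proofs in Sections \ref{sec: tools}--\ref{sec:withgap} adapt mutatis mutandis to this stronger definition; in particular, the high-moment arguments in Lemma \ref{lemma: Lambdao} and in the fluctuation-averaging Theorems \ref{thm: averaging}--\ref{thm: averaging with Lambdao} deliver bounds of the form $(\log N)^C Y$ rather than $N^\epsilon Y$. Consequently, the small parameter $\gamma>0$ entering the definition of $\wt{\f S}$, and responsible for the $M^{3\gamma}$ factor in \eqref{etabout}, may be taken effectively as $(\log N)^{-1}$. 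Rerunning the proof of Theorem \ref{thm:extr} with this upgraded $\prec$, each source of loss collapses to the intrinsic scale $M^{-1/4}$ (coming from the $M^{-1/5}$ term in \eqref{etabout} together with the fluctuation of the diagonal entries $h_{ii}\prec M^{-1/2}$), yielding part (i) with $X=M^{-1/4}$ uniformly in $S$ and in $\delta_+$.

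For part (ii), the additional symmetry of each $h_{ij}$ would be used as follows. In the Schur expansion \eqref{schur}--\eqref{def:Ups}, the diagonal entry $h_{ii}$ in $\Upsilon_i$ is a symmetric random variable, and all odd moments of the quadratic form $Z_i$ vanish. High-moment estimates of the type used in Appendix \ref{app: fluct averaging} then gain an extra factor of $M^{-1/2}$ at each symmetry-improved stage, which propagates into the averaged self-consistent equation via the fluctuation averaging result. Feeding this refinement into the iterative scheme of Lemma \ref{lem: iteration for optimal bound}, together with the sharper out-of-spectrum bound \eqref{m-mestout}, improves the eigenvalue exponent from $1/4$ to the Tracy--Widom scale $2/3$, giving $X = M^{-2/3}$. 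The main technical obstacle will be to track the symmetry-induced cancellations through the entire iterative machinery of Section \ref{sec:withgap} without their being spoiled at any of the continuity steps; just as in the classical Wigner case, the cancellation must be invoked not only in the bulk self-consistent equation but also in the delicate edge analysis where $\sqrt{\kappa+\eta}$ becomes small and the control provided by $\wt\Gamma$ degenerates.
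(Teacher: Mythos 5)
Your strategy does not match the paper's and, more importantly, it cannot produce the stated bounds. The paper proves Theorem \ref{thm:extr1} by the moment method, not by the resolvent approach of Theorem \ref{thm:extr}: the bound $X=M^{-1/4}$ is quoted from the combinatorial estimate of $\E\tr H^{2k}$ in part (2) of Lemma 7.2 of \cite{EYY} (with an optimized choice of $k$), and the bound $X=M^{-2/3}$ under the symmetry assumption is quoted from Theorem 3.4 of \cite{EK2}, which relies on the Feldheim--Sodin Chebyshev polynomial representation $\E \tr U_k(H/2)$; there the symmetry of the entries enters by killing the odd moments in the path expansion. Note also that Theorem \ref{thm:extr1} is stated without the hypotheses $\delta_-\geq c$ and $N^{3/4}\leq M\leq N$ of Theorem \ref{thm:extr}, which any rerun of that proof would require.

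The concrete gap in your argument is that the obstruction in the proof of Theorem \ref{thm:extr} is not the $N^\epsilon$ tolerance in $\prec$, so upgrading $N^\epsilon$ to $(\log N)^C$ does not remove it. To detect an eigenvalue at distance $\kappa$ beyond the edge one compares the signal $(N\eta)^{-1}$ in \eqref{low1} with the error $(M\eta)^{-2}\kappa^{-1/2}$ from \eqref{m-mestout} and the background $\im m \asymp \eta/\sqrt{\kappa}$; these force $N^2M^{-4}\kappa^{-1}\ll \eta^2 \ll \sqrt{\kappa}/N$ and hence $\kappa \gg N^2 M^{-8/3}$, which is exactly the first term of \eqref{def:X}. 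This threshold is dictated by the quadratic error $(M\eta)^{-2}$ of the local law, i.e.\ by the strength of the fluctuation averaging, and it exceeds $M^{-1/4}$ whenever $M\leq N^{24/29}$; for genuine band matrices with $M\ll N$ it is nowhere near $M^{-2/3}$. Likewise the $\delta_+$-dependent term in \eqref{def:X} reflects the degeneration of $\wt\Gamma$ at the spectral edge, not a loss of $N^\epsilon$. Finally, your mechanism for part (ii) --- tracking symmetry-induced cancellations through the self-consistent iteration --- is not substantiated: the quantities $Q_k (G_{kk})^{-1}$ are not odd functions of the matrix entries, and no step of Sections \ref{sec: simple proof}--\ref{sec:withgap} produces an extra factor $M^{-1/2}$ from the symmetry of the law of $h_{ij}$. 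The improvement to $M^{-2/3}$ genuinely requires the non-backtracking/Chebyshev combinatorics of \cite{FSo, So1, EK2}.
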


We remark that  \eqref{def:X1} can obtained via a relatively standard moment method argument combined with refined combinatorics. 
Obtaining the bound \eqref{def:Xsim} is fairly involved; it makes use of the Chebyshev polynomial
representation first used by Feldheim and Sodin \cite{FSo, So1} in this context 
for a special distribution of $h_{ij}$, and extended in \cite{EK2} to general symmetric entries.

\begin{proof}[Proof of Theorem \ref{thm:extr}]
We shall prove a lower bound on the smallest eigenvalue $\lambda_1$
of $H$; the largest eigenvalue $\lambda_N$ may be estimated similarly from above.
Fix a small $\gamma>0$ and set 
\begin{equation*}
 \ell \;\deq\; M^{6\gamma}  \frac{N^2}{M^{8/3}}\,.
\end{equation*}
We distinguish two regimes depending on the location of $\lambda_1$, i.e.\ we decompose
$$
   \ind{\lambda_1 \leq -2-\ell} \;=\;
  \phi_1+\phi_2\,,
$$
where
$$
  \phi_1 \;\deq\; \ind{-3\leq \lambda_1 \leq -2-\ell}\,,
  \qquad  \phi_2 \;\deq\;  \ind{\lambda_1 \leq -3}\,.
$$
In the first regime we further decompose the probability space 
by estimating
$$
   \phi_1 \;\leq\; \sum_{k=0}^{k_0} \phi_{1,k}\,,
  \qquad \phi_{1,k} \;\deq\; \indbb{   -2- \ell - \frac{k+1}{N}
\le\lambda_1\leq -2-\ell- \frac{k}{N}}\,.
$$
The upper bound $k_0$ is the smallest integer such that  $2+\ell +\frac{k_0+1}{N}\geq 3$;
clearly $k_0\leq  N$.
For any $k\leq k_0$ we set
$$
   z_k \;\deq\; E_k + \ii \eta_k\,, \qquad E_k \;\deq\; -2-\kappa_k\,, \qquad
 \kappa_k \;\deq\; \ell +\frac{k}{N}\,, \qquad \eta_k \;\deq\; M^{4\gamma}\frac{N}{M^2\sqrt{\kappa_k}}\,.
$$
Clearly, $\eta_k \leq \kappa_k$ since $M \leq N$.
On the support of $\phi_{1,k}$  we have $\abs{\lambda_1-E_k} \leq C/N \leq \eta_k$, so that we get the lower bound
\begin{equation} \label{low1}
\phi_{1,k} \im m_N(z_k) \;=\; \phi_{1,k} \frac{1}{N}\sum_{\al=1}^N \frac{\eta_k}{(\lambda_\al-E_k)^2+\eta_k^2}
  \;\geq\; \phi_{1,k} \frac{1}{N} \frac{\eta_k}{(\lambda_1-E_k)^2+\eta_k^2} \;\geq\;
  \frac{c}{N\eta_k}
\end{equation}
for some positive constant $c$. On the other hand, by \eqref{lower bound on im msc}, we have
$$
  \im m(z_k) \;\leq\; \frac{C\eta_k}{\sqrt{\kappa_k}}\,.
$$
Therefore we get
\begin{equation} \label{gh}
\phi_{1,k} \absb{\im m_N(z_k) - \im m(z_k)} \;\geq\; \frac{c}{N\eta_k} - \frac{C\eta_k}{\sqrt{\kappa_k}}
 \;\geq\; \frac{c' }{N\eta_k}
\end{equation}
for some positive constant $c'$. Here in the second step we used that $\eta_k/\sqrt{\kappa_k}\leq M^{-\gamma} (N\eta_k)^{-1}$. 

Suppose for now that $\delta_+\geq c$. Then by \eqref{etab} we have the upper bound $\wt \eta_x \leq CM^{3\gamma-1}$, uniformly for
$\abs{x} \leq 10$. Since $\eta_k\geq CM^{4\gamma-1}$ we find that $z_k\in \wt {\f S}$ with $\abs{\re z_k} \geq 2$. Hence \eqref{m-mestout} applies
for $z=z_k$ and we get
\begin{equation}
\absb{ \im m_N(z_k) - \im m(z_k)} \;\prec\; \frac{1}{M\kappa_k} + \frac{1}{(M\eta_k)^2\sqrt{\kappa_k}}
 \;\leq\; C M^{-\gamma}\frac{1}{N\eta_k}\,.
\end{equation}
Comparing this bound with \eqref{gh} we conclude that $\phi_{1,k} \prec 0$ (i.e.\ the event $\{\phi_{1,k} = 1\}$ has very small probability).
Summing over $k$ yields $\phi_1\prec 0$.
Note that in this proof the stronger bound \eqref{m-mestout} outside of the spectrum was essential; the general bound 
of order $(M\eta_k)^{-1}$ from \eqref{m-mest 2} is not smaller than the right-hand side of \eqref{gh}.

The preceding proof of $\phi_1 \prec 0$ assumed the existence of a spectral gap $\delta_+ \geq c$.
The above argument easily carries over to the case without a gap of constant size, in which case we choose
\begin{gather*}
  \ell \;\deq\; M^{6\gamma} \pBB{ \frac{N^2}{M^{8/3}} + \pbb{\frac{N}{M^2}}^2 
\qbb{\delta_+ +  \pbb{\frac{N}{M^2}}^{1/7}}^{-12}}\,,
\\
E_k \;\deq\; -2 - \kappa_k\,,
 \qquad  \kappa_k \;\deq\; \ell + \frac{k}{N}\,,
\qquad
 \eta_k \;\deq\; M^{4\gamma} \pbb{ \frac{N}{M^2\sqrt{\kappa_k}} + \frac{1}{M(\sqrt{\kappa_k}+\delta_+)^3}}\,.
\end{gather*}
The last term in $\eta_k$  guarantees that $z_k\in \wt {\f S}$, by \eqref{etabout}. Then we may repeat the above proof to get $\phi_1 \prec 0$ for the new function $\phi_1$.

All that remains to complete the proof of \eqref{extreme2} and \eqref{extreme} is the estimate $\phi_2 \prec 0$.
Clearly
$$
   \P (\lambda_1\leq -3) \;\leq\; \E \absb{\h{j \col \lambda_j \leq -3}}\,.
$$
In part (2) of Lemma 7.2 in \cite{EYY} it was shown, using the
moment method, that the right-hand side is bounded by $CN^{-c\log \log N}$ provided 
the matrix entries $h_{ij}$ have subexponential decay, i.e.
$$
  \P ( \abs{\zeta_{ij}} \geq x^\al ) \;\leq\; \beta \me^{-x} \qquad (x>0)\,,
$$
for some constants $\al,\beta$ (recall the notation \eqref{def:zeta}). In this paper we only assume polynomial decay,
\eqref{finite moments}. However, the subexponential decay assumption of \cite{EYY} was only used in
the first truncation step, Equations (7.28)--(7.29) in \cite{EYY}, where a 
new set of independent random variables $\wh h_{ij}$ was constructed with the
properties that
\begin{equation}\label{trunc}
\P \pb{\zeta_{ij} = \wh \zeta_{ij}} \;\geq\; 1-\me^{-n}\,, \qquad  \absb{\wh \zeta_{ij}} \;\leq\; n\,, \qquad \E \zeta_{ij} =0\,, \qquad \E \absb{\wh \zeta_{ij}}^2 \;\leq\; \E \abs{\zeta_{ij}}^2 + \me^{-n}
\end{equation}
for $n = (\log N)(\log \log N)$. Under the condition \eqref{finite moments} the same
truncation can be performed, but the estimates in \eqref{trunc} will be somewhat weaker; instead of the exponent $n=(\log N)(\log \log N)$
we get $n = D\log N$ for any fixed $D > 0$. The conclusion of the same proof is that, assuming only \eqref{finite moments}, we have
\begin{equation}\label{conc}
\E \absb{\h{j \col \lambda_j \leq -3}} \;\leq\; N^{-D}
\end{equation}
for any positive number $D$ and for any $N\geq N_0(D)$.  
This guarantees that $\phi_2 \norm{H} \prec 0$.
Together with the estimate $\phi_1  \norm{H} \leq 3 \phi_1 \prec 0$ established above,
this completes the proof of Theorem~\ref{thm:extr}. 
\end{proof}

\begin{proof}[Proof of Theorem \ref{thm:extr1}]
The estimate of $\norm{H}$ with $X=M^{-1/6}$ follows from the proof of part (2) of Lemma 7.2 in \cite{EYY},
by choosing $k=M^{-1/6-\e}$ with any small $\e>0$ in (7.32) of \cite{EYY}. This argument can be
improved to  $X=M^{-1/4}$ by the remark after (7.18) in \cite{EYY}. Finally, the bound with
$X=M^{-2/3}$ under the symmetry condition on the entries of $H$ is proved in Theorem 3.4 of \cite {EK2}.
\end{proof}

Next, we establish an estimate on the normalized counting function $\fn_N$ defined in \eqref{def:fn}.
As above, the exponents are not expected to be optimal, but the estimate is in general sharp if $\delta_+\geq c$.

\begin{theorem}[Eigenvalue counting function]\label{thm:count}
Suppose that $\delta_- \geq c$ (so that \eqref{Gaim} holds). Then
\begin{equation}\label{fn-n}
\sup_{E\in \R} \; \abs{\fn_N(E) - n(E)}  = \ O_\prec (Y),
\end{equation}
where we introduced the control parameter
\be\label{def:Y}
Y \;\deq\; \frac{1}{M} \pbb{\frac{1}{\delta_+ + M^{-1/5}}}^{7/2} \,.
\ee
\end{theorem}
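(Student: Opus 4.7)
The strategy is to combine three tools: the control on extreme eigenvalues (Theorem~\ref{thm:extr}), which pins down $\fn_N$ outside a small neighbourhood of $[-2,2]$; the counting function estimate (Lemma~\ref{lemma: counting function estimate}), which reduces the problem to a uniform bound on $\wt\eta$; and the explicit estimates \eqref{etab}--\eqref{etabout} on $\wt\eta_x$, which encode the dependence on $\delta_+$.

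First I would dispose of the regime $\abs{E}\geq 3$. By Theorem~\ref{thm:extr}, for any fixed $D>0$ the event $\Xi \deq \{\norm{H}\leq 3\}$ satisfies $\P(\Xi^c)\leq N^{-D}$ for all large $N$. On $\Xi$ we have $\fn_N(E)=0$ for $E\leq -3$ and $\fn_N(E)=1$ for $E\geq 3$, which matches the corresponding values of $n(E)$ exactly (since $\supp\varrho=[-2,2]$). Thus $\sup_{\abs{E}\geq 3}\abs{\fn_N(E)-n(E)}\prec 0$, and in particular is dominated by $Y$.

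Next, for $E\in[-3,3]$ I would apply Lemma~\ref{lemma: counting function estimate} with $E_1=-3$ and $E_2=E$. Since on $\Xi$ one has $\fn_N(-3)=0=n(-3)$, \eqref{main estimate on n - nsc} yields
\begin{equation*}
\absb{\fn_N(E)-n(E)}\;\prec\;\wt\eta\,,\qquad \wt\eta \;=\; \max\hb{\wt\eta_x\col x\in[-3,3]}\,.
\end{equation*}
Now I would bound $\wt\eta$ using \eqref{etab}--\eqref{etabout}. For $\abs{x}\leq 2$, the right-hand side of \eqref{etab} is maximized at $\kappa_x=0$ (i.e.\ $x=\pm 2$), giving
\begin{equation*}
\wt\eta_x \;\leq\; \frac{CM^{3\gamma}}{M(\delta_++M^{-1/5})^{7/2}}\;=\;CM^{3\gamma}Y\,.
\end{equation*}
For $2\leq\abs{x}\leq 3$, \eqref{etabout} gives $\wt\eta_x\leq CM^{3\gamma}/(M(\delta_++M^{-1/5})^3)$, which, since $\delta_++M^{-1/5}\leq C$, is bounded by a constant times $CM^{3\gamma}Y$. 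Hence $\wt\eta\leq CM^{3\gamma}Y$ deterministically; since $\gamma\in(0,1/2)$ is arbitrary, we obtain $\wt\eta\prec Y$, and therefore $\abs{\fn_N(E)-n(E)}\prec Y$ for each fixed $E\in[-3,3]$.

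Finally, to pass from pointwise to uniform control I would use a standard lattice argument. Choose a grid $\Delta\subset[-3,3]$ with $\abs{\Delta}\leq N^{10}$ and spacing at most $N^{-8}$. A union bound over $\Delta$ gives $\max_{E\in\Delta}\abs{\fn_N(E)-n(E)}\prec Y$. For an arbitrary $E\in[-3,3]$, pick the nearest $E'\in\Delta$; monotonicity of $\fn_N$ together with the bound $\fn_N(E'')-\fn_N(E')\leq (E''-E')/\min_{j}(\lambda_{j+1}-\lambda_j)+1/N$ is not needed, and it is simpler to note directly that $\abs{n(E)-n(E')}\leq C N^{-8}$ by Lipschitz continuity of $n$, and $\abs{\fn_N(E)-\fn_N(E')}\leq (\fn_N(E'+N^{-8})-\fn_N(E'-N^{-8}))\prec Y+CN^{-8}$ by applying the pointwise bound at $E'\pm N^{-8}\in\Delta$. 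Thus the sup differs from the grid maximum by $O_\prec(N^{-8})\ll Y$, yielding \eqref{fn-n}.

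The main obstacle is already bundled into the apriori estimate \eqref{etab}: the exponent $7/2$ in $Y$ comes directly from that bound, which in turn reflects the combined cost of the edge singularity ($\theta\to 0$) and the spectral gap of $S$ ($\delta_+$). Everything else is routine bookkeeping and a union bound.
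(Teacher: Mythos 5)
Your argument is correct and follows essentially the same route as the paper: reduce to Lemma~\ref{lemma: counting function estimate}, bound $\wt\eta_x$ via \eqref{etab}--\eqref{etabout} (the maximum being attained at the edges $\kappa_x=0$, which is where the exponent $7/2$ in $Y$ comes from), and upgrade pointwise to uniform control by monotonicity. Your two simplifications are both legitimate: the paper's dyadic telescoping over $E_k=-2-2^k(\delta_++M^{-1/5})$ buys nothing here, since the per-interval error in \eqref{main estimate on n - nsc} is the \emph{maximum} of $\wt\eta_x$ over the interval (not an integral), so a single application on $[-3,E]$ gives the same bound $CM^{3\gamma}Y$ that the geometric series does; and your $N^{-8}$-lattice plays exactly the role of the classical locations $\gamma_\alpha$ in the paper, with the same monotonicity sandwich. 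The one point to fix is the anchoring step: Theorem~\ref{thm:extr} carries the extra hypothesis $N^{3/4}\leq M\leq N$, which Theorem~\ref{thm:count} does not assume, so you should instead invoke the moment-method estimate \eqref{conc} (equivalently $\P(\norm{H}\geq 3)\leq N^{-D}$), which holds under \eqref{finite moments} alone and is all you need to conclude $\fn_N(-3)\prec 0$ and to handle $\abs{E}\geq 3$.
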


\begin{proof}
First we prove the bound \eqref{fn-n} for any fixed $E \in [-10,10]$.
Define the dyadic energies $E_k \deq -2 - 2^k (\delta_++M^{-1/5})$. By \eqref{etab} we have for all $k \geq 0$
\begin{equation*}
  \max \hb{\wt\eta_E \col E\in [E_{k+1}, E_k]}
  \;\leq\; \frac{CM^{-1+4\gamma}}{\big[2^k (\delta_++ M^{-1/5})\big]^{7/2}}.
\end{equation*}
A similar bound holds for $E_k' \deq -2+2^k(\delta_++M^{-1/5})$. For any $E\in [-10,0]$, we express $\fn_N(E) - n(E)$ as a telescopic sum
and use \eqref{main estimate on n - nsc} to get
\begin{align}\label{fnn}
  |\fn_N(E) - n(E)| &\;\leq \;\; |\fn_N(-10) - n(-10)| + 
\sum_{k \geq 0} \absB{\pb{{\fra n}_N(E_{k+1}) -  {\fra n}_N(E_k)} - \pb{n(E_{k+1}) - n(E_k)}} \nonumber \\
  &\qquad +\sum_{k \geq 0} \absB{\pb{{\fra n}_N(E_{k+1}') -  {\fra n}_N(E_k')} - \pb{n(E_{k+1}') - n(E_k')}} \nonumber \\
  &\;\prec\; M^{-1+4\gamma} (\delta_++M^{-1/5})^{-7/2}. 
\end{align}
Here we used that $n(-10)=0$ and $\fn_N(-10)\leq \fn_N(-3) \prec 0$ by \eqref{conc}.
In fact, \eqref{fnn} easily extends to any $E<-10$. By an
 analogous dyadic analysis near the upper spectral edge, we also get \eqref{conc} for any $E\geq 0$. 
Since this holds for any $\gamma>0$, we thus proved
\begin{equation} \label{nN - n fixed E}
\abs{\fn_N(E) - n(E)} \;\prec\; Y\,
\end{equation}
for any fixed $E \in [-10,10]$.

To prove the statement uniformly in $E$, we 
define the \emph{classical location of the $\alpha$-th eigenvalue} $\gamma_\alpha$ through
\begin{equation}\label{def:gam}
\int_{-\infty}^{\gamma_\alpha} \varrho(x) \, \dd x \;=\; \frac{\alpha}{N}\,.
\end{equation}
Applying \eqref{nN - n fixed E} for the $N$ energies $E=\gamma_1, \dots, \gamma_N$, we get
\be
\absbb{\fn_N(\gamma_\al) - \frac{\al}{N}} \;\prec\; Y
\label{fal}
\end{equation}
uniformly in $\alpha = 1, \dots, N$. Since $\fn_N(E)$ and $n(E)$ are nondecreasing and $Y \geq 1/N$, we find
\begin{equation*}
\sup \hb{\fn_N(E) - n(E) \col \gamma_{\al-1} \leq E \leq \gamma_{\al}} \;\leq\; \fn_N(\gamma_\al) - n(\gamma_{\al - 1}) \;=\; \fn_N(\gamma_\al) - n(\gamma_{\al}) + \frac{1}{N} \;=\; O_\prec(Y)
\end{equation*}
uniformly in $\al=2,3,\dots$. Below $\gamma_1$ we use \eqref{fal} to get
$$
   \sup_{E\leq \gamma_1} \pb{\fn_N(E) - n(E)} \;\leq\; \fn_N(\gamma_1) \;=\; O_\prec(Y)\,.
$$
Finally, for any $E\ge\gamma_N$, we have $\fn_N(E)- n(E) =\fn_N(E)-1 \leq 0$
deterministically. Thus we have proved
\begin{equation*}
  \sup_{E\in \R} \pb{\fn_N(E) - n(E)} \;=\; O_\prec(Y)\,.
\end{equation*}
A similar argument yields $\inf_{E \in \R} \pb{\fn_N(E) - n(E)} = O_\prec(Y)$. This concludes the proof of
Theorem~\ref{thm:count}.
\end{proof}

Next, we derive rigidity bounds on the locations of the eigenvalues. 
Recall the definition 
of $\gamma_\al$ from \eqref{def:gam}.

\begin{theorem}[Eigenvalue locations] \label{thm:evl} 
Suppose that $\delta_-\geq c$ (so that \eqref{Gaim} holds) and
that \eqref{extreme2} and \eqref{fn-n} hold with some positive control parameters $X,Y \leq C$.
Define  $\wh \alpha \deq \min \h{\alpha, N + 1 - \alpha}$ and let $\e>0$ be arbitrary.
Then
\begin{equation}\label{evl}
\abs{\lambda_\alpha - \gamma_\alpha} \;\prec\; Y \pbb{\frac{N}{\wh\al}}^{1/3} \qquad \text{for} \quad \wh\al\geq M^\e NY\,,
\end{equation}
and
\begin{equation}\label{evl2}
\abs{\lambda_\alpha - \gamma_\alpha} \;\prec\; X+(M^\e Y)^{2/3} \qquad \text{for} \quad \wh\al\leq  M^\e NY\,.
\end{equation}
\end{theorem}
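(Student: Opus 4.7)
My starting point would be to translate the uniform counting-function estimate \eqref{fn-n} into the index-level bound
\begin{equation*}
\absbb{n(\lambda_\al) - \frac{\al}{N}} \;\prec\; Y \qquad (\al = 1, \dots, N).
\end{equation*}
Indeed, since $\fn_N$ is piecewise constant and jumps by at least $1/N$ at each eigenvalue, we have $\fn_N(\lambda_\al) \geq \al/N$ and $\lim_{\e\downarrow 0} \fn_N(\lambda_\al-\e) \leq (\al-1)/N$. Combining these with $|\fn_N - n| \prec Y$ (uniformly in $E$, hence also at the random point $\lambda_\al$) and using the continuity of $n$ yields the claim. Since $n(\gamma_\al) = \al/N$ by definition, this may be rewritten as $|n(\lambda_\al) - n(\gamma_\al)| \prec Y$, uniformly in $\al$.

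For the regime $\wh\al \geq M^\e NY$ in \eqref{evl}, I would assume WLOG that $\wh\al = \al \leq N/2$, so that $\gamma_\al\in[-2,0]$. By the classical semicircle edge scaling, the distance $\kappa_\al \deq 2+\gamma_\al$ satisfies $\kappa_\al \asymp (\al/N)^{2/3}$, so the target bound reads $|\lambda_\al - \gamma_\al| \prec Y \kappa_\al^{-1/2}$. The hypothesis $\al\geq M^\e NY$ is equivalent to $\kappa_\al^{3/2}\geq M^\e Y$; in particular $Y\kappa_\al^{-1/2}\leq M^{-\e}\kappa_\al$, so the target bound is genuinely smaller than $\kappa_\al$. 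I would argue by a short dichotomy: if $\lambda_\al$ lay outside the window $I_\al \deq [\gamma_\al - \kappa_\al/2,\, \gamma_\al + \kappa_\al]$, then one of the integrals $\int_{\lambda_\al}^{\gamma_\al}\varrho$ or $\int_{\gamma_\al}^{\lambda_\al}\varrho$ would dominate $c\kappa_\al^{3/2} \gg Y$, contradicting $|n(\lambda_\al)-n(\gamma_\al)|\prec Y$ with high probability. On the complementary event $\lambda_\al \in I_\al$, the density is bounded below by $c\sqrt{\kappa_\al}$ on the entire segment joining $\lambda_\al$ and $\gamma_\al$, so
\begin{equation*}
\sqrt{\kappa_\al}\;|\lambda_\al - \gamma_\al| \;\leq\; C\,|n(\lambda_\al)-n(\gamma_\al)| \;\prec\; Y,
\end{equation*}
which gives \eqref{evl}. (In the deep bulk where $\kappa_\al \asymp 1$, this reduces to $|\lambda_\al-\gamma_\al|\prec Y$, consistent with \eqref{evl}.)

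For the extreme regime $\wh\al \leq M^\e NY$ in \eqref{evl2}, assume again $\wh\al = \al$. The edge scaling gives $\kappa_\al \leq C(M^\e Y)^{2/3}$, so $\gamma_\al$ lies within $C(M^\e Y)^{2/3}$ of $-2$. For $\lambda_\al$, the lower bound comes directly from \eqref{extreme2}: $\lambda_\al\geq -\|H\| \geq -2-O_\prec(X)$. For the upper bound I would choose $t = C_1(M^\e Y)^{2/3}$ with $C_1$ large enough that $n(-2+t)\geq ct^{3/2}\geq 2M^\e Y\geq 2\al/N$; then the bound \eqref{fn-n} forces $\fn_N(-2+t)\geq \al/N$ with overwhelming probability, so $\lambda_\al\leq -2+t$. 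The triangle inequality gives $|\lambda_\al-\gamma_\al|\prec X+(M^\e Y)^{2/3}$, which is \eqref{evl2}.

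The only delicate point is bookkeeping in the dichotomy step of the second paragraph, in particular ensuring that the exceptional event ($\lambda_\al \notin I_\al$) is absorbed into the $\prec$ notation uniformly in $\al$; this is automatic since the high-probability event on which $|\fn_N(E)-n(E)|\leq N^{\e'}Y$ uniformly in $E$ also implies $|n(\lambda_\al)-n(\gamma_\al)|\leq 2N^{\e'}Y$ simultaneously for all $\al$. The boundary case $\wh\al\asymp M^\e NY$ is consistent between \eqref{evl} and \eqref{evl2}: both bounds are then of order $(M^\e Y)^{2/3}$.
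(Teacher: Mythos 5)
Your proposal is correct and follows essentially the same route as the paper: both rest on the key relation $n(\lambda_\alpha)=\alpha/N+O_\prec(Y)$ extracted from \eqref{fn-n}, inverted via the square-root behaviour of $\varrho$ near the edge (your dichotomy-plus-density-lower-bound is just a repackaging of the paper's mean-value-theorem step using $n'(x)\asymp\kappa_x^{1/2}$), and both treat the extreme indices by combining \eqref{extreme2} for the lower bound with a counting argument for the upper bound. The only cosmetic difference is that for $\wh\al\leq M^\e NY$ you bound $\lambda_\al$ from above directly via $\fn_N(-2+t)$ at a deterministic energy, whereas the paper uses monotonicity $\lambda_\al\leq\lambda_{\al_0}$ together with the already-established case \eqref{evl} at $\al_0=M^\e NY$; both work.
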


\begin{proof}
To simplify notation, we assume that $\al\leq N/2$ so that $\wh \alpha = \alpha$; the other eigenvalues are handled analogously. Without loss of generality we assume that $\lambda_{N/2} \leq 1$. Indeed, the condition $\lambda_{N/2} \leq 1$ is equivalent to $ {\fra n}(1) \geq 1/2$, which holds with very high probability by Theorem~\ref{thm:count} and the fact that $n_{sc}(1) > 1/2$.

The key relation is
\begin{equation}\label{key} 
\frac{\alpha}{N} \;=\; n(\gamma_\alpha) \;=\;  {\fra n}_N (\lambda_\alpha) \;=\; n(\lambda_\alpha)  
+O_\prec(Y),
\end{equation}
where in the last step we used  Theorem~\ref{thm:count}.
By definition of $n(x)$ we have for $-2 \leq x \leq 1$ that
\begin{equation}\label{nnx}
n(x) \;\asymp\; (2+x)^{3/2} \;\asymp\; \kappa_x^{3/2}\,, \qquad n'(x) \;\asymp\; n(x)^{1/3}\,.
\end{equation}
Hence for $\al\leq N/2$ we have
\begin{equation}\label{nprop}
\gamma_\al +2 \;\asymp\; \pbb{\frac{\al}{N}}^{2/3}\,, \qquad n(\gamma_\al) = \frac{\al}{N}\,, \qquad
  n'(\gamma_\al) \;\asymp\; \pbb{\frac{\al}{N}}^{1/3}\,.
\end{equation}

Suppose first that $\al\ge\al_0 \deq M^\e NY$. Then $n(\gamma_\al)\geq M^\e Y$,
so that the relation \eqref{key} implies
$$
      \absb{n(\gamma_\alpha) -  n(\lambda_\alpha)} \;\prec\;  Y \leq M^{-\e} n(\gamma_\alpha) \,,
$$
which yields $n(\gamma_\alpha) \asymp n(\lambda_\alpha)$. By \eqref{nnx}, we we therefore get that $n'(\gamma_\alpha) \asymp n'(\lambda_\alpha)$ as well. Since $n'$ is nondecreasing, we get $n'(x) \asymp n'(\gamma_\alpha) \asymp n'(\lambda_\alpha)$ for any $x$ between $\gamma_\al$ and $\lambda_\al$.
Therefore, by the mean value theorem, we have
$$
  \abs{\gamma_\al-\lambda_\al} \;\leq\;  \frac{C \abs{n(\gamma_\alpha) - n(\lambda_\alpha)} }{n'(\gamma_\al)} 
 \;\prec\; Y \pbb{\frac{N}{\al}}^{1/3}\,,
$$
where in the last step we used \eqref{key} and \eqref{nprop}.
This proves \eqref{evl} for $\al\geq M^\e NY$.

For the remaining indices, $\al< \al_0$, we get from \eqref{key} the upper bound
$$
  2+\lambda_\al \;\leq\; 2+\lambda_{\al_0} \;=\; 2+\gamma_{\al_0} + O_\prec(Y^{2/3}) \;\prec\; (M^{\e}Y)^{2/3}\,,
$$
where in the second step we used \eqref{evl} and in the last step \eqref{nprop}.
In order to obtain a lower bound, we use Theorem~\ref{thm:extr} to get
$$
 -(2+\lambda_\al) \;\leq\; -(2+\lambda_1) \;\prec\; X\,.
$$
Similar bounds hold for $\gamma_\al$ as well:
$$
 0 \;\leq\;   2+ \gamma_\al \;\leq\;  2+\gamma_{\al_0} \;\leq\;  (M^{\e}Y)^{2/3}\,.
$$
Combining these bounds, we obtain
$$
    \abs{\lambda_\al-\gamma_\al} \;\prec\; X +  (M^{\e}Y)^{2/3}\,.
$$
This concludes the proof.
\end{proof}

Finally, we state a trivial corollary of  Theorem~\ref{thm:evl}.
\begin{corollary}\label{cor:evl}
Suppose that $\delta_-\geq c$ and
that \eqref{extreme2} and \eqref{fn-n} hold with some positive control parameters $X,Y \leq C$. 
Then
$$
  \sum_{\al=1}^N \abs{\lambda_\al-\gamma_\al}^2 \;\prec\;  NY(Y+ X^2)\,.
$$
\end{corollary}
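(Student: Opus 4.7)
The plan is straightforward: split the sum according to the dichotomy in Theorem~\ref{thm:evl} and apply each estimate in its respective regime. Fix a small $\epsilon > 0$ and partition
\begin{equation*}
\sum_{\alpha=1}^N \abs{\lambda_\alpha - \gamma_\alpha}^2 \;=\; \sum_{\wh\alpha \geq M^\epsilon NY} \abs{\lambda_\alpha - \gamma_\alpha}^2 \;+\; \sum_{\wh\alpha < M^\epsilon NY} \abs{\lambda_\alpha - \gamma_\alpha}^2\,.
\end{equation*}

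For the first sum, I would apply \eqref{evl} termwise to obtain the bound $Y^2 (N/\wh\alpha)^{2/3}$. Using $\wh\alpha = \min\{\alpha, N+1-\alpha\}$, the remaining sum splits symmetrically:
\begin{equation*}
\sum_{\alpha=1}^N (N/\wh\alpha)^{2/3} \;\leq\; 2 N^{2/3} \sum_{k=1}^{\lceil N/2 \rceil} k^{-2/3} \;\asymp\; N^{2/3} \cdot N^{1/3} \;\asymp\; N\,,
\end{equation*}
so this piece contributes $O_\prec(NY^2)$. Since the sum has only $N$ terms and each bound from \eqref{evl} involves the uniform stochastic domination symbol $\prec$, a standard union-bound argument (which is implicit in the arithmetic properties of $\prec$ listed in Lemma~\ref{lemma: basic properties of prec}) permits summation within $\prec$.

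For the second sum, the number of indices with $\wh\alpha < M^\epsilon NY$ is at most $2 M^\epsilon NY$, and each term is bounded via \eqref{evl2} by $(X + (M^\epsilon Y)^{2/3})^2 \prec X^2 + M^{4\epsilon/3} Y^{4/3}$. Hence this piece is
\begin{equation*}
\prec\; M^\epsilon NY \pb{X^2 + M^{4\epsilon/3} Y^{4/3}} \;=\; M^\epsilon NY X^2 + M^{7\epsilon/3} NY^{7/3} \;\prec\; NY X^2 + NY^{7/3}\,,
\end{equation*}
where in the last step we absorb the factors of $M^\epsilon \leq N^\epsilon$ into $\prec$, using that $\epsilon > 0$ was arbitrary. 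Finally, the hypothesis $Y \leq C$ gives $Y^{7/3} \leq C^{1/3} Y^2$, so $NY^{7/3} \prec NY^2$. Combining both contributions yields
\begin{equation*}
\sum_{\alpha=1}^N \abs{\lambda_\alpha - \gamma_\alpha}^2 \;\prec\; NY^2 + NY X^2 \;=\; NY(Y + X^2)\,,
\end{equation*}
which is the desired bound.

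There is no substantial obstacle: the argument is essentially bookkeeping, and the only estimate that requires care is the harmonic-type sum $\sum k^{-2/3}$, which is standard. The choice of the cutoff $M^\epsilon NY$ in \eqref{evl}--\eqref{evl2} is precisely what makes the two contributions balance at order $NY^2$, so no optimization is needed.
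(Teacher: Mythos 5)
Your proof is correct and follows exactly the route the paper intends: the paper labels this a ``trivial corollary'' of Theorem~\ref{thm:evl} and omits the argument, which is precisely the split at $\wh\alpha = M^\epsilon NY$, the convergent sum $\sum_k k^{-2/3}$ in the bulk regime, and the counting bound near the edges that you carry out. The bookkeeping (uniformity of $\prec$ in $\alpha$, absorption of $M^\epsilon$, and the use of $Y\leq C$ to reduce $Y^{7/3}$ to $Y^2$) is all handled correctly.
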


\subsection{Local density of states: proof of Lemma~\ref{lemma: counting function estimate}}\label{sec:count}

In this section we prove Lemma \ref{lemma: counting function estimate}. Define the empirical eigenvalue distribution
\begin{equation*}
\varrho_N(x) \;=\; \frac{1}{N} \sum_{\alpha = 1}^N \delta(x - \lambda_\alpha)\,,
\end{equation*}
so that we may write
\begin{equation*}
\qquad {\fra n}_N(E) \;=\; \frac{1}{N} \, 
\abs{\h{\alpha \col \lambda_\alpha \leq E}} \;=\; 
\int_{-\infty}^E \varrho_N(x) \, \dd x\,, \qquad
m_N(z) \;=\; \frac{1}{N} \tr G(z) \;=\; \int \frac{\varrho_N(x) \, \dd x}{x - z}\,.
\end{equation*}
We introduce the differences
\begin{equation*}
\varrho^\Delta \;\deq\; \varrho_N - \varrho \,, \qquad m^\Delta \;\deq\; m_N - m\,.
\end{equation*}

Following \cite{ERSY}, we use the Helffer-Sj\"ostrand functional calculus \cite{Davies, HS}.
Introduce $\cal E \;\deq\; \max\hb{E_2 - E_1, \wt\eta}\,.$
Let $\chi$ be a smooth cutoff function equal to $1$ on $[-\cal E, \cal E]$ and vanishing 
on $[-2 \cal E, 2 \cal E]^c$, such that $\abs{\chi'(y)} \leq C \cal E^{-1}$. 
Let $f$ be a 
characteristic function of the interval $[E_1, E_2]$ smoothed on the scale
 $\wt\eta$: $f(x) = 1$ on $[E_1+\wt\eta, E_2-\wt\eta]$, $f(x) = 
0$ on $[E_1, E_2 ]^c$, $\abs{f'(x)} \leq C \wt\eta^{-1}$, and $\abs{f''(x)} \leq C \wt\eta^{-2}$. Note that the 
supports of $f'$ and $f''$ have measure $O(\wt\eta)$.

Then we have the estimate (see Equation (B.13) in \cite{ERSY})
\begin{multline} \label{HS split}
\absbb{\int f(\lambda) \, \varrho^\Delta(\lambda) \, \dd \lambda} \;\leq\; C \absbb{\int \dd x \int_0^\infty \dd y \, 
(f(x) + y f'(x)) \, \chi'(y) \, m^\Delta(x + \ii y)}
\\
+
C \absbb{\int \dd x \int_0^{\wt\eta} \dd y \, f''(x) \chi(y) \, y \im m^\Delta(x + \ii y)}
+
C \absbb{\int \dd x \int_{\wt\eta}^\infty \dd y \, f''(x) \chi(y) \, y \im m^\Delta(x + \ii y)}\,.
\end{multline}
Since $\chi'$ vanishes away from $[\cal E, 2 \cal E]$ and $f$ vanishes away from $[E_1, E_2]$, we may apply \eqref{lsc hypothesis} to get
\begin{equation}\label{largeeta}
\abs{m_N(x+iy)-m(x+iy)} \;\prec\; \frac{1}{My}
\end{equation}
uniformly for $x\in [E_1, E_2]$ and $y\geq \wt\eta$.
Thus the first term on the right-hand side of \eqref{HS split} is 
bounded by
\begin{equation}\label{1st}
\frac{C}{M \cal E}\int \dd x \int_{\cal E}^{2 \cal E} \dd y \, \abs{f(x) + y f'(x)} 
\;\prec\; \frac{1}{M}\,.
\end{equation}
In order to estimate the two remaining terms of \eqref{HS split}, we estimate $\im m^\Delta(x + \ii y)$.
If $y \geq \wt \eta$ we may use \eqref{largeeta}.  Consider therefore the case $0 < y \leq \wt \eta$.  From Lemma \ref{lemma: 
msc} we find
\begin{equation} \label{im msc bound}
\abs{\im m(x + \ii y)} \;\leq\; C \sqrt{\kappa_x + y}\,.
\end{equation}
By spectral decomposition of $H$, it is easy to see that the function
 $y \mapsto y \im m_N(x + \ii y)$ is monotone 
increasing.  Thus we get, using \eqref{im msc bound}, $x + \ii \wt \eta \in \wt{\f S}$,
 and \eqref{lsc hypothesis}, that
\begin{equation} \label{im m for small y 0}
y \im m_N(x + \ii y) \;\leq\; \wt \eta \im m_N(x + \ii \wt \eta) \;\prec\;
 \wt \eta 
\pbb{\sqrt{\kappa_x + \wt \eta} + \frac{1}{M\wt \eta}} 
\;\prec\;  \wt \eta  \sqrt{\kappa_x + \wt \eta}+ \frac{1}{M}\,,
\end{equation}
for $y \leq \wt \eta$ and $x\in [E_1,E_2]$.
Using $m^\Delta = m_N - m$ and recalling \eqref{im msc bound}, we therefore get
\begin{equation} \label{im m for small y}
\abs{y \im m^\Delta(x + \ii y)} \;\prec\; \wt \eta \sqrt{\kappa_x + \wt \eta}+ \frac{1}{M}\,,
\end{equation}
for $y \leq \wt \eta$ and $x\in [E_1,E_2]$.
The second term of \eqref{HS split} is therefore bounded by
\begin{equation*}
\pbb{\wt \eta \sqrt{\kappa_x + \wt \eta}+ \frac{1}{M}}
 \int \dd x \, \abs{f''(x)} \int_0^{\wt\eta} \dd y \, \chi(y) \;\leq\;
 \wt \eta \sqrt{\kappa_x + \wt \eta}+ \frac{1}{M}\,.
\end{equation*}

In order to estimate the third term on the right-hand side of \eqref{HS split}, we integrate by parts, first in $x$ and 
then in $y$, to obtain the bound
\begin{multline} \label{HS after IBP}
C \absbb{\int \dd x \, f'(x) \, \wt\eta \re m^\Delta(x + \ii \wt\eta)}
+
C \absbb{\int \dd x \int_{\wt\eta}^\infty \dd y \, f'(x) \chi'(y) y
 \re m^\Delta(x + \ii y)}
\\
+
C \absbb{\int \dd x \int_{\wt\eta}^\infty \dd y \, f'(x) \chi(y) 
\re m^\Delta(x + \ii y)}\,.
\end{multline}
The second term of \eqref{HS after IBP} is similar to the first term on the right-hand side of \eqref{HS split}, and is 
easily seen to be bounded by $1/M$ as in \eqref{1st}.

In order to bound the first and third terms of \eqref{HS after IBP}, we estimate, for any $y \leq \wt \eta$,
\begin{equation}\label{derint}
\absb{m^\Delta(x + \ii y)} \;\leq\; \absb{m^\Delta(x + \ii \wt \eta)} + \int_y^{\wt \eta} \dd u \, \pB{\absb{\partial_u  m_N(x + \ii u)} + \absb{\partial_u m(x + \ii u)}}\,.
\end{equation}
Moreover, using the monotonicity  of  $y \mapsto y \im m_N(x + \ii y)$
 and the identity $\sum_j |G_{ij}|^2 =\eta^{-1}
\im G_{ii}$ , we find for any $u \leq \wt \eta$ that
\begin{equation*}
\absb{\partial_u m_N(x + \ii u)} \;=\; \absbb{\frac{1}{N} \tr  G^2(x + \ii u)} \;\leq\;
 \frac{1}{N} \sum_{i,j} \absb{G_{ij}(x + \ii u)}^2 \;=\; \frac{1}{u} \im  m_N(x + \ii u) \;\leq\; \frac{1}{u^2} \, \wt \eta \im  m_N(x + \ii \wt \eta)\,.
\end{equation*}
Similarly, we find from \eqref{definition of msc} that
\begin{equation*}
\absb{\partial_u m(x + \ii u)} \;\leq\; \frac{1}{u^2} \, \wt \eta \im m(x + \ii \wt \eta) \leq \frac{C\wt \eta}{u^2} \qquad (u\leq \wt\eta).
\end{equation*}
Thus \eqref{derint} and \eqref{largeeta} yield
\begin{equation} \label{bound for full m delta}
\absb{m^\Delta(x + \ii y)} \;\prec\; \frac{1}{M\wt\eta}
+ \int_y^{\wt \eta} \dd u \, \frac{\wt \eta}{u^2}
\Big( 1+ \frac{1}{M\wt\eta}\Big)
 \;\prec\;  \frac{\wt \eta}{y} \qquad (y \leq \wt \eta)\,,
\end{equation}
where we also used that $\wt\eta\geq M^{-1}$.
Using \eqref{bound for full m delta} for $y=\wt\eta$,
 we may now estimate the first term of \eqref{HS after IBP} by $\wt\eta$.

What remains is the third term of \eqref{HS after IBP},
which can be estimated, using \eqref{largeeta}, by
\begin{equation*}
 \int \dd x \int_{\wt \eta}^{2 \cal E} \dd y \, \abs{f'(x)} \frac{1}{M y} 
  \;\leq\; C M^{-1}(1+\abs{\log \wt\eta}) \;\leq\; CM^{-1}\log M\,.
\end{equation*}
Summarizing, we have proved that
\begin{equation} \label{smoothed counting function}
\absbb{\int f(\lambda) \, \varrho^\Delta(\lambda) \, \dd \lambda} 
\;\prec\;  \frac{1}{M}+\wt \eta \sqrt{\kappa_x + \wt \eta}+ \wt\eta+ 
\frac{\log M}{M}
 \;\prec\; \wt\eta + \frac{1}{M}.
\end{equation}

Since $ \im m_N(x + \ii \wt\eta)$ controls the local density
on scale $\wt\eta$, we may estimate $\abs{{\fra n}_N(E) - n(E)}$ using \eqref{im m for small y 0} according to
\begin{equation*}
\abs{{\fra n}_N(x + \wt\eta) - {\fra n}_N(x - \wt\eta)}
 \;\leq\; C\wt \eta \im  m_N(x + \ii \wt\eta)
\prec   \wt \eta  \sqrt{\kappa_x + \wt \eta}+ \frac{1}{M}.
\end{equation*}
Thus we get
\begin{equation*}
\absbb{ {\fra n}_N(E_1) -  {\fra n}_N(E_2) - \int f(\lambda) \, \varrho_N(\lambda) \, \dd \lambda} \;\leq\; C \sum_{i = 
1,2} \pb{ {\fra n}(E_i + \wt \eta) -  {\fra n}(E_i - \wt \eta)}
 \;\prec\;   \wt \eta  \sqrt{\kappa_x + \wt \eta}+ \frac{1}{M}\,.
\end{equation*}
Similarly, since $\varrho$ has a bounded density, we find
\begin{equation*}
\absbb{n(E_1) - n(E_2) - \int f(\lambda) \, \varrho(\lambda) \, \dd \lambda} \;\leq\; C \wt\eta\,.
\end{equation*}
Together with \eqref{smoothed counting function} and recalling $\wt\eta\geq M^{-1}$, we therefore get 
\eqref{main estimate on n - nsc}. This concludes the proof of Lemma \ref{lemma: counting function estimate}.

\section{Bulk universality}\label{sec:bulk}

Local eigenvalue statistics are described by correlation functions on the scale $1/N$.
Fix an integer $n \ge 2$ and an energy $E \in (-2,2)$. Abbreviating $\f x = (x_1, x_2,
\ldots x_n)$, we define the \emph{local correlation function}
\be
   f_N^{(n)}(E, \f x) \;\deq\; \frac{1}{\varrho(E)^n} p_N^{(n)} \pbb{ E+ \frac{x_1}{N\varrho(E)},
  E+ \frac{x_2}{N\varrho(E)}, \ldots ,  E+ \frac{x_n}{N\varrho(E)}}\,,
\label{loccor}
\ee
where $p_N^{(n)}$ is the $n$-point correlation function of the $N$ eigenvalues and $\varrho(E)$ is the density of the semicircle law defined in \eqref{definition of msc}. Universality of the local eigenvalue statistics means that, for any fixed $n$, the limit as $N \to \infty$ of the local correlation function $f_N^{(n)}$ only depends on the symmetry class of the matrix entries, and is otherwise independent of their distribution. In particular, the limit of $f^{(n)}_N$ coincides with that of a GOE or GUE matrix, which is explicitly known. In this paper, we consider
local correlation functions averaged over a small energy interval of size $\ell=N^{-\e}$,
\be\label{ftilde}
   \wt f_N^{(n)}(E, {\bf x})  \;\deq\; \frac{1}{2\ell} \int_{E-\ell}^{E+\ell}  f_N^{(n)}(E', {\bf x}) \, \dd E'\,.
\ee
Universality is understood in the sense of the weak limit, as $N\to\infty$ for fixed $|E|<2$, of $\wt f_N^{(n)}(E, {\bf x})$ in the variables ${\bf x}$.

The general approach developed in \cite{ESY4, ESYY, EYY} 
to prove the universality of the local eigenvalue statistics in
the bulk spectrum of a general Wigner-type matrix consists of three steps.
\begin{itemize}
\item[(i)] A rigidity estimate on the locations of the eigenvalues, in the sense of a quadratic mean.

\item[(ii)] The spectral universality for matrices with a small Gaussian component,
via local ergodicity of the Dyson Brownian motion (DBM).

\item[(iii)] A perturbation argument that removes the small Gaussian component by
comparing Green functions.
\end{itemize}

In this paper we do not give the details of steps (ii) and (iii), since they have been concisely presented elsewhere, e.g.\ in \cite{EYBull}. Here we only summarize the results and the key arguments of steps (ii) and (iii) for the general class of matrices we consider. 
In this section we assume that $H$ is either real symmetric
or complex Hermitian. The former case means that the entries of $H$ are real. The latter means, loosely, that its off-diagonal entries have a nontrivial imaginary part. More precisely, in the complex Hermitian case we shall replace the lower bound on the variances $s_{ij}$ from Definition \ref{def: Wigner} with the following, stronger, condition.
\begin{definition} \label{def: complex full}
We call the Hermitian matrix $H$ a \emph{complex $a$-full Wigner matrix} 
if for each $i,j$ the $2\times 2$ covariance matrix 
$$
\sigma_{ij} \;=\; \begin{pmatrix} \E (\re h_{ij})^2 & \E (\re h_{ij})(\im h_{ij}) \\
  \E (\re h_{ij})(\im h_{ij}) & \E ( \im h_{ij})^2 
\end{pmatrix}
$$
satisfies
$$
     \sigma \;\geq\; \frac{a}{N}
$$
as a symmetric matrix. Note that this condition implies that $H$ is $a$-full, but the converse is not true.
\end{definition}

\newcommand{\bla}{\mbox{\boldmath $\lambda$}}
\newcommand{\cH}{{\cal H}}

We consider a stochastic flow of Wigner-type matrices generated by the Ornstein-Uhlenbeck equation
$$
   \dd H_t \;=\; \frac{1}{\sqrt{N}} \, \dd B_t -\frac{1}{2} H_t\dd t
$$
with some given initial matrix $H_0$.
Here $B$ is an $N \times N$ matrix-valued standard Brownian motion with 
the same symmetry type as $H$. The resulting dynamics on the level of the eigenvalues is Dyson Brownian motion (DBM).
It is well known that $H_t$ has the same distribution as the matrix
\begin{equation} \label{distribution of Ht}
\me^{-t/2}H_0+ (1-\me^{-t})^{1/2}U\,,
\end{equation}
where $U$ is an independent standard Gaussian Wigner matrix of the same symmetry class as $H$.
In particular, $H_t$ converges to $U$ as $t\to\infty$.
The eigenvalue distribution converges to the Gaussian equilibrium measure, whose density is explicitly given by
$$
\mu(\bla) \;=\; \frac{1}{Z} \, \me^{-\beta N \cH(\bla)}\dd\bla\,, \qquad \cH(\bla) \;\deq\; \sum_{i=1}^N \frac{\lambda_i^2}{4}
  - \frac{1}{N}\sum_{i<j} \log \abs{\lambda_i-\lambda_j}\,;
$$
here $\beta=1$ for the real symmetric case (GOE) and $\beta=2$ for the complex Hermitian case (GUE).

The matrix $S^{(t)}$ of variances of $H_t$ is given by
$$
   S^{(t)} \;=\; \me^{-t} S^{(0)} + (1-\me^{-t}){\f e}{\f e}^*,
$$
where $S^{(0)}$ is the matrix of variances of $H_0$. It is easy to see that the gaps $\delta_\pm(t)$ of $S^{(t)}$ satisfy
$\delta_\pm(t) \geq \delta_\pm(0)$; therefore the corresponding parameters \eqref{def:rhohat} satisfy $\wt\Gamma_t(z)\leq \wt \Gamma_0(z)$. Since all estimates behind our main theorems in Sections~\ref{sec: setup} and \ref{sec: dos} improve
if $\delta_\pm$ increase, it is immediate that all results in these sections
hold for $H_t$ provided they hold for $H_0$.

The key quantity to be controlled when establishing bulk universality is the mean quadratic distance
of the eigenvalues from their classical locations,
\begin{equation}\label{Qdef}
  Q \;\deq\; \max_{t \geq 0} \, \E^{(t)} \frac{1}{N}\sum_i (\lambda_i-\gamma_i)^2\,,
\end{equation}
where $\E^{(t)}$ denotes the expectation with respect to the ensemble $H_t$.
By Corollary~\ref{cor:evl} we have
$$
    Q \;\leq\; N^\e Y(Y+X^2)
$$
for any $\e>0$ and $N\geq N_0(\e)$. Here we used that the estimate from Corollary~\ref{cor:evl} is uniform in $t$, by the remark in the previous paragraph. 

We modify the original DBM by adding a local relaxation term of the form $\frac{1}{2\tau}\sum_i (\lambda_i-\gamma_i)^2$
to the original Hamiltonian $\cH$, which has the effect of artificially speeding up the relaxation of the dynamics. Here $\tau \ll 1$ is a small parameter, the relaxation time of the modified dynamics. We choose $\tau \deq N^{1+4\e}Q$ for some $\e>0$.  As Theorem 4.1 of \cite{ESYY} 
(see also Theorem 2.2 of \cite{EYBull}) shows, the
local statistics of the eigenvalue gaps of $H_t$ and GUE/GOE coincide
if  $t\geq N^\e\tau= N^{1+4\e} Q$,  i.e.\ if
\begin{equation}\label{lowt}
   t \;\geq\;  N^{1+5\e} Y(Y+X^2)\,.
\end{equation}
The local statistics are averaged over $N^{1-\e}$ consecutive eigenvalues or, alternatively,
in the energy parameter $E$ over an interval of length $N^{-\e}$.

To complete the programme (i)--(iii), we need to compare the local statistics of the original 
ensemble $H$ and $H_t$, i.e.\ perform step (iii). We first recall the Green function comparison
theorem from \cite{EYY} for the case $M \asymp N$ (generalized Wigner). The result states, roughly, that expectations
of Green functions with spectral parameter $z$ satisfying $\im z\ge N^{-1-\e}$ are 
determined by the first four moments of the single-entry distributions. Therefore the local
eigenvalue statistics on a very small scale, $\eta=N^{-1-\e}$, of two Wigner ensembles are indistinguishable if the first four moments of their matrix entries match.
More precisely, for the local $n$-point correlation functions \eqref{loccor}
to match, one needs to compare
expectations of $n$-th order monomials of the form 
\be \label{mmm}
\prod_{k=1}^n m_N(E_k + i \eta)\,,
\ee
where the energies $E_k$ are chosen in the bulk spectrum
with $E_k-E_{k'} = O(1/N)$. (Recall that $m_N(z) = \frac{1}{N}\tr G(z)$.)

The proof uses a Lindeberg-type replacement strategy to change the distribution of
each matrix entry $h_{ij}$ one by one in a telescopic sum.  
The idea of applying Lindeberg's method in random matrices 
was recently used  by Chatterjee \cite{Ch}  for comparing  the traces of the Green functions; 
the idea was also used by Tao and Vu \cite{TV}
 in the context of comparing individual eigenvalue 
distributions.  
 The error resulting from each replacement is estimated using
a fourth order resolvent expansion, where all resolvents $G(z)=(H-z)^{-1}$ 
with $z=E_k +i\eta$ appearing in \eqref{mmm} are expanded
with respect to the single matrix entry $h_{ij}$ (and its conjugate $h_{ji}= \bar h_{ij}$).
If the first four moments of the two distributions match, then the terms of at most fourth order
in this expansion remain unchanged by each replacement.
The error term is of order $\E \abs{h_{ij}}^5 \asymp N^{-5/2}$, which is negligible even
after summing up all $N^2$ pairs of indices $(i,j)$. This estimate assumes
that the resolvent entries in the expansion (and hence all factors $m_N(z)$ in \eqref{mmm}) are essentially bounded.

The Green function comparison method therefore has two main ingredients. First, a high probability apriori estimate is needed on the
resolvent entries at any spectral parameter $z$ with imaginary part $\eta$ slightly below $1/N$:
\be\label{GFTGij}
   \max_{i,j} \abs{G_{ij}(E+i\eta)} \;\prec\; N^{2\e} \qquad  (\eta \geq N^{-1-\e})
\ee
for any small $\e>0$.  Clearly, the same estimate also holds for $m_N(E+i\eta)$.
 The bound \eqref{GFTGij} is typically obtained from the local semicircle law
for the resolvent entries, \eqref{Gijest 2}. Although the local semicircle law is effective only
for $\im z \gg 1/N$, it still gives an almost optimal bound for a somewhat smaller $\eta$ by using
the trivial estimate 
\be\label{trivgg}
   \max_{i,j} \abs{G_{ij}(E+i\eta)} \;\leq\; \log N \,  \pbb{\frac{\eta'}{\eta}}
\sup_{\eta''\ge \eta'}\max_i \im G_{ii}(E+i\eta'') \qquad (\eta \leq \eta')
\ee
with the choice of $\eta' = N^{-1+\e}$. The proof of \eqref{trivgg} follows
from a simple dyadic decomposition; see the proof of Theorem 2.3 in Section 8
of \cite{EYY} for details.

The second ingredient is  the construction of an initial ensemble $H_0$ whose time evolution $H_t$ for some $t\le 1$ satisfying \eqref{lowt} is close to $H$; here closeness is measured by the \emph{matching of moments} of the matrix entries between the ensembles $H$ and $H_t$. We shall choose $H_0$, with variance matrix $S^{(0)}$, 
so that the second moments of $H$ and $H_t$  match,
\begin{equation}\label{2match}
   S \;=\; \me^{-t} S^{(0)} +  (1-\me^{-t}){\f e} {\f e}^*\,,
\end{equation}
and the third and fourth moments are close.   We remark that the  matching of higher moments was introduced  
in the work of \cite{TV}, while the idea of approximating a general matrix ensemble
 by an appropriate Gussian 
one  appeared earlier in \cite{EPRSY}.  
They have to be so close that
even after multiplication with at most five resolvent entries and summing up for all $i,j$ indices,
their difference is still small. (Five resolvent entries appear in the fourth order
of the resolvent expansion of $G$.) Thus, given \eqref{GFTGij}, we require that
\begin{equation}\label{s34}
\max_{i,j} \absb{\E h_{ij}^s -\E^{(t)} h_{ij}^s}
   \;\leq\; N^{-2 -  (2n+9)  \e} \qquad (s=3,4)\,
\end{equation}
 to ensure that the expectations of the $n$-fold product
in \eqref{mmm} are close.  
This formulation holds for the real symmetric case; in the complex Hermitian case 
all moments of order $s=3,4$ involving the real and imaginary parts of $h_{ij}$ have to
be approximated. To simplify notation, we work with the real symmetric case in the sequel.

The matching can be done in two steps. In the first we construct a
matrix of variances $S^{(0)}$ such that \eqref{2match} holds. 
This first step is possible if, given $S$ associated with $H$, \eqref{2match} can be satisfied for a doubly stochastic $S^{(0)}$, i.e.\ if
$H$ is an $a$-full Wigner matrix and 
\begin{equation}\label{upt}
      a \;\geq\; Ct
\end{equation}
with some large constant $C$. For the complex Hermitian case, the condition \eqref{upt}
is the same but $H$ has to be complex $a$-full Wigner matrix (see Definition \ref{def: complex full}).

In the second step of moment matching, we use Lemma 3.4 of \cite{EYY2} to construct an ensemble $H_0$ with variances $S^{(0)}$, such that the entries of $H$ and $H_t$ satisfy
\begin{equation*}
\E h_{ij} \;=\; \E^{(t)} h_{ij} \;=\; 0\,, \qquad \E h_{ij}^2 \;=\; \E^{(t)} h_{ij}^2 \;=\; s_{ij}\,, \qquad \E h_{ij}^3 \;=\; \E^{(t)} h_{ij}^3  \,, \qquad \absb{\E h_{ij}^4 -\E^{(t)} h_{ij}^4} \;\leq\; C t s_{ij}^2\,.
\end{equation*}
This means that \eqref{s34} holds if 
$$
    C ts_{ij}^2 \;\leq\; N^{-2-   (2n+9)  \e}\,.
$$
Suppose that $H$ is $b$-flat, i.e.\ that $s_{ij}\leq b/N$. Then this condition holds provided
\begin{equation}
   C  tb^2 \;\leq\; N^{-  (2n+9)  \e}\,.
\label{upt3}
\end{equation}
The argument so far assumed that $M \asymp N$ ($H$ is a generalized Wigner matrix), in which case $G_{ij}(E+i\eta')$ remains
essentially bounded down to the scale $\eta' \approx 1/N$. If $M\ll N$, then \eqref{Gijest 2}
provides control only  down to scale $\eta'\gg 1/M$ and \eqref{trivgg} gives only the weaker bound
\be\label{gijm}
     \abs{G_{ij}(E+\ii \eta)} \;\prec\; \frac{1}{M\eta}\,,
\ee
for  any $\eta \le 1/M$,
which replaces \eqref{GFTGij}. 
Using this weaker bound, the condition \eqref{upt3} is replaced with
\begin{equation}
   C  tb^2 \;\prec\;  (M\eta)^{ n+4}\,,
\label{upt2}
\end{equation}
which is needed for $n$-fold products of the form \eqref{mmm}  to be close.
(For convenience, here we use the notation $A_N\prec B_N$ even
for deterministic quantities to indicate that $A_N\le N^{\e} B_N$
for any $\e>0$ and $N\ge N_0(\e)$.) 
The bound \eqref{upt2} thus guarantees that,
for any fixed $n$, the expectations of the $n$-fold products
of the form \eqref{mmm} with respect to the ensembles $H$
and $H_t$ are close. Following the argument in 
the proof of Theorem 6.4 of \cite{EYY},
this means that for any smooth,
compactly supported function $O \col \R^n\to \R$, the expectations 
of observables
\be
   \sum_{i_1\ne i_2\ne \ldots \ne i_n} O_\eta \pB{ N(\lambda_{i_1}-E), N(\lambda_{i_2}-E) ,\dots,
N(\lambda_{i_n}-E)}
\label{Oeta}
\ee
are close, where the smeared out observable $O_\eta$ on scale $\eta$ is defined through
$$
   O_\eta (\beta_1, \ldots, \beta_n) \;\deq\;  \frac{1}{(\pi N)^n}
  \int_{\R^n} \dd \alpha_1 \cdots \dd \alpha_n \, O(\al_1, \ldots, \al_n) \prod_{j=1}^n \theta_\eta \pbb{\frac{\beta_j-\al_j}{N} }\,,
  \qquad
   \theta_\eta(x) \;\deq\; \im \frac{1}{x-i\eta}\,.
$$

To conclude the result for observables with $O$ instead of $O_\eta$ in \eqref{Oeta},
we need to estimate, for both ensembles, the difference
\be
\E \sum_{i_1\ne i_2\ne \ldots \ne i_n} (O-O_\eta)
\pB{N(\lambda_{i_1}-E), N(\lambda_{i_2}-E), \dots,
N(\lambda_{i_n}-E)}\,.
\label{OO}
\ee
Due to the smoothness of $O$,
we can decompose
$O-O_\eta = Q_1+ Q_2$,
where
$$
|Q_1(\beta_1, \ldots, \beta_n)| \;\le\; CN\eta \prod_{j=1}^n{\bf 1}( |\beta_j|\le K)
$$
and
$$
|Q_2(\beta_1, \ldots, \beta_n)| \;\le\; C\sum_{j=1}^n {\bf 1}( |\beta_j|\ge K)
  \prod_{j=1}^n \frac{1}{1+\beta_j^2}\,,
$$
with an arbitratry parameter $K\gg N/M$. Here the constants depend on $O$.
The contribution from $Q_1$ to \eqref{OO} can thus
be estimated by
$$
   \E  \sum_{i_1\ne i_2\ne \ldots \ne i_n} Q_1\Big( \ldots \Big) \;\prec\; CN\eta K^n\,,
$$
where we used that the expected 
number of eigenvalues in the interval $[E- K/N, E+ K/N]$
is $O_\prec(K)$, since \eqref{gijm}  guarantees that
the density is bounded on scales larger than $1/M$.
The contribution from $Q_2$ to \eqref{OO} is estimated by
\be\label{Q1}
   \E  \sum_{i_1\ne i_2\ne \ldots \ne i_n} Q_2\Big( \ldots \Big) \;\prec\; CK^{-1} \pbb{\frac{N}{M}}^n\,.
\ee
In the last step we used \eqref{gijm} to estimate
\be\label{Q2}
  \sum_{k=1}^N \frac{1}{1+ N^2(\lambda_k-E)^2} \;=\; \frac{1}{N} \im \tr G \pbb{E+ \frac{\ii}{N}}
 \;\prec\; \frac{N}{M}\,.
\ee
Optimizing the choice of $K$ and $\eta$,  \eqref{upt2} becomes
\be\label{upto}
 C  tb^2 \;\prec\; \pbb{\frac{M}{N}}^{(n^2+1)( n+4)}\,.
\ee
Summarizing the conditions \eqref{lowt}, \eqref{upt}, and \eqref{upto}, we require that
$$
  N^{1+5\e}Y(Y+X^2) \;\prec\; 
\min \hbb{ a, b^{-2} \pbb{\frac{M}{N}}^{ (n^2+1)(n+4)}}
$$
in order to have bulk universality. We have therefore proved the following result.
\begin{theorem} \label{thm: bulk univ}
Suppose that $H$ is $N/M$-flat and $a$-full (in the real symmetric case) or complex $a$-full 
(in the complex Hermitian case). 
Suppose moreover that \eqref{extreme2} and \eqref{fn-n} hold with some positive control parameters $X,Y \leq C$.
Fix an arbitrary positive parameter $\e>0$.
Then the   local $n$-point correlation functions  of $H$, averaged over the energy parameter in an
interval of size $N^{-\e}$  around  $|E|<2$ (see \eqref{ftilde}), 
coincide with those of GOE or GUE provided that 
\be
N^{1+6\e} Y(Y+X^2) \;\leq\; 
 \min \hbb{ a,  \pbb{\frac{M}{N}}^{ (n^2+1)(n+4)+2}  }\,.
\label{fincond}
\end{equation}
In particular, if  $N^{3/4}\leq M\leq N$ then 
 \eqref{extreme2} and \eqref{fn-n} hold with
 $X$ and $Y$ defined in \eqref{def:X} and \eqref{def:Y}.
\end{theorem}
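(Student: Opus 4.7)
The plan is to execute the three-step strategy (i)--(iii) outlined before the statement: first, deploy the rigidity estimates of Section~\ref{sec: dos} to control the mean-square fluctuation parameter $Q$ defined in \eqref{Qdef}; second, run Dyson Brownian motion with a local relaxation term to force GOE/GUE statistics at an intermediate time scale $t$; third, remove the added Gaussian component by a Green function comparison against a carefully constructed initial ensemble $H_0$.

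Step~1 is essentially automatic. Since $\wt\Gamma_t(z) \leq \wt\Gamma_0(z)$ along the OU-flow, the hypotheses of Theorems~\ref{thm:extr} and~\ref{thm:count} pass from $H$ to $H_t$ with the same parameters $X,Y$, and Corollary~\ref{cor:evl} yields $Q \leq N^{\epsilon} Y(Y+X^2)$ uniformly in $t \geq 0$. For Step~2, I would set $\tau \deq N^{1+4\epsilon} Q$ and invoke the local ergodicity theorem for the locally relaxed DBM (Theorem~4.1 of \cite{ESYY}, summarized in Theorem~2.2 of \cite{EYBull}): once $t \geq N^\epsilon \tau$, the averaged local correlation functions~\eqref{ftilde} of $H_t$ coincide with those of the corresponding Gaussian ensemble, which is the content of the lower bound~\eqref{lowt} on $t$.

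Step~3 is the main content. Given $t$ satisfying~\eqref{lowt}, I would choose the initial variance matrix by solving~\eqref{2match}, i.e.\ $S^{(0)} \deq \me^{t}(S - (1 - \me^{-t})\f e \f e^*)$; this is a symmetric stochastic matrix with non-negative entries precisely when $H$ is $a$-full with $a \geq C t$, which is condition~\eqref{upt}. Lemma~3.4 of \cite{EYY2} then supplies an initial ensemble $H_0$ with independent entries of covariance $S^{(0)}$ whose first three moments match those of $H$ exactly and whose fourth moments deviate by at most $O(ts_{ij}^2)$. The Green function comparison is then a Lindeberg telescoping replacement: each of the $O(N^2)$ entries is swapped in turn and a fourth-order resolvent expansion of the $n$-fold product~\eqref{mmm} is performed at each swap. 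The terms of order up to four cancel by the matching of moments, and the fifth-order and error terms are bounded using the apriori resolvent estimate~\eqref{gijm}, itself obtained by combining the local law~\eqref{Gijest 2} at scale $\eta \approx M^{-1}$ with the dyadic trick~\eqref{trivgg}.

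The main obstacle, and the source of the exponent $(n^2+1)(n+4)$ in~\eqref{fincond}, is that the apriori bound $\abs{G_{ij}} \prec (M\eta)^{-1}$ is weaker than the $O_\prec(1)$ bound available in the generalized Wigner setting. Consequently one cannot push $\eta$ down to $N^{-1-\epsilon}$; instead one first proves coincidence of the smeared $n$-point functions involving $O_\eta$, and then estimates the error $O - O_\eta = Q_1 + Q_2$ separately on a bulk window $\{\abs{\beta_j} \leq K\}$ and on its complement. The bulk part costs $N\eta K^n$ via the eigenvalue count on scale $K/N$, while the tails cost $K^{-1}(N/M)^n$ using the bound~\eqref{Q2} on $\im m_N(E + \ii/N)$. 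Optimizing jointly in $K$ and $\eta$ combines with the moment-matching constraint~\eqref{upt2} to produce \eqref{upto}, and intersecting this with~\eqref{lowt} and~\eqref{upt} yields~\eqref{fincond}. The final sentence of the theorem is then immediate: for $M \geq N^{3/4}$ one has $\delta_- \geq c$ (e.g.\ for full Wigner and band matrices, by Proposition~\ref{prop: spectrum of S}), and Theorems~\ref{thm:extr} and~\ref{thm:count} supply $X$ and $Y$ of the advertised form.
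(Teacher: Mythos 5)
Your proposal is correct and follows essentially the same route as the paper's own argument: the identical three-step scheme, the same control of $Q$ via Corollary~\ref{cor:evl} and the monotonicity of $\wt\Gamma$ along the flow, the same construction of $S^{(0)}$ from \eqref{2match} under the $a$-fullness condition \eqref{upt}, the same Lindeberg/Green-function comparison with the weakened apriori bound \eqref{gijm}, and the same $O-O_\eta=Q_1+Q_2$ decomposition optimized in $K$ and $\eta$ to produce \eqref{upto} and hence \eqref{fincond}. No gaps.
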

We conclude with a few examples illustrating Theorem~\ref{thm: bulk univ}.  

\begin{corollary}\label{cor:bulk}  Fix an integer $n\ge 2$.  There exists
a positive number  $p(n) \geq c n^{-3}$ with the following property. 
Suppose that $H$ satisfies {\fontseries{bx}\selectfont any} of the following conditions
for some sufficiently small $\xi>0$.
\begin{enumerate}
\item
$cN^{-1-\xi}\leq s_{ij} \leq CN^{- 1+ p(n)  -\xi}$.
\item
$cN^{-\frac{9}{8}+\xi}\leq s_{ij} \leq CN^{-1}$.
\item
$H$ is a one-dimensional band matrix with band width $W$ with a mean-field component
of size $\nu$ (see Definition~\ref{def: bandwigner}) such that  $W\ge N^{ 1-  p(n) +\xi}$
and $\nu \ge N^{15+\xi}W^{-16}$. 
\end{enumerate}
Then there exists an $\e>0$ (depending on $\xi$ and  $n$)  such that
the  local $n$-point correlation functions  of $H$, averaged over the energy parameter in an
interval of size $N^{-\e}$  around  $|E|<2$, 
coincide with those of GOE or GUE (depending on the symmetry class of $H$).
\end{corollary}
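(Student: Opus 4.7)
The plan is to verify the hypotheses of Theorem \ref{thm: bulk univ} in each of the three cases. I would first identify the structural parameters of $S$ (the flatness ratio $N/M$, the fullness constant $a$, and the spectral gaps $\delta_\pm$), then insert these into Theorems \ref{thm:extr} and \ref{thm:count} to obtain explicit values of the error parameters $X$ and $Y$, and finally check the quantitative inequality \eqref{fincond}. The lower bound $p(n)\geq cn^{-3}$ will emerge from the polynomial factor $(n^2+1)(n+4)+2$ on the right of \eqref{fincond}.

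In case (i), all entries $s_{ij}$ lie in $[cN^{-1-\xi},CN^{-1+p(n)-\xi}]$, so $M\asymp N^{1-p(n)+\xi}$, $a\asymp N^{-\xi}$, and Proposition \ref{prop: spectrum of S} yields $\delta_\pm\geq c$. Theorems \ref{thm:extr} and \ref{thm:count} then give $X\asymp N^2/M^{8/3}$ and $Y\asymp 1/M$; a short check shows $Y$ dominates $X^2$ for small $p(n)$, so the left-hand side of \eqref{fincond} is of order $N^{1+6\e}Y^2\asymp N^{-1+2p(n)+O(\e+\xi)}$, while the binding right-hand factor is $(M/N)^{(n^2+1)(n+4)+2}\asymp N^{-(p(n)-\xi)((n^2+1)(n+4)+2)}$. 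This produces the constraint $p(n)\pb{(n^2+1)(n+4)+O(1)}\leq 1$, forcing $p(n)\geq cn^{-3}$. Case (ii) is simpler: $M\asymp N$, $a\asymp N^{-1/8+\xi}$, $\delta_\pm\geq c$, hence $X\asymp N^{-2/3}$, $Y\asymp 1/N$, and \eqref{fincond} reduces to $N^{-1+O(\e)}\leq N^{-1/8+\xi}$, which is trivial; case (ii) therefore imposes no constraint on $p(n)$.

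The main obstacle is case (iii). Here $M\asymp W$, the Wigner component supplies the fullness $a\asymp\nu$, and Proposition \ref{prop: spectrum of S} gives an effective upper gap $\delta_+\asymp(W/N)^2+\nu$, which need not be bounded below by a constant. One must therefore track the factors $\delta_+^{-7/2}$ in \eqref{def:Y} and $\delta_+^{-12}$ in \eqref{def:X} through the calculation. For $\nu\leq(W/N)^2$ and $W=N^{1-p(n)+\xi}$ the dominant left-hand contribution to \eqref{fincond} is $N^{1+6\e}Y^2\asymp N^{-1+16p(n)+O(\e-\xi)}$, and the hypothesis $\nu\geq N^{15+\xi}W^{-16}=N^{-1+16p(n)+\xi}$ is calibrated precisely so that this term is absorbed by $a=\nu$ on the right, provided $\e<\xi/6$ (which is exactly why $\e$ is chosen depending on $\xi$ in the statement). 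The remaining constraint, coming from the $(M/N)^{(n^2+1)(n+4)+2}$ factor, then produces $p(n)\geq cn^{-3}$ exactly as in case (i). The final $p(n)$ in the statement is the minimum of the three case-specific thresholds, each of order $n^{-3}$.
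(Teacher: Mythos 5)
Your overall strategy coincides with the paper's: extract the structural parameters $a$, $b=N/M$ and the gaps $\delta_\pm$ from Proposition \ref{prop: spectrum of S}, substitute them into \eqref{def:X} and \eqref{def:Y}, and verify \eqref{fincond}; cases (i) and (iii) go through essentially as in the paper (in (i) your claim $\delta_\pm\geq c$ should be $\delta_\pm\geq a= cN^{-\xi}$, but this only costs harmless $N^{O(\xi)}$ factors; in (iii) your exponent $N^{15+\xi}W^{-16}=N^{-1+16p(n)+\xi}$ should read $N^{-1+16p(n)-15\xi}$, again immaterial).

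The genuine gap is in case (ii). Proposition \ref{prop: spectrum of S}(i) yields only $\delta_\pm\geq a=cN^{-1/8+\xi}$, \emph{not} $\delta_\pm\geq c$: a flat, $a$-full matrix can have $\delta_+$ exactly of order $a$ (take $S=(1-a)S_0+a\,\f e\f e^*$ with $S_0$ block-diagonal consisting of two flat blocks, so that $S_0\big|_{\f e^\perp}$ has eigenvalue $1$ and hence $\delta_+(S)=a$). With the correct bound, $Y=N^{-1}(\delta_++N^{-1/5})^{-7/2}$ is only $O(N^{-1}\cdot N^{7/16-7\xi/2})=O(N^{-9/16+O(\xi)})$ rather than $O(N^{-1})$, and the $\delta_+^{-12}$ term in $X$ contributes $N^{-2}\cdot N^{3/2}=N^{-1/2}$ rather than being negligible. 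The inequality \eqref{fincond} then becomes $N^{1+6\e}\cdot N^{-9/8+O(\xi)}\leq a=N^{-1/8+\xi}$, which is exactly borderline and holds only because $6\e<\xi$; this is precisely why the exponent $9/8$ appears in hypothesis (ii). Your version of the check, $N^{-1+O(\e)}\leq N^{-1/8+\xi}$, is far from sharp and would ``prove'' that any lower bound $s_{ij}\geq cN^{-2+\xi}$ suffices, which the method does not deliver. The fix is to do in case (ii) what you correctly do in case (iii): keep $\delta_+\geq a$ and track the factors $\delta_+^{-7/2}$ and $\delta_+^{-12}$ explicitly through \eqref{def:X}, \eqref{def:Y}, and \eqref{fincond}.
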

 We remark that the conditions for the
 upper bound on $s_{ij}$ in parts (i)  and (iii) are similar.
But the band structure in (iii)
allows one to choose a much smaller mean-field component than in (i).

\begin{proof} 
In Case (i),  we have $a=cN^{-\xi}$ and $b=N/M$ in Definition \ref{def: Wigner}; hence $\delta_\pm\geq cN^{-\xi}$
 by Proposition \ref{prop: spectrum of S}. Therefore
$Y= M^{-1} N^{-7\xi/2}$ and $X=N^2M^{-8/3}$ from \eqref{def:X} and \eqref{def:Y}, so that \eqref{fincond} reads
$$
  \frac{N}{M}\Big( \frac{1}{M} + \frac{N^4}{M^{16/3}}\Big) \;\leq\; N^{- (1+6\e) }
 N^{7\xi} \min \hbb{ N^{-\xi},  \Big( \frac{M}{N}\Big)^{ (n^2+1)(n+4)+2 }}\,.
$$
By Theorem \ref{thm: bulk univ} bulk universality therefore holds provided that $M \ge N^{1- p(n)  +\xi}$
with any sufficiently small positive $\xi>0$ (and $\e$ chosen appropriately, depending on $\xi$
 and $n$).  The function $p(n)$ can be easily computed. 

We remark that if we additionally assume that $h_{ij}$ 
has a symmetric law with subexponential decay \eqref{subexp},
 then by Theorem~\ref{thm:extr1} we can use the improved control parameter $X= M^{-2/3}$.
This yields a better threshold $p(n)$. For example, for $n=2$ we obtain $p(n)=\frac{1}{34}$.

In Case (ii) we take  $M=N$, i.e.\ $b=c$ 
and  $\delta_+ \geq  a =  N^{-1/8+\xi}$. Then with the choice \eqref{def:X} and \eqref{def:Y} we have
 $Y \leq C N^{-1} \delta_+^{-7/2}$, 
$X \leq  C N^{-2/3} + C N^{-2} (\delta_++ N^{-1/7})^{-12}$, so that \eqref{fincond} reads
$$
   \delta_+^{-7/2} \pB{N^{-1} \delta_+^{-7/2} + N^{-4/3} + N^{-4} (\delta_+ + N^{-1/7})^{-24}} \;\ll\; a\,,
$$
which holds since $\delta_+ \geq a\ge N^{-1/8}$.

Finally, in Case (iii) we have $W\asymp M$,  $b=N/M$,  $a=\nu$,   $\delta_+ \geq c \nu  + c (M/N)^2$,
and $\delta_- \geq c$.  Since $M\ge N^{22/23}$ we have  $\delta_+\geq cM^{-1/5}$,
Thus, with the choice \eqref{def:X} and \eqref{def:Y}, we have
$$
  Y \asymp \frac{1}{M\delta_+^{7/2}} 
 \;\leq\; C \frac{N^{7}}{M^{8}}\,, \qquad X \;\leq\; C \frac{N^2}{M^{8/3}} + C \frac{N^{26}}{M^{28}} \;\asymp\;  \frac{N^{26}}{M^{28}}\,,
$$
and \eqref{fincond} reads
$$
   \frac{N^{8}}{M^{8}}\Big(  \frac{N^{7}}{M^{8}} + \frac{N^{52}}{M^{56}}\Big) \;\ll\; \min \hbb{\nu \,,\, 
\Big( \frac{M}{N} \Big)^{  (n^2+1)(n+4)+2 } }\,.
$$
This leads to the conditions
\begin{equation}\label{2con}
\nu \;\gg\; \frac{N^{15}}{M^{16}},  \qquad  M\gg N^{1-p(n)}\,, 
\end{equation} 
with some positive $p(n)$, 
which concludes the proof. 
\end{proof}

\appendix

\section{Behaviour of $\Gamma$ and $\wt \Gamma$} \label{app: bounds on varrho}

In this section we give basic bounds on the parameters $\Gamma$ and $\wt \Gamma$. As it turns out, their behaviour is intimately linked with the spectrum of $S$, more precisely with its spectral gaps. Recall that the spectrum of $S$ lies in $[-1,1]$, with $1$ being a simple eigenvalue.

\begin{definition} \label{def: gap}
Let $\delta_-$ be the distance from $-1$ to the spectrum of $S$, and $\delta_+$ the distance from $1$ to the spectrum of $S$ restricted to $\f e^\perp$. In other words, $\delta_\pm$ are the largest numbers satisfying
\begin{equation*}
S \;\geq\;  -1+\delta_-, \qquad  S\big|_{\f e^\perp} \;\leq\; 1-\delta_+ \,.
\end{equation*}
\end{definition}

The following proposition gives explicit bounds on $\Gamma$ and $\wt \Gamma$ depending on the spectral gaps $\delta_\pm$. We recall the notations $z = E + \ii \eta$, $\kappa \deq \absb{\abs{E} - 2}$ and the definition of $\theta$ from \eqref{def theta}.

\begin{proposition} \label{prop: bounds on Gamma}
There is a universal constant $C$ such that the following holds uniformly in the domain $\hb{ z=E+i\eta \col |E|\leq 10, \, M^{-1}\leq \eta\leq 10}$, and in particular in any spectral domain $\f D$.
\begin{enumerate}
\item
We have the estimate
\begin{equation} \label{bound for Gamma}
\frac{1}{C \sqrt{\kappa+\eta}} \;\leq\; \Gamma(z)\;\leq\; \frac{C \log N} {1- \max_\pm \absb{ \frac{1\pm m^2}{2}}} \;\leq\; \frac{C \log N}{\min\{\eta + E^2, \theta\}}\,.
\end{equation}
\item
In the presence of a gap $\delta_-$ we may improve the upper bound  to
\begin{equation} \label{bound for Gamma with gap}
\Gamma(z) \;\leq\; \frac{ C\log N}{\min \h{\delta_- + \eta + E^2, \theta}}.
\end{equation}
\item
For $\wt\Gamma$ we have the bounds
\begin{equation} \label{bound for Gamma tilde}
C^{-1} \;\leq\; \wt \Gamma (z)\;\leq\; \frac{C\log N}{\min \h{\delta_- + \eta + E^2, \delta_+ + \theta}}.
\end{equation}
\end{enumerate}
\end{proposition}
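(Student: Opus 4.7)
The lower bounds are direct. For $\Gamma$, apply $(1-m^2 S)^{-1}$ to the constant vector $\mathbf{1}=\sqrt{N}\f e$: stochasticity \eqref{S is stochastic} gives $S\mathbf 1=\mathbf 1$, hence $(1-m^2 S)^{-1}\mathbf 1=(1-m^2)^{-1}\mathbf 1$, and therefore $\Gamma\geq |1-m^2|^{-1}\geq c(\kappa+\eta)^{-1/2}$ by \eqref{1-msquare}. For $\wt\Gamma$, submultiplicativity of the $\ell^\infty\to\ell^\infty$ operator norm combined with $\|I-m^2 S\|_{\ell^\infty\to\ell^\infty}\leq 1+|m|^2\leq C$ (from \eqref{m is bounded} and $\|S\|_{\ell^\infty\to\ell^\infty}=1$) yields $\wt\Gamma\geq c$.

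For the upper bound in (i), the strategy is a truncated Neumann series combined with an interpolation between the $\ell^\infty\to\ell^\infty$ and the spectral norms. Write
\begin{equation*}
(1-m^2 S)^{-1} \;=\; \sum_{k=0}^{K-1}(m^2 S)^k + (m^2 S)^K (1-m^2 S)^{-1}\,,
\end{equation*}
for a cutoff $K$ of order $\log N$. The first sum is bounded by $K$ in $\ell^\infty\to\ell^\infty$ since $\|S\|_{\ell^\infty\to\ell^\infty}=1$ and $|m|\leq 1$; the tail is estimated via $\|A\|_{\ell^\infty\to\ell^\infty}\leq\sqrt N\,\|A\|_{\ell^2\to\ell^2}$ and the spectral identity $\|(1-m^2 S)^{-1}\|_{\ell^2\to\ell^2}=(\min_{\lambda\in\sigma(S)}|1-m^2\lambda|)^{-1}$. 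Parametrizing $m^2=re^{\ii\phi}$, a direct minimization of $|1-m^2\lambda|$ over $\lambda\in[-1,1]$ shows the minimum is $|\sin\phi|$ when the unconstrained critical point $\lambda^*=\cos\phi/r$ lies inside $[-1,1]$, and $|1\mp m^2|$ otherwise; a short case analysis matches this minimum, up to constants, with $1-\max_\pm|(1\pm m^2)/2|$. One then chooses $K$ of order $\log N$ so that $\sqrt N\,|m|^{2K}$ is controlled by the exponential decay $|m|^{2K}\leq e^{-cK\eta}$ (using $1-|m|^2\geq c\eta$ from \eqref{m is bounded} and $\eta\geq M^{-1}$). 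The second inequality in (i) is a small computation comparing $1-\max_\pm|(1\pm m^2)/2|$ to $\min\{\eta+E^2,\theta\}$ in the regimes $|E|\approx 0$, bulk, and $|E|\approx 2$, using \eqref{1-msquare} and \eqref{lower bound on im msc}.

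For (ii), only the spectrum of $S$ changes: it now lies in $[-1+\delta_-,1]$, which enlarges the minimum of $|1-m^2\lambda|$ precisely in the regime where $m^2\approx -1$ (namely near $E=0$), upgrading the contribution $|1+m^2|\asymp\eta+E^2$ to $\delta_-+\eta+E^2$. For (iii), decompose $(1-m^2 S)^{-1}=(1-m^2)^{-1}P+RQ$ with $P=\f e\f e^*$ and $Q=I-P$, and run the same truncated-series argument on $RQ=\sum_k m^{2k}(S^k-P)$. Now the spectrum of $S$ on $\f e^\perp$ lies in $[-1+\delta_-,1-\delta_+]$, so both ends of the minimization are improved, giving the combined denominator $\min\{\delta_-+\eta+E^2,\delta_++\theta\}$.

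The hard part will be a careful choice of cutoff $K$ and the reconciliation of the spectral minimization with the target denominator: when $1-|m|^2$ is much smaller than $1-\max_\pm|(1\pm m^2)/2|$ (which occurs for $m^2$ with substantial imaginary part away from the real axis), the naive choice of $K$ inflates the truncated-sum bound beyond the target, so one must balance the cutoff using the interpolation more delicately, taking advantage of the fact that the relevant operator norm of the tail decays faster than the crude geometric rate $|m|^{2K}$. Tracking the three critical cases (interior minimum, upper boundary, lower boundary) and matching them cleanly with $\max_\pm|(1\pm m^2)/2|$ and its gap-enhanced versions is the main bookkeeping task.
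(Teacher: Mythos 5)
Your lower bounds are fine and match the paper's (the $\wt\Gamma\geq c$ argument via $\|(1-m^2S)v\|_\infty\leq(1+|m|^2)\|v\|_\infty$ on $\f e^\perp$ is exactly what is behind \eqref{varrho geq c}). The upper-bound strategy---a truncated Neumann series with the tail controlled through $\|A\|_{\ell^\infty\to\ell^\infty}\leq\sqrt N\,\|A\|_{\ell^2\to\ell^2}$ and the spectral theorem for $S$---is also the paper's strategy. However, applying it to the series in powers of $m^2S$ is a genuine gap, not just bookkeeping. The geometric rate of that series is governed by $\max_{\lambda\in\sigma(S)}|m^2\lambda|=|m|^2$, and $1-|m|^2\asymp\eta$ can be as small as $M^{-1}$ even when the target denominator $1-q$, with $q\deq\max_\pm|(1\pm m^2)/2|$, is of order one (bulk energies). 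Concretely, since $1\in\sigma(S)$ and $S$ is symmetric, $\normb{(m^2S)^K(1-m^2S)^{-1}}_{\ell^2\to\ell^2}\geq|m|^{2K}\,|1-m^2|^{-1}$; in the bulk $|1-m^2|\asymp1$ and $|m|^{2K}\geq(1-C\eta)^{2K}\geq c$ for any $K=O(\log N)$ once $\eta\leq1/\log N$, so the tail contributes $\geq c\sqrt N$ after the interpolation---far above the target $C\log N$. Pushing $K$ up to $\eta^{-1}\log N$ kills the tail but inflates the truncated sum to $K\gg\log N/(1-q)$. You correctly flag this tension as ``the hard part'', but the proposed escape---that the tail ``decays faster than the crude geometric rate''---is false for this decomposition: at $\lambda=1$ the tail factor is exactly $|m|^{2K}|1-m^2|^{-1}$, and the resolvent factor provides no decay. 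No choice of $K$ closes the argument.

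The missing idea is a re-centering of the series: write $(1-m^2S)^{-1}=\tfrac12\bigl(1-\tfrac{1+m^2S}{2}\bigr)^{-1}$ and expand in powers of $\tfrac{1+m^2S}{2}$. Because $\lambda\mapsto\tfrac{1+m^2\lambda}{2}$ is affine, its modulus on $[-1,1]$ is maximized at the endpoints $\lambda=\pm1$, so $\normb{\tfrac{1+m^2S}{2}}_{\ell^2\to\ell^2}\leq q$, and $1-q\geq c\min\{\eta+E^2,\theta\}$ by the elementary bounds $|\tfrac{1-m^2}{2}|\leq1-c(\eta+E^2)$ and $|\tfrac{1+m^2}{2}|\leq1-c\theta$ (from \eqref{1-msquare} and \eqref{lower bound on im msc}). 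This is the quantity you actually need in the denominator, in contrast to $\max_\lambda|m^2\lambda|=1-O(\eta)$. With this replacement your own scheme closes: the first $n_0$ terms are bounded by $n_0$ in $\ell^\infty\to\ell^\infty$ since $\|\tfrac{1+m^2S}{2}\|_{\ell^\infty\to\ell^\infty}\leq1$, the tail by $\sqrt N\,q^{n_0}(1-q)^{-1}$ with $n_0\asymp\log N/(1-q)$, and parts (ii) and (iii) follow exactly as you describe by shrinking the relevant spectral interval of $S$ to $[-1+\delta_-,1]$, resp.\ $[-1+\delta_-,1-\delta_+]$ on $\f e^\perp$. Your explicit minimization of $|1-m^2\lambda|$ then becomes unnecessary.
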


\begin{proof}
The first bound of \eqref{bound for Gamma} follows from $(1 - m^2 S)^{-1} \f e = (1 - m^2)^{-1} \f e$ combined with \eqref{1-msquare}.
In order to prove the second bound of \eqref{bound for Gamma}, we write
$$
    \frac{1}{1-m^2 S} \;=\; \frac{1}{2} \frac{1}{1 - \frac{1+m^2S}{2}}
$$
and observe that
\begin{equation} \label{q def}
\normbb{ \frac{1+m^2S}{2}}_{\ell^2 \to \ell^2} \;\leq\; \max_{\pm} \absbb{\frac{1\pm m^2}{2}} \;\eqd\; q\,.
\end{equation}
Therefore
\begin{align*}
\normbb{\frac{1}{1-m^2 S}}_{\ell^\infty\to\ell^\infty} &\;\leq\; \sum_{n=0}^{n_0 - 1} \normbb{\frac{1+m^2S}{2}}^n
_{\ell^\infty\to\ell^\infty}
+ \sqrt{N}  \sum_{n=n_0}^\infty  \normbb{\frac{1+m^2S}{2}}^n
_{\ell^2\to\ell^2}
\\
&\;\leq\; n_0 + \sqrt{N} \frac{q^{n_0}}{1 - q}
\\
&\;\leq\; \frac{C \log N}{1 - q}\,,
\end{align*}
where in the last step we chose $n_0 = \frac{C_0 \log N}{1 - q}$ for large enough $C_0$.
Here we used that  $\norm{S}_{\ell^\infty\to\ell^\infty}\leq 1$ and \eqref{m is bounded}
to estimate the summands in the first sum. This concludes the proof of the second bound of \eqref{bound for Gamma}. The third bound of \eqref{bound for Gamma} follows from the elementary estimates
\begin{equation} \label{elementary bounds for q}
\absbb{\frac{1 - m^2}{2}} \;\leq\; 1 - c (\eta + E^2)\,, \qquad \absbb{\frac{1 + m^2}{2}} \;\leq\; 1 - c \pbb{(\im m)^2 + \frac{\eta}{\im m + \eta}} \;\leq\; 1 - c \theta
\end{equation}
for some universal constant $c > 0$, where in the last step we used Lemma \ref{lemma: msc}.

The estimate \eqref{bound for Gamma with gap} follows similarly. Due to the gap $\delta_-$ in the spectrum of $S$, we may replace the estimate \eqref{q def} with
\begin{equation} \label{modified q}
\normbb{ \frac{1+m^2S}{2}}_{\ell^2 \to \ell^2} \;\leq\; \max \hbb{1 - \delta_- - \eta - E^2 \,,\, \absbb{\frac{1 + m^2}{2}}}\,.
\end{equation}
Hence  \eqref{bound for Gamma with gap} follows using \eqref{elementary bounds for q}.

The lower bound of \eqref{bound for Gamma tilde} was proved in \eqref{varrho geq c}. The upper bound is proved similarly to \eqref{bound for Gamma with gap}, except that \eqref{modified q} is replaced with
\begin{equation*}
\normbb{ \frac{1+m^2S}{2} \biggr|_{\f e^\perp}}_{\ell^2 \to \ell^2} \;\leq\; \max \hBB{1 - \delta_- - \eta - E^2\,,\, \min \hbb{1 - \delta_+ \,,\, \absbb{\frac{1 + m^2}{2}}}}\,.
\end{equation*}
This concludes the proof of \eqref{bound for Gamma tilde}.
\end{proof}

The following proposition gives the behaviour of the spectral gaps $\delta_\pm$ for the example matrices from Section~\ref{sec: example}.

\begin{proposition}[Spectrum of $S$ for example matrices] \label{prop: spectrum of S}
\begin{itemize}
\item[(i)] If $H$ is an $a$-full Wigner matrix then $\delta_- \geq a$ and $\delta_+ \geq a$. 

\item[(ii)]
 If $H$ is a band matrix there is a  positive 
constant $c$, depending on the dimension $d$ and the profile function $f$, such that
 $\delta_- \geq c$ and $\delta_+ \geq c (W/L)^2$.

\item [(iii)]
If $H = \sqrt{1-\nu}H_B + \sqrt{\nu}
 H_W$, where $H_B$ is a band matrix, $H_W$ is an $a$-full Wigner matrix independent of $H_B$,
and $\nu\in [0,1]$ (see Definition~\ref{def: bandwigner}), then there is a constant $c$ depending only on the dimension $d$ and the profile function $f$ of $H_B$, such that
 $\delta_- \geq c$ and $\delta_+ \geq c (W/L)^2+\nu a$.
\end{itemize}
\end{proposition}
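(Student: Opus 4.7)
The plan is to treat the three parts in order, reducing each to a concrete spectral calculation on $S$.

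For part (i), the main idea is the decomposition
\begin{equation*}
S \;=\; a \f e \f e^* + (1-a) \Sigma\,, \qquad \Sigma \;\deq\; \frac{1}{1-a}\pB{S - \frac{a}{N} \f 1 \f 1^*}\,,
\end{equation*}
where $\f 1 = \sqrt{N} \f e$. By the assumption $s_{ij} \geq a/N$, the matrix $\Sigma$ has nonnegative entries, and a check of the row sums shows $\Sigma$ is doubly stochastic, so $\Sigma \geq -I$ and $\norm{\Sigma}_{\ell^2 \to \ell^2} \leq 1$. The bound $\delta_- \geq a$ is immediate since $a \f e \f e^* \geq 0$ gives $S \geq (1-a)(-I) = (-1+a) I$. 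The bound $\delta_+ \geq a$ follows by restricting to $\f e^\perp$, where the rank-one piece vanishes and $S|_{\f e^\perp} = (1-a) \Sigma|_{\f e^\perp}$ has norm at most $1-a$.

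For part (ii), I would diagonalize $S$ using the discrete Fourier transform on $\bb T^d_L$, since $S$ is a convolution operator with kernel $k(i) = Z_L^{-1} f([i]_L / W)$. The eigenvalues are
\begin{equation*}
\lambda_p \;=\; \frac{1}{Z_L} \sum_{i \in \bb T^d_L} f \pb{[i]_L/W}\, \me^{2 \pi \ii p \cdot i / L}\,, \qquad p \in \bb T^d_L\,,
\end{equation*}
and a Riemann-sum/Poisson-summation argument identifies $\lambda_p$ up to negligible errors with $\wh f(W p / L)$. Since $f$ is a bounded symmetric probability density, $\wh f$ is real, $\wh f(0) = 1$, and the Taylor expansion $\wh f(\xi) = 1 - c_f \abs{\xi}^2 + O(\abs{\xi}^4)$ holds near the origin with $c_f > 0$ determined by the second moment of $f$. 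The nonzero momenta closest to $0$ have $|p| = 1$, giving $W\abs{p}/L = W/L$ and therefore the upper bound $\lambda_p \leq 1 - c(W/L)^2$ uniformly in $p \neq 0$, i.e.\ $\delta_+ \geq c(W/L)^2$. For $\delta_-$ one uses that $f$ is a bounded (not atomic) density on $\R^d$, so $\wh f(\xi) > -1$ strictly for every $\xi$, combined with Riemann--Lebesgue decay to reduce to a compact set and conclude $\inf_\xi \wh f(\xi) \geq -1 + c$.

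For part (iii), I would write $S = (1-\nu) S_B + \nu S_W$ and combine the bounds from (i) and (ii) by summing quadratic forms. The bound on $S|_{\f e^\perp}$ follows from $S_B|_{\f e^\perp} \leq (1 - c(W/L)^2) I$ and $S_W|_{\f e^\perp} \leq (1-a) I$, which gives $\delta_+ \geq (1-\nu) c (W/L)^2 + \nu a$; after adjusting the constant $c$ this yields the stated $c (W/L)^2 + \nu a$ in the regime of interest $\nu \leq 1/2$. For $\delta_-$ one uses $S_B \geq (-1 + c) I$ and $S_W \geq -I$ to obtain $S \geq (-1 + (1-\nu) c) I$, which absorbs into a constant (again for $\nu$ away from $1$). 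The genuinely nontrivial step in the whole proposition is the quantitative Fourier analysis of part (ii): one must convert the qualitative behaviour of $\wh f$ (smoothness at $0$, strict bound $> -1$, decay at $\infty$) into uniform estimates for $\lambda_p$ on the discrete torus, controlling the aliasing error in $Z_L$ and in $\lambda_p$ uniformly in $W$ and $L$ with $W \leq L$; everything else is a direct algebraic manipulation.
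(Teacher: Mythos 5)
Your argument is correct and, for parts (i) and (iii), essentially identical to the paper's: the paper proves (i) by the same splitting $S = a \f e \f e^* + (S - a \f e \f e^*)$, observing that the second summand is $(1-a)$ times a doubly stochastic matrix, and it disposes of (iii) exactly as you do, by combining the quadratic-form bounds for $(1-\nu)S_B$ and $\nu S_W$ (with the same implicit restriction to $\nu$ bounded away from $1$ that you correctly flag). The real difference is in part (ii): the paper does not prove it but cites Lemma A.1 of \cite{EYY} and Equation (5.16) of \cite{EKYY3}, whereas you reconstruct the argument; your Fourier diagonalization of the convolution operator $S$ on $\bb T^d_L$ is indeed the method used in those references, so you are supplying a proof the paper outsources. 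Two small refinements to your sketch of (ii). First, the bound $\lambda_p \leq 1 - c(W/L)^2$ must be justified \emph{uniformly} in $p \neq 0$, not only for the momenta with $\abs{p}=1$; this requires the same ``strict inequality on compacta plus decay at infinity'' argument you invoke for the $-1$ end of the spectrum, now applied to $\wh f(\xi) < 1$ for $\xi \neq 0$, giving $\wh f(\xi) \leq 1 - c \min\{\abs{\xi}^2, 1\}$ and hence the claim for all $p \neq 0$. Second, the Taylor expansion $\wh f(\xi) = 1 - c_f \abs{\xi}^2 + O(\abs{\xi}^4)$ presumes moment assumptions on $f$ that Definition \ref{def: band matrix} does not grant; it is safer to bound $1 - \wh f(\xi) = \int f(x)\pb{1 - \cos(2\pi \xi \cdot x)} \dd x \geq c \abs{\xi}^2$ for small $\abs{\xi}$ by restricting the integral to a ball on which $f$ carries positive mass. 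Neither point changes the structure of your proof; the only genuinely delicate issue, the aliasing/Riemann-sum errors relating $\lambda_p$ to $\wh f(Wp/L)$ uniformly in $W \leq L$, is one you correctly identify as the technical heart of part (ii).
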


\begin{proof}
For the case where $H$ is an $a$-full Wigner matrix, the claim easily follows by splitting
\begin{equation*}
S \;=\; (S - a \f e \f e^*) + a \f e \f e^*\,.
\end{equation*}
By assumption, the first term is $(1 - a)$ times a doubly stochastic matrix. Hence its
 spectrum lies in $[-1 + a, 1 - a]$. The claims on $\delta_\pm$ now follow easily.

The claims about band matrices were proved in Lemma A.1 of \cite{EYY} and Equation (5.16) of \cite{EKYY3}, respectively.
Finally, (iii) easily follows from (i) and (ii).
\end{proof}

\section{Proof of Theorems \ref{thm: averaging} 
and \ref{thm: averaging with Lambdao}} \label{app: fluct averaging}

Theorems \ref{thm: averaging} and \ref{thm: averaging with Lambdao} are essentially simple special cases of the much more involved, and general, fluctuation averaging estimate from \cite{EKYfluc}. Nevertheless, here we give the details of the proofs because (a) they do not strictly follow from the formulation of the result in \cite{EKYfluc}, and (b) their proof is much easier than that of \cite{EKYfluc}, so that the reader only interested in the applications of fluctuation averaging to the local semicircle law need not read the lengthy proof of \cite{EKYfluc}.
We start with a simple lemma which summarizes the key properties of $\prec$ when combined with expectation.

\begin{lemma} \label{lem: exp prec}
Suppose that the deterministic control parameter $\Psi$ satisfies $\Psi \geq N^{-C}$, and that for all $p$ there is a constant $C_p$ such that the nonnegative 
random variable $X$ satisfies $\E X^p \leq N^{C_p}$. Suppose moreover that that $X \prec \Psi$. Then for any
fixed $n\in \N$ we have
\begin{equation}\label{EX}
\qquad \E X^n \;\prec\; \Psi^n\,.
\end{equation}
(Note that this estimate involves deterministic quantities only, i.e.\
it means that $\E X^n\leq N^\e \Psi^n$ for any $\e>0$ if $N\geq N_0(n,\e)$.) Moreover, we have
\begin{equation} \label{Qi X}
P_i X^n \;\prec\; \Psi^n\,, \qquad Q_i X^n \;\prec\; \Psi^n
\end{equation}
uniformly in $i$.
If $X = X(u)$ and $\Psi = \Psi(u)$ depend on some parameter $u$ and the 
above assumptions are uniform in $u$, then so are the conclusions.
\end{lemma}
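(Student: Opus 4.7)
The plan is to prove the three assertions in the order \eqref{EX}, then $P_i X^n \prec \Psi^n$, then $Q_i X^n \prec \Psi^n$. Throughout I would use the two standing assumptions: a high-probability bound $X \prec \Psi$ (i.e.\ $\P(X > N^{\epsilon}\Psi) \leq N^{-D}$ for any $\epsilon, D > 0$ and $N$ large enough) and a moment bound $\E X^p \leq N^{C_p}$ for every $p$.

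For \eqref{EX}, I would split according to the event $A \deq \{X \leq N^{\epsilon/(2n)} \Psi\}$. On $A$ one has the deterministic bound $X^n \ind{A} \leq N^{\epsilon/2}\Psi^n$. On $A^c$ the Cauchy--Schwarz inequality gives $\E X^n \ind{A^c} \leq (\E X^{2n})^{1/2}\P(A^c)^{1/2} \leq N^{C_{2n}/2} N^{-D/2}$, where $D$ may be chosen arbitrarily large by $X \prec \Psi$. Using the assumption $\Psi \geq N^{-C}$ and taking $D$ large compared to $Cn + C_{2n}$ makes the tail contribution smaller than $\Psi^n$. Combining, $\E X^n \leq N^{\epsilon}\Psi^n$ for large $N$, which is \eqref{EX}.

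For $P_i X^n \prec \Psi^n$, the idea is a high-moment Markov inequality. For any integer $q \geq 1$, Jensen's inequality applied conditionally on $H^{(i)}$ gives $(P_i X^n)^q = (\E[X^n \mid H^{(i)}])^q \leq P_i X^{nq}$, so $\E (P_i X^n)^q \leq \E X^{nq}$. Applying \eqref{EX} to $X^{nq}$ (with a small parameter $\epsilon/2$ in place of $\epsilon$), we obtain $\E (P_i X^n)^q \leq N^{\epsilon q/2} \Psi^{nq}$. Markov's inequality then yields
\begin{equation*}
\P \pb{P_i X^n > N^\epsilon \Psi^n} \;\leq\; \frac{\E (P_i X^n)^q}{N^{\epsilon q} \Psi^{nq}} \;\leq\; N^{-\epsilon q/2}\,.
\end{equation*}
Since $q$ is arbitrary, this gives a bound $N^{-D}$ for any $D$, proving $P_i X^n \prec \Psi^n$; uniformity in $i$ follows from a union bound over the $N$ indices, which is harmless within the $\prec$ framework. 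For $Q_i X^n = X^n - P_i X^n$, the triangle inequality combined with $X^n \prec \Psi^n$ (from $X \prec \Psi$ and Lemma \ref{lemma: basic properties of prec}) and the bound just established on $P_i X^n$ gives $\abs{Q_i X^n} \prec \Psi^n$.

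The argument is essentially routine; the only subtlety is verifying that the polynomial lower bound $\Psi \geq N^{-C}$ is exactly what is needed to absorb the $N^{-D/2}$ tail term into $\Psi^n$ in the proof of \eqref{EX}, and that the moment assumption $\E X^p \leq N^{C_p}$ can be iterated via Jensen to give \eqref{EX} at every power $nq$, which is what fuels the Markov argument at arbitrarily high $q$. Uniformity in the auxiliary parameter $u$ is automatic because each step respects the quantifier structure of $\prec$.
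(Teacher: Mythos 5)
Your proof is correct and follows essentially the same route as the paper: the same split into $\{X \leq N^{\epsilon}\Psi\}$ and its complement with Cauchy--Schwarz on the tail for \eqref{EX}, and the same high-moment Markov argument via conditional Jensen for \eqref{Qi X}, which the paper only sketches. The only cosmetic difference is that the paper first reduces to $n=1$ via Lemma \ref{lemma: basic properties of prec}, whereas you work with $X^n$ directly; both are fine.
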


\begin{proof}[Proof of Lemma \ref{lem: exp prec}]
It is enough to consider the case $n=1$; the case of larger $n$ follows immediately from the case $n = 1$, using the basic properties of $\prec$
from Lemma \ref{lemma: basic properties of prec}.

For the first claim, pick $\epsilon > 0$. Then
\begin{equation*}
\E X \;=\; \E X \ind{X \leq N^{\epsilon} \Psi} + \E X \ind{X > N^{\epsilon} \Psi} \;\leq\; N^{\epsilon} \Psi + \sqrt{\E X^2} \sqrt{\P(X > N^\epsilon \Psi)} \;\leq\; N^\epsilon \Psi + N^{C_2/2 - D/2}\,,
\end{equation*}
for arbitrary $D > 0$. The first claim therefore follows by choosing $D$ large enough.

The second claim follows from Chebyshev's inequality, using a high-moment estimate combined with Jensen's inequality for partial expectation. We omit the details, which are similar to those of the first claim.
\end{proof}

We shall apply Lemma \ref{lem: exp prec} to resolvent entries of $G$. In order to verify its assumptions, we record the following bounds.

\begin{lemma}\label{lemma:B2}
Suppose that $\Lambda \prec \Psi$ and $\Lambda_o \prec \Psi_o$ for some deterministic control parameters $\Psi$ and $\Psi_o$ both satisfying \eqref{admissible Psi}. Fix $p \in \N$. Then for any $i \neq j$ and  $\bb T \subset \{1, \dots, N\}$ satisfying $\abs{\bb T} \leq p$ and $i,j \notin \bb T$ we have
\begin{equation} \label{basic bounds on GT}
G_{ij}^{(\bb T)} \;=\; O_\prec(\Psi_o) \,, \qquad \frac{1}{G_{ii}^{(\bb T)}} \;=\; O_\prec(1)\,.
\end{equation}
Moreover, we have the rough bounds $\absb{G_{ij}^{(\bb T)}} \leq M$ and
\begin{equation} \label{rough bound on 1/G}
\E \absBB{\frac{1}{G_{ii}^{(\bb T)}}}^n \;\leq\; N^\epsilon
\end{equation}
for any $\epsilon > 0$ and $N \geq N_0(n, \epsilon)$.
\end{lemma}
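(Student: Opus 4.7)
My plan is to reduce the statements about minor Green function entries to the analogous statements for the full Green function $G$ by iterating the resolvent identities \eqref{resolvent expansion type 1}, and then to combine this with a cheap polynomial moment bound and Lemma \ref{lem: exp prec} to obtain \eqref{rough bound on 1/G}. Since $|\bb T| \leq p$ is fixed, only boundedly many iterations are needed, so all accumulated errors are harmless.

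For the base case $\bb T = \emptyset$, the assumption $\Lambda_o \prec \Psi_o$ gives $G_{ij} = O_\prec(\Psi_o)$ directly for $i \neq j$. For the diagonal, $\Lambda \prec \Psi$, the lower bound $|m| \geq c$ from Lemma \ref{lemma: msc}, and the smallness $\Psi \leq M^{-c}$ from \eqref{admissible Psi} together yield $|G_{ii}| \geq |m| - O_\prec(\Psi) \geq c/2$ on an event of very high probability, so $1/G_{ii} = O_\prec(1)$. For the inductive step I would induct on $\ell = |\bb T|$: writing $\bb T = \bb T' \cup \{k\}$ with $k \notin \bb T'$ and $i,j \notin \bb T$, the identities in \eqref{resolvent expansion type 1} give
\begin{equation*}
G_{ij}^{(\bb T)} \;=\; G_{ij}^{(\bb T')} - \frac{G_{ik}^{(\bb T')} G_{kj}^{(\bb T')}}{G_{kk}^{(\bb T')}}\,, \qquad G_{ii}^{(\bb T)} \;=\; G_{ii}^{(\bb T')} - \frac{G_{ik}^{(\bb T')} G_{ki}^{(\bb T')}}{G_{kk}^{(\bb T')}}\,.
\end{equation*}
The induction hypothesis controls each factor on the right-hand side, and the correction term is $O_\prec(\Psi_o^2) \cdot O_\prec(1) = O_\prec(\Psi_o^2)$, which is absorbed into $\Psi_o$ since $\Psi_o \leq M^{-c}$. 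Hence $G_{ij}^{(\bb T)} = O_\prec(\Psi_o)$ and $G_{ii}^{(\bb T)} = m + O_\prec(\Psi + \Psi_o^2)$; the latter again gives $|G_{ii}^{(\bb T)}| \geq c/2$ with very high probability and therefore $1/G_{ii}^{(\bb T)} = O_\prec(1)$.

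For the deterministic rough bound, I would use $|G_{ij}^{(\bb T)}| \leq \|G^{(\bb T)}\|_{\text{op}} \leq 1/\eta \leq M$, which is immediate from the constraint $\eta \geq M^{-1}$ built into Definition \ref{def: domain}. For the polynomial moment bound \eqref{rough bound on 1/G}, I would first observe that spectrally
\begin{equation*}
|G_{ii}^{(\bb T)}| \;\geq\; \im G_{ii}^{(\bb T)} \;\geq\; \frac{\eta}{(\|H^{(\bb T)}\| + |E|)^2 + \eta^2}\,,
\end{equation*}
so $|1/G_{ii}^{(\bb T)}|^{2n} \leq C_n \eta^{-2n} (1 + \|H^{(\bb T)}\|^{4n})$. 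Since zeroing out rows and columns is a compression, $\|H^{(\bb T)}\| \leq \|H\|$, and the finite-moment hypothesis \eqref{finite moments} combined with $\|H\|^{4n} \leq \tr(H^{4n})$ gives $\E\|H\|^{4n} \leq N^{c_n}$; together with $\eta^{-2n} \leq M^{2n} \leq N^{2n}$ this shows that $|1/G_{ii}^{(\bb T)}|^n$ has moments bounded by a polynomial in $N$. Lemma \ref{lem: exp prec}, applied with $X = |1/G_{ii}^{(\bb T)}|^n$ and $\Psi = 1$ and with the hypothesis $X \prec 1$ supplied by the induction above, then yields \eqref{rough bound on 1/G}.

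The only real technical point is keeping the denominators $G_{kk}^{(\bb T')}$ under control throughout the induction: the induction hypothesis $1/G_{kk}^{(\bb T')} = O_\prec(1)$ is exactly what makes the quadratic correction in \eqref{resolvent expansion type 1} negligible, and this rests on the a priori lower bound $|m| \geq c$ combined with the smallness of $\Psi$ and $\Psi_o$. Since the induction depth is at most $p$, a fixed constant, there is no issue with accumulating an arbitrarily large number of errors, and the proof closes immediately once the base case is established.
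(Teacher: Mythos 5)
Your argument is correct, and for the main bounds \eqref{basic bounds on GT} and the deterministic bound $\absb{G_{ij}^{(\bb T)}}\leq M$ it is exactly the paper's route: repeated application of \eqref{resolvent expansion type 1} starting from $\Lambda\prec\Psi$, $\Lambda_o\prec\Psi_o$, the lower bound $\abs{m}\geq c$ from \eqref{m is bounded}, and $\eta\geq M^{-1}$. The one place where you genuinely diverge is the a priori polynomial moment bound needed before invoking Lemma \ref{lem: exp prec} for \eqref{rough bound on 1/G}. The paper obtains $\E\absb{1/G_{ii}^{(\bb T)}}^{q}\leq N^{C_q}$ from Schur's complement formula \eqref{schur} applied to $1/G_{ii}^{(\bb T)}$, estimating the quadratic form there via the moment assumption \eqref{finite moments} on the entries together with the deterministic bound $\absb{G^{(\bb T)}_{kl}}\leq M$ and \eqref{lower bound on W}. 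You instead bound $\absb{1/G_{ii}^{(\bb T)}}$ deterministically by $C\eta^{-1}(1+\norm{H^{(\bb T)}}^2)$ via the spectral decomposition, use $\norm{H^{(\bb T)}}\leq\norm{H}$ and $\norm{H}^{4n}\leq\tr(H^{4n})$, and control $\E\tr(H^{4n})\leq C_nN^{4n}$ from \eqref{finite moments}. Both yield the same crude $N^{C_q}$ input for Lemma \ref{lem: exp prec}, and both then use the already-established $1/G_{ii}^{(\bb T)}\prec 1$ to conclude via \eqref{EX}; your trace-moment route is marginally more self-contained (it never touches the minors' entries again), while the paper's Schur route recycles machinery already set up for the self-consistent equation. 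Either is fine, and your handling of the induction depth (bounded by the fixed $p$) and of the denominators $1/G_{kk}^{(\bb T')}$ is exactly the point that needs care and is handled correctly.
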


\begin{proof}
The bounds \eqref{basic bounds on GT} follow easily by a repeated application of \eqref{resolvent expansion type 1}, the assumption $\Lambda \prec M^{-c}$, and the lower bound in \eqref{m is bounded}. The deterministic bound
 $\absb{G_{ij}^{(\bb T)}} \leq M$ follows immediately from $\eta \geq M^{-1}$ by definition of a spectral domain.

In order to prove \eqref{rough bound on 1/G}, we use Schur's complement formula \eqref{schur} applied to $1/ G_{ii}^{(\bb T)}$, where the expectation is estimated using \eqref{finite moments} and $\absb{G_{ij}^{(\bb T)}} \leq M$. (Recall \eqref{lower bound on W}.) This gives
\begin{equation*}
\E \absBB{\frac{1}{G_{ii}^{(\bb T)}}}^p \;\prec\; N^{C_p}
\end{equation*}
for all $p \in \N$. Since $1 / G_{ii}^{(\bb T)} \prec 1$, \eqref{rough bound on 1/G} therefore follows from \eqref{EX}.
\end{proof}

\begin{proof}[Proof of Theorem \ref{thm: averaging with Lambdao}]
First we claim that, for any fixed $p \in \N$, we have
\begin{equation} \label{Xi estimate}
\absbb{Q_k \frac{1}{G^{(\bb T)}_{kk}}} \;\prec\; \Psi_o
\end{equation}
uniformly for $\bb T \subset \{1, \dots, \N\}$, $\abs{\bb T} \leq p$, and $k \notin \bb T$. To simplify notation, for the proof we set $\bb T = \emptyset$; the proof for nonempty $\bb T$ is the same.
From Schur's complement formula \eqref{schur} we get $\abs{Q_k  (G_{kk})^{-1}} \leq \abs{h_{kk}} + \abs{Z_k}$.
The first term is estimated by $\abs{h_{kk}} \prec M^{-1/2} \leq \Psi_o$. The second term is estimated exactly as in \eqref{Zi split} and \eqref{estimate for sum Gkl}:
\begin{equation*}
\abs{Z_k} \;\prec\; \pBB{\sum_{x \neq y}^{(k)} s_{kx} \absb{G_{xy}^{(k)}}^2 s_{yk}}^{1/2} \;\prec\; \Psi_o\,,
\end{equation*}
where in the last step we used that $\absb{G_{xy}^{(k)}} \prec \Psi_o$ as follows from \eqref{basic bounds on GT},
 and the bound $1 / \abs{G_{kk}} \prec 1$ (recall that $\Lambda \prec \Psi \leq M^{-c}$). This concludes the proof of \eqref{Xi estimate}.

Abbreviate $X_k \deq Q_k (G_{kk})^{-1}$. We shall estimate $\sum_{k} t_{ik} X_k$ in probability by estimating its $p$-th moment by $\Psi_o^{2p}$, from which the claim will easily follow using Chebyshev's inequality. Before embarking on the estimate for arbitrary $p$, we illustrate its idea by estimating the variance 
\begin{equation} \label{variance of average of X}
\E \absbb{\sum_k t_{ik} X_k}^2 \;=\; \sum_{k,l} t_{ik} \ol t_{il} \,  \E X_{k} \ol X_{l} \;=\; \sum_{k} \abs{t_{ik}}^2 \, \E X_{k} \ol X_{k} + \sum_{k \neq l} t_{ik} \ol t_{il} \, \E X_{k} \ol X_{l}\,.
\end{equation}
Using Lemma \ref{lem: exp prec} and the bounds \eqref{condition on weights} on $t_{ik}$, we find that the first term on the right-hand side of \eqref{variance of average of X} is $O_\prec(M^{-1} \Psi_o^2) = O_\prec(\Psi_o^4)$, where we used the estimate \eqref{admissible Psi}. Let us therefore focus on the second term of \eqref{variance of average of X}. Using the fact that $k \neq l$, we apply \eqref{resolvent expansion type 1} to $X_{k}$ and $X_{l}$ to get
\begin{equation} \label{variance calculation}
\E X_{k} \ol X_{l} \;=\; \E Q_{k} \pbb{\frac{1}{G_{k k}}} Q_{l} \ol{\pbb{\frac{1}{G_{l l}}}} \;=\;
\E Q_{k} \pbb{\frac{1}{G_{kk}^{(l)}} - \frac{G_{kl} G_{lk}}{G_{kk} G_{kk}^{(l)} G_{ll}}} Q_{l} \ol{\pbb{\frac{1}{G_{ll}^{(k)}} - \frac{G_{lk} G_{kl}}{G_{ll} G_{ll}^{(k)} G_{kk}}}}\,.
\end{equation}
We multiply out the parentheses on the right-hand side. The crucial observation is that if the random variable $Y$ is independent of $i$ (see Definition \ref{definition: P Q}) then $\E Q_i(X) Y = \E Q_i(XY) = 0$. Hence out of the four terms obtained from the right-hand side of \eqref{variance calculation}, the only nonvanishing one is
\begin{equation*}
\E Q_{k} \pbb{\frac{G_{kl} G_{lk}}{G_{kk} G_{kk}^{(l)} G_{ll}}} Q_{l} \ol{\pbb{\frac{G_{lk} G_{kl}}{G_{ll} G_{ll}^{(k)} G_{kk}}}} \;\prec\; \Psi_o^4\,.
\end{equation*}
Together with \eqref{condition on weights}, this concludes the proof of $\E \absb{\sum_{k}t_{ik} X_k}^2 \prec \Psi_o^4$.

After this pedagogical interlude we move on to the full proof. Fix some even integer $p$ and write
\begin{equation*}
\E \absbb{\sum_k t_{ik} X_k}^p \;=\; \sum_{k_1, \dots, k_p} t_{ik_1} \cdots t_{i k_{p/2}} \ol t_{i k_{p/2 + 1}} \cdots \ol t_{ik_p} \, \E X_{k_1} \cdots X_{k_{p/2}} \ol X_{k_{p/2 + 1}} \cdots \ol X_{k_p}\,.
\end{equation*}
Next, we regroup the terms in the sum over $\f k \deq (k_1, \dots, k_p)$ according to the partition of $\{1, \dots, p\}$ generated by the indices $\f k$. To that end, let $\fra P_p$ denote the set of partitions of $\{1, \dots, p\}$, and $\cal P(\f k)$ the element of $\fra P_p$ defined by the equivalence relation $r \sim s$ if and only if $k_r = k_s$.
In short, we reorganize the summation according to coincidences among the indices $\f k$. Then we write
\begin{equation} \label{Xp expanded}
\E \absbb{\sum_k t_{ik} X_k}^p \;=\; \sum_{P \in \fra P_p} \sum_{\f k} t_{ik_1} \cdots t_{i k_{p/2}} \ol t_{i k_{p/2 + 1}} \cdots \ol t_{ik_p}\, \ind{\cal P(\f k) = P} V(\f k) \,,
\end{equation}
where we defined
\begin{equation*}
V(\f k) \;\deq\; \E X_{k_1} \cdots X_{k_{p/2}} \ol X_{k_{p/2 + 1}} \cdots \ol X_{k_p}\,.
\end{equation*}
Fix $\f k$ and set $P \deq \cal P(\f k)$ to be partition induced by the coincidences in $\f k$.
For any $r\in  \{1, \dots, p\}$, we denote by  $[r]$ the block of $r$ in $P$. 
Let $L \equiv L(P) \deq \h{r \col [r] = \{r\}} \subset \{1, \dots, p\}$ be the set of ``lone'' labels.
We denote by $\f k_L \deq (k_r)_{r \in L}$ the summation indices associated with lone labels.

The resolvent entry $G_{kk}$ depends strongly on the randomness in the $k$-column of $H$, but only weakly on the randomness in the other columns.
We conclude that if $r$ is a lone label then all factors $X_{k_s}$ with $s \neq r$
in $V(\f k)$ depend weakly on the randomness in the $k_r$-th column of $H$. Thus, the idea is to make all resolvent entries inside the expectation of $V(\f k)$ as independent of the indices $\f k_L$ as possible (see Definition \ref{definition: P Q}),
using the identity \eqref{resolvent expansion type 1}. To that end, we say that a resolvent entry $G_{xy}^{(\bb T)}$ with $x,y \notin \bb T$ is \emph{maximally expanded} if $\f k_L \subset \bb T \cup \{x,y\}$. 
The motivation behind this definition is that using \eqref{resolvent expansion type 1} we cannot add upper indices from the set $\f k_L$ to a maximally expanded resolvent entry. We shall apply
 \eqref{resolvent expansion type 1} to all resolvent entries in $V(\f k)$.
In this manner we generate a sum of monomials consisting of off-diagonal resolvent entries
and inverses of diagonal resolvent entries. We can now repeatedly apply
 \eqref{resolvent expansion type 1} to each factor
 until either they are all maximally expanded or a sufficiently large 
 number of off-diagonal resolvent entries has been generated. 
 The cap on the number of off-diagonal entries is introduced to 
ensure that this procedure terminates after a finite number of steps.

In order to define the precise algorithm, let $\cal A$ denote the set of monomials in the off-diagonal entries $G_{xy}^{(\bb T)}$, with $\bb T \subset \f k_L$, $x \neq y$, and $x,y \in \f k \setminus \bb T$, as well as the inverse diagonal entries $1 / G_{xx}^{(\bb T)}$, with $\bb T \subset \f k_L$ and $x \in \f k \setminus \bb T$. Starting from $V(\f k)$, the algorithm will recursively generate sums of monomials in $\cal A$.
Let $d(A)$ denote the number of off-diagonal entries in $A \in \cal A$. For $A \in \cal A$ we shall define $w_0(A), w_1(A) \in \cal A$ satisfying
\begin{equation} \label{properties of w}
A \;=\; w_0(A) + w_1(A) \,, \qquad d(w_0(A)) \;=\; d(A) \,, \qquad d(w_1(A)) \;\geq\; \max \hb{2, d(A) + 1}\,.
\end{equation}
The idea behind this splitting is to use \eqref{resolvent expansion type 1} on one entry of $A$; the first term on the right-hand side of \eqref{resolvent expansion type 1} gives rise to $w_0(A)$ and the second to $w_1(A)$. The precise definition of the algorithm applied to $A \in \cal A$ is as follows.

\begin{itemize}
\item[(1)]
If all factors of $A$ are maximally expanded or $d(A) \geq p + 1$ then 
stop the expansion of $A$. In other words, the algorithm cannot be applied to $A$ in the future.
\item[(2)]
Otherwise choose some (arbitrary) factor of $A$ that is not maximally expanded. If this entry is off-diagonal, $G^{(\bb T)}_{xy}$, write
\begin{equation} \label{splitting off-diag}
G^{(\bb T)}_{xy} \;=\; G_{xy}^{(\bb T u)} + \frac{G_{xu}^{(\bb T)} G_{uy}^{(\bb T)}}{G_{uu}^{(\bb T)}}
\end{equation}
for the smallest $u \in \f k_L \setminus (\bb T \cup \{x,y\})$. If the chosen entry is diagonal, $1/ G^{(\bb T)}_{xx}$, write
\begin{equation} \label{splitting diag}
\frac{1}{G_{xx}^{(\bb T)}} \;=\; \frac{1}{G_{xx}^{(\bb Tu)}} - \frac{G_{xu}^{(\bb T)} G_{ux}^{(\bb T)}}{G_{xx}^{(\bb T)} G_{xx}^{(\bb Tu)} G_{uu}^{(\bb T)}}
\end{equation}
for the smallest $u \in \f k_L \setminus (\bb T \cup \{x\})$. Then the splitting $A = w_0(A) + w_1(A)$ is defined by the splitting induced by \eqref{splitting off-diag} or \eqref{splitting diag}, in the sense that we replace the factor $G^{(\bb T)}_{xy}$ or $1/G_{xx}^{(\bb T)}$ in the monomial $A$ by the right-hand sides of  \eqref{splitting off-diag} or \eqref{splitting diag}.
\end{itemize}
(This algorithm contains some arbitrariness in the choice of the factor of $A$ to be expanded. It may be removed for instance by first fixing some ordering  of all resolvent entries $G_{ij}^{(\bb T)}$. Then in (2) we choose the first factor of $A$ that is not maximally expanded.) Note that \eqref{splitting off-diag} and \eqref{splitting diag} follow from \eqref{resolvent expansion type 1}. It is clear that \eqref{properties of w} holds with the algorithm just defined.

We now apply this algorithm recursively to each entry $A^{r} \deq 1 / G_{k_r k_r}$ 
in the definition of $V(\f k)$. More precisely, we start with $A^r$ and define $A_{0}^r \deq w_0(A^r)$ and $A_{1}^r \deq w_1(A^r)$. In the second step of the algorithm we define four monomials
$$
 A_{00}^r \;\deq\; w_0(A_0^r)\,, \qquad A_{01}^r \;\deq\; w_0(A_1^r)\,, \qquad  
A_{10}^r \;\deq\; w_1(A_0^r)\,, \qquad A_{11}^r \;\deq\; w_1(A_1^r)\,,
$$
and so on, at each iteration performing the steps (1) and (2) on each new monomial independently of the others. 
Note that the lower indices are binary sequences that describe the recursive application of the
operations $w_0$ and $w_1$.
In this manner we generate a binary tree whose vertices are given by finite binary strings $\sigma$. 
The associated monomials satisfy $A_{\sigma i}^r \deq w_i(A_\sigma^r)$ for $i = 0,1$,
where $\sigma i$ denotes the binary string obtained by appending $i$ to the right end of $\sigma$. See Figure \ref{fig: tree} for an illustration of the tree.

\begin{figure}[ht!]
\begin{center}
\includegraphics{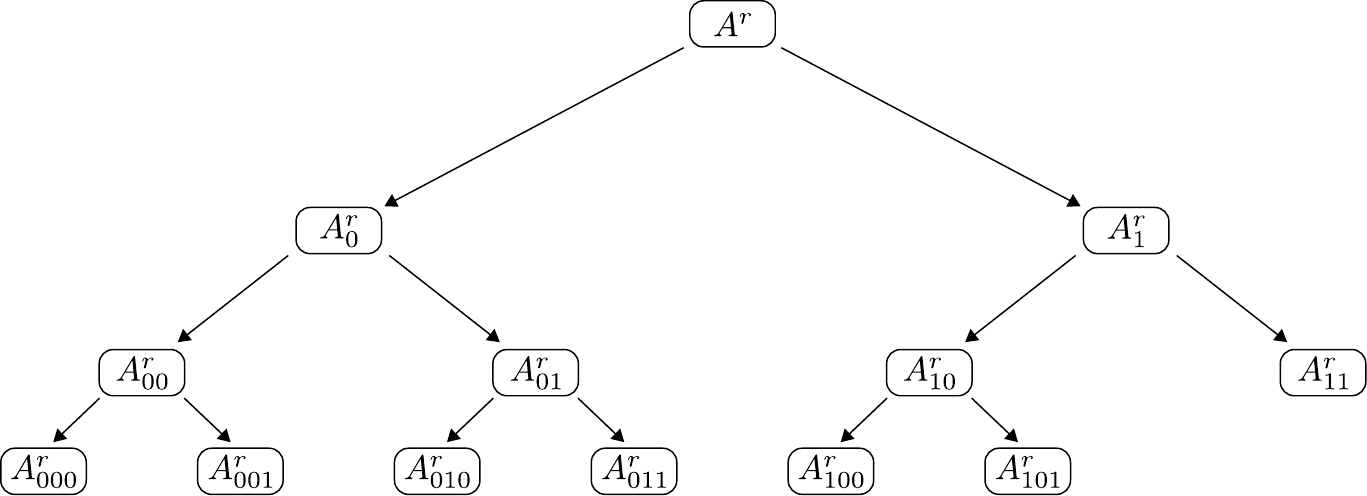}
\end{center}
\caption{The binary tree generated by applying the algorithm (1)--(2) to a monomial $A^r$. Each vertex of the tree is indexed by a binary string $\sigma$, and encodes a monomial $A^r_\sigma$. An arrow towards the left represents the action of $w_0$ and an arrow towards the right the action of $w_1$. The monomial $A_{11}^r$ satisfies the assumptions of step (1), and hence its expansion is stopped, so that the tree vertex $11$ has no children. \label{fig: tree}}
\end{figure}

We stop the recursion of a tree vertex whenever the associated monomial satisfies the stopping rule of step (1). In other words, the set of leaves of the tree is the set of binary strings $\sigma$ such that either all factors of $A^r_\sigma$ are maximally expanded or $d(A^r_\sigma) \geq p + 1$. We claim that the resulting binary tree is finite, i.e.\ that the algorithm always reaches step (1) after a finite number of iterations. Indeed, by the stopping rule in (1), we have $d(A^r_\sigma) \leq p + 1$ for any vertex $\sigma$ of the tree. Since each application of $w_1$ increases $d(\cdot)$ by at least one, and in the first step (i.e.\ when applied to $A^r$) by two,
we conclude that the number of ones in any $\sigma$ is at most $p$. Since each application of $w_1$ increases the number of resolvent entries by at most four, and the application of $w_0$ does not change this number,
we find that the number of resolvent entries in $A^r_\sigma$ is bounded by $4 p + 1$. Hence the maximal number of
 upper indices in $A^r_\sigma$ for any tree vertex $\sigma$ is $(4p+1)p$.
Since each application of $w_0$ increases the total number of upper indices by one, we find that $\sigma$ contains at most $(4p+1)p$  zeros. We conclude that the maximal length of the string $\sigma$ (i.e.\ the depth of the tree) is at most $(4p +1)p+ p = 4p^2+2p$. 
A string $\sigma$ encoding a tree vertex contains at most $p$ ones. Denoting by $k$ the number of ones in a string encoding a leaf of the tree, we find that the number of leaves is bounded by $\sum_{k = 0}^p \binom{4p^2 + 2p}{k} 
\leq (Cp^2)^{p}$.
Therefore, denoting by $\cal L_r$ the set of leaves of the binary tree generated from $A^r$, 
we have $\abs{\cal L_r} \leq (Cp^2)^p$.

By definition of the tree and $w_0$ and $w_1$, we have the decomposition
\begin{equation} \label{X split}
X_{k_r} \;=\; Q_{k_r} \sum_{\sigma \in \cal L_r} A_\sigma^r\,.
\end{equation}
Moreover, each monomial $A_\sigma^r$ for $\sigma \in \cal L_r$ either consists entirely of maximally expanded resolvent entries or satisfies $d(A_\sigma^r) = p + 1$. (This is an immediate consequence of the stopping rule in (1)).

Next, we observe that for any string $\sigma$ we have
\begin{equation} \label{A sigma size}
A_\sigma^k \;=\; O_\prec \pb{\Psi_o^{b(\sigma) + 1}}\,,
\end{equation}
where $b(\sigma)$ is the number ones in the string $\sigma$. Indeed, if $b(\sigma) = 0$ then this follows from \eqref{Xi estimate}; if $b(\sigma) \geq 1$ this follows from the last statement in \eqref{properties of w} and \eqref{basic bounds on GT}.

Using \eqref{Xp expanded} and \eqref{X split} we have the representation
\begin{equation} \label{V in terms of leaves}
V(\f k) \;=\; \sum_{\sigma_1 \in \cal L_1} \cdots \sum_{\sigma_p \in \cal L_p} \E \pb{Q_{k_1} A_{\sigma_1}^1} \cdots \pb{Q_{k_p} \ol{A_{\sigma_p}^p}}\,.
\end{equation}

We now claim that any nonzero term on the right-hand side of \eqref{V in terms of leaves} satisfies
\begin{equation} \label{estimate of integrand}
\pb{Q_{k_1} A_{\sigma_1}^1} \cdots \pb{Q_{k_p} \ol{A_{\sigma_p}^p}} \;=\; O_\prec \pb{\Psi_o^{p + \abs{L}}}\,.
\end{equation}

\begin{proof}[Proof of \eqref{estimate of integrand}]
Before embarking on the proof, we explain its idea.
By \eqref{A sigma size}, the naive size of the left-hand side of \eqref{estimate of integrand} is $\Psi_o^p$. The key observation is that each
lone label $s\in L$ yields one extra factor $\Psi_o$ to the estimate. This is because the expectation 
in \eqref{V in terms of leaves} would vanish if all other factors  $\pb{Q_{k_r} A_{\sigma_r}^r}$, $r\ne s$,
were independent of $k_s$. The expansion of the binary tree makes this dependence explicit
by exhibiting $k_s$ as a lower index. But this requires performing an operation $w_1$
with the choice $u=k_s$ in \eqref{splitting off-diag} or \eqref{splitting diag}.
However, $w_1$ increases the number of off-diagonal element by at least one. 
In other words, every index associated with a lone label must have a ``partner'' index in a different resolvent entry which arose by application of $w_1$.
Such a partner index may only be obtained through the creation of at least one off-diagonal resolvent entry. The actual proof
below shows that this effect applies \emph{cumulatively} for all lone labels.

In order to prove \eqref{estimate of integrand}, we consider two cases. Consider first the case where for some $r = 1, \dots, p$ the monomial $A_{\sigma_r}^r$ on the left-hand side of \eqref{estimate of integrand} is not maximally expanded. Then $d(A_{\sigma_r}^r) = p + 1$, so that \eqref{basic bounds on GT} yields $A_{\sigma_r}^r \prec \Psi_o^{p + 1}$. Therefore the observation that $A_{\sigma_s}^s \prec \Psi_o$ for all $s \neq r$, together with \eqref{Qi X} implies that the left-hand side of \eqref{estimate of integrand} is $O_\prec\pb{\Psi_o^{2p}}$. Since $\abs{L} \leq p$, \eqref{estimate of integrand} follows.

Consider now the case where $A_{\sigma_r}^r$ on the left-hand side of \eqref{estimate of integrand} is maximally expanded for all $r = 1, \dots, p$. The key observation is the following claim about the left-hand side of \eqref{estimate of integrand} with a nonzero expectation.
\begin{itemize}
\item[$(*)$]
For each $s \in L$ there exists $r = \tau(s) \in \{1, \dots, p\} \setminus \{s\}$ such that the monomial $A_{\sigma_r}^r$ contains a resolvent entry with lower index $k_s$.
\end{itemize}
In other words,  after expansion, the lone label $s$ has a ``partner'' label $r = \tau(s)$, such that the index
$k_s$ appears also in the expansion of $A^r$ (note that there may be several such partner labels $r$). 
To prove $(*)$, suppose by contradiction that there exists an $s \in L$ such that for all $r \in \{1, \dots, p\} \setminus \{s\}$ the lower index $k_s$ does not appear in the monomial $A_{\sigma_r}^r$. To simplify notation, we assume that $s = 1$. Then, for all $r = 2, \dots, p$, since $A_{\sigma_r}^r$ is maximally expanded, we find that $A_{\sigma_r}^r$ is independent of $k_1$ (see Definition \ref{definition: P Q}).
Therefore we have
\begin{equation*}
\E \pb{Q_{k_1} A_{\sigma_1}^1} \pb{Q_{k_2} A_{\sigma_2}^2} \cdots \pb{Q_{k_p} \ol{A_{\sigma_p}^p}} \;=\; \E Q_{k_1} \pB{A_{\sigma_1}^1 \pb{Q_{k_2} A_{\sigma_2}^2} \cdots \pb{Q_{k_p} \ol{A_{\sigma_p}^p}}} \;=\; 0\,,
\end{equation*}
where in the last step we used that $\E Q_i(X) Y = \E Q_i(XY) = 0$ if $Y$ is independent of $i$.
This concludes the proof of $(*)$.

For $r \in \{1, \dots, p\}$ we define $\ell(r) \deq \sum_{s \in L} \ind{\tau(s) = r}$, the number of times that the label $r$ was chosen as a partner
to some lone label $s$. We now claim that 
\begin{equation} \label{key linking estimate}
A_{\sigma_r}^r \;=\; O_\prec \pb{\Psi_o^{1 + \ell(r)}}\,.
\end{equation}
To prove \eqref{key linking estimate}, fix $r \in \{1, \dots, p\}$. By definition, for each $s \in \tau^{-1}(\{r\})$ the index $k_s$ appears as a lower index in the monomial $A_{\sigma_r}^r$. Since $s \in L$ is by definition a lone label
and $s \neq r$, we know that $k_s$ does not appear as an index in $A^r$. By definition of the monomials associated with the tree vertex $\sigma_r$, it follows that $b(\sigma_r)$, the number of ones in $\sigma_r$, is at least $\absb{\tau^{-1}(\{r\})} = \ell(r)$
since each application of $w_1$ adds precisely one new (lower) index.
Note that in this step it is crucial that $s \in \tau^{-1}(\{r\})$ was a lone label. Recalling \eqref{A sigma size}, we therefore get \eqref{key linking estimate}.

Using \eqref{key linking estimate} and Lemma \ref{lem: exp prec} we find
\begin{equation*}
\absB{\pb{Q_{k_1} A_{\sigma_1}^1} \cdots \pb{Q_{k_p} \ol{A_{\sigma_p}^p}}} \;\prec\; \prod_{r = 1}^p \Psi_o^{1 + \ell(r)} \;=\; \Psi_o^{p + \abs{L}}\,.
\end{equation*}
This concludes the proof of \eqref{estimate of integrand}.
\end{proof}

Summing over the binary trees in \eqref{V in terms of leaves} and using Lemma \ref{lem: exp prec}, we get from \eqref{estimate of integrand}
\begin{equation} \label{bound for Vk}
V(\f k) \;=\; O_\prec\pb{\Psi_o^{p + \abs{L}}}\,.
\end{equation}
We now return to the sum \eqref{Xp expanded}. We perform the summation by first fixing $P \in \fra P_p$, with associated lone labels $L = L(P)$. We find
\begin{equation*}
\absbb{\sum_{\f k} \ind{\cal P(\f k) = P} \, t_{ik_1} \cdots t_{i k_{p/2}} \ol t_{i k_{p/2 + 1}} \cdots \ol t_{ik_p}} \;\leq\;   (M^{-1})^{p-|P|} \;\leq\; (M^{-1/2})^{p - \abs{L}}\,;
\end{equation*}
in the first step we used \eqref{condition on weights} and the fact that the summation is performed over $\abs{P}$ free indices, the remaining $p - \abs{P}$ being estimated by $M^{-1}$; in the second step we used that  
each block of $P$ that is not contained in $L$ consists of at least two labels, so that $p - \abs{P} \geq (p-\abs{L})/2$.
From \eqref{Xp expanded} and \eqref{bound for Vk} we get
\begin{equation*}
\E \absbb{\sum_k t_{ik} X_k}^p \;\prec\; \sum_{P \in \fra P_p} (M^{-1/2})^{p - \abs{L(P)}} \, \Psi_o^{p + \abs{L(P)}}  \;\leq\; C_p \Psi_o^{2p}\,,
\end{equation*}
where in the last step we used the lower bound from \eqref{admissible Psi} and estimated the summation over $\fra P_p$ with a constant $C_p$ (which is bounded by $(Cp^2)^p$). Summarizing, we have proved that
\begin{equation} \label{final moment estimate}
\E \absbb{\sum_k t_{ik} X_k}^p \;\prec\; \Psi_o^{2p}
\end{equation}
for any $p \in 2 \N$.

We conclude the proof of Theorem \ref{thm: averaging with Lambdao} with a simple application of 
Chebyshev's inequality. Fix $\epsilon > 0$ and $D > 0$. Using \eqref{final moment estimate} and Chebyshev's inequality we find
\begin{equation*}
\P \pbb{\absbb{\sum_k t_{ik} X_k} > N^\epsilon \Psi_o^2} \;\leq\; N \, N^{-\epsilon p}
\end{equation*}
for large enough $N \geq N_0(\epsilon, p)$. Choosing $p \geq \epsilon^{-1} (1 + D)$ concludes
 the proof of Theorem \ref{thm: averaging with Lambdao}.
\end{proof}

\begin{remark}
The identity \eqref{resolvent expansion type 1} is the only identity about the entries of $G$ that is needed in the proof of Theorem \ref{thm: averaging with Lambdao}. In particular, \eqref{resolvent expansion type 2} is never used, and the actual entries of $H$ never appear in the argument.
\end{remark}

\begin{proof}[Proof of Theorem \ref{thm: averaging}]
The first estimate of \eqref{averaging with Q} follows from Theorem \ref{thm: averaging with Lambdao} and the simple bound $\Lambda_o\leq \Lambda\prec \Psi$.
The second estimate of \eqref{averaging with Q} may be proved by following the proof of 
Theorem \ref{thm: averaging with Lambdao} verbatim; 
the only modification is the bound
\begin{equation*}
\absb{Q_k G_{kk}^{(\bb T)}} \;=\; \absb{Q_k \pb{G_{kk}^{(\bb T)} - m}} \;\prec\; \Psi\,,
\end{equation*}
which replaces \eqref{Xi estimate}. Here we again use the same upper bound $\Psi_o = \Psi$ 
for $\Lambda$ and $\Lambda_o$.

In order to  prove \eqref{averaging without Q},  we write Schur's complement formula \eqref{schur} using \eqref{identity for msc} as
\begin{equation} \label{inverse Schur}
\frac{1}{G_{ii}} \;=\; \frac{1}{m} + h_{ii} - \pbb{\sum_{k,l}^{(i)} h_{ik} G_{kl}^{(i)} h_{li} - m}\,.
\end{equation}
Since $\abs{h_{ii}} \prec M^{-1/2} \leq \Psi$ and $\abs{1/G_{ii} - 1/m} \prec \Psi$, we find that the term in parentheses is stochastically dominated by $\Psi$. Therefore we get, inverting \eqref{inverse Schur} and expanding the right-hand side, that
\begin{equation*}
v_i \;=\; G_{ii} - m \;=\; m^2 \pbb{-h_{ii} + \sum_{k,l}^{(i)} h_{ik} G_{kl}^{(i)} h_{li} - m} + O_\prec(\Psi^2)\,.
\end{equation*}
Taking the partial expectation $P_i$ yields
\begin{equation*}
P_i v_i \;=\; m^2 \pbb{\sum_k^{(i)} s_{ik} G_{kk}^{(i)} - m} + O_\prec(\Psi^2) \;=\; m^2 \sum_k s_{ik} v_k + O_\prec(\Psi^2)\,,
\end{equation*}
where in the second step we used \eqref{resolvent expansion type 1}, \eqref{s leq W}, and \eqref{basic bounds on GT}.
 Therefore we get, using \eqref{averaging with Q} and $Q_i G_{ii} = Q_i(G_{ii}-m) = Q_i v_i$,
\begin{equation*}
w_a \;\deq\; \sum_i t_{ai} v_i \;=\; \sum_i t_{ai} P_i v_i + \sum_i t_{ai} Q_i v_i \;=\; m^2 \sum_{i,k} t_{ai} s_{ik} v_k + O_\prec(\Psi^2) \;=\; m^2 \sum_{i,k} s_{ai} t_{ik} v_k + O_\prec(\Psi^2)\,,
\end{equation*}
where in the last step we used that the matrices $T$ and $S$ commute by assumption.
Introducing the vector $\f w = (w_a)_{a = 1}^N$ we therefore have the equation 
\begin{equation}\label{selfcons}
\f w = m^2 S \f w + O_\prec(\Psi^2)\,,
\end{equation}
where the error term is in the sense of the $\ell^\infty$-norm (uniform in the components of the vector $\f w$).
Inverting the matrix $1 - m^2 S$ and recalling the definition \eqref{def:rho} yields  \eqref{averaging without Q}. 

The proof of \eqref{averaging without Q avg}  is similar, except that we have to treat the subspace $\f e^\perp$ separately.  Using \eqref{sum_t_1}  we write
\begin{equation*}
\sum_i t_{ai} (v_i - [v]) \;=\; \sum_i t_{ai} v_i - \sum_i \frac{1}{N} v_i\,,
\end{equation*}
and apply the above argument to each term separately.
This yields
\begin{equation*}
\sum_i t_{ai} (v_i - [v]) \;=\; m^2 \sum_{i} t_{ai} \sum_k s_{ik} v_k  - m^2 \sum_i \frac{1}{N} \sum_k t_{ik} v_k + O_\prec(\Psi^2) \;=\; m^2 \sum_{i,k} s_{ai} t_{ik} (v_k - [v]) + O_\prec(\Psi^2)\,,
\end{equation*}
where we used \eqref{S is stochastic} in the second step.
Note that the error term on the right-hand side is perpendicular to $\f e$ when regarded as a vector indexed by $a$, since all other terms in the equation are. Hence we may invert the matrix $(1 - m^2 S)$ on the subspace $\f e^\perp$, as above, to get \eqref{averaging without Q avg}. 
\end{proof}

We conclude this section with an alternative proof of Theorem \ref{thm: averaging with Lambdao}. While the underlying argument remains similar, the following proof makes use of an additional decomposition of the space of random variables, which avoids the use of the stopping rule from Step (1) in the above proof of Theorem \ref{thm: averaging with Lambdao}. This decomposition may be regarded as an abstract reformulation of the stopping rule.

\begin{proof}[Alternative proof of Theorem \ref{thm: averaging with Lambdao}]
As before, we set $X_k \deq Q_k (G_{kk})^{-1}$. For simplicity of presentation, we set $t_{ik} = N^{-1}$. The decomposition is defined using the operations $P_i$ and $Q_i$, introduced in Definition \ref{definition: P Q}. It is immediate that $P_i$ and $Q_i$ are projections, that $P_i + Q_i = 1$, and that all of these projections commute with each other. For a set $A \subset \{1, \dots, N\}$ we use the notations $P_A \deq \prod_{i \in A} P_i$ and $Q_A \deq \prod_{i \in A} Q_i$.

Let $p$ be even and introduce the shorthand $\wt X_{k_s} \deq X_{k_s}$ for $s \leq p/2$ and $\wt X_{k_s} \deq \ol X_{k_s}$ for $s > p/2$. Then we get
\begin{equation*}
\E \absbb{\frac{1}{N} \sum_k X_k}^{p}
\;=\; \frac1{N^p}  \sum_{k_1, \dots, k_p}  \E \prod_{s = 1}^p \wt X_{k_s} \;=\;
\frac1{N^p}  \sum_{k_1, \dots, k_p}  \E \prod_{s = 1}^p \pBB{\prod_{r = 1}^p (P_{k_r} + Q_{k_r})\wt X_{k_s}}\,.
\end{equation*}
Introducing the notations $\f k = (k_1, \dots, k_p)$ and $[\f k] = \{k_1, \dots, k_p\}$, we therefore get by multiplying out the parentheses
\begin{equation} \label{Zp expanded}
\E \absbb{\frac{1}{N} \sum_k X_k}^{p} \;=\;
\frac1{N^p}  \sum_{\f k} \sum_{A_1, \dots, A_p \subset [\f k]}  \E \prod_{s = 1}^p \pb{P_{A_s^c} Q_{A_s} \wt X_{k_s}}\,.
\end{equation}

Next, by definition of $\wt X_{k_s}$, we have that $\wt X_{k_s} = Q_{k_s} \wt X_{k_s}$, which implies that $P_{A^c_s} \wt X_{k_s} = 0$ if $k_s \notin A_s$. Hence may restrict the summation to $A_s$ satisfying
\begin{equation} \label{C cond 1}
k_s \;\in\; A_s
\end{equation}
for all $s$. Moreover, we claim that the right-hand side of \eqref{Zp expanded} vanishes unless
\begin{equation} \label{C cond 2}
k_s \;\in\; \bigcup_{q \neq s} A_{q}
\end{equation}
for all $s$. Indeed, suppose that $k_s \in \bigcap_{q \neq s} A_{q}^c$ for some $s$, say $s = 1$. In this case, for each $s = 2, \dots, p$, the factor $P_{A_s^c} Q_{A_s} \wt X_{k_s}$ is independent of $k_1$ (see Definition \ref{definition: P Q}). Thus we get
\begin{multline*}
\E \prod_{s = 1}^p \pb{P_{A_s^c} Q_{A_s} \wt X_{k_s}} \;=\; \E \pb{P_{A_1^c} Q_{A_1} Q_{k_1} \wt X_{k_1}} \prod_{s = 2}^p \pb{P_{A_s^c} Q_{A_s} \wt X_{k_s}}
\\
=\; \E Q_{k_1} \pbb{\pb{P_{A_1^c} Q_{A_1} \wt X_{k_1}} \prod_{s = 2}^p \pb{P_{A_s^c} Q_{A_s} \wt X_{k_s}}} \;=\; 0\,,
\end{multline*}
where in the last step we used that $\E Q_i(X) = 0$ for any $i$ and random variable $X$.

We conclude that the summation on the right-hand side of \eqref{Zp expanded} is restricted to indices satisfying \eqref{C cond 1} and \eqref{C cond 2}. Under these two conditions we have
\begin{equation} \label{sum size A}
\sum_{s = 1}^p \abs{A_s} \;\geq\; 2 \, \abs{[\f k]}\,,
\end{equation}
since each index $k_s$ must belong to at least two different sets $A_q$: to $A_s$ (by \eqref{C cond 1}) as well as to some $A_q$ with $q \neq s$ (by \eqref{C cond 2}).

Next, we claim that for $k \in A$ we have
\begin{equation} \label{claim on size of QA}
\abs{Q_A X_k } \;\prec\; \Psi_o^{\abs{A}}\,.
\end{equation}
(Note that if we were doing the case $X_k = Q_k G_{kk}$ instead of $X_k = Q_k (G_{kk})^{-1}$, then \eqref{claim on size of QA} would have to be weakened to $\abs{Q_A X_k } \prec \Psi^{\abs{A}}$, in accordance with \eqref{averaging with Q}. Indeed, in that case and for $A = \{k\}$, we only have the bound $\abs{Q_k G_{kk}} \prec \Psi$ and not $\abs{Q_k G_{kk}} \prec \Psi_o$.)

Before proving \eqref{claim on size of QA}, we show it may be used to complete the proof. Using \eqref{Zp expanded}, \eqref{claim on size of QA}, and Lemma \ref{lem: exp prec}, we find
\begin{multline*}
\E \absbb{\frac{1}{N} \sum_k X_k}^{p} \;\prec\; C_p \frac{1}{N^p} \sum_{\f k} \Psi_o^{2 \abs{[k]}} \;=\; C_p \sum_{u = 1}^p \Psi_o^{2u} \frac{1}{N^p} \sum_{\f k} \ind{\abs{[\f k]} = u}
\\
\leq\; C_p \sum_{u = 1}^p \Psi_o^{2u} N^{u - p} \;\leq\; C_p (\Psi_o + N^{-1/2})^{2p} \;\leq\; C_p \Psi_o^{2p}\,,
\end{multline*}
where in the first step we estimated the summation over the sets $A_1, \dots, A_p$ by a combinatorial factor $C_p$ depending on $p$, in the forth step we used the elementary inequality $a^n b^m \leq (a + b)^{n + m}$ for positive $a,b$, and in the last step we used \eqref{admissible Psi} and the bound $M \leq N$. Thus we have proved \eqref{final moment estimate}, from which the claim follows exactly as in the first proof of Theorem \ref{thm: averaging with Lambdao}.

What remains is the proof of \eqref{claim on size of QA}. The case $\abs{A} = 1$ (corresponding to $A = \{k\}$) follows from \eqref{Xi estimate}, exactly as in the first proof of Theorem \ref{thm: averaging with Lambdao}. To simplify notation, for the case $\abs{A} \geq 2$ we assume that $k = 1$ and $A = \{1, 2, \dots, t\}$ with $t \geq 2$. It suffices to prove that
\begin{equation} \label{claim for QA 1/G}
\absbb{Q_t \cdots Q_2 \frac{1}{G_{11}}} \;\prec\; \Psi_o^{t}\,.
\end{equation}
We start by writing, using \eqref{resolvent expansion type 1},
\begin{equation*}
Q_2 \frac{1}{G_{11}} \;=\; Q_2 \frac{1}{G_{11}^{(2)}} + Q_2 \frac{G_{12} G_{21}}{G_{11} G_{11}^{(2)} G_{22}} \;=\; Q_2 \frac{G_{12} G_{21}}{G_{11} G_{11}^{(2)} G_{22}}\,,
\end{equation*}
where the first term vanishes since $G_{11}^{(2)}$ is independent of $2$ (see Definition \ref{definition: P Q}). We now consider
\begin{equation*}
Q_3 Q_2 \frac{1}{G_{11}} \;=\; Q_2 Q_3 \frac{G_{12} G_{21}}{G_{11} G_{11}^{(2)} G_{22}}\,,
\end{equation*}
and apply \eqref{resolvent expansion type 1} with $k = 3$ to each resolvent entry on the right-hand side, and multiply everything out. The result is a sum of fractions of entries of $G$, whereby all entries in the numerator are diagonal and all entries in the denominator are diagonal. The leading order term vanishes,
\begin{equation*}
Q_2 Q_3 \frac{G_{12}^{(3)} G_{21}^{(3)}}{G_{11}^{(3)} G_{11}^{(23)} G_{22}^{(3)}} \;=\; 0\,,
\end{equation*}
so that the surviving terms have at least three (off-diagonal) resolvent entries in the numerator.
We may now continue in this manner; at each step the number of (off-diagonal) resolvent entries in the numerator increases by at least one.

More formally, we obtain a sequence $A_2, A_3, \dots, A_t$, where $A_2 \deq Q_2 \frac{G_{12} G_{21}}{G_{11} G_{11}^{(2)} G_{22}}$ and $A_{i}$ is obtained by applying \eqref{resolvent expansion type 1} with $k = i$ to each entry of $Q_i A_{i - 1}$, and keeping only the nonvanishing terms. The following properties are easy to check by induction.
\begin{enumerate}
\item
$A_i = Q_i A_{i - 1}$.
\item
$A_i$ consists of the projection $Q_2 \cdots Q_i$ applied to a sum of fractions such that all entries in the numerator are diagonal and all entries in the denominator are diagonal.
\item
The number of (off-diagonal) entries in the numerator of each term of $A_i$ is at least $i$.
\end{enumerate}
By Lemma \ref{lem: exp prec} combined with (ii) and (iii) we conclude that $\abs{A_i} \prec \Psi_o^i$. From (i) we therefore get
\begin{equation*}
Q_t \cdots Q_2 \frac{1}{G_{11}} \;=\; A_t \;=\; O_\prec(\Psi_o^t)\,.
\end{equation*}
This is \eqref{claim for QA 1/G}. Hence the proof is complete.
\end{proof}

\section{Large deviation bounds}

We consider random variables $X$ satisfying
\begin{equation} \label{cond on X}
\E X \;=\; 0\,, \qquad \E \abs{X}^2 \;=\; 1\,, \qquad \p{\E \abs{X}^p}^{1/p} \;\leq\; \mu_p
\end{equation}
for all $p \in \N$ and some constants $\mu_p$.

\begin{theorem}[Large deviation bounds] \label{thm: LDE}

Let $\pb{X_i^{(N)}}$ and $\pb{Y_i^{(N)}}$ be independent families of random variables and 
$\pb{a_{ij}^{(N)}}$ and $\pb{b_i^{(N)}}$ be deterministic; here $N \in \N$ and $i,j = 1, \dots, N$. Suppose that all entries $X_i^{(N)}$ and $Y_i^{(N)}$ are independent and satisfy \eqref{cond on X}. Then we have the bounds
\begin{align} \label{lde 1}
\sum_i b_i X_i &\;\prec\; \pbb{\sum_i \abs{b_i}^2}^{1/2}\,,
\\ \label{lde 2}
\sum_{i,j} a_{ij} X_i Y_j &\;\prec\; \pbb{\sum_{i,j} \abs{a_{ij}}^2}^{1/2}\,,
\\ \label{lde 3}
\sum_{i \neq j} a_{ij} X_i X_j &\;\prec\; \pbb{\sum_{i \neq j} \abs{a_{ij}}^2}^{1/2}\,.
\end{align}
 If the coefficients $a_{ij}^{(N)}$ and $b_i^{(N)}$
depend on an additional parameter $u$, then all of these estimates are uniform in $u$ (see Definition \ref{def:stocdom}), i.e.\ the threshold $N_0= N_0(\e, D)$ in the definition of $\prec$ depends only on the family $\mu_p$ from \eqref{cond on X} and $\delta$ from \eqref{lower bound on W}; in particular, $N_0$ does not depend on $u$. 
\end{theorem}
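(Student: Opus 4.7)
The plan is to use the moment method: for each of the three sums $Z$ on the left-hand sides of \eqref{lde 1}--\eqref{lde 3}, I will bound $\E|Z|^{2p}$ by a constant $C_p$ (depending only on $p$ and the moment sequence $\mu_p$ from \eqref{cond on X}) times the $p$-th power of the right-hand side squared, and then invoke Chebyshev's inequality together with the definition of $\prec$. Concretely, if $\E |Z|^{2p} \leq C_p R^{2p}$ for every $p \in \N$, then for any $\e > 0$ and $D > 0$ we get $\P(|Z| > N^\e R) \leq C_p N^{-2p\e}$, which is $\leq N^{-D}$ for $p \geq (D+1)/(2\e)$. Uniformity in the auxiliary parameter $u$ is automatic from this scheme since the constants $C_p$ depend only on the $\mu_p$.

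First I would dispatch \eqref{lde 1}. Writing $Z = \sum_i b_i X_i$ and expanding
\begin{equation*}
\E|Z|^{2p} \;=\; \sum_{i_1, \dots, i_{2p}} b_{i_1} \cdots b_{i_p} \ol{b_{i_{p+1}}} \cdots \ol{b_{i_{2p}}} \, \E \pb{X_{i_1} \cdots X_{i_p} \ol{X}_{i_{p+1}} \cdots \ol{X}_{i_{2p}}} \,,
\end{equation*}
the assumption $\E X_i = 0$ and independence force the expectation to vanish unless each index in the tuple $(i_1, \dots, i_{2p})$ appears at least twice. Hence the sum is restricted to tuples with at most $p$ distinct values, and the expectation of the product of $X$'s is bounded by $\mu_{2p}^{2p}$. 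A standard combinatorial count of partitions of $\{1,\dots,2p\}$ into blocks of size $\geq 2$ (there are at most $(Cp)^p$ such partitions) together with Cauchy--Schwarz on each block yields $\E|Z|^{2p} \leq C_p (\sum_i |b_i|^2)^p$.

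Next, \eqref{lde 2} follows from the same template. Writing $Z = \sum_{i,j} a_{ij} X_i Y_j$ and expanding $\E|Z|^{2p}$, the independence of the $X$- and $Y$-families splits the expectation into a product of an $X$-expectation (over the first indices) and a $Y$-expectation (over the second indices). As before, each first index and each second index must appear at least twice. Grouping the $2p$ tuples $(i_r, j_r)$ according to the partitions $P_X, P_Y$ they induce on $\{1, \dots, 2p\}$ and using $|a_{i_r j_r}| \leq (|a_{i_r j_r}|^2 \cdot 1)^{1/2}$ with H\"older or Cauchy--Schwarz applied to each block, one arrives at $\E|Z|^{2p} \leq C_p (\sum_{i,j} |a_{ij}|^2)^p$.

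The bound \eqref{lde 3} is the main obstacle, essentially a version of the Hanson--Wright inequality. The complication is that since a single family $(X_i)$ appears twice, the two copies can couple, and the restriction $i \neq j$ must be tracked carefully through the expansion of $\E|Z|^{2p}$; without this restriction one would also see a contribution from $\sum_i a_{ii}(X_i^2 - 1)$. I would expand $\E|Z|^{2p}$ as a sum over $2p$ pairs $(i_r, j_r)$ with $i_r \neq j_r$, partition the combined index multi-set $(i_1, j_1, \dots, i_{2p}, j_{2p})$ according to coincidences, and observe that each value must occur at least twice for the expectation not to vanish. Since each ordered pair contributes two indices subject to $i_r \neq j_r$, the number of distinct values is at most $2p$, and using Cauchy--Schwarz block-by-block one again obtains $\E|Z|^{2p} \leq C_p (\sum_{i \neq j} |a_{ij}|^2)^p$. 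Chebyshev's inequality then finishes the proof exactly as in the first step. Throughout, all estimates depend only on $p$ and on the $\mu_p$, so uniformity in $u$ holds trivially.
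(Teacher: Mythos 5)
Your overall scheme---bound $\E\abs{Z}^{2p}$ by $C_p$ times the $p$-th power of the squared right-hand side, then apply Chebyshev with $p$ large depending on $\e$ and $D$---is exactly the logic the paper relies on, but the paper itself does not carry it out: its proof of Theorem \ref{thm: LDE} is a one-line citation of the moment bounds in Lemmas B.2--B.4 of \cite{EKYY3}, combined with Chebyshev's inequality. So you are supplying a self-contained version of what the paper outsources. Your treatment of \eqref{lde 1} is complete and standard. For \eqref{lde 2} and especially \eqref{lde 3}, however, the phrase ``using Cauchy--Schwarz block-by-block'' hides the actual content: after restricting to partitions in which every index value occurs at least twice, one must bound a graph sum of the form $\sum_{\phi}\prod_{e}\abs{a_{\phi(e)}}$ over multigraphs with $2p$ edges and minimum degree two by $\pb{\sum_{i,j}\abs{a_{ij}}^2}^{p}$. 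This is true but is a genuine combinatorial lemma (already for a $2p$-cycle it amounts to $\tr\abs{A}^{2p}\le \norm{A}_{HS}^{2p}$, and for general degree profiles it needs an inductive or generalized H\"older argument), not a routine application of Cauchy--Schwarz. The cited references sidestep precisely this point by deriving the bilinear bound \eqref{lde 2} from two successive applications of the linear bound \eqref{lde 1} (condition on the $Y$'s, apply \eqref{lde 1} in $X$, then again in $Y$ via a Rosenthal/Burkholder-type inequality), and the quadratic bound \eqref{lde 3} by a further conditioning or decoupling step; that route trades the raw combinatorics for martingale inequalities and is why those lemmas are short. Your plan is viable, but as written the key step of \eqref{lde 3} is asserted rather than proved, and you should either prove the graph-sum lemma or reduce \eqref{lde 2} and \eqref{lde 3} to \eqref{lde 1} as in the cited works.
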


\begin{proof}
The estimates \eqref{lde 1}, \eqref{lde 2}, and \eqref{lde 3} follow from Lemmas B.2, B.3, and B.4 of \cite{EKYY3}, combined with Chebyshev's inequality.
\end{proof}

\end{document}